\newtheorem{theorem}{Theorem}[section]
\newtheorem{proposition}[theorem]{Proposition}
\newtheorem{observation}[theorem]{Observation}
\newtheorem{lemma}[theorem]{Lemma}
\newtheorem{corollary}[theorem]{Corollary}
\theoremstyle{definition}
\newtheorem{definition}[theorem]{Definition}
\newtheorem{remark}[theorem]{Remark}
\newcommand{\N}{\mathbb{N}}
\newcommand{\Z}{\mathbb{Z}}
\newcommand{\Q}{\mathbb{Q}}
\newcommand{\R}{\mathbb{R}}
\newcommand{\C}{\mathbb{C}}
\newcommand{\D}{\mathbb{D}}
\newcommand{\AF}{{\it A}\hspace{-.3em} \raisebox{+.0235em}{\sf F}}
\newcommand{\bara}{{\overline{a}}}
\newcommand{\barE}{{\overline{E}}}
\newcommand{\barN}{{\overline{N}}}
\newcommand{\barU}{{\overline{U}}}
\newcommand{\barW}{{\overline{W}}}
\newcommand{\barx}{{\overline{x}}}
\newcommand{\bardelta}{{\overline{\delta}}}
\newcommand{\bareta}{{\overline{\eta}}}
\newcommand{\bariota}{{\overline{\iota}}}
\newcommand{\barphi}{{\overline{\varphi}}}
\newcommand{\barsigma}{{\overline{\sigma}}}
\newcommand{\barw}{{\overline{w}}}
\newcommand{\bary}{{\overline{y}}}
\newcommand{\barepsilon}{\overline{\epsilon}}
\newcommand{\barkappa}{\overline{\kappa}}
\newcommand{\hatkappa}{\widehat{\kappa}}
\newcommand{\hatiota}{\widehat{\iota}}
\newcommand{\hatW}{\widehat{W}}
\newcommand{\Js}{\mathcal{Z}}
\newcommand{\bfk}{{\bf k}}
\newcommand{\T}{\ensuremath{\mathbb{T}}}
\newcommand{\bh}{\overline{h}}
\newcommand{\br}{\overline{r}}
\newcommand{\Cs}{$\mathrm{C}^*$-al\-ge\-bra}
\newcommand{\Css}{$\mathrm{C}^*$-sub\-al\-ge\-bra}
\newcommand{\cA}{\mathscr{A}}
\newcommand{\cG}{\mathcal{G}}
\newcommand{\cK}{\mathcal{K}}
\newcommand{\cI}{\mathcal{I}}
\newcommand{\cT}{\mathcal{T}}
\newcommand{\cU}{\mathcal{U}}
\newcommand{\cR}{\mathcal{R}}
\newcommand{\cZ}{{\mathcal{Z}}}
\newcommand{\oneLnplus}{{1_{M_{2^{L_{n+1}}}\otimes M_{3^{L_{n+1}}}}}}
\newcommand{\PhiT}{{\Phi_{\mathcal{ T}}}}
\newcommand{\subsetepsilon}{\subset_{\varepsilon}}
\newcommand{\subsetdelta}{\subset_{\delta}}
\newcommand{\tcR}{\widetilde{\mathcal{R}}}
\newcommand{\tcU}{\widetilde{\mathcal{U}}}
\newcommand{\tdelta}{{\widetilde{\delta}}}
\newcommand{\tepsilon}{{\tilde{\epsilon}}}
\newcommand{\tf}{{\tilde{f}}}
\newcommand{\tg}{{\tilde{g}}}
\newcommand{\tL}{{\widetilde{L}}}
\newcommand{\tpsi}{{\widetilde{\psi}}}
\newcommand{\tphi}{{\tilde{\varphi}}}
\newcommand{\tkappa}{{\widetilde{\kappa}}}
\newcommand{\tp}{{\widetilde{p}}}
\newcommand{\tq}{{\tilde{q}}}
\newcommand{\ts}{{\widetilde{s}}}
\newcommand{\tU}{{\widetilde{U}}}
\newcommand{\tW}{{\widetilde{W}}}
\newcommand{\ty}{{\tilde{y}}}
\newcommand{\tr}{{\mathrm{tr}}}
\newcommand{\tildevarphi}{{\widetilde{\varphi}}}
\DeclareMathOperator{\Image}{Im}
\DeclareMathOperator{\id}{id}
\DeclareMathOperator{\Ad}{Ad}
\DeclareMathOperator{\Aff}{Aff}
\DeclareMathOperator{\Aut}{Aut}
\DeclareMathOperator{\ev}{ev}
\DeclareMathOperator{\Ev}{Ev}
\DeclareMathOperator{\Lip}{Lip}
\DeclareMathOperator{\supp}{supp}
\DeclareMathOperator{\Ped}{Ped}
\DeclareMathOperator{\rank}{rank}
\begin{document}
\title{Rationally AF algebras and KMS states \\ of $\mathcal{Z}$-absorbing \Cs s}
\author{
George A. Elliott\thanks{The first named author was supported by a Natural Sciences and Engineering Research Council of Canada Discovery Grant. } 
\ \ and\ \ \
  Yasuhiko Sato\thanks{The second author was supported by JSPS (the Grant-in-Aid for Research Activity Start-up
25887031) and the Department of Mathematical Sciences,  Kyushu University.}\\ 
}
\date{}

\maketitle
\begin{abstract} 
In order to realize all possible KMS-bundles on the Jiang-Su algebra, we introduce a class of \Cs s which we call rationally approximately finite dimensional (RAF). Using these, we show  that for a given proper simplex bundle $(S, \pi)$ with a singleton $\pi^{-1}(\{0\})$ and a unital separable monotracial \Cs{} $A$ absorbing the Jiang-Su algebra tensorially (for instance, the irrational rotation algebra),  there exists a  flow on $A$ whose KMS-bundle is isomorphic to $(S, \pi)$. 
\end{abstract}

\section{Introduction}\label{Sec1}

Approximately finite-dimensional separable \Cs s (AF algebras) were classified, roughly fifty years ago, in \cite{Gl}, \cite{Dix}, \cite{Br}, and \cite{Ell01}, in terms of the Murray-von Neumann semigroup of equivalence classes of projections---equivalently, the universal enveloping ordered group that has come to be known (since Murray and von Neumann) as $K_0$ (and in the setting of AF algebras, the dimension group).

A very general class of simple separable \Cs s, assumed to be well behaved but satisfying very simple abstract axioms, has now been classified by a generalization of this invariant; see \cite{K1}, \cite{K2}, \cite{KP}, \cite{P}, \cite{GLN1}, \cite{GLN2}, \cite{EN1}, \cite{EGLN1}, \cite{TWW}, \cite{CETWW}, \cite{EGLN2}, \cite{GL1}, and \cite{GL2}, all of which follow on substantial  earlier work over the last thirty to forty years. The axioms are amenability,  absorption of the Jiang-Su algebra $\mathcal{Z}$, and the Universal Coefficient Theorem (conceivably redundant in the amenable case). Recall that the tensor product of any \Cs{} with the Jiang-Su algebra absorbs the Jiang-Su algebra tensorially. In many examples (see \cite{EN2}), this property already holds. The invariant, in the stable case, is $K_*=K_0\oplus K_1$ together with the natural pairing with traces, where both $K_0$ and $K_1$ are arbitrary countable abelian groups, and the traces constitute an arbitrary simplicial cone paired in an arbitrary way with $K_0$ (the natural order in which is determined by this pairing). 

At the same time, considerable progress has been made in the last thirty years in the classification of non-simple \Cs s beyond the case of AF algebras. In the infinite ($\mathcal{O}_\infty$-stable)
 case an ideal-related $KK$-equivalence based isomorphism theorem was outlined in \cite{K2}; a different proof of this was given in \cite{Ga}. In the finite (non-simple) case, after earlier results, a $K$-theoretical classification of inductive limits of finite direct sums of matrix algebras over commutative \Cs s (AH, or approximately homogeneous, algebras) with no dimension growth in the spectra and with the ideal property (every closed two-sided ideal generated by projections) was given in \cite{GJL}. The next step in this direction would be to classify inductive limits of sequences of \Css s of  matrix algebras over commutative \Cs s (ASH, or approximately subhomogeneous, algebras), with no dimension growth and with the ideal property. 
 
 Unexpectedly, in an investigation of KMS-state behavior of one-parameter automorphism groups of a \Cs, along the lines of \cite{BEK1} and \cite{BEK2}, a class of ASH algebras arose which it was possible to classify. 
 This is the basis of the present paper, which uses this new classification theorem to construct specified KMS-state phenomena in the Jiang-Su \Cs, and hence in the tensor product of this algebra with any other monotracial \Cs.

 The class of algebras in question (see Section 4), to be named rationally AF algebras (following the terminology of \cite{GLN1} and \cite{GLN2}), or RAF algebras, can be characterized as those \Cs s such that the tensor product with every infinite-dimensional UHF (Glimm uniformly hyperfinite) algebra is AF. 
 
 By \cite{GJLP}, every \Cs{} which is both RAF and AH with no dimension growth is AF. 
 (To see this, one may assume that the algebra is stable, and then as shown in the proof of Corollary \ref{Cor5.4}, below, it has the ideal property. By \cite{GJLP} it is AT, and since by Lemma \ref{Lem4.2}, it has zero $K_1$ group, by \cite{Thomsen-1} it is AI. By Corollary 1.3 of \cite{BEll} any AH algebra with no dimension growth has real rank zero if it has small eigenvalue variation, which holds if the tensor product with a UHF algebra has real rank zero. It is well known that an AI algebra of real rank zero is AF.) 
 
 The present class, thus, projects directly into the unknown territory of the non-simple ASH class.
 
  Interestingly (see Theorem \ref{Thm5.3}), the classifying invariant for this class---or, rather, the Jiang-Su stable members of the class---, at least up to stable isomorphism, is exactly the same as for the class of AF algebras, namely, the ordered $K_0$-group, which can be an arbitrary countable ordered abelian group whose tensor product with every dense subgroup of the rational numbers is a   dimension group. (We use the terminology rational dimension group, following \cite{LN2} which deals with the simple case. See Definition \ref{Def3.2}, Lemma \ref{Lem3.1} (v), and Theorem \ref{Thm4.4}.) 
  
  The classification up to isomorphism for non-stable algebras is also almost the same as for AF algebras, and in the case that there is an approximate unit consisting of projections (for instance if the algebras are stable or unital), it is exactly the same (namely, the dimension range).
  The general case (appearing already with the non-unital algebras stably isomorphic to the Jiang-Su algebra) is slightly more subtle. See Corollaries \ref{Cor4.8} and \ref{Cor5.4}, which introduce what we shall call the matrix dimension range.

  The proof of Theorem \ref{Thm5.3}, the $\mathcal{Z}$-absorbing RAF-algebra classification theorem, consists of an application of the Winter deformation technique \cite[Proposition 4.5]{W1},
  extended in a simple way beyond the unital case in which it is at present couched.

  Our result concerning KMS-state structure, which follows from the Jiang-Su absorbing RAF classification (Corollaries \ref{Cor4.8} and \ref{Cor5.4}) is Theorem \ref{ThmMain}. (A precise statement is already given in the Abstract.) The method is similar to that of \cite{BEK1} and \cite{BEK2}, and to that of \cite{ETh} and \cite{EST}, and there is some overlap in the results. 
 In \cite{BEK1} and \cite{BEK2} the \Cs s on which the actions are constructed are not precisely identified. In \cite{EST}, the present setting but with only the case of a compact bundle, i.e., bounded set of admissible inverse temperatures, is dealt with---this uses the known simple \Cs{} classification referred to above.  
 The case of an infinite Kirchberg algebra is also dealt with in \cite{EST}, without the compactness restriction. In \cite{ETh}, the only overlap is the case of a monotracial simple unital AF algebra. 
 
As a straightforward application of the present paper, we obtain in particular the following result.
\begin{corollary}\label{Cor1.1}
For any irrational rotation algebra, there exists a flow which realizes any given proper simplex bundle with singleton fibre over $0$ as a KMS-bundle on it. 
\end{corollary}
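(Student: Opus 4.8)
The plan is to read off Corollary~\ref{Cor1.1} directly from Theorem~\ref{ThmMain}. That theorem, for an arbitrary unital separable monotracial \Cs{} $A$ absorbing the Jiang--Su algebra tensorially and an arbitrary proper simplex bundle $(S,\pi)$ with singleton fibre $\pi^{-1}(\{0\})$, produces a flow on $A$ whose KMS-bundle is isomorphic to $(S,\pi)$. The simplex bundle appearing in Corollary~\ref{Cor1.1} is of precisely this form by hypothesis, so the only thing to do is to check that an irrational rotation algebra $A_\theta$, $\theta\in\R\setminus\Q$, is a legitimate choice of $A$.

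First I would recall the standard facts. Writing $A_\theta=C(\T)\rtimes_\theta\Z$, it is generated by two unitaries $u,v$ with $vu=e^{2\pi i\theta}uv$, so it is unital and separable. Since $\theta$ is irrational the rotation by $\theta$ on $\T$ is minimal, hence (the space being infinite) topologically free, so $A_\theta$ is simple; and since that rotation is moreover uniquely ergodic, $A_\theta$ has a unique tracial state, i.e.\ it is monotracial. The remaining hypothesis, $\mathcal{Z}$-absorption, is also classical: by the Elliott--Evans theorem $A_\theta$ is an AT algebra of real rank zero, i.e.\ an inductive limit of finite direct sums of circle algebras; in particular it is a simple separable unital infinite-dimensional nuclear \Cs{} of finite nuclear dimension, and hence it absorbs $\mathcal{Z}$ tensorially, $A_\theta\cong A_\theta\otimes\mathcal{Z}$.

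With these three properties verified, I would simply apply Theorem~\ref{ThmMain} with $A=A_\theta$ and with $(S,\pi)$ the given proper simplex bundle with singleton fibre over $0$, obtaining a flow on $A_\theta$ whose KMS-bundle is isomorphic to $(S,\pi)$. I do not anticipate any genuine obstacle: the content lies entirely in Theorem~\ref{ThmMain}, and the present deduction uses only well-known structural facts about irrational rotation algebras, the least elementary being their $\mathcal{Z}$-stability --- a point already anticipated by the parenthetical remark ``(for instance, the irrational rotation algebra)'' accompanying the main theorem.
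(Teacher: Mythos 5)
Your proof is correct and is exactly the route the paper intends (Corollary \ref{Cor1.1} being stated as "a straightforward application" with no separate proof). One small point: the result you quote --- arbitrary unital separable monotracial $\mathcal{Z}$-absorbing $A$ --- is actually the unnumbered theorem that follows Theorem \ref{ThmMain}, not \ref{ThmMain} itself (which is stated only for the Jiang-Su algebra); with that relabelling your three verifications for $A_\theta$ (unital separable, unique trace by unique ergodicity, $\mathcal{Z}$-absorption via Elliott--Evans plus classification or finite nuclear dimension) are standard and complete.
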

 
\section{Preliminaries}\label{Sec2}
Let us start with some basic terminology and a fact for ordered abelian groups with the Riesz interpolation property. 

In this paper, a \emph{ partially ordered abelian group} ($G$, $G^+$) will mean an abelian group $G$ with a positive cone $G^+$ satisfying $G^+ + G^+\subset G^+$ and $G^+\cap -G^+=\{0\}$. If just the first condition is satisfied, $(G, G^+)$ will be called a \emph{pre-ordered abelian group} (see \cite{Goodearl}). If, in addition to both conditions, $G=G^+-G^+$, then $(G, G^+)$ will be called an \emph{ordered abelian group} (see \cite[Definition 5.1.3]{RLL}). For a partially ordered (or pre-ordered) abelian group $(G, G^+)$, we shall denote by $\leq$ the order relation defined by $g\leq h$ if $h-g\in G^+$.

A partially ordered abelian group $(G, G^+)$ will be said to have the {\it Riesz (or Birkhoff-Riesz) interpolation property} (RIP) if for $g_0$, $g_1$, $h_0$, $h_1\in G$ satisfying $g_i\leq h_j$ for all $i, j\in\{0, 1\}$ there exists $a\in G$ such that $g_i\leq a\leq h_j$ for all $i, j\in\{0, 1\}$.
(See \cite{Bir}.) 

For a supernatural number ${\frak n}$ (i.e., a formal infinite product of finite or infinite powers of prime numbers; see \cite{Dix}, \cite{Ror0}),   denote by $\D_{\frak n}$ the set of all rational numbers $p/q$ given by $p\in\Z$ and $q\in\N$ with $q | {\frak n}$, and fix the positive cone $\D_{\frak n}^+=\{d\in \D_{\frak n} \ : \ d\geq 0\}$.  Denote by $\D_{\frak n}[e^x, e^{-x}]$  the group of Laurent polynomials on $\R$ with coefficients in $\D_{\frak n}$. For a closed subset $F\subset \R$, equip $\D_{\frak n}[e^x, e^{-x}]$ with the strict pointwise order, making it an ordered abelian group: 
\[\D_{\frak n}[e^x, e^{-x}]_F^+=\{0\}\cup\{f\in \D_{\frak n}[e^x, e^{-x}] \ :\  f(x)>0 \text{ for any } x\in F\}.\] 
We will have occasion to use the following fundamental lemma in the sequel (in Lemma \ref{Lem3.4}). 
\begin{lemma}\label{Lem2.1}
Let $F\subset \R$ be a closed subset of $\R$ and ${\frak n}$ an infinite supernatural number. Then the ordered abelian group $(\D_{\frak n}[e^x, e^{-x}],  \D_{\frak n}[e^x, e^{-x}]_F^+)$ has the  RIP if and only if $F$ is semi-bounded (i.e., either bounded below or bounded above).
\end{lemma}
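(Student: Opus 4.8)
The plan is to prove the two implications by completely different means: the ``if'' direction by an explicit construction of an interpolant, the ``only if'' direction by an explicit four‑element counterexample. The only analytic input I will use is the elementary remark that if $\phi<\psi$ are continuous real functions on a compact set $K\subseteq\R$, then some polynomial over $\D_{\frak n}$ lies strictly between them on $K$: approximate $\tfrac12(\phi+\psi)$ by a Weierstrass polynomial to within a third of the positive gap, then nudge its coefficients into the dense subgroup $\D_{\frak n}\subseteq\R$ (harmless because $K$ is bounded). Throughout, I will use freely that $f(x)\mapsto f(-x)$ and multiplication by $e^x$ are order‑automorphisms of $\D_{\frak n}[e^x,e^{-x}]$.

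For the implication ``$F$ semi‑bounded $\Rightarrow$ RIP'': using $f(x)\mapsto f(-x)$ I reduce to $F$ bounded below, say $F\subseteq[c,\infty)$. The substitution $s=e^{-x}$ then identifies $(\D_{\frak n}[e^x,e^{-x}],\D_{\frak n}[e^x,e^{-x}]_F^+)$ with the Laurent ring $\D_{\frak n}[s,s^{-1}]$ ordered by strict positivity on the \emph{bounded} closed set $E=e^{-F}\subseteq(0,M]$, $M=e^{-c}$; multiplying a given interpolation problem by a high enough power of $s$ (an order‑automorphism) puts all four data in $\D_{\frak n}[s]$, and it suffices to interpolate there. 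After disposing of the degenerate configurations (some $h_j-g_i=0$, or $g_0=g_1$, or $h_0=h_1$, in each of which one of the data is already an interpolant) the task becomes: given $g_i,h_j\in\D_{\frak n}[s]$ with $h_j-g_i>0$ on $E$, find $a\in\D_{\frak n}[s]$ with $G<a<H$ on $E$, where $G=\max(g_0,g_1)$, $H=\min(h_0,h_1)$. If $G<H$ on the compact closure $\overline E$, the remark above supplies $a$ immediately. The only subtle point is that $G$ and $H$ may agree somewhere on $\overline E$; since $G<H$ throughout $E$ this can only happen at a point of $\overline E\setminus E$, i.e.\ (when $\inf E=0$) at $0$.

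In that touching case I normalise $G(0)=H(0)$ to $0$ by subtracting a constant, let $g_\ast\in\{g_0,g_1\}$ and $h_\ast\in\{h_0,h_1\}$ be the polynomials realising $G$, respectively $H$, just to the right of $0$, and observe that $h_\ast-g_\ast$ vanishes at $0$ to some order $m\ge 1$ with positive leading coefficient $d_m$. The key manoeuvre is to divide out this order of tangency: the functions $s^{-m}\bigl(G-g_\ast\bigr)$ and $s^{-m}\bigl(H-g_\ast\bigr)$ extend continuously across $0$, with values $0$ and $d_m$ there, and are still strictly ordered on all of $\overline E$ — the collapsed channel has been reopened — so the remark produces $\tilde a\in\D_{\frak n}[s]$ strictly between them, and then $a:=g_\ast+s^{m}\tilde a$ solves the original problem on $E$. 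I expect this reopening step at the boundary point to be the one genuinely clever move; everything else is routine.

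For the implication ``$F$ not semi‑bounded $\Rightarrow$ RIP fails'': assume $F$ is unbounded above and below, and take $g_0=0$, $g_1=1-e^x$, $h_0=e^{-x}$, $h_1=2$. The differences $h_0-g_0=e^{-x}$, $h_0-g_1=e^x+e^{-x}-1$, $h_1-g_0=2$, $h_1-g_1=1+e^x$ are all strictly positive on $\R$, so $g_i\le h_j$ for all $i,j$. No interpolant can equal any of $g_0,g_1,h_0,h_1$ (each such equality forces an inequality between two of the data which fails because $F$ is unbounded on the relevant side), so an interpolant $a$ would satisfy $\max(g_0,g_1)<a<\min(h_0,h_1)$ pointwise on $F$. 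Letting $x\to+\infty$ along $F$ this reads $0<a(x)<e^{-x}$, so $a(x)\to0$ and the top exponent of $a$ is $\le-1$; letting $x\to-\infty$ along $F$ it reads $1-e^x<a(x)<2$, so $a$ is bounded there and the bottom exponent of $a$ is $\ge0$. Since the bottom exponent of a nonzero Laurent polynomial never exceeds its top exponent, this is impossible, so RIP fails.
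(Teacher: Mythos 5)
Your proof is correct and takes a genuinely different route in both directions. For the ``if'' direction, you compactify by substituting $s = e^{-x}$, turning the order into positivity on $E = e^{-F}\subseteq(0,e^{-c}]$, whose closure in $\R$ adds at most the single point $0$; you then handle the possible osculation of $\max(g_0,g_1)$ and $\min(h_0,h_1)$ at $s=0$ by dividing out the order of tangency $s^m$, reopening the channel so that Stone--Weierstrass on the compact set $\overline{E}$ applies directly. The paper instead works with the reverse lexicographic order on the coefficient sequence space $\D_{\frak n}^{\infty}$: it manufactures an $a_0$ lying strictly between $p_1$ and $q_0$ in the lex order with margin enough to absorb an $e^{-Nx}$ error, then patches the remaining compact region by Stone--Weierstrass and assembles the interpolant as $e^{-Nx}a_1 + a_0$. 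Both arguments exploit the same underlying fact---tail behaviour is governed by a single leading exponent---but you localize that observation at one boundary point of a compactification, whereas the paper encodes it globally in a lexicographic order and splits the interpolant into an algebraic tail part and an analytic compact part. Your version is somewhat shorter and more geometric; the paper's makes the relationship to asymptotic dominance more visible. For the ``only if'' direction, your four-tuple $(0,\ 1-e^x,\ e^{-x},\ 2)$ is a different counterexample from the paper's $(-1,\ (N-e^x)(e^x-2N),\ e^{2x},\ N^2)$ (with $\beta\in F$ chosen so that $N<e^{\beta}<2N$); the paper forces boundedness and then constancy, contradicting the middle pair, while you derive incompatible constraints on the top and bottom exponents of a hypothetical interpolant directly from its behaviour at $\pm\infty$ along $F$. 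This avoids any need to select $\beta$ and is a little cleaner, though the contrapositive mechanism is essentially the same. Both your implications are sound, including the disposal of the degenerate cases and the justification that $\D_{\frak n}$-coefficient nudging is harmless on a bounded set (which, as in the paper, is where infiniteness of $\frak n$ enters).
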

\begin{proof}
The ``if'' part of the statement is a variant of the argument of  \cite[Section 5]{Thomsen1}. We include a proof for the reader's convenience. Suppose that $F$ is semi-bounded, and set $-F=\{x\in\R\ : \ -x\in F\}$. In the case that $F$ is bounded, the Stone-Weierstrass theorem can be applied directly to show the RIP. So we may assume that $F$ is unbounded. To simplify notation, set $G=\D_{\frak n}[e^x, e^{-x}]$ and $G_F^+=\D[e^x, e^{-x}]_F^+$. It is immediate that $(G,  G_F^+)$ has the RIP if and only if $(G, G_{-F}^+)$ has. Thus, we may assume that $F$ is bounded below.

Let $p_i$, $q_j\in G$, $i, j\in\{0, 1\}$, be such that $p_i\leq q_j$ for all $i, j\in \{0, 1\}$. If $p_{i_0}=q_{j_0}$ for some $i_0, j_0\in \{0, 1\}$, then $a=p_{i_0}=q_{j_0}$ satisfies $p_i\leq a \leq q_j$ for all $i, j\in\{0, 1\}$. Therefore, we may suppose that $p_i(x) < q_j(x)$ for all $i, j\in\{0, 1\}$ and $x\in F$. As the first step in the argument which follows, note that for sufficiently  large $N\in\N$ and $R>0$,  there exists $a_0\in G$ such that 
\[p_i(x) + e^{-Nx}< a_0(x) <q_j(x) -e^{-Nx},\]
for all $i, j\in\{0, 1\}$ and $x\in F\cap [R, \infty)$. 

Let $\D_{\frak n}^{\infty}$ denote the infinite direct sum $\bigoplus_{n\in\Z} \D_{\frak n}$ of copies of $\D_{\frak n}$ over $\Z$, and equip $\D_{\frak n}^{\infty}$ with the reverse lexicographic order $\leq_{\rm lex}$, which means that 
for $g=(g_n)_{n\in\Z}$ and $h=(h_n)_{n\in\Z}\in\D_{\frak n}^{\infty}$, $g\leq_{\rm lex} h$ if $g_n= h_n$ for all $n\in\Z$ or $g_{n_0}< h_{n_0}$ for $n_0=\max\{n\in\Z\ : \ g_n\neq h_n\}$. If $g\leq_{\rm lex} h$ and $g\neq h$,  write $g<_{\rm lex} h$. Since $F$ is unbounded (above), it follows that $(g_n)_{n\in\Z}<_{\rm lex} (h_n)_{n\in\Z}$ if, and only if, there exists $r>0$ such that $\sum_{n\in\Z} g_ne^{nx}<\sum_{n\in\Z}h_ne^{nx}$ for any $x\in F\cap [r, \infty)$. Let $p_i^{\infty}=(p_{i, n})_{n\in\Z}$, $q_j^{\infty}=(q_{j, n})_{n\in\Z}\in \D_{\frak n}^{\infty}$ denote the coefficient sequences of $p_i(x)=\sum_{n\in\Z} p_{i, n}e^{nx}$ and $q_j(x)=\sum_{n\in\Z} q_{j, n}e^{nx}$, $i, j\in\{0, 1\}$. Since the reverse lexicographic order is a total order, reindexing, we may assume that $p_0^{\infty}\leq_{\rm lex} p_1^{\infty} <_{\rm lex} q_0^{\infty} \leq_{\rm lex} q_1^{\infty}$. Set $n_0=\max\{n\in \Z\ : \ p_{1, n}\neq q_{0, n}\}$ and note that $p_{1, n_0}< q_{0, n_0}$. 
For $k\in\Z$, set $\delta_k=(\delta_{k, n})_{n\in\Z}\in\D_{\frak n}^{\infty}$ where $\delta_{k, n}$ is the Kronecker delta, and set $a_0^{\infty}=p_1^{\infty}+\delta_{n_0-1}$. Then we have  $p_1^{\infty} <_{\rm lex} a_0^{\infty} <_{\rm lex} q_0^{\infty}$. Let $N\in\N$ chosen above be chosen sufficiently large that also $p_0^{\infty} +\delta_{-N}\leq_{\rm lex}p_1^{\infty} +\delta_{-N} <_{\rm lex} a_0^{\infty} <_{\rm lex} q_0^{\infty} -\delta_{-N} \leq q_1^{\infty}-\delta_{-N}$, which is the analogue in $\D_{\frak n}^{\infty}$ of the required condition. 

Since by the choice of $a_0$, $e^{Nx}(p_i(x)-a_0(x))< -1<1< e^{Nx}(q_j(x) -a_0(x))$ for all $i, j\in\{0, 1\}$ and $x\in F\cap [R, \infty)$, there exist $d>0$ and a continuous function $f :\R\rightarrow \R$ with compact support such that $e^{Nx}(p_i(x)-a_0(x)) +d < f(x)< e^{Nx}(q_j(x)-a_0(x))-d$ for any $x\in F$. By the Stone-Weierstrass theorem (applied to the compact space $F\cup\{\infty\}$ and the subalgebra of Laurent polynomials bounded on this space), we obtain $a_1\in G$ such that $\sup_{x\in F}|f(x)-a_1(x)|< d$. Defining $a\in G$ by $a(x)=e^{-Nx}a_1(x) + a_0(x)$, we have $p_i\leq a\leq q_j$ for all $i, j\in\{0, 1\}$.

To show the ``only if '' part,  assume that $F$ is not semi-bounded. Let $N\in\N$ and $\beta\in F$ be such that $N<e^{\beta}< 2N$. Define $p_i$, $q_j\in G$, $i, j\in\{0, 1\}$, by $p_0(x)=-1$, $p_1(x)=(N-e^x)(e^x-2N)$, $q_0(x)=e^{2x}$, and $q_1(x)= N^2$ for $x\in\R$. Then it follows that 
$p_i \leq q_j$ for all $i, j\in\{0, 1\}$. If there existed $a\in G$ with $p_i\leq a\leq q_j$ for all $i, j\in\{0, 1\}$, then, as $p_0\leq a \leq q_1$, $a$ would be a bounded function on $F$. Since $F$ is not semi-bounded, $a$ must be a constant function, in contradiction with $p_1\leq a \leq q_0$.
\end{proof}

\section{Rational dimension groups and simplex bundles}\label{Sec3}
The notion of rationally Riesz group was introduced in \cite{LN2}, and it was shown that a weakly unperforated simple ordered abelian group $G$ is a rationally Riesz group if and only if $G\otimes \D_{\frak p}$ has the Riesz interpolation property for every infinite supernatural number ${\frak p}$.  (See \cite{Ror0} for the definition of the supernatural numbers, which extend the natural numbers, which we shall refer to as finite.) In this section, we shall consider a modified definition for non-simple ordered abelian groups (Definition \ref{Def3.2}), and construct an example which gives rise  in a natural way to a given proper simplex bundle (see below), in Proposition \ref{Prop3.5}. In the sequel (see Theorem \ref{ThmMain}), proper simplex bundles will be shown to give rise to 
KMS-bundles on the Jiang-Su algebra.

Recall that the positive cone $(G\otimes H)^+$ of the tensor product of  two partially ordered (or pre-ordered) abelian groups $(G, G^+)$ and $(H, H^+)$ is defined as the set of all finite sums of $\{g\otimes h\ :\ g\in G^+, h\in H^+\}$, and note that $(G\otimes H, (G\otimes H)^+)$ is then a partially ordered (or pre-ordered) abelian group; see \cite{GH2}. For convenience, we shall just write ${G\otimes H}^+$. For an abelian group $G$ we shall denote by $G\otimes 1_{\bfk}$ the subgroup of $G\otimes \D_{\bfk}$ consisting of $\{g\otimes 1_{\bfk}\ :\ g\in G\}$, where $1_{\bfk}$ denotes the number $1\in \D_{\bfk}$.

\begin{lemma}\label{Lem3.1}
Let ${\frak p}$ and ${\frak q}$ be relatively prime supernatural numbers. For an abelian group $G$, the following fundamental statements hold.
\begin{itemize}
\item[ \rm (i)] If $G\otimes \D_{\frak p}$ and $G\otimes \D_{\frak q}$ are torsion-free abelian groups, then so also is $G$.
\item[\rm (ii)] If $G$ is torsion free, then the subgroup $(G\otimes \D_{\frak p}\otimes 1_{\frak q})\cap (G\otimes 1_{\frak p}\otimes \D_{\frak q})$ of $G\otimes \D_{\frak p}\otimes \D_{\frak q}$ is isomorphic to $G$ as an abelian group.
\item[\rm (iii)] If a partially ordered abelian group $(G, G^+)$ is such that $(G\otimes \D_{\frak p}, {G\otimes \D_{\frak p}}^+)$ and $(G\otimes \D_{\frak q}, {G\otimes \D_{\frak q}}^+)$ are torsion-free ordered abelian groups, then $(G, G^+)$ is an ordered abelian group (i.e., $G=G^+-G^+$).
\item[\rm (iv)]
If $(G, G^+)$ is an unperforated ordered abelian group (i.e., $ng\geq 0$ for some $n\in\N$ implies $g\geq 0$; see \cite[Definition 6.7.1]{Bl2}), then so also is $(G\otimes\D_{\frak n}, {G\otimes \D_{\frak n}}^+)$ for any supernatural number ${\frak n}$. 
\item[\rm (v)] If $(G, G^+)$ is an unperforated partially ordered abelian group such that $(G\otimes \D_{\frak p}, {G\otimes\D_{\frak p}}^+)$ and $(G\otimes \D_{\frak q}, {G\otimes \D_{\frak q}}^+)$ are dimension groups, then $(G, G^+)$ is an ordered abelian group and $(G\otimes \D_{\frak n}, {G\otimes\D_{\frak n}}^+)$ is a dimension group for any infinite supernatural number ${\frak n}$. 
\end{itemize}
\end{lemma}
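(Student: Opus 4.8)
The plan is to reduce everything to the four preceding parts, which do most of the work, and then to isolate the genuinely new content: the fact that the interpolation property descends from the two coprime localizations $G\otimes\D_{\frak p}$ and $G\otimes\D_{\frak q}$ to $G$, and then ascends from $G$ to an arbitrary infinite $\D_{\frak n}$. First I would observe that a dimension group is by definition an unperforated ordered abelian group with the Riesz interpolation property, and in particular torsion-free; so the hypotheses of (i), (iii) and (iv) are met. By (iii), $(G,G^+)$ is already an ordered abelian group, and by (iv) $(G\otimes\D_{\frak n},{G\otimes\D_{\frak n}}^+)$ is unperforated for every supernatural ${\frak n}$. Since $G$ is unperforated and torsion-free (by (i)), what remains is to prove: (a) $(G,G^+)$ has the RIP; and (b) for every infinite supernatural ${\frak n}$, $(G\otimes\D_{\frak n},{G\otimes\D_{\frak n}}^+)$ has the RIP. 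Note that (b) with ${\frak n}$ finite also follows if we simply stay inside $G$, so the substantive case of (b) is ${\frak n}$ infinite, which is why it is stated that way.

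For step (a), I would embed $G$ into $G\otimes\D_{\frak p}\otimes\D_{\frak q}$ via (ii), identifying $G$ with $(G\otimes\D_{\frak p}\otimes 1_{\frak q})\cap(G\otimes 1_{\frak p}\otimes\D_{\frak q})$, and check this identification is order-preserving and order-reflecting for the given positive cones (this is where unperforation of $G$ is used: an element of $G$ that becomes positive after tensoring with $\D_{\frak p}$, say $np\in G^+$ for some $n$ dividing a power structure, is already positive). Given an interpolation problem $g_i\le h_j$ in $G$, view it in $G\otimes\D_{\frak p}$ and solve it there to get $a'\in G\otimes\D_{\frak p}$; similarly solve it in $G\otimes\D_{\frak q}$ to get $a''\in G\otimes\D_{\frak q}$. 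The point is to manufacture, from $a'$ and $a''$, a single solution lying in the intersection subgroup $\cong G$. Here I would use a CRT-type argument: after clearing denominators, $a'$ and $a''$ differ, inside $G\otimes\D_{\frak p}\otimes\D_{\frak q}$, from a common element by something divisible by arbitrarily high powers of the primes of ${\frak q}$ (resp. ${\frak p}$); combining with the interpolation inequalities, which leave room because $G$ is unperforated, one can adjust $a'$ to land in $G$ while preserving $g_i\le a\le h_j$. I expect this splicing step — producing an honest element of $G$ from the two localized solutions — to be the main obstacle, and the decisive tool will be unperforation together with the explicit description of the intersection subgroup in (ii).

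For step (b), fix an infinite supernatural ${\frak n}$ and pick any infinite supernatural ${\frak m}$ coprime to ${\frak n}$ (e.g.\ built from primes not dividing ${\frak n}$, or a suitable tail); then ${\frak n}$ and ${\frak m}$ are relatively prime. Now $G\otimes\D_{\frak n}$ plays the role of "$G$" and I would like to apply the argument of step (a) to it, so I need that $(G\otimes\D_{\frak n})\otimes\D_{\frak p'}$ is a dimension group for the two coprime pieces; but localizing $G\otimes\D_{\frak n}$ further just gives $G\otimes\D_{\frak{n}'}$ for a larger supernatural number, and one checks directly that for ${\frak p}$, ${\frak q}$ as in the hypothesis, $G\otimes\D_{\frak n\cdot\frak p}$ and $G\otimes\D_{\frak n\cdot\frak q}$ are each an increasing union (or localization) of the dimension groups $G\otimes\D_{\frak p}$, $G\otimes\D_{\frak q}$ respectively, hence are dimension groups (the RIP and unperforation both pass to such localizations, by (iv) and an easy direct-limit argument for RIP). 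Feeding this back into the coprime-descent argument of step (a), now with $G$ replaced by $G\otimes\D_{\frak n}$, gives that $G\otimes\D_{\frak n}$ has the RIP; combined with unperforation from (iv) and torsion-freeness, it is a dimension group. Finally I would remark that the finite case of ${\frak n}$, and the case ${\frak n}$ divisible only by primes from ${\frak p}$ or only from ${\frak q}$, are either trivial or immediate from the above, completing the proof.
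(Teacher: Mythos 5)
Your step (a) overshoots the claim, in a way that makes the plan unworkable. Part (v) does not assert that $(G,G^+)$ itself has the Riesz interpolation property; the conclusion is only that $(G,G^+)$ is an ordered abelian group (which you correctly derive from (i) and (iii)) and that $G\otimes\D_{\frak n}$ is a dimension group when $\frak n$ is \emph{infinite}. A rational dimension group need not itself be a dimension group, and the restriction to infinite $\frak n$ is the actual content of the statement --- it is not, as your remark suggests, that the finite case ``also follows if we simply stay inside $G$.'' So step (a), as stated, is false in general, and step (b), which explicitly routes back through the ``coprime-descent argument of step (a),'' inherits the gap.

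The place where the descent breaks is exactly the splicing step you flag as the main obstacle, but it is not an obstacle to be overcome --- it is the reason $\frak n$ has to be infinite. After clearing denominators in the two localizations you get solutions at coprime scales $N_{\frak p}$ and $N_{\frak q}$. The Bezout identity $cN_{\frak p}+dN_{\frak q}=1$ forces one of $c,d$ to be negative, destroying positivity; insisting on $c,d\ge 0$ only yields $cN_{\frak p}+dN_{\frak q}=M$ with $M$ large (Frobenius), after which one must divide by $M$, which requires $1/M\in\D_{\frak n}$ and so lands you in $G\otimes\D_{\frak n}$, not in $G$. The paper never attempts to prove RIP for $G$. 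It works directly with the (equivalent) Riesz decomposition property in $G\otimes\D_{\frak n}$: given $x\le a_0+a_1$ in ${G\otimes\D_{\frak n}}^+$, scale by $N$ into $G^+$ using unperforation, decompose $g_x$ in each of $G\otimes\D_{\frak p}$ and $G\otimes\D_{\frak q}$, clear denominators to obtain $N_{\frak j}g_x=g_0^{(\frak j)}+g_1^{(\frak j)}$ in $G^+$ with $g_i^{(\frak j)}\le N_{\frak j}g_{a_i}$, and set $x_i=(cg_i^{(\frak p)}+dg_i^{(\frak q)})\otimes\tfrac{1}{MN}$ for suitable $c,d\in\N$ and $M=cN_{\frak p}+dN_{\frak q}$ with $1/(MN)\in\D_{\frak n}$. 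The infinitude of $\frak n$ is used exactly and only at this last step, to make such an $M$ available; your outline never invokes it, which is the tell-tale sign that the argument cannot be made to work as written.
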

\begin{proof}
\noindent {\rm (i)}. Assume that $g\in G$ is a torsion element and $n\in\N$ is the first number such that $ng=0$. Since $G\otimes \D_{\frak p}$ is torsion free, we have $g \otimes 1_{\frak p}=0$ in $\langle g \rangle \otimes \D_{\frak p}$. Since $\langle g \rangle\otimes \D_{\frak p} \cong (\Z/n\Z)\otimes \D_{\frak p} \cong \D_{\frak p} / n\D_{\frak p}$, it follows that $1\in n\D_{\frak p}$.  Similarly, we have  $1\in n\D_{\frak q}$. 
Because ${\frak p}$ and $\frak{q}$ are relatively prime, this implies that $n=1$ and so $g=ng=0$. 

\noindent {\rm (ii)}. Since $G$ is torsion free, the canonical embedding $\varphi : G\rightarrow G\otimes 1_{\frak p}\otimes 1_{\frak q}$ defined by $\varphi(g) = g\otimes 1_{\frak p}\otimes 1_{\frak q}$ for $g\in G$ is a group isomorphism. Therefore, it suffices to show that $(G\otimes \D_{\frak p}\otimes 1_{\frak q})\cap (G\otimes 1_{\frak p}\otimes \D_{\frak q}) =G\otimes 1_{\frak p}\otimes 1_{\frak q}$. If $x\in (G\otimes \D_{\frak p}\otimes 1_{\frak q})\cap( G\otimes 1_{\frak p}\otimes \D_{\frak q})$,  then there exist $k$, $l\in \N$ such that $1/k\in \D_{\frak p}$, $1/l\in \D_{\frak q}$, and $kx, lx \in G\otimes 1_{\frak p}\otimes 1_{\frak q}$. Since $k$ and $l$ are relatively prime, there exist $a$, $b\in\Z$ such that $1=ak + bl$. It follows that $x=akx + blx\in G\otimes 1_{\frak p}\otimes 1_{\frak q}$. The converse inclusion $(G\otimes \D_{\frak p}\otimes 1_{\frak q}) \cap (G\otimes 1_{\frak p}\otimes \D_{\frak q}) \supset G\otimes 1_{\frak p}\otimes 1_{\frak q}$ is trivial.

\noindent {\rm (iii)}. Fix $x\in G$. For ${\frak i}={\frak p}$, ${\frak q}$, since $(G\otimes \D_{\frak i}, {G\otimes \D_{\frak i}}^+)$ is an ordered group, there exist $a_{\frak i}$, $b_{\frak i}\in {G\otimes \D_{\frak i}}^+$ such that $x\otimes 1_{\frak i}=a_{\frak i}-b_{\frak i}$. Let $n_{\frak i}$ be such that $1/n_{\frak i}\in \D_{\frak i}$ and $n_{\frak i} a_{\frak i}$, $n_{\frak i} b_{\frak i} \in G\otimes 1_{\frak i}$ for ${\frak i}= {\frak p}$, ${\frak q}$. Since $a_{\frak i}$, $b_{\frak i}\in {G\otimes \D_{\frak i}}^+$, there exist $g_{a_{\frak i}}$, $g_{b_{\frak i}}\in G^+$,  $\frak{i}=\frak{p}, \frak{q}$,  such that $n_{\frak i} a_{\frak i}=g_{a_{\frak i}}\otimes 1_{\frak i}$ and $n_{\frak i} b_{\frak i}=g_{b_{\frak i}} \otimes 1_{\frak i}$. By {\rm (i)}, $G$ is torsion free, and so from $n_{\frak i} x \otimes 1_{\frak i}=(g_{a_{\frak i}}-g_{b_{\frak i}})\otimes 1_{\frak i}$ it follows that $n_{\frak i} x= g_{a_{\frak i}}-g_{b_{\frak i}}$. Since $n_{\frak p}$ and $n_{\frak q}$ are relatively prime, there exist $c, d\in \N$ such that $1=cn_{\frak p} -dn_{\frak q}$. Then we have
\[ x= (c g_{a_{\frak p}} +d g_{b_{\frak q}})-(cg_{b_{\frak p}}+ d g_{a_{\frak q}})\in G^+-G^+.\] 

\noindent {\rm (iv)}.  Since $(G, G^+)$ and $(\D_{\frak n}, {\D_{\frak n}}^+)$ are torsion-free ordered abelian groups, it is straightforward to check that $(G\otimes \D_{\frak n}, {G\otimes \D_{\frak n}}^+)$ is also a torsion-free ordered abelian group. Therefore it suffices to show that there exists $d\in \N$ such that $1/d\in \D_{\frak n}$ and $d x\in {G\otimes \D_{\frak n}}^+$ if $x\in G\otimes \D_{\frak n}$ and $n\in\N$ satisfy $nx\in {G\otimes\D_{\frak n}}^+$. Let $g_j\in G$ and $d_j\in \D_{\frak n}$, $j=1, 2, ..., N$, be such that $\sum_{j=1}^N g_j\otimes d_j=x$. Since $nx\in {G\otimes \D_{\frak n}}^+$, there exist $h_m\in G_+$, and $e_m\in {\D_{\frak n}}^+$, $m=1,2,..., M$, such that $ nx=\sum_{m=1}^M h_m\otimes e_m$. Choose $d\in\N$ such that $1/d\in \D_{\frak n}$ and $dd_j$, $de_m\in \Z$ for all $j=1,2,..., N$ and $m=1,2,..., M$. Then it follows that $\left(\sum_{j=1}^N ndd_j g_j\right)\otimes 1_{\frak n} =\left(\sum_{m=1}^M d e_m h_m\right)\otimes 1_{\frak n}$. 
Since $G$ is torsion free, we have $ \sum_{j=1}^N ndd_j g_j=\sum_{m=1}^M de_m h_m \in G^+$. Finally, since $G$ is unperforated, we conclude that $ dx =\left(\sum_{j=1}^N d d_j g_j\right)\otimes 1_{\frak n}\in (G\otimes \D_{\frak n})^+$. 

\noindent {\rm (v)}. By {\rm (i) } and {\rm (iii)}, we see that $(G, G^+)$ is an unperforated ordered abelian group, and by {\rm (iv)} so also is $(G\otimes \D_{\frak n}, {G\otimes \D_{\frak n}}^+)$. Because the Riesz interpolation property in an ordered abelian group is equivalent to the Riesz decomposition property of \cite{Rie} (see \cite{Bir}; see also \cite[Theorem IV 6.2]{Dav}), it suffices to show that for $x$, $a_0$, $a_1\in {G\otimes \D_{\frak n}}^+$ with $x\leq a_0+a_1$ there exist two elements $x_i \in {G\otimes \D_{\frak n}}^+$, $i=0, 1$, such that $x= x_0 + x_1$ and $x_i\leq a_i$ for both $i=0, 1$.

Choose $N\in\N$ such that $1/N \in \D_{\frak n}$ and $Nx$, $Na_i\in G\otimes 1_{\frak n}$, and denote by $g_x$, $g_{a_i}\in G^+$, $i=0, 1$, the elements such that $g_x\otimes 1_{\frak n}=Nx$ and $g_{a_i}\otimes 1_{\frak n}= N a_i$ for $i=0, 1$. Since $g_x\otimes 1_{\frak n} \leq(g_{a_0} + g_{a_1})\otimes 1_{\frak n}$, we have $g_x\leq g_{a_0}+ g_{a_1}$, as $G$ is unperforated. By the Riesz decomposition property of $G\otimes \D_{\frak p}$ and $G\otimes \D_{\frak q}$, for ${\frak j}={\frak p}, {\frak q}$ there are $y_i^{({\frak j})}\in {G\otimes \D_{\frak j}}^+$, $i=0, 1$, such that $g_x\otimes 1_{\frak j} = y_0^{({\frak j})}+y_1^{({\frak j})}$ and $y_i^{({\frak j})}\leq g_{a_i}\otimes 1_{\frak j}$ for both $i=0, 1$. 
Choose $N_{\frak j}\in \N$, ${\frak j}={\frak p}, {\frak q}$,  such that $1/N_{\frak j}\in \D_{\frak j}$ and $N_{\frak j} y_i^{({\frak j})} \in G\otimes 1_{\frak j}$ for both $i=0, 1$, and denote by $g_i^{({\frak j})}\in G^+$ the element such that $N_{\frak j} y_i^{({\frak j})}=g_i^{({\frak j})}\otimes 1_{\frak j}$. Thus we have $N_{\frak j} g_x=g_0^{({\frak j})}+g_1^{(\frak j)}$ for ${\frak j}= {\frak p}, {\frak q}$. 
Since $N_{\frak p}$ and $N_{\frak q}$ are relatively prime, if ${\frak n}$ is infinite, there exist natural numbers $c$, $d$, and $M$ $(> N_{\frak p}N_{\frak q})$ such that $1/(MN)\in\D_{\frak n}$ and $M=cN_{\frak p} + dN_{\frak q}$. The elements $x_i =(cg_i^{({\frak p})}+d g_i^{({\frak q})})\otimes 1/(MN) \in {G\otimes \D_{\frak n}}^+$, $i=0, 1$, then satisfy the desired conditions.
\end{proof}

\begin{definition}\label{Def3.2}
We shall call a partially ordered abelian group $(G, G^+)$ a \emph{rational dimension group} if $(G, G^+)$ is unperforated and, further, $(G\otimes\D_{\frak p}, {G\otimes\D_{\frak p}}^+)$ and $(G\otimes\D_{\frak q}, {G\otimes\D_{\frak q}}^+)$ are dimension groups for some pair of relatively prime supernatural numbers ${\frak p}$ and ${\frak q}$. Note that a partially ordered abelian group is not necessarily unperforated even if $G\otimes\D_{\frak n}$ is a dimension group for every infinite supernatural number ${\frak n}$. For example, the ordered abelian group $(\Z, S_n)$ with $S_n=\{0, n, n+1,...\}$ is not unperforated for $n\in\N\setminus\{1\}$; however, $(\Z\otimes\D_{\frak n}, {S_n\otimes \D_{\frak n}}^+)$ is isomorphic to $(\D_{\frak n}, {\D_{\frak n}}^+)$ as an ordered abelian group for any infinite supernatural number ${\frak n}$.
\end{definition}

\begin{remark}\label{Rem3.3}
For simple ordered abelian groups, it is enough for the tensor product of an unperforated ordered group with $\D_{\frak n}$ for a single supernatural number ${\frak n}$ to be a dimension group in order for it to be a rational dimension group (see \cite[Proposition 5.7]{LN2}). However, for non-simple ordered abelian groups, the following example indicates the necessity of the two tensor products in  the definition above.

Let $p$ and $q$ be two relatively prime numbers, and let ${\frak u}$ denote the universal supernatural number, with $\D_{\frak u}=\Q$. Set $H=\Q^2$, $H^+=\{(x, y)\in H\ :\ x\geq 0, y\geq 0\}$, $a_0=(0, 0)$, $b_0=(1, 0)$, $a_1=(1/p, 1/p-1)$, and $b_1=(1/p, 1/p)$ in $H$. Define $(G, G^+)$ as the ordered subgroup of $(H, H^+)$ generated by $\{a_1, b_1\}$, which is unperforated and non-simple. Note that, for any supernatural number ${\frak n}$, the ordered abelian group $(G\otimes \D_{\frak n}, {G\otimes \D_{\frak n}}^+)$ is isomorphic to 
$(\D_{\frak n}a_1+\D_{\frak n}b_1, (\D_{\frak n}a_1+\D_{\frak n}b_1)\cap H^+)$, and that $a_0$, $b_0\in G$. 
Then the tensor product $(G\otimes \D_{\frak u}, {G\otimes \D_{\frak u}}^+)$ $\cong (H, H^+)$ is a dimension group. However, if there exists $x\in G\otimes \D_{q^{\infty}}$ such that $a_i\otimes 1_{q^{\infty}}\leq x\leq b_j\otimes 1_{q^{\infty}}$, for $i, j\in\{0, 1\}$, then $x$ must correspond to $(1/p, 0)$ via the identification of $G\otimes \D_{q^{\infty}}$ with $\D_{q^{\infty}}a_1+\D_{q^{\infty}}b_1$. Since $p$ and $q$ are relatively prime,  $x$ cannot be contained in $G\otimes \D_{q^{\infty}}$. This shows that $(G\otimes \D_{q^{\infty}}, {G\otimes\D_{q^{\infty}}}^+)$ does not have the RIP.
\end{remark}

\medskip

Let us recall a few notions concerning  simplex bundles, introduced in \cite{BEK1} and \cite{BEK2} (see also \cite{ETh} and \cite{EST}). A pair $(S, \pi)$ consisting of a second countable locally compact Hausdorff space $S$ and a continuous map $\pi : S\rightarrow \R$ is called a \emph{simplex bundle} if $\pi^{-1}(\{\beta\})$ is compact   and has a structure of Choquet simplex for every $\beta\in \R$. To simplify notation, we shall write $\pi^{-1}(\beta)$ for $\pi^{-1}(\{\beta\})$.  Two simplex bundles $(S, \pi)$ and $(S', \pi')$ are said to be \emph{isomorphic} if there exists a homeomorphism $\Phi : S\rightarrow S'$ such that $\pi'\circ \Phi=\pi$ and the restriction of $\Phi$ to $\pi^{-1}(\beta)$ is affine for all $\beta\in \R$. 

For a simplex bundle $(S, \pi)$, we shall denote by $A(S)$ the set of all continuous functions $f$ from $S$ to $\R$ such that each restriction of $f$ to $\pi^{-1}(\beta)$ is affine for any $\beta\in\R$. We also set 
\begin{align*}
A_{0}(S)&=\{f\in A(S) \ :\ f \text{ vanishes at infinity}\}, \\
A_{00}(S)&=\{f\in A(S)\ : \supp(f) \text{ is compact and } \supp(f)\cap \pi^{-1}(0)=\O\}.
\end{align*} 

A simplex bundle $(S, \pi)$ will be called \emph{proper} if $\pi$ is a proper map (i.e. $\pi^{-1}(K)$ is compact for any compact set $K\subset \R$), and $A(S)$ separates points of $S$. 

\begin{lemma}\label{Lem3.3}
Let $(S, \pi)$ be a simplex bundle, $S'$ a locally compact  second countable Hausdorff space, and $\pi' : S'\rightarrow \R$ a continuous map. Suppose that $\pi$ is proper and $\Phi$ is a bijective continuous map from $S$ to $S'$ such that $\pi=\pi'\circ\Phi$. Then $\Phi$ is a homeomorphism. In particular, if $(S, \pi)$ is a proper simplex bundle and the restriction of $\Phi$ to $\pi^{-1}(\beta)$ is affine for any $\beta\in\R$, then $(S', \pi')$ is also a proper simplex bundle.
\end{lemma}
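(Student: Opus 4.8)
The plan is to prove that $\Phi$ is a homeomorphism by showing it is a closed map; since $\Phi$ is already a continuous bijection, this suffices. Both $S$ and $S'$ are second countable, hence first countable, so I will argue entirely with sequences: a set is closed exactly when it contains the limit of every convergent sequence of its points.

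First I would take a closed set $C\subseteq S$ and a sequence $y_n=\Phi(x_n)$ with $x_n\in C$ and $y_n\to y$ in $S'$, with the goal of showing $y\in\Phi(C)$. Continuity of $\pi'$ gives $\pi(x_n)=\pi'(\Phi(x_n))=\pi'(y_n)\to\pi'(y)$, so $K:=\{\pi(x_n):n\in\N\}\cup\{\pi'(y)\}$ is a compact subset of $\R$ and $\{x_n\}\subseteq\pi^{-1}(K)$. Since $\pi$ is proper, $\pi^{-1}(K)$ is compact, and being a second countable Hausdorff space it is metrizable, hence sequentially compact; thus a subsequence $x_{n_k}$ converges to some $x\in\pi^{-1}(K)$, and $x\in C$ since $C$ is closed. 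By continuity $\Phi(x_{n_k})\to\Phi(x)$, while $\Phi(x_{n_k})=y_{n_k}\to y$, so $\Phi(x)=y$ because $S'$ is Hausdorff. Hence $y\in\Phi(C)$, so $\Phi(C)$ is closed, and $\Phi$ is a homeomorphism.

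For the ``in particular'' part I would note that $\pi'^{-1}(\beta)=\Phi(\pi^{-1}(\beta))$ for every $\beta\in\R$ and $\pi'^{-1}(K)=\Phi(\pi^{-1}(K))$ for every compact $K\subseteq\R$, both immediate from $\pi=\pi'\circ\Phi$ and the surjectivity of $\Phi$. Since $\Phi$ is now a homeomorphism, these sets are compact, so $\pi'$ is proper and each $\pi'^{-1}(\beta)$ is compact; moreover $\Phi$ restricts to an affine homeomorphism of $\pi^{-1}(\beta)$ onto $\pi'^{-1}(\beta)$, through which one transports the Choquet simplex structure, so $(S',\pi')$ is a simplex bundle. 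Finally, $f\mapsto f\circ\Phi^{-1}$ maps $A(S)$ into $A(S')$: the restriction of $f\circ\Phi^{-1}$ to $\pi'^{-1}(\beta)$ is the composite of the affine bijection $(\Phi|_{\pi^{-1}(\beta)})^{-1}$ with the affine map $f|_{\pi^{-1}(\beta)}$, hence affine, and it is continuous because $\Phi^{-1}$ is. Since $A(S)$ separates the points of $S$ and $\Phi$ is a bijection, the image $\{f\circ\Phi^{-1}:f\in A(S)\}\subseteq A(S')$ separates the points of $S'$; therefore $(S',\pi')$ is a proper simplex bundle.

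The hard part is the closedness argument in the first two paragraphs, and in particular recognising that properness of $\pi$ is exactly the hypothesis that confines a potentially escaping sequence $(x_n)$ to a compact, hence sequentially compact, set $\pi^{-1}(K)$ in the fibre direction; once $\Phi$ is known to be a homeomorphism, the transport of the bundle structure and of $A(S)$ along the affine homeomorphism is routine bookkeeping.
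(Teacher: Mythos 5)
Your proof is correct, and it takes a genuinely different route from the paper's for the central step (that $\Phi$ is a homeomorphism). The paper argues abstractly: first it shows $\Phi$ is a \emph{proper} map, i.e.\ $\Phi^{-1}(K)$ is compact for every compact $K\subset S'$ (because $\pi'(K)$ is contained in a compact interval $I$, so $\Phi^{-1}(K)$ is a closed subset of the compact set $\pi^{-1}(I)$), and then, for a closed $F\subset S$, it observes that $\Phi(F)\cap K=\Phi(F\cap\Phi^{-1}(K))$ is compact for every compact $K\subset S'$ and invokes the standard fact that in a locally compact Hausdorff space a set with this property is closed. That argument uses only local compactness of $S'$ and never appeals to second countability. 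You instead exploit second countability (hence first countability) of $S'$ to reduce closedness to sequential closedness, and second countability of $S$ to make $\pi^{-1}(K)$ metrizable, hence sequentially compact, so you can extract a convergent subsequence of the $x_n$; the properness of $\pi$ enters as the device that traps the sequence in a compact fibre slab $\pi^{-1}(K)$. The two proofs carry the same essential content (continuity of $\pi'$ plus properness of $\pi$ confine the preimage sequence to a compact set), but the paper's packaging is more abstract and slightly more general, while yours is more elementary and leans on hypotheses (second countability) that the paper's argument does not need. Your treatment of the ``in particular'' part is essentially the same as the paper's.
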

\begin{proof}
For any compact subset $K$ of $S'$, since $\pi'$ is continuous, there exists a compact interval $I\subset \R$ such that $\pi'(K)\subset I$, which implies that $\Phi^{-1}(K)\subset\pi^{-1}(I)$. Since $\pi^{-1}(I)$ is compact, so also is $\Phi^{-1}(K)$. This shows that $\Phi$ is a proper map. Let $F$ be a closed subset of $S$. To show $\Phi^{-1}$ is continuous, it suffices to show that $\Phi(F)$ is closed. For any compact subset $K$ of $S'$, it follows that $\Phi(F)\cap K=\Phi(F\cap \Phi^{-1}(K))$ is compact. Since $S'$ is locally compact and Hausdorff, this shows that $\Phi(F)$ is closed. 

As $\pi=\pi'\circ \Phi$, it follows that ${\pi'}^{-1}(B)=\Phi(\pi^{-1}(B))$ for any subset $B$ of $\R$. Then $\pi'$ is also a proper map. If each restriction of $\Phi$ to $\pi^{-1}(\beta)$ is affine, then we see that $A(S')=\{f\circ\Phi^{-1}\ : \ f\in A(S)\}$ separates points of $S'$. 
\end{proof}

The concept of a proper simplex bundle was presented as an abstract characterization of the KMS-bundle of a \Cs{} in \cite{BEK1}, \cite{BEK2}, \cite{ETh}, and \cite{EST}. On the one hand, it was shown in \cite{BEK1} and \cite{BEK2} (see also \cite[Section 5.3]{BR}) that a KMS-bundle for a unital separable \Cs{} is a proper simplex bundle, and on the other hand, in \cite{ETh}, it was shown that any proper simplex bundle can be realized as a KMS-bundle on a given unital simple infinite-dimensional AF-algebra. The construction in this paper is a variant of \cite[Subsection 4.2]{ETh} for rational dimension groups (instead of just dimension groups). In the rest of this section, we shall consider a proper simplex bundle $(S, \pi)$ such that $\pi^{-1}(0)$ is a singleton $\{\tau_S\}$. 

Let $\cG$ be a countable additive subgroup of $\R$ with $1\in \cG$ and let $\cG^{\infty}$ denote the infinite direct sum $\bigoplus_{n\in\Z} \cG$ of copies of $\cG$ over $\Z$. Given a sequence $g=(g_n)_{n\in\Z}$ in $\cG^{\infty}$, define a continuous function $L(g)\in A(S)$ by 
\[ L(g) (s)= \sum_{n\in\Z} g_n e^{n\pi(s)}\quad \text{ for } s\in S.\]
From now on, we shall denote by $e^{-\pi}$ the function $e^{-\pi(\cdot)}\in A(S)$, so that $L(g)=\sum_{n\in\Z}g_ne^{n\pi}$.  Let $\sigma_S$ denote the automorphism of $A(S)$ defined by $\sigma_S(f) =e^{-\pi} f$ for $f\in A(S)$, and $\sigma_{\cG^{\infty}}$ the automorphism of $\cG^{\infty}$ defined by $\sigma_{\cG^{\infty}}((g_n)_{n\in\Z})=(g_{n+1})_{n\in\Z}$ for $(g_n)_{n\in\Z}\in \cG^{\infty}$. Note that 
\[\sigma_S\circ L= L\circ \sigma_{\cG^{\infty}}.\] 

For $k\in\N$, choose positive continuous functions $\psi_k$, $\psi_{k +}$, $\psi_{k-}$ from $\R$ into $[0, 1]$ such that $(-\infty, -1/k]\subset \psi_{k-}^{-1}(\{1\})$, $[-1/2k, 1/2k]\subset \psi_k^{-1}(\{1\})$, $[1/k, \infty)\subset\psi_{k +}^{-1}(\{1\})$, and $\psi_{k -}+\psi_k + \psi_{k +}=1_{C(\R)}$. 
Denote by $\cG[e^{-\pi}, (1-e^{-\pi})]$ the additive subgroup of $A(S)$ generated by $\{ge^{n\pi}(1-e^{-\pi})^m\ : \ g\in\cG,\ m, n\in\Z\}$. Consider the countable subgroup of $A(S)$
\[\cG_k^{\infty} =\cG[e^{-\pi}, (1-e^{-\pi})]\psi_{k -}\circ\pi + L(\cG^{\infty})\psi_k\circ\pi +\cG[e^{-\pi}, (1-e^{-\pi})]\psi_{k+}\circ\pi.\]
In the same way as in \cite[Lemma 2.2]{BEK2} and  \cite[Property 4.5]{ETh}, choose a countable subgroup $\cG_{00}$ of $A_{00}(S)$ satisfying the following conditions: 
\begin{itemize}
\item[(1)] for any $f\in A_{00}(S)$, $\varepsilon>0$, and $N\in\N$ with $\supp(f)\subset\pi^{-1}((-N, N)\setminus\{0\})$, there exists $g\in \cG_{00}$ such that $\sup_{s\in S} |f(x)-g(x)|<\varepsilon$ and $\supp(g) \subset \pi^{-1}((-N, N)\setminus\{ 0\})$,
\item[(2)] $\cG_k^{\infty} +\cG_{00}\subset \cG_{k+1}^{\infty} +\cG_{00}$ for any $k\in\N$, and
\item[(3)] $\sigma_S(\cG_{00}) =(\id_{A(S)}-\sigma_S)(\cG_{00})=\cG_{00}$.
\end{itemize}
Consider the countable subgroup of $A(S)$ 
\[ \cR(\cG)=\bigcup_{k=1}^{\infty} \cG_k^{\infty} +\cG_{00}.\]
Define positive cones $A(S)^+$ and $\cR(\cG)^+$ of $A(S)$ and $\cR(\cG)$ by
\begin{align*}
A(S)^+&=\{0\}\cup\{f\in A(S)\ :\ f(x)>0 \text{ for any } x\in S\},\\
\cR(\cG)^+&=\cR(\cG)\cap A(S)^+.
\end{align*}

\begin{lemma}\label{Lem3.4}
With $\cG$ and $\cR(\cG)$ as above, the following statements hold. 
\begin{itemize}
\item[\rm (i)] $(\cR(\cG), \cR(\cG)^+)$ is an unperforated ordered group.
\item[\rm (ii)] In the case $\cG=\Z$, $(\cR(\Z), \cR(\Z)^+)$ is a rational dimension group.
\end{itemize}
\end{lemma}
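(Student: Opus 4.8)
\emph{Part (i).} Since $\cR(\cG)$ is a subgroup of the torsion-free group $A(S)$ it is torsion-free, and $\cR(\cG)^{+}=\cR(\cG)\cap A(S)^{+}$ trivially satisfies $\cR(\cG)^{+}+\cR(\cG)^{+}\subset\cR(\cG)^{+}$, $\cR(\cG)^{+}\cap(-\cR(\cG)^{+})=\{0\}$, and unperforation (from $nf\in\cR(\cG)^{+}$ one gets $nf=0$, whence $f=0$, or $nf>0$ pointwise, whence $f\in\cR(\cG)^{+}$). What needs an argument is directedness, $\cR(\cG)=\cR(\cG)^{+}-\cR(\cG)^{+}$. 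I would write $f\in\cR(\cG)$ as $r+w$ with $r\in\cG_{k}^{\infty}$ and $w\in\cG_{00}$; since $\psi_{k-}+\psi_{k}+\psi_{k+}\equiv 1$ the value $r(s)$ is at each point a convex combination of the values of three elements of $\cG[e^{-\pi},(1-e^{-\pi})]$, while $w$ is bounded with compact support, so $|f|$ is dominated on $S$ by some strictly positive $h\in L(\cG^{\infty})$ (e.g.\ $h=M(e^{m\pi}+e^{-m\pi})+M$ with $M,m\in\N$ large enough, using $\N\subset\cG$). Then $h$ and $f+h$ lie in $\cR(\cG)^{+}$ and $f=(f+h)-h$.

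\emph{Part (ii), reduction.} By (i), $\cR(\Z)$ is a countable unperforated ordered abelian group, so by Definition \ref{Def3.2} it suffices to show that $\cR(\Z)\otimes\D_{\frak p}$ and $\cR(\Z)\otimes\D_{\frak q}$ are dimension groups for one pair of relatively prime infinite supernatural numbers, say ${\frak p}=2^{\infty}$ and ${\frak q}=3^{\infty}$. For any infinite supernatural $\frak n$, the homomorphism $\cR(\Z)\otimes\D_{\frak n}\to A(S)$, $f\otimes d\mapsto df$, is injective ($\cR(\Z)$ being torsion-free) with image the $\D_{\frak n}$-linear span of $\cR(\Z)$, and it carries the tensor-product positive cone exactly onto the functions in that span that are $0$ or strictly positive on $S$: one inclusion is immediate, and if $g$ in the span is strictly positive then, after clearing denominators, $Mg\in\cR(\Z)^{+}$ for some $M\in\N$ with $1/M\in\D_{\frak n}$, so $g$ is the image of $(Mg)\otimes(1/M)$. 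If the auxiliary subgroup for $\cG=\D_{\frak n}$ is taken to be the $\D_{\frak n}$-span of the one fixed for $\cG=\Z$ (still satisfying (1)--(3)), this span is exactly $\cR(\D_{\frak n})$; and since the interpolation argument below uses only (1)--(3), it applies to any admissible choice. Thus it remains to prove that $(\cR(\D_{\frak n}),\cR(\D_{\frak n})^{+})$ is a dimension group for every infinite $\frak n$; by (i), and since a countable unperforated ordered abelian group is a dimension group precisely when it has the Riesz interpolation property, only the latter is at issue.

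\emph{Part (ii), interpolation.} Let $f_{0},f_{1},f_{0}',f_{1}'\in\cR(\D_{\frak n})$ with $f_{i}\le f_{j}'$ for all $i,j\in\{0,1\}$; by (2) I may take them all in a single $\cG_{k}^{\infty}+\cG_{00}$. If $f_{i_{0}}=f_{j_{0}}'$ for some indices, then $a:=f_{i_{0}}$ interpolates; otherwise $f_{i}<f_{j}'$ pointwise on $S$. Choose $R>1/k$ exceeding the $\pi$-images of the compact supports occurring in the $\cG_{00}$-parts, so that on $\{\,\pi\ge R\,\}$ each $f_{i},f_{j}'$ equals a Laurent polynomial $c_{i},c_{j}'\in\D_{\frak n}[e^{\pi},e^{-\pi}]$, and likewise on $\{\,\pi\le -R\,\}$ (with $a_{i},a_{j}'$). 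The sets $F_{+}=\pi(S)\cap[R,\infty)$ and $F_{-}=\pi(S)\cap(-\infty,-R]$ are closed ($\pi(S)$ being closed, as $\pi$ is proper) and semi-bounded, so by Lemma \ref{Lem2.1} the group $\D_{\frak n}[e^{\pi},e^{-\pi}]$ with the strict $F_{+}$- (resp.\ $F_{-}$-) order has the Riesz interpolation property; applying this and then adjusting by a tiny positive Laurent polynomial (such as $q^{-1}e^{-N\pi}$ with $q\mid{\frak n}$, $N$ large) to make the inequalities strict on $F_{\pm}$, I obtain $b_{\pm}\in\D_{\frak n}[e^{\pi},e^{-\pi}]$ with $c_{i}<b_{+}<c_{j}'$ on $F_{+}$ and $a_{i}<b_{-}<a_{j}'$ on $F_{-}$. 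Picking $d\in\D_{\frak n}$ with $\max_{i}f_{i}(\tau_{S})<d<\min_{j}f_{j}'(\tau_{S})$ (possible since $\{\tau_{S}\}=\pi^{-1}(0)$ and $\D_{\frak n}$ is dense in $\R$), I set
\[\widetilde a = b_{-}\,\psi_{k-}\circ\pi + d\,\psi_{k}\circ\pi + b_{+}\,\psi_{k+}\circ\pi \in \cG_{k}^{\infty}.\]
Then $f_{i}<\widetilde a<f_{j}'$ holds on $\{\,|\pi|\ge R\,\}$ (where $\widetilde a=b_{\pm}$) and near $\tau_{S}$ (where $\widetilde a=d$, by continuity of the $f$'s). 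Hence $0$ lies in the interior of the pointwise band $\bigl(\max_{i}f_{i}(s)-\widetilde a(s),\ \min_{j}f_{j}'(s)-\widetilde a(s)\bigr)$ near $\tau_{S}$ and off a compact set, so there is a continuous $w_{0}$, compactly supported inside $\pi^{-1}(\R\setminus\{0\})$, with $\max_{i}f_{i}-\widetilde a<w_{0}<\min_{j}f_{j}'-\widetilde a$ on $S$; by property (1), a uniformly close $w\in\cG_{00}$ still satisfies $\max_{i}f_{i}-\widetilde a<w<\min_{j}f_{j}'-\widetilde a$. Then $a:=\widetilde a+w\in\cG_{k}^{\infty}+\cG_{00}\subset\cR(\D_{\frak n})$ obeys $f_{i}(s)<a(s)<f_{j}'(s)$ for every $s\in S$, so $f_{i}\le a\le f_{j}'$.

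\emph{Expected main difficulty.} The delicate step is the last paragraph: arranging the tail interpolants $b_{\pm}$ and the central value $d$ so that the ``skeleton'' $\widetilde a$ is already \emph{strictly} between the four functions both outside a compact set and in a neighbourhood of $\tau_{S}$ --- so that the still-needed correction lies in $A_{00}(S)$ and can be pulled into $\cG_{00}$ by property (1) --- and then carrying out the $\varepsilon$-bookkeeping so that every interpolation remains strict across the overlaps of the partition of unity $\psi_{k-},\psi_{k},\psi_{k+}$; this follows the pattern of \cite[Subsection 4.2]{ETh} and \cite[\S 5]{Thomsen1}. Lemma \ref{Lem2.1} enters only in its ``if'' direction and only on the (automatically semi-bounded) tails $F_{\pm}$, but it is exactly what requires $\frak n$ to be infinite and so forces the passage to $\D_{\frak p}$, $\D_{\frak q}$ in (ii).
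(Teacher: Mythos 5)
Your argument follows the same two-step route as the paper's own proof: for (i), dominate a general element by a strictly positive Laurent polynomial that grows exponentially in both tails; for (ii), pass to the strict-order subgroup of $A(S)$, use Lemma~\ref{Lem2.1} on the semi-bounded tails $\pi(\pi^{-1}(\pm[N,\infty)))$ and density of $\D_{\frak n}$ at $\tau_S$ to build a ``skeleton'' interpolant, and then correct by an element of $\cG_{00}$. Your identification of $\cR(\Z)\otimes\D_{\frak n}$ with the $\D_{\frak n}$-span $\tcR(\D_{\frak n})\subset A(S)$, and the positivity of the inverse map via unperforation, matches the paper.

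There is, however, a genuine gap at the crucial step of Part (ii). You write that since $0$ lies in the interior of the pointwise band $\bigl(\max_i f_i(s)-\widetilde a(s),\ \min_j f_j'(s)-\widetilde a(s)\bigr)$ near $\tau_S$ and off a compact set, ``there is a continuous $w_0$, compactly supported inside $\pi^{-1}(\R\setminus\{0\})$'' lying strictly in the band. For property (1) of $\cG_{00}$ to apply, $w_0$ must lie in $A_{00}(S)$, i.e., it must also restrict to an \emph{affine} function on every fibre $\pi^{-1}(\beta)$. That existence is not a simple Urysohn-type construction: it requires a selection/separation theorem for the simplex bundle (an Edwards-style argument), which is exactly what \cite[Lemma~2.3]{BEK2} and \cite[Lemma~4.4]{ETh} supply and what the paper cites at this point. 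As written, you assert the existence without this input, and your closing remark locates the ``delicate step'' in $\varepsilon$-bookkeeping across the partition of unity rather than in the affine selection itself; the bookkeeping is routine, while the affine interpolation is the substantive fact being used. Once that citation is inserted, the proof is complete and essentially identical to the paper's.
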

\begin{proof}
\noindent{\rm (i)}. It is straightforward to see that $\cR(\cG)^+\cap -\cR(\cG)^+=0$ and that the partially ordered abelian group $(\cR(\cG), \cR(\cG)^+)$ is unperforated. We must show that $\cR(\cG)=\cR(\cG)^+-\cR(\cG)^+$. For $k\in\N$,  let $g\in \cG_k^{\infty} +\cG_{00}\subset \cR(\cG)$ be given, so that
\[ g=h_-\psi_{k-}\circ\pi +h_0\psi_k\circ\pi + h_+\psi_{k+}\circ\pi +g_0,\]
with  $h_+$, $h_-\in \cG[e^{-\pi}, (1-e^{-\pi})]$, $h_0\in L(\cG^{\infty})$, and $g_0\in \cG_{00}$. For $N\in\N$, let us fix the notation $-[N, \infty):=(-\infty, -N]$ and $+[N, \infty):=[N, \infty)$. With  $M$ a large enough natural number, we have $h_{\pm}(x)\leq e^{\pm M\pi(x)}$ for any $x\in \pi^{-1}(\pm [M, \infty))$. Since the supports of $h_0\psi_k\circ\pi$ and $g_0$ are compact, and $\pi$ is proper, there exists $c>0$ such that the function $f=c(e^{-M\pi}\psi_{k-}\circ\pi +\psi_k\circ\pi + e^{M\pi}\psi_{k+}\circ\pi)\in\cR(\cG)^+$ satisfies $f-g\in \cR(\cG)^+$. Thus, $g=f- (f-g)\in \cR(\cG)^+-\cR(\cG)^+$. 

\noindent{\rm (ii)}. Let ${\frak n}$ be an infinite supernatural number and denote by $\tcR(\D_{\frak n})$ the  subgroup of $A(S)$ defined by
\[\tcR(\D_{\frak n})=\left\{ \sum_{i=1}^N d_i g_i\in A(S)\ : \ N\in \N,\ d_i\in \D_{\frak n},\ g_i\in\cR(\Z)\right\}.\]
We define a positive cone $\tcR(\D_{\frak n})^+$ by $\tcR(\D_{\frak n})^+=\tcR(\D_{\frak n})\cap A(S)^+$. First we show that the partially ordered abelian group $(\tcR(\D_{\frak n}), \tcR(\D_{\frak n})^+)$ is isomorphic to $(\cR(\Z)\otimes \D_{\frak n}, {\cR(\Z)\otimes \D_{\frak n}}^+)$ (as a partially ordered abelian group). Note that, by Lemma \ref{Lem3.1} (iv), 
 $(\cR(\Z)\otimes\D_{\frak n}, {\cR(\Z)\otimes\D_{\frak n}}^+)$ is an unperforated ordered abelian group.
Let $\Phi : \cR(\Z)\otimes\D_{\frak n} \rightarrow\tcR(\D_{\frak n})$ denote the group homomorphism determined by $\Phi(g\otimes d)= dg$ for $g\in\cR(\Z)$ and $d\in \D_{\frak n}$, which is obviously surjective. 
To show the injectivity of $\Phi$, let $g_i$, $g_i'\in \cR(\Z)$ and $d_i$, $d_i'\in \D_{\frak n}$, $i=1, 2, ..., N$, be such that $\sum_{i=1}^Nd_i g_i =\sum_{i=1}^N d_i' g_i'$ in $\tcR(\D_{\frak n})$. 
Choose $d\in\N$ such that $d d_i$, $dd_i'\in \Z$ for all $i=1,2,..., N$ and $1/d\in \D_{\frak n}$. Then it follows that $\sum_{i=1}^N dd_ig_i=\sum_{i=1}^N d d_i'g_i'$ in $\cR(\Z)$. Thus we have $d(\sum_{i=1}^Ng_i\otimes d_i)=d(\sum_{i=1}^N g_i'\otimes d_i')$ in $\cR(\Z)\otimes \D_{\frak n}$. Since $\cR(\Z)\otimes \D_{\frak n}$ is torsion free, we have $\sum_{i=1}^N g_i\otimes d_i =\sum_{i=1}^N g_i'\otimes d_i'$. It is trivial to see that $\Phi({\cR(\Z)\otimes \D_{\frak n}}^+)\subset \tcR(\D_{\frak n})^+$. Since $(\cR(\Z)\otimes\D_{\frak n}, {\cR(\Z)\otimes\D_{\frak n}}^+)$ is unperforated, we also have the converse inclusion.  

What remains to be shown is the RIP of $(\tcR(\D_{\frak n}), \tcR(\D_{\frak n})^+)$. The following argument is essentially same as that in the proof of \cite[Lemma 4.6]{ETh}.
 Let $f_i$, $g_j\in \tcR(\D_{\frak n})$, $i$, $j\in\{0, 1\}$, be such that $f_i(x) < g_j (x)$ for any $x\in S$ and $i, j\in\{0, 1\}$. At $\tau_S\in\pi^{-1}(0)$, since $\D_{\frak n}$ is dense in $\R$, we obtain $d\in\D_{\frak n}$ such that $f_i(\tau_S) < d <g_j(\tau_S)$ for all $i, j\in\{0, 1\}$. By the definition of $\tcR(\D_{\frak n})$, there exist $N\in\N$ and $\tf_{i+}$, $\tf_{i-}$, $\tg_{j +}$, $\tg_{j -}\in \D_{\frak n}[e^{-\pi}, (1-e^{-\pi})]$, $i, j\in\{0, 1\}$ such that $\tf_{i\pm}(x)=f_i(x)$ and $\tg_{j\pm}(x)=g_j(x)$ for any $i, j\in\{0, 1\}$ and $x\in \pi^{-1}(\pm [N, \infty))$. Note that $\pi(\pi^{-1}(\pm[N, \infty))$ are semi-bounded closed subsets of $\R$, since $\pi$ is a proper map. 
By Lemma \ref{Lem2.1}, we obtain $e_-$, $e_+\in\D_{\frak n}[e^x, e^{-x}]$ such that 
\[ f_i(x)<e_{\pm}(\pi(x))<g_j(x)\quad\text{ for } s\in\pi^{-1}(\pm[N, \infty)).\]
Set 
\[\bh(x)=(e_-\psi_{k-}+ d\psi + e_+\psi_{k+})\circ\pi(x)\quad\text{ for }x\in S;\] 
then it follows that $f_i(x)<\bh(x)< g_j(x)$ for any $i, j\in\{0, 1\}$ and $x\in \pi^{-1}((-\infty, -N]$ $\cup\{0\}\cup[N, \infty))$. By \cite[Lemma 2.3]{BEK2} and \cite[Lemma 4.4]{ETh}, there exists $\widetilde{h}\in A(S)$ such that $\widetilde{h}(x)=\bh(x)$ for $x\in\pi^{-1}((-\infty, -N]\cup\{0\}\cup [N, \infty))$, and $f_i < \widetilde{h} < g_j$ on $S$ for all $i, j\in\{0, 1\}$. Because $\pi^{-1}([-N, N])$ is compact, there exists $\varepsilon >0$ such that 
\[f_i(x) + \varepsilon <\widetilde{h}(x) < g_j(x)-\varepsilon,\]
for any $i, j\in\{0, 1\}$ and $x\in \pi^{-1}([-N, N])$. Set $\br(x)=\widetilde{h}(x)-\bh(x)$ for $x\in S$.  Then it follows that $\supp(\br)\subset\pi^{-1}((-N, N)\setminus\{0\})$. By {\rm (1)} of the conditions for $\cG_{00}$, we have $r\in\cG_{00}$ such that $r(x)\approx_{\varepsilon}\br(x)$ for $x\in S$ and $\supp(r)\subset \pi^{-1}((-N, N)\setminus \{0 \})$. The element $h= \bh +r\in\tcR(\D_{\frak n})$  satisfies $f_i < h < g_j$ for all $i, j\in\{0, 1\}$.
\end{proof}

 For an infinite supernatural number ${\frak n}$, consider the ordered abelian group  $(\tcR(\D_{\frak n}),$ $\tcR(\D_{\frak n})^+)$ introduced in the proof above. Define an ordered group $G_{\D_{\frak n}}$ and a subgroup $G_{\Z}\subset G_{\D_{\frak n}}$ by 
\begin{align*}
G_{\D_{\frak n}}&=\{\xi\oplus g\in(\D_{\frak n}^{\infty} \oplus \tcR(\D_{\frak n}))\ :\ \exists\varepsilon >0, \text{ for any } x\in \pi^{-1}((-\varepsilon, \varepsilon)),  L(\xi)(x)=g(x)\},\\
G_{\Z}&=(\Z^{\infty}\oplus \cR(\Z))\cap G_{\D_{\frak n}}.
\end{align*}
Define positive cones for $G_{\Z}$ and $G_{\D_{\frak n}}$ by 
\begin{align*}
G_{\D_{\frak n}}^+&=\{0\}\cup\{\xi\oplus g\in G_{\D_{\frak n}}\ :\ g\in\tcR(\D_{\frak n})^+\setminus\{0\}\},\\
G_{\Z}^+&=G_{\Z}\cap G_{\D_{\frak n}}^+.
\end{align*}
Note that $(G_{\Z}, G_{\Z}^+)$ is independent of the choice of infinite supernatural number ${\frak n}$. 
It is straightforward to check that $(G_{\D_{\frak n}}, G_{\D_{\frak n}}^+)$ and $(G_{\Z}, G_{\Z}^+)$ are torsion-free ordered abelian groups such that $(G_{\D_{\frak n}}, G_{\D_{\frak n}}^+)$ is isomorphic to $(G_{\Z}\otimes \D_{\frak n}, G_{\Z}\otimes \D_{\frak n}^+)$ (as ordered abelian groups). Since $\tcR(\D_{\frak n})$ is a dimension group (see proof of Lemma \ref{Lem3.4}), so also is $G_{\D_{\frak n}}$, for any infinite supernatural number ${\frak n}$, which means that $(G_{\Z}, G_{\Z}^+)$ is a rational dimension group. 

Define an automorphism $\sigma$ of $(G_{\D_{\frak n}}, G_{\D_{\frak n}}^+)$ by 
\[ \sigma(\xi\oplus g)=(\sigma_{\D_{\frak n}^{\infty}}(\xi)\oplus \sigma_S(g)), \quad\text{for }\xi\oplus g\in G_{\D_{\frak n}}.\]
From $\sigma_S(\cR(\Z)^+)=\cR(\Z)^+$, it follows that $\sigma(G_{\Z}^+)=G_{\Z}^+$. Then, using the same symbol, we may regard $\sigma$ as an automorphism of $(G_{\Z}, G_{\Z}^+)$.

Set $1_0=(\delta_{0, n})_{n\in\Z}\in \Z^{\infty}$, using the Kronecker delta $\delta_{m n}$ for $m, n\in\Z$, and set $u=(1_0\oplus 1)\in G_{\Z}$. Although $u$ is not necessarily an order unit, we 
may consider a kind of state space $S_u(H)$ for an ordered abelian subgroup $(H, H^+)$ of $(G_{\D_{\frak n}}, G_{\D_{\frak n}}^+)$ with $u\in H^+$ defined by 
\[S_u(H)=\{\varphi \ : \ H\rightarrow\R\ :\ \text{ a positive group homomorphism with }\varphi(u)=1\}.\]

Consider $S_u(H)$ with the relative topology from the Cartesian product $\R^H$ (the topology of pointwise convergence).
Assuming $\sigma(H^+)=H^+$, for $\alpha=e^{-\beta}\in\R$, consider the subset $S_{\sigma}^{\alpha}(H)$ of $S_u(H)$ and the bundle $S_{\sigma}(H)$ over $\R$ defined by 
\begin{align*}
S_{\sigma}^{\alpha}(H)&=\{\varphi\in S_u(H)\ : \ \varphi\circ\sigma=\alpha\varphi\},\\
S_{\sigma}(H)&=\{(\varphi, \beta)\in S_u(H)\times\R\ :\ \varphi\in S_{\sigma}^{\alpha}(H)\}.
\end{align*}
Although $\R^H$ is not locally compact, we shall show that $S_{\sigma}(G_{\Q})$ is locally compact,
 in Proposition \ref{Prop3.5} {\rm (i)} below.
Define a projection $\pi_{\sigma H} : S_{\sigma}(H)\rightarrow \R$ by $\pi_{\sigma H}(\varphi, \beta)=\beta$. 
To simplify notation, let us write $\pi_{\Z}$ and $\pi_{\D_{\frak n}}$ for $\pi_{\sigma G_{\Z}}$ and $\pi_{\sigma G_{\D_{\frak n}}}$.

For a supernatural number ${\frak n}$, let $\iota_{\frak n}$ denote  the canonical embedding of $G_{\Z}$ into $G_{\D_{\frak n}}$, and  $\iota_{\frak n}^*$ the induced map from $S_u(G_{\D_{\frak n}})$ to $S_u(G_{\Z})$ defined by $\iota_{\frak n}^*(\varphi)=\varphi\circ\iota_{\frak n}$ for $\varphi\in S_u(G_{\D_{\frak n}})$. Define a continuous map $\iota_{{\frak n} \sigma}^*$ from  $S_{\sigma}(G_{\D_{\frak n}})$ to $S_{\sigma}(G_{\Z})$ by $\iota_{{\frak n} \sigma}^*(\varphi, \beta)=(\iota_{\frak n}^*(\varphi), \beta)$ for $(\varphi, \beta)\in S_{\sigma}(G_{\D_{\frak n}})$. For $s\in S$ and $g\in G_{\D_{\frak n}}$ with $g=\xi\oplus f$ for some $\xi\in\D_{\frak n}^{\infty}$ and $f\in \tcR(\D_{\frak n})$,  set $\widehat{s}(g)=f(s)$ and regard $\widehat{s}$ as an element of $S_{\sigma}^{\alpha}(G_{\D_{\frak n}})$ for $\alpha=e^{-\pi(s)}$. Define a continuous map $\Psi_{\D_{\frak n}} $ from $S$ to $S_{\sigma}(G_{\D_{\frak n}})$  by $\Psi_{\D_{\frak n}}(s)=(\widehat{s}, \pi(s))$ for $s\in S$. 

\begin{proposition}\label{Prop3.5}
As in Remark \ref{Rem3.3}, let ${\frak u}$ denote the supernatural number corresponding to the universal UHF algebra, i.e., $\D_{\frak u}=\Q$.
\begin{itemize}
\item[\rm (i)] The pair $(S_{\sigma}(G_{\Q}), \pi_{ \Q})$ is a proper simplex bundle.
\item[\rm (ii)] The induced map $\iota_{{\frak u}\sigma}^* : S_{\sigma}(G_{\Q})\rightarrow S_{\sigma}(G_{\Z})$ is a homeomorphism such that $\pi_{ \Z}\circ \iota_{{\frak u} \sigma}^* =\pi_{\Q}$ and the restriction $\iota_{{\frak u} \sigma}^*|_{\pi_{ \Q}^{-1}(\beta)}$ is affine for each $\beta\in\R$. Therefore the pair $(S_{\sigma}(G_{\Z}), \pi_{\Z})$ is also a proper simplex bundle that is isomorphic to $(S_{\sigma}(G_{\Q}), \pi_{\Q})$.
\item[\rm (iii)] $\Psi_{\Z} : S\rightarrow S_{\sigma}(G_{\Z})$ is an isomorphism of simplex bundles.
\end{itemize}
\end{proposition}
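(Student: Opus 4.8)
\emph{Plan.} The strategy is to prove (i) first and to obtain (ii) and (iii) from it by elementary transfer arguments; the whole difficulty is concentrated in a single step of (i). Throughout write $\alpha=e^{-\beta}$.

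\emph{Reduction of (ii) and (iii) to (i).} A positive group homomorphism $\varphi\colon G_{\Z}\to\R$ with $\varphi(u)=1$ extends uniquely to such a homomorphism $\tilde\varphi\colon G_{\Q}\to\R$: for $x\in G_{\Q}$ choose $d\in\N$ with $dx\in G_{\Z}$ (clearing denominators, the germ condition at $\pi^{-1}(0)$ being preserved under scaling) and set $\tilde\varphi(x)=d^{-1}\varphi(dx)$; this is well defined and positive because $dx\in G_{\Z}^{+}$ whenever $x\in G_{\Q}^{+}$, and it is $\sigma$-equivariant because $\sigma$ preserves $G_{\Z}$. Hence $\iota_{{\frak u}\sigma}^{*}$ is a bijection, visibly continuous in the topology of pointwise convergence, with continuous inverse $\varphi\mapsto\tilde\varphi$, affine on fibres and compatible with the projections; so it is an isomorphism of $(S_{\sigma}(G_{\Q}),\pi_{\Q})$ onto $(S_{\sigma}(G_{\Z}),\pi_{\Z})$, which yields (ii) once (i) is proved. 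For (iii) one checks $\iota_{{\frak u}\sigma}^{*}\circ\Psi_{\Q}=\Psi_{\Z}$ directly from the definitions, so it suffices to see that $\Psi_{\Q}$ is an isomorphism of simplex bundles, which will come out of the proof of (i).

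\emph{Proof of (i), the elementary parts.} We show that $\Psi_{\Q}\colon S\to S_{\sigma}(G_{\Q})$ is a continuous bijection, compatible with the projections and affine on each fibre, and then invoke Lemma \ref{Lem3.3}. That $S_{\sigma}(G_{\Q})$ is Hausdorff and second countable is clear since $G_{\Q}$ is countable; for local compactness it suffices to check that $\pi_{\Q}^{-1}([a,b])$ is compact for $a\le b$, which follows by realizing it as a closed subset of $\prod_{g\in G_{\Q}}[-M_{g},M_{g}]\times[a,b]$ — the bounds exist because every $f\in\tcR(\Q)$ satisfies $|f|\le C(e^{\pi}+e^{-\pi})^{m}$ for suitable $C\in\Q$, $m\in\N$, so $|\varphi(g)|\le 2C(e^{|a|+|b|}+e^{-|a|-|b|})^{m}$ for $(\varphi,\beta)\in\pi_{\Q}^{-1}([a,b])$. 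Well-definedness, continuity, compatibility with the projections, and fibrewise affineness of $\Psi_{\Q}$ are routine from $A(S)\subset C(S)$. Injectivity of $\Psi_{\Q}$ reduces to the statement that $\cR(\Z)$ separates the points of $S$: $e^{-\pi}\in\cR(\Z)$ separates points in distinct fibres, while for $s\ne s'$ in a fibre $\pi^{-1}(\beta)$ with $\beta\ne0$ one multiplies a function of $A(S)$ separating them by $\chi\circ\pi$, where $\chi\in C_{c}(\R\setminus\{0\})$ is equal to $1$ near $\beta$ (legitimate since $\pi$ is proper), obtaining an element of $A_{00}(S)$, and then approximates by $\cG_{00}\subset\cR(\Z)$.

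\emph{Proof of (i), surjectivity of $\Psi_{\Q}$ — the main obstacle.} We must show that every $\varphi\in S_{\sigma}^{\alpha}(G_{\Q})$ equals $\widehat{s}$ for some $s\in\pi^{-1}(\beta)$. First, every $\varphi\in S_{u}(G_{\Q})$ annihilates $K:=\{\xi\oplus 0\in G_{\Q}\}$: if $\kappa=\xi\oplus 0\in K$ then $n\kappa+u\in G_{\Q}^{+}$ for all $n\in\Z$ (its function part is the constant $1$, and $L(n\xi+1_{0})=1$ near $\pi^{-1}(0)$ since $L(\xi)=0$ there), so $n\varphi(\kappa)+1\ge0$ for every $n\in\Z$, forcing $\varphi(\kappa)=0$. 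As $\xi\oplus f\mapsto f$ identifies $G_{\Q}/K$ with $(\tcR(\Q),\tcR(\Q)^{+})$ and carries $\sigma$ to $\sigma_{S}$, the task reduces to showing that a positive group homomorphism $\bar\varphi\colon\tcR(\Q)\to\R$ with $\bar\varphi(1)=1$ and $\bar\varphi\circ\sigma_{S}=\alpha\bar\varphi$ is evaluation at a point of $\pi^{-1}(\beta)$. When $\beta=0$ this is direct: the elements $v_{k}:=(\delta_{k,n})_{n\in\Z}\oplus e^{k\pi}\in G_{\Q}$ satisfy $\sigma^{k}(v_{k})=u$, hence $\varphi(v_{k})=1$; the crude bound $|\bar\varphi(h)|\le\|h\|_{\infty}$ for bounded $h$ (from $c\pm h\in\tcR(\Q)^{+}$ for $c\in\Q$, $c>\|h\|_{\infty}$) together with $\bar\varphi(e^{-N\pi}h)=\bar\varphi(h)$ forces $\bar\varphi$ to vanish on all function parts supported off $\pi^{-1}(0)$, and the decomposition $\xi\oplus f=\sum_{n}\xi_{n}v_{n}+\bigl(0\oplus(f-L(\xi))\bigr)$ gives $\varphi(\xi\oplus f)=\sum_{n}\xi_{n}=f(\tau_{S})=\widehat{\tau_{S}}(\xi\oplus f)$. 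When $\beta\ne0$ the engine is the estimate that, for $f\in A_{00}(S)$ with $\supp f\subset\pi^{-1}(U)$ for an interval $U$ with $\beta\notin\overline{U}$, iterating $\bar\varphi\circ\sigma_{S}=\alpha\bar\varphi$ in the direction determined by the side on which $U$ lies and bounding $\|\sigma_{S}^{\mp N}f\|_{\infty}$ on the compact set $\supp f$ yields $|\bar\varphi(f)|\le e^{-N\,\dist(\beta,U)}\|f\|_{\infty}$ for all $N$, hence $\bar\varphi(f)=0$; applying this to pieces of a partition of unity, and to $\psi_{k}\circ\pi$ for $k$ large for the normalization, shows that $\bar\varphi$ restricted to $A_{00}(S)$ (extended from $\cG_{00}$ using $|\bar\varphi(\cdot)|\le\|\cdot\|_{\infty}$) is a positive unital functional concentrated on $\pi^{-1}(\beta)$. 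As in the proof of \cite[Lemma 4.6]{ETh} (cf.\ \cite[Lemma 2.3]{BEK2}), using the density of $\tcR(\Q)|_{\pi^{-1}(\beta)}$ in the continuous affine functions $A(\pi^{-1}(\beta))$ and the identification of the compact Choquet simplex $\pi^{-1}(\beta)$ with the state space of $A(\pi^{-1}(\beta))$, this functional is evaluation at a point $s\in\pi^{-1}(\beta)$; the non-compactly-supported elements of $\tcR(\Q)$, which decompose as $F\circ\pi+h$ with $h\in A_{00}(S)$ and $F$ eventually a Laurent polynomial in $e^{t}$, are dealt with via the analogous one-dimensional statement for the $F\circ\pi$ term, and altogether $\varphi=\widehat{s}$.

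\emph{Conclusion.} Now $\Psi_{\Q}$ is a continuous bijection of the proper simplex bundle $(S,\pi)$ onto $(S_{\sigma}(G_{\Q}),\pi_{\Q})$, compatible with the projections and affine on fibres; by Lemma \ref{Lem3.3} it is a homeomorphism and $(S_{\sigma}(G_{\Q}),\pi_{\Q})$ is a proper simplex bundle, which is (i). Then $\Psi_{\Q}$ and, by the second paragraph, $\iota_{{\frak u}\sigma}^{*}$ are isomorphisms of simplex bundles, and $\Psi_{\Z}=\iota_{{\frak u}\sigma}^{*}\circ\Psi_{\Q}$, which gives (ii) and (iii). I expect the genuinely delicate point to be the last step of surjectivity — passing from ``$\bar\varphi$ is concentrated on $\pi^{-1}(\beta)$'' to ``$\bar\varphi$ is a point evaluation'' — because of the bookkeeping needed to keep the relevant cut-off and approximating functions inside $\tcR(\Q)$; this is exactly where the argument parallels \cite[\S4.2]{ETh}.
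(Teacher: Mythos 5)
Your proposal is correct in its main ideas, but the route is structurally different from the paper's, in a way worth noting. The paper proves (i) \emph{directly}, bypassing $\Psi_\Q$: it identifies $\pi_\Q^{-1}(\beta)$ with a compact face of the state space $S(G_\Q^u)$, which is a Choquet simplex because $G_\Q^u$ inherits the Riesz interpolation property from $G_\Q$ (Lemma~\ref{Lem3.4}); the ``face'' claim is established via the Cauchy--Schwarz trick of Lemma~\ref{Lem3.6} to extend a state from $G_\Q^u$ to all of $G_\Q$. It then proves surjectivity of $\Psi_\Q$ only in part~(iii), via a Hahn--Banach extension of $\varphi'$ to $C^u(S)$, a Borel measure $m_\varphi$, and a dominated-convergence/ultrafilter argument to show independence of the choice of approximating sequence $(f_{yn})$ for $y\in\Aff(\pi^{-1}(\beta))$. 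You instead prove $\Psi_\Q$ is a continuous bijection first --- surjectivity obtained by iterating the relation $\bar\varphi\circ\sigma_S=\alpha\bar\varphi$ against the a~priori bound $|\bar\varphi(\cdot)|\le\|\cdot\|_\infty$ to force vanishing off $\pi^{-1}(\beta)$ --- and then harvest all of (i) from Lemma~\ref{Lem3.3} by transfer from $(S,\pi)$. This reorganization is legitimate and arguably cleaner; it has the side effect of revealing that Lemma~\ref{Lem3.4} (the RIP of $\tcR(\D_{\frak n})$) is not needed for Proposition~\ref{Prop3.5} itself, only downstream for the RAF realization. The trade-off is that your version of the delicate step --- passing from ``$\bar\varphi$ annihilates functions supported off $\pi^{-1}(\beta)$'' to a \emph{well-defined} positive functional on $\Aff(\pi^{-1}(\beta))$ --- is essentially deferred to \cite{ETh}; it does not follow immediately from concentration (a function vanishing on $\pi^{-1}(\beta)$ need not be supported off it), and one must interpose the cut-off/uniform-continuity estimate, together with the a~priori norm bound, which is exactly the content the paper's measure-theoretic argument is packaging. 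Both proofs share the same opening reduction (annihilation of $K=\{\xi\oplus 0\}$) and essentially the same argument for (ii) (clearing denominators and invoking torsion-freeness). One small cosmetic remark: your displayed bound $|\varphi(g)|\le 2C(e^{|a|+|b|}+e^{-|a|-|b|})^m$ is not quite what the domination $\pm g\le C\sum_j\binom{m}{j}v_{2j-m}+u$ gives, but a uniform bound of that flavour does hold and local compactness follows as you intend.
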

To prove the proposition, we shall consider the subgroup of $G_{\Q}$ for which $u$ is an order unit, and its elements of compact support. Let $\rho : G_{\Q}\rightarrow\tcR(\Q)$ denote the standard projection defined by $\rho(\xi\oplus f)=f$. Set $G_{\Q}^{u+}=\{g\in G_{\Q}^+ : \text{ there exists } N\in\N \text{ such that } x\leq Nu \}$, $G_{\Q}^{c+}=\{g\in G_{\Q}^+ : \supp(\rho(g)) \text{ is compact} \}$, $G_{\Q}^u=G_{\Q}^{u+}-G_{\Q}^{u+}$, and $G_{\Q}^{c}=G_{\Q}^{c+}-G_{\Q}^{c+}$.  It is straightforward to check that $(G_{\Q}^u, G_{\Q}^{u+})$ is an ordered abelian group with order unit $u$.  Since $(G_{\Q}, G_{\Q}^+)$ is a dimension group, we also see that $(G_{\Q}^u, G_{\Q}^{u+})$ has the RIP. Let $S(G_{\Q}^u)$ denote the state space of $G_{\Q}^u$,  a Choquet simplex (see for example \cite[Theorem 10.17]{Goodearl}). 

To simplify notation, set $Q_0=\Q[e^x, e^{-x}]$ and $Q=\Q[e^{-x}, (1-e^{-x})]$. For $\xi=(\xi_n)_{n\in\Z}\in \Q^{\infty}$,  define $\Sigma \xi\in Q_0$ by $\Sigma \xi(x)=\sum_{n\in\Z} \xi_n e^{nx}$. Let $\varphi\in S(G_{\Q}^u)$. Note that, since $\varphi(Ng)=N\varphi(g)$ for any $g\in G_{\Q}^u$ and $N\in \N$,  we have $\varphi(\lambda g)=\lambda\varphi(g)$ for $\lambda\in\Q$ and $g\in G_{\Q}^u$. 

\begin{lemma}\label{Lem3.6}
Suppose that $\varphi$, $\psi\in S(G_{\Q}^u)$, $\lambda>0$, and $\beta\in \R$ satisfy $\psi(g)\leq \lambda\varphi(g)$ for any $g\in G_{\Q}^{u+}$, and $\varphi(\sigma(g))=e^{-\beta}\varphi(g)$ for any $g\in G_{\Q}^c$. Then it follows that $\psi(\sigma(g))=e^{-\beta}\psi(g)$ for any $g\in G_{\Q}^c$.
\end{lemma}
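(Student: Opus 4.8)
My plan is to reduce the statement, by linearity, to showing $\psi(\sigma g)=e^{-\beta}\psi(g)$ for every $g$ in the positive part $G_\Q^{c+}$, and then to exploit positivity of the quadratic expressions $(e^{-\pi}-q)^2$ in the ``generator'' $e^{-\pi}$ of $\sigma$, for rational $q$ near $e^{-\beta}$. First a preliminary: an element $h\in G_\Q^{c+}$ (whose $\rho(h)$ is nonnegative of compact support) is generally not an order unit in $G_\Q^u$ — $u$ itself is not — but for rational $\varepsilon>0$ the element $h+\varepsilon u$ does lie in $G_\Q^{u+}$, since $\rho(h+\varepsilon u)=\rho(h)+\varepsilon$ is bounded and bounded below by $\varepsilon$. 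Applying the positive functionals $\psi,\varphi$ and letting $\varepsilon\to 0$ therefore gives $0\le\psi(h)\le\lambda\varphi(h)$ and $\varphi(h)\ge 0$ for all $h\in G_\Q^{c+}$. Moreover $\sigma$ and $\sigma^{-1}$ preserve $G_\Q^{c+}$ (they only multiply $\rho$ by the nowhere vanishing $e^{\mp\pi}$, leaving supports and signs alone), so the scaling hypothesis for $\varphi$ iterates: $\varphi(\sigma^{n}g)=e^{-n\beta}\varphi(g)$ for all $n\in\Z$, $g\in G_\Q^c$.

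Now fix $g\in G_\Q^{c+}$ and set $a_n:=\psi(\sigma^{n}g)\ge 0$, $n\in\Z$. For each rational $q$ the combination
\[ \sigma^{n+2}g-2q\,\sigma^{n+1}g+q^{2}\,\sigma^{n}g \]
is an element of $G_\Q$ (which is closed under rational scalars), and $\rho$ of it equals $e^{-n\pi}(e^{-\pi}-q)^2\rho(g)$, which is nonnegative with compact support; hence the combination lies in $G_\Q^{c+}$. Applying $\psi$ and using the preliminary, $q^{2}a_n-2q\,a_{n+1}+a_{n+2}\ge 0$ for all rational, and hence all real, $q$; consequently $a_{n+1}^{2}\le a_n a_{n+2}$ for every $n$ (if $a_n=0$ this merely says $a_{n+1}=0$). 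On the other hand, from $\psi\le\lambda\varphi$ (extended as above to $G_\Q^{c+}$) and the scaling of $\varphi$, $a_n=\psi(\sigma^{n}g)\le\lambda\varphi(\sigma^{n}g)=\lambda e^{-n\beta}\varphi(g)$ for all $n\in\Z$.

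From these two facts the result drops out. If $a_0=0$ then $a_1^{2}\le a_0 a_2=0$, so $\psi(\sigma g)=a_1=0=e^{-\beta}\psi(g)$. If $a_0>0$, the inequalities $a_{n+1}^{2}\le a_n a_{n+2}$ force every $a_n$ strictly positive and make the ratios $a_{n+1}/a_n$ nondecreasing, so $a_n\ge a_0(a_1/a_0)^{n}$ for $n\ge 0$; comparing with $a_n\le\lambda e^{-n\beta}\varphi(g)$ and letting $n\to\infty$ forces $a_1/a_0\le e^{-\beta}$, i.e. $\psi(\sigma g)\le e^{-\beta}\psi(g)$. Finally, the hypotheses are invariant under replacing $(\sigma,\beta)$ by $(\sigma^{-1},-\beta)$ (then $\varphi\circ\sigma^{-1}=e^{\beta}\varphi$ on $G_\Q^c$, and $\sigma^{-1}$ is again an automorphism preserving $G_\Q^{c+}$), so the same argument gives $\psi(\sigma^{-1}g)\le e^{\beta}\psi(g)$ for all $g\in G_\Q^{c+}$; applied to $\sigma g$ in place of $g$, this reads $\psi(\sigma g)\ge e^{-\beta}\psi(g)$. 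Hence $\psi(\sigma g)=e^{-\beta}\psi(g)$ on $G_\Q^{c+}$, and the general case follows by linearity.

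I expect the main obstacle to be not any single computation but the temptation to argue by localizing $g$ around the fibre $\pi^{-1}(\beta)$: on the part supported near $\pi^{-1}(\beta)$, $\sigma$ acts almost like scalar multiplication by $e^{-\beta}$, while on the part supported away from it one would iterate $\sigma^{\pm n}$ and dominate by a multiple of $u$ to kill $\psi$. Such cut-off decompositions are awkward inside $G_\Q$, whose order is the strict one and whose structure is rigid near $\pi^{-1}(0)$ and towards $\pm\infty$ — carrying them out honestly would drag in the approximation properties of $\cG_{00}$ and the extension lemmas used for Lemma \ref{Lem3.4}. The quadratic device with $(e^{-\pi}-q)^2$, $q$ rational, bypasses localization entirely: it detects the relation $\omega\circ\sigma=e^{-\beta}\omega$ on $G_\Q^c$ from genuine group elements together with the scaling of $\varphi$ and the domination $\psi\le\lambda\varphi$. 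The remaining points — moving the hypotheses from $G_\Q^{u+}$ to $G_\Q^{c+}$ by the $\varepsilon u$-perturbation, and the degenerate case $a_0=0$ — are routine.
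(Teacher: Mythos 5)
Your proof is correct, but it takes a genuinely different route from the paper's. Both rest on the same positivity ingredient---that $\psi$ is nonnegative on elements whose $\rho$-image is $(e^{-\pi}-q)^2\rho(g)$, $q\in\Q$---but exploit it differently. The paper packages this as a Cauchy--Schwarz inequality for the functional $\psi_g(q)=\psi(q\cdot g)$ on $Q_0=\Q[e^x,e^{-x}]$ and tests it against $\tq_n=e^{-x}-\alpha_n$ with rationals $\alpha_n\to e^{-\beta}$: since $\varphi(\tq_n^2\cdot g)=\tq_n^2(\beta)\varphi(g)\to 0$, the domination $\psi\le\lambda\varphi$ together with Cauchy--Schwarz gives $\psi(\sigma g)-\alpha_n\psi(g)=\psi_g(\tq_n)\to 0$ in one stroke, with no need to consider $\sigma^{-1}$. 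You instead record the positivity as log-convexity $a_{n+1}^2\le a_na_{n+2}$ of the orbit sequence $a_n=\psi(\sigma^ng)$ (which is precisely the Cauchy--Schwarz inequality for $\psi_{\sigma^ng}$ with $q=1$, $r=e^{-x}$), play this off against the exponential bound $a_n\le\lambda e^{-n\beta}\varphi(g)$ to get $\psi(\sigma g)\le e^{-\beta}\psi(g)$, and then pass to $\sigma^{-1}$ for the reverse inequality. The paper's route is shorter and one-sided; yours replaces the explicit Cauchy--Schwarz step with elementary growth estimates for a scalar sequence and makes explicit the $\varepsilon u$-perturbation, which the paper leaves implicit but which is needed to see that $\psi\ge 0$ on the relevant elements.

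One small imprecision to flag: since $G_\Q^+$ is defined by \emph{strict} positivity of $\rho$, the element $\sigma^{n+2}g-2q\,\sigma^{n+1}g+q^2\sigma^ng$ need not belong to $G_\Q^{c+}$ as you assert, because $(e^{-\pi}-q)^2$ can vanish on $S$; and your parenthetical gloss of $G_\Q^{c+}$ as ``$\rho$ nonnegative of compact support'' is likewise weaker than the paper's definition. This does not damage the argument: your $\varepsilon u$-perturbation already yields $\psi(h)\ge 0$ and $\psi(h)\le\lambda\varphi(h)$ for every $h\in G_\Q$ with $\rho(h)\ge 0$ of compact support, which is exactly the class to which you actually apply it; so the inequality $q^2a_n-2qa_{n+1}+a_{n+2}\ge 0$ stands. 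The aside that $u$ is not an order unit of $G_\Q^u$ is incorrect (the paper notes $u$ is an order unit there), but it plays no role.
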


\begin{proof}
For $\eta=(\eta_n)_{n\in\Z}$ and $\xi=(\xi_n)_{n\in\Z}\in \Q^{\infty}$,  let $\eta *\xi$ denote the convolution product of $\eta$ and $\xi$ (i.e., $\eta*\xi =(\sum_{k\in\Z} \eta_k\xi_{n-k})_n$).  For $q\in Q_0$ and $g=(\xi\oplus f)\in G_{\Q}^c$, setting $q^{\infty}=(q_n)_n\in \Q^{\infty}$ such that 
$\Sigma q^{\infty} =q$, we define $q\cdot g\in G_{\Q}^c$ by $q\cdot g=(q^{\infty}*\xi\oplus q\circ\pi f)$. Note that, since $\varphi(e^{-x}\cdot g)=e^{-\beta}\varphi(g)$ for any $g\in G_{\Q}^c$, it follows that $\varphi(q\cdot g)=q(\beta)\varphi(g)$ for any $q\in Q_0$ and $g\in G_{\Q}^c$. 

Fix $g\in G_{\Q}^{c+}$ and define $\psi_g(q)=\psi(q\cdot g)$ for $q\in Q_0$. Thus one obtains the Cauchy-Schwartz inequality 
\[\psi_g(qr)^2\leq \psi_g(q^2)\psi_g(r^2)\]
for any $q, r\in Q_0$. Choose a sequence $\alpha_n\in\Q$, $n\in\N$,  such that $\lim\limits_{n\to\infty}\alpha_n=e^{-\beta}$, and set $\tq_n=e^{-x}-\alpha_n\in Q_0$. Then we have  
\begin{align*}
\psi_g(\tq_n)^2&\leq\psi_g(\tq_n^2)\psi_g(1)\leq\lambda^{-1}\varphi(\tq_n^2\cdot g)\psi_g(1) \\
&=\lambda^{-1}\tq_n^2(\beta)\varphi(g)\psi(g)\rightarrow 0\quad (n\to \infty),
\end{align*}
which implies that $\lim_{n\to\infty}\psi(e^{-x}\cdot g) -\alpha_n\psi(g)=0$. Since $G_{\Q}^c=G_{\Q}^{c+}-G_{\Q}^{c+}$, we have $\psi(e^{-x}\cdot g)=e^{-\beta}\psi(g)$ for any $g\in G_{\Q}^c$.
\end{proof}

\begin{proof}[Proof of Proposition \ref{Prop3.5}]
{\rm (i)}. First we show that $\pi_{ \Q} : S_{\sigma}(G_{\Q})\rightarrow \R$ is a proper map. Let $K\subset \R$ be a compact set, and $s_{\lambda}=(\varphi_{\lambda}, \beta_{\lambda})\in \pi_{\Q}^{-1}(K)$, $\lambda\in \Lambda$ be a universal net (see \cite{Ped2} for the definition).  For $g\in G_{\Q}$ such that $\rho(g)$ is bounded on $S$, there exists $c\in \N$ such that $|\varphi_{\lambda}(g)|<c$ for any $\lambda \in \Lambda$. Then the net $\varphi_{\lambda}(g)$, $\lambda\in\Lambda$, converges to a point  in $[-c, c]$. For a general $g\in G_{\Q}$, there are $g_0$, $g_+$, $g_-\in G_{\Q}$ such that $g=g_-+ g_0 + g_+$, and $\rho(g_0)$, $\rho(\sigma^N(g_-))$, and $\rho(\sigma^{-N}(g_+))$ are bounded on $S$. By $(\varphi_{\lambda}, \beta_{\lambda})\in \pi_{\Q}^{-1}(K)$ and $\varphi_{\lambda}(g_{\pm})=e^{\pm N\beta_{\lambda}}\varphi_{\lambda}(\sigma^{\mp N}(g_{\pm}))$, it follows that $\varphi_{\lambda}(g)$, $\lambda\in\Lambda$, converges in $\R$. Set $\displaystyle \varphi(g)=\lim_{\lambda \to \infty}\varphi_{\lambda}(g)$.  Setting $\displaystyle \beta=\lim_{\lambda\to \infty}\beta_{\lambda}\in K$, we have $\varphi(\sigma(g))=e^{-\beta}\varphi(g)$ for any $g\in G_{\Q}$, which implies that $s_{\lambda}$ converges to $(\varphi, \beta)\in \pi_{\Q}^{-1}(K)$. Thus, $\pi_{\Q}^{-1}(K)$ is compact.

For $s=(\varphi, \beta)\in S_{\sigma}(G_{\Q})$, we see that $\pi_{\Q}^{-1}([\beta- \varepsilon, \beta+\varepsilon])$ for $\varepsilon>0$ is a compact neighbourhood of $S$, which means that $S_{\sigma}(G_{\Q})$ is locally compact. 

It remains to show that $\pi_{\Q}^{-1}(\beta)$ is a Choquet simplex for any $\beta\in\R$. Fix $\beta\in\R$.  Denote by $\iota_u$ the canonical embedding of $G_{\Q}^u$ into $G_{\Q}$ and define the induced map $\iota_u^* : \pi_{\Q}^{-1}(\beta)\rightarrow S(G_{\Q}^u)$ by $\iota_u^*(s)=\varphi\circ\iota_u$ for $s=(\varphi, \beta)\in \pi_{\Q}^{-1}(\beta)$. From the definition, it follows that $\iota_u^*$ is affine and continuous in the topology of pointwise convergence. We see that points of $\pi_{\Q}^{-1}(\beta)$ are separated by $G_{\Q}^u$, because any $g\in G_{\Q}$ can be decomposed as $g=\sigma^{-N}(g_-)+ g_0 +\sigma^N(g_+)$ for some $g_0$, $g_-$, $g_+\in G_{\Q}^u$ and $N\in\N$. Thus $\iota_u^*$ is injective. Since a compact face of a Choquet simplex is also a Choquet simplex (see \cite[Theorem 10.9]{Goodearl}), it suffices to show that $\Image(\iota_u^*)$ is a face of $S(G_{\Q}^u)$. 

Let $\varphi\in \Image(\iota_u^*)$, $\psi$, $\eta\in S(G_{\Q}^u)$, and $\lambda\in (0, 1)$ satisfy $\varphi=\lambda\psi + (1-\lambda)\eta$.  In the following argument, we first consider a specific $g\in G_{\Q}^u$ with the property $\rho(g)=\psi_{k+}\circ \pi$, $\psi_{k-}\circ\pi$, $(\psi_{k+}-\psi_{l+})\circ\pi$, or $(\psi_{k-}-\psi_{l-})\circ\pi$ for $k$, $l\in\N$ with $k> l$. Set $f_g=\rho(g)$, $Q_g=\{q\in Q\ : \ 0\oplus (q\circ\pi) f_g\in G_{\Q}^u\}$, and $\psi_g(q)=\psi(0 \oplus (q\circ\pi) f_g)$ for $q\in Q_g$. Thus we  obtain the  Cauchy-Schwartz inequality, 
$\psi_g(qr)^2\leq\psi_g(q^2)\psi_g(r^2)$,
for any $q$, $r\in Q_g$ satisfying $qr$, $q^2$, $r^2\in Q_g$.  By an argument  similar to that in the proof of Lemma \ref{Lem3.6},
we have 
\begin{align}
\psi(0\oplus (q\circ\pi) f_g)=q(\beta)\psi(0\oplus f_g)\quad  \text{ for any } q\in Q_g.
\end{align} For general $g=\xi\oplus f\in G_{\Q}$ represented 
as $f=(f_-\psi_{k -})\circ\psi + L(\xi)\psi_{k}\circ \pi + (f_+\psi_{k +})\circ\pi + f_0$ with $f_+$, $f_-\in Q$, and $f_0\in \cG_{00}$,  define 
\[\tpsi(g)=f_-(\beta)\psi(0\oplus \psi_{k -}\circ\pi)+ \psi(\xi\oplus L(\xi)\psi_{k}\circ\pi)+ f_+(\beta)\psi(0\oplus\psi_{k+}\circ\pi) +\psi(0\oplus f_0).\]
Then $\tpsi : G_\Q\rightarrow\R$ is well defined (single-valued). Indeed, if two elements $g_1$, $g_2\in G_{\Q}$ are written as 
\[ g_i= \xi^{(i)}\oplus\left((f_-^{(i)}\psi_{k_i-})\circ\pi + L(\xi^{(i)})(\psi_{k_i}\circ\pi)+(f_+^{(i)}\psi_{k_i +})\circ\pi + f_0^{(i)}\right),  \quad i=1, 2,\]
for some $f_-^{(i)}$, $f_+^{(i)}\in Q$, and $f_0^{(i)}\in G_{00}$, and if $g_1=g_2$, then we may assume that $k_1\leq k_2$ and $f_{\pm}^{(1)}=f_{\pm}^{(2)}$ without loss of generality. Applying (1)  to $f_g=(\psi_{k_2 \pm} -\psi_{k_1 \pm})\circ\pi$ we have 
\[\psi(0\oplus f_{\pm}(\psi_{k_2\pm}-\psi_{k_1\pm})\circ\pi)=f_{\pm}(\beta)\psi(0\oplus(\psi_{k_2\pm}-\psi_{k_1\pm})\circ\pi),\] which implies  $\tpsi(g_2)-\tpsi(g_1)=\psi(g_2-g_1) =0$. Similarly, applying (1) to $f_g=\psi_{k\pm}\circ\pi$ we see that $\tpsi(g)=\psi(g)$ for any $g\in G_{\Q}^u$.  It is not hard to see that $\tpsi : G_{\Q}\rightarrow \R$ is a group homomorphism such that $\tpsi(u)=1$. In the following paragraph we shall show that $\tpsi$ is positive. 

Let $g=\xi\oplus f\in G_{\Q}$ be such that $f>0$ and let $k\in\N$, $f_+$, $f_-\in Q$, and $f_0\in\cG_{00}$ satisfy $f=(f_-\psi_{k-})\circ\pi + L(\xi)(\psi_k\circ\pi) +(f_+\psi_{k +}\circ\pi) + f_0$. Let $N\in\N$ and continuous functions $e_+$, $e_-$ on $\R$ satisfy $\supp(f_0)\subset [-N, N]$, $0\leq e_{\pm}\leq 1$, $\supp(e_{\pm})\subset\pm[N, \infty)$, and $e_{\pm}|_{\pm [N+1, \infty)}=1$. Set 
\[ z=((1-e_-)f_-\psi_{k -})\circ\pi+ L(\xi)\psi_{k}\circ\pi +((1-e_+)f_+\psi_{k+})\circ\pi + f_0\in A(S).\]
Since $f>0$, we may assume that $N\in\N$, $e_+$, and $e_-$ also satisfy $f_{\pm}e_{\pm}\geq 0$, and that $z\geq 0$. Since there exists (a large) $M\in\N$ such that $e^{\mp Mx}f_{\pm}$ are bounded on $\pm [0, \infty)$, by approximating $e^{\pm Mx}(1-e_{\pm})$ on $\pm[0, \infty)$ uniformly, we obtain sequences $d_{+ n}$, $d_{- n}\in Q$ such that 
\[\sup_{x\in \pm [0, \infty)}|d_{\pm n}f_{\pm}(x)-(1-e_{\pm})f_{\pm}(x)|< 1/n.\]
Set 
\begin{align*}
y_n&= (f_-(1-d_{-n})\psi_{k-})\circ\pi +(f_+(1-d_{+n})\psi_{k \pm})\circ\pi\in\tcR(\Q),\\
z_n&=(f_-d_{- n}\psi_{k -})\circ\pi +L(\xi)\psi_{k}\circ\pi + (f_+d_{+ n}\psi_{k+})\circ\pi + f_0\in\tcR(\Q).
\end{align*}
Then it follows that $0\oplus y_n$, $\xi \oplus z_n\in G_{\Q}$ and $g=(0\oplus y_n)+(\xi\oplus z_n)$ for any $n\in\N$. Since $f_{\pm}(1-d_{\pm})$ converge uniformly to $e_{\pm}f_{\pm}$ on $\pm[0, \infty)$, we have that $\displaystyle \liminf_{n\to\infty}\tpsi(0\oplus y_n)$
\[=\liminf_{n\to \infty}f_-(1-d_{-n})(\beta)\psi(0\oplus \psi_{k -}\circ\pi)+f_+(1-d_{+ n})(\beta)\psi(0\oplus \psi_{k +}\circ\pi)\geq 0.\]
Since $z_n$ converges uniformly to $z\geq 0$ on $S$, and $\supp(z)$ is compact, we have  $\xi\oplus z_n\in G_{\Q}^u$ and $\displaystyle \liminf_{n\to\infty}\tpsi(\xi\oplus z_n)=\liminf_{n\to\infty} \psi(\xi\oplus z_n)\geq 0$. Thus we conclude that $\tpsi(g)\geq 0$.

 By Lemma \ref{Lem3.6} and $\psi(g)\leq\lambda^{-1}\varphi(g)$ for $g\in G_{\Q}^{u+}$, we see that $\tpsi(\sigma(g))=e^{-\beta}\tpsi(g)$ for any $g\in G_{\Q}$. 
 From $\tpsi(g)=\psi(g)$ for any $g\in G_{\Q}^u$, it follows that $\psi=\iota_u^*(\tpsi)\in \Image(\iota_u^*)$. By the same argument, we also see that $\eta\in \Image(\iota_u^*)$, which means that $\Image(\iota_u^*)$ is a face of $S(G_{\Q}^u)$, as asserted, and the proof of (i) is complete. 

\rm{(ii)}. For any $g\in G_{\Q}$, consider the smallest number $N_g\in \N$ such that $N_gg\in G_{\Z}$. To show injectivity of $\iota_{{\frak u} \sigma}^*$,  let $s=(\varphi, \pi_{\Q}(\varphi))$, $t=(\psi, \pi_{\Q}(\psi))\in S_{\sigma}(G_{\Q})$ satisfy $\iota_{{\frak u} \sigma}^*(s)=\iota_{{\frak u} \sigma}^*(t)$. Then, 
\[N_g\varphi(g)=\varphi\circ\iota_{\frak u}(N_g g) =\psi\circ\iota_{\frak u}(N_gg) = N_g\psi(g),\]
for any $g\in G_{\Q}$, and  it follows that $s=t$. To prove surjectivity of $\iota_{{\frak u} \sigma}^*$,  let $s=(\varphi, \beta)\in S_{\sigma}(G_{\Z})$ and define a map $\tphi :G_{\Q}\rightarrow\R$ by $\tphi(g)=\frac{1}{N_g}\varphi(N_gg)$ for $g\in G_{\Q}$. From
$\tphi(g +h)=\frac{1}{N_{g+h}N_gN_h}\varphi(N_{g+h}N_gN_h(g+h))=\tphi(g)+\tphi(h)$ for any $g$, $h\in G_{\Q}$, it follows that $\tphi$ is a group homomorphism. It is straightforward to see that $\tphi$ is a positive group homomorphism satisfying $\tphi(u)=1$. By $\tphi(\sigma(g))=\frac{1}{N_gN_{\sigma(g)}}\varphi(N_gN_{\sigma(g)}\sigma(g))=\frac{1}{N_gN_{\sigma(g)}}e^{-\beta}\varphi(N_gN_{\sigma(g)}g)= e^{-\beta}\tphi(g)$ for any $g\in G_{\Q}$, we have $(\tphi, \beta)\in S_{\sigma}(G_{\Q})$ and $\iota_{{\frak u}\sigma}^*((\tphi, \beta))=(\varphi, \beta)$. 

By the same argument as in {\rm (i)}, we see that $S_{\sigma}(G_{\Z})$ is also locally compact. From the definition of $\iota_{\frak u}^*$, it follows that $\iota_{{\frak u}\sigma}^*$ is a bijective continuous map such that $\pi_{\Z}\circ\iota_{{\frak u}\sigma}^*=\pi_{\Q}$ and $\iota_{{\frak u}\sigma}^*|_{\pi_{\Q}^{-1}(\beta)}$ is affine for any $\beta\in\R$. Because of Lemma \ref{Lem3.3}, $\iota_{{\frak u} \sigma}^*$ is an isomorphism of simplex bundles and $(S_{\sigma}(G_{\Z}), \pi_{\Z})$ is also proper. 

{\rm (iii)}.
As $\Psi_{\Z}=\iota_{{\frak u} \sigma}^*\circ\Psi_{\Q}$, it suffices to show that $\Psi_{\Q} : S\rightarrow S_{\sigma}(G_{\Q})$ is an isomorphism of simplex bundles. From the definition, it is straightforward to check that $\Psi_{\Q}$ is injective and continuous, and the restriction of $\Psi_{\Q}$ to $\pi^{-1}(\beta)$ is affine for any $\beta\in\R$. By Lemma \ref{Lem3.3}, it is enough to show that $\Psi_{\Q}$ is surjective. However, this is done in the proof of \cite[Lemma 4.12]{ETh}. We sketch a proof for the convenience of the reader. 

Let $(\varphi, \beta)$ be in $S_{\sigma}(G_{\Q})$. If $g=\xi\oplus f\in G_{\Q}$ satisfies $f=0\in \tcR(\Q)$, then it follows that $ng\leq u$ for any $n\in \N$, which implies $\varphi(g)=0$. Thus we obtain a positive group homomorphism $\varphi' : \tcR(\Q)\rightarrow \R$ such that $\varphi'\circ \rho =\varphi$. Denote by $A_{\R}(S)$ the real Banach space, with the supremum norm, consisting of functions in $A(S)$ which have a limit at infinity. Since $\rho(G_{\Q}^u)\cap A_{\R}(S)$ is uniformly dense in $A_{\R}(S)$ (see \cite[Lemma 4.11]{ETh}), we can extend $\varphi'$ to $A_{\R}(S)$ in such a way that $ |\varphi'(f)|\leq \sup_{s\in S}|f(s)|$ for any $f\in A_{\R}(S)$. By the Hahn-Banach theorem, we can further extend $\varphi'$ to the Banach space $ C^u(S)=\{f\in C(S, \R) : \text{ there exists }\lim_{s\to \pm\infty} f(s)\}$. Since $\varphi'(1)=1$, the extension of $\varphi'$ is also a positive bounded linear functional on $C^u(S)$. Thus we obtain a Borel measure $m_{\varphi}$ on $S$ such that 
\[\varphi'(f) =\int_S f(x) \ dm_{\varphi}(x)\quad\text{for any }f\in C_0(S).\]

For $y\in \Aff(\pi^{-1}(\beta))$ such that $y>0$, by \cite[Lemma 4.4 (2) and Lemma 4.11]{ETh}, we obtain a sequence $g_{y n}=(\xi_{y n}\oplus f_{y n})\in G_{\Q}$, $n\in\N$, such that $\supp(f_{y n})$ is compact, $f_{y n}\geq 0$ for any $n\in\N$, $\sup_{n\in\N}\sup_{s\in S}|f_{y n}(s)|<\infty$, and $f_{y n}|_{\pi^{-1}(\beta)}$ converges to $y$ uniformly. Let $\omega$ be a free ultrafilter on $\N$. For $y\in \Aff(\pi^{-1}(\beta))$ with $y>0$, we define $\displaystyle \barphi(y)=\lim_{n\to\omega}\varphi'(f_{yn})$. Then $\barphi$ is independent of the choice of $\xi_{yn}$ and $f_{yn}$. Indeed, if we suppose that $g_{yn}'=(\xi_{yn}'\oplus f_{yn}')\in G_{\Q}$ is another choice, then for $\varepsilon >0$ there exists $N\in\N$ such that $\sup_{s\in \pi^{-1}(\beta)}|f_{yn}(s) - f_{yn}'(s)|< \varepsilon$, for any $n>N$. Set $X_n=\supp(f_{yn})\cup \supp(f_{yn}')$, which is compact for any $n\in \N$, and let $q_m\in \Q[e^x, e^{-x}]$, $m\in\N$, be a sequence such that $q_m |_{X_n}$ converges to the characteristic function $\chi_{\beta}$
 at $\{\beta\}$ and such that $\supp_{s\in X_n}|q_m(s)|$, $m\in\N$, is a bounded sequence. 
Then, by the Lebesgue dominated convergence theorem, 
\begin{align*}
\lim_{m\to\infty} \varphi'((q_m\circ\pi) f_{yn}) &=\int_S(\chi_{\beta}\circ\pi )f_{yn}\ d m_{\varphi}\approx_{\varepsilon} \int_S (\chi_{\beta}\circ\pi) f_{yn}'\ dm_{\varphi} =\lim_{m\to\infty}\varphi'((q_m\circ\pi)f_{yn}'), 
\end{align*}
 for any $n\geq N$.
It follows that 
\[|\varphi'(f_{y n}- f_{y n}')|\leq \limsup_{m\to \infty}\left(\left|\varphi'(q_m\circ\pi(f_{yn}-f_{yn}'))\right| +\left|(1-q_m(\beta))\varphi'(f_{yn}- f_{yn}')\right|\right)<\varepsilon. \]
Since $\varepsilon >0$ is arbitrary, we have $\lim_{n\to\omega} \varphi'(f_{yn})=\lim_{n\to\omega}\varphi'(f_{yn}')$. 

Since $\barphi$ is well defined, we see that $\barphi(x+y)=\barphi(x)+\barphi(y)$ for any $x, y\in\Aff(\pi^{-1}(\beta))$ with $x, y >0$. Because $\Aff(\pi^{-1}(\beta))$ is an ordered abelian group in the strict ordering, we can extend $\barphi$ to a positive group homomorphism from $\Aff(\pi^{-1}(\beta))$ to $\R$. Thus there exists $s\in \pi^{-1}(\beta)$ and $\lambda\in (0, \infty)$ such that $\barphi(y)=\lambda y(s)$ for any $y\in\Aff(\pi^{-1}(\beta))$; see \cite[Theorem 7.1]{Goodearl}, for example. From the definition of $\barphi$ it follows that 
\[\lambda f(s) = \barphi(f|_{\pi^{-1}(\beta)})=\varphi(\xi\oplus f),\]
for any $g=\xi\oplus f\in G_{\Q}$ such that $\supp(f)$ is compact. For $g=\xi\oplus f\in G_{\Q}$ with $f\in A_0(S)$ and $\xi=0$, by the property (1) of $\cG_{00}$, there exists a sequence $f_n\in\cG_{00}$, $n\in\N$, which converges to $f$ uniformly. Then it follows that $\varphi(0\oplus f)=\lim_{n\to\infty}\varphi(0\oplus f_n)=\lambda f(s)$. For $f\in Q$, there exists a large $N\in\N$ such that $(fe^{\mp Nx}\psi_{k\pm})\circ \pi\in A_0(S)$ for any $k\in\N$. Then we have that $\varphi(0\oplus (f\psi_{k\pm})\circ\pi)=e^{\pm N\beta}\varphi(0\oplus (e^{\mp Nx}f\psi_{k\pm})\circ\pi)=\lambda f\psi_{k\pm}(\beta)$, which implies that $\varphi(g)=\lambda f(s)$ for any $g=\xi\oplus f\in G_{\Q}$. Since $\varphi(u)=1$, it is clear that $\lambda=1$, and thus we conclude that $\Psi_{\Q}(s)=(\varphi, \beta)$.
\end{proof}

\section{Rationally AF algebras}\label{Sec4}

In order to realize a given rational dimension group at the level of $K_0$-groups for operator algebras, it seems plausible that a corresponding \Cs{} should become an AF-algebra after tensoring 
with a UHF-algebra. This tensoring procedure is called rationalization in \cite{Bl3}, \cite{Win3}. While the rationalization was determined by a single UHF-algebra in these previous works, we would like to use the term ``rationally'' to refer to the tensor products with two UHF-algebras, that are relatively prime in the natural sense. (cf. Remark \ref{Rem3.3}.)

We prepare some facts and notation concerning \Cs s.
For a \Cs\ $C$, we let $C^{\sim}$ denote the unitization of $C$, and $C^1$ denote the closed unit ball of $C$. We denote by $\id_C$ the identity automorphism of $C$.  For a subset $F\subset C$, let $P(F)$ denote the set of all projections in $F$ and $F^+$ the set of all positive elements in $F$. When $C$ is a unital \Cs, the symbol $1_C$ means the unit of $C$. For two elements $x$, $y\in C$, and $\varepsilon >0$,  we use the notation $x\approx_{\varepsilon} y$ if $\|x -y \|< \varepsilon$. For subsets $F$, $G$ in $C$, and $\varepsilon >0$, the notation $F \subset_{\varepsilon} G$ means  that  for any $x\in F$ there exists $y\in G$ with $y\approx_{\varepsilon} x$. 

For a supernatural number ${\frak n}$, let $M_{\frak n}$ denote the uniformly hyperfinite (UHF) algebra of type ${\frak n}$ (i.e., $(K_0(M_{\frak n}), K_0(M_{\frak n})^+, [1_{M_{\frak n}}]_0)\cong (\D_{\frak n}, \D_{\frak n}^+, 1_{\D_{\frak n}})$), and $\cK$ the \Cs{} of all compact operators on a separable Hilbert space. For a natural number $n\in\N$, we let $\{e_{i j}^{(n)}\}_{i, j=1}^n$ or $\{e_{i j}\}_{i, j=1}^n$ denote the set of canonical matrix units of $M_n$. 

For two \Cs s $C$, $D$, and a $*$-homomorphism $\varphi$ from $C$ to $D$, denote the induced map from $K_0(C)$ to $K_0(D)$, a homomorphism of pre-ordered abelian groups (the positive cone of $K_0$ being the canonical image of the Murray-von Neumann semigroup---see \cite{RLL}), by $\varphi_*$.

\begin{definition}\label{Def4.1}
Given a \Cs{} $A$, we shall say that $A$ is \emph{rationally AF} (RAF) if $A\otimes M_{\frak p}$ and $A\otimes M_{\frak q}$ are AF-algebras for some relatively prime  pair of supernatural numbers ${\frak p}$ and ${\frak q}$. 
\end{definition}

\begin{lemma}\label{Lem4.2}
{ }\
\begin{itemize}
\item[\rm (i)] If $A$ is a RAF-algebra, then $A\otimes M_{\frak n}$ is approximately finite dimensional for every infinite supernatural number ${\frak n}$. 
\item[\rm (ii)] Any RAF-algebra has  trivial $K_1$-group.
\end{itemize}
\end{lemma}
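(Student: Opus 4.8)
The plan is to establish (ii) directly from $K$-theory, and (i) by tensor-algebraic reductions followed by the deformation/classification machinery that also underlies Theorem~\ref{Thm5.3}; I expect (i) to be the hard part.

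\emph{Part (ii).} An AF algebra has trivial $K_1$, and $K_*$ is continuous. Writing $A\otimes M_{\frak p}$ as the inductive limit of the $A\otimes M_{n_i}$ along a chain $n_1\mid n_2\mid\cdots$ with product $\frak p$, and identifying $K_1(A\otimes M_n)$ with $K_1(A)$ in the standard way, the connecting maps become multiplication by $n_{i+1}/n_i$, so $K_1(A\otimes M_{\frak p})\cong K_1(A)\otimes\D_{\frak p}$, and likewise $K_1(A\otimes M_{\frak q})\cong K_1(A)\otimes\D_{\frak q}$. Both vanish since $A\otimes M_{\frak p}$, $A\otimes M_{\frak q}$ are AF. Hence for $g\in K_1(A)$ we get $ng=0$ for some $n\mid\frak p$ (from $g\otimes 1_{\frak p}=0$) and $mg=0$ for some $m\mid\frak q$ (from $g\otimes 1_{\frak q}=0$); since $\gcd(n,m)=1$, writing $1=an+bm$ gives $g=ang+bmg=0$. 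Thus $K_1(A)=0$. Equivalently, $K_1(A)$ is torsion-free by Lemma~\ref{Lem3.1}\,(i) and torsion since $K_1(A)\otimes\Q\cong(K_1(A)\otimes\D_{\frak p})\otimes\Q=0$, hence trivial.

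\emph{Part (i): reductions.} If $\frak p$ or $\frak q$ is finite then $A$ is itself AF and there is nothing to prove, so assume both are infinite. Since $A\otimes M_{\frak p}$ AF forces $A\otimes M_{\frak p'}=(A\otimes M_{\frak p})\otimes M_{\frak p'/\frak p}$ to be AF (an AF algebra tensored with a UHF algebra) for every supernatural $\frak p'\supseteq\frak p$, we may enlarge and assume $\frak p\frak q$ is the universal supernatural number; then any infinite $\frak n$ factors as $\frak n=\frak n_{\frak p}\frak n_{\frak q}$ with $\frak n_{\frak p}\mid\frak p$, $\frak n_{\frak q}\mid\frak q$. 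If moreover $\frak p\mid\frak n$ or $\frak q\mid\frak n$ we are done, as then $A\otimes M_{\frak n}=(A\otimes M_{\frak p})\otimes M_{\frak n/\frak p}$ is AF; the substantive case is the rest. In all cases $M_{\frak n}$ is an infinite-dimensional UHF algebra, hence absorbs $\cZ$, so $B:=A\otimes M_{\frak n}$ is separable, $\cZ$-stable, and nuclear (since $A$, being a hereditary sub-\Cs{} of the AF algebra $A\otimes M_{\frak p}$, is nuclear).

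\emph{Part (i): the crux.} By (ii), $K_1(B)\cong K_1(A)\otimes\D_{\frak n}=0$; and $K_0(B)\cong K_0(A)\otimes\D_{\frak n}$ is torsion-free, and weakly unperforated as the $K_0$-group of a $\cZ$-stable \Cs, hence unperforated. Moreover $K_0(B)\otimes\D_{\frak p}\cong K_0(A\otimes M_{\frak n\frak p})$ and $K_0(B)\otimes\D_{\frak q}\cong K_0(A\otimes M_{\frak n\frak q})$ are dimension groups, being the $K_0$-groups of the AF algebras $(A\otimes M_{\frak p})\otimes M_{\frak n}$ and $(A\otimes M_{\frak q})\otimes M_{\frak n}$; so $K_0(B)$ is a rational dimension group by Lemma~\ref{Lem3.1}\,(v). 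The intended finish is then: take an AF algebra $E$ with $K_0(E)\cong K_0(B)$; since $E\otimes M_{\frak p}$ and $B\otimes M_{\frak p}$ are AF with isomorphic $K_0$, they are isomorphic by Elliott's classification \cite{Ell01}, similarly over $\frak q$, and these two isomorphisms are compatible over $\frak p\frak q$ because both are obtained by tensoring up one fixed isomorphism $K_0(E)\cong K_0(B)$; as $E$ and $B$ are $\cZ$-stable, \cite[Proposition~4.5]{W1} gives $B\cong E$, so $B$ is AF. \emph{The main obstacle} is exactly this last step: one must know that $K_0(B)$ is a genuine (not merely rational) dimension group, so that such an $E$ exists, and carry the deformation argument of \cite{W1} through in the non-unital generality needed --- the same two points that are handled in the proof of Theorem~\ref{Thm5.3}.
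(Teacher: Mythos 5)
Your part (ii) is essentially the paper's proof. You establish $K_1(A\otimes M_{\frak i})\cong K_1(A)\otimes\D_{\frak i}$ for ${\frak i}={\frak p},{\frak q}$ (via continuity of $K_1$ rather than the K\"unneth theorem, but the result is the same) and deduce $K_1(A)=0$ from vanishing over two relatively prime supernatural numbers; your B\'ezout wrap-up is equivalent to the paper's appeal to Lemma \ref{Lem3.1}\,(i), and you even supply that phrasing as an alternative.

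Part (i), however, is where your proposal genuinely diverges --- and has a gap. The paper's proof is short and entirely local. For finite $F\subset A$ and $\varepsilon>0$ it finds finite-dimensional $A_{\frak p}\subset A\otimes M_{m_{\frak p}}$ and $A_{\frak q}\subset A\otimes M_{m_{\frak q}}$ with $F\otimes 1\subset_\varepsilon A_{\frak i}$, chooses natural numbers $a,b,N$ with $am_{\frak p}+bm_{\frak q}=N$ and $N\mid\frak n$ (possible since $\gcd(m_{\frak p},m_{\frak q})=1$ and $\frak n$ is infinite), and pushes the finite-dimensional algebra $A_{\frak p}\oplus A_{\frak q}$ into $A\otimes M_N\subset A\otimes M_{\frak n}$ via a unital embedding $M_{m_{\frak p}}\oplus M_{m_{\frak q}}\hookrightarrow M_N$. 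The coprimality enters once, at the $C^*$-algebra level, and no $K$-theory or classification machinery is needed at all.

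Your proposal instead wants to realize $K_0(B)$ by an AF algebra $E$ and then invoke Winter's Proposition 4.5 to get $B\cong E$. As you yourself flag, this requires $K_0(B)$ to be a genuine dimension group, and you only establish that it is a \emph{rational} dimension group. Lemma \ref{Lem3.1}\,(v) does not close the gap: applied to $G=K_0(B)$ it gives that $K_0(B)\otimes\D_{\frak m}$ has RIP for every infinite $\frak m$, but says nothing about $K_0(B)$ itself; applied to $G=K_0(A)$ it would require $K_0(A)$ to be unperforated, which is not available for a general RAF-algebra (that is exactly what Proposition \ref{Prop4.3}\,(ii) proves later, under a $\cZ$-absorption hypothesis and using the present lemma in spirit). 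Pointing to Theorem \ref{Thm5.3} is no help either: its statement never asserts that $K_0$ of a $\cZ$-stable RAF-algebra is a dimension group --- the whole point of ``rationally AF'' is that it need not be --- and its proof explicitly cites Lemma \ref{Lem4.2}\,(i), so leaning on it here would be circular. The step ``take an AF algebra $E$ with $K_0(E)\cong K_0(B)$'' therefore cannot be made without new input, and the elementary local argument in the paper is both far cleaner and, as far as I can see, the only route that stays within the logical scope of the lemma.
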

\begin{proof}
\noindent {\rm (i)}. Since $A\otimes M_{ n}$ is also a RAF-algebra for any $n\in\N$, to show $A\otimes M_{\frak n}$ is approximately finite dimensional, it suffices to show that for any finite subset $F$ of $A$ and $\varepsilon>0$ there exists a finite dimensional $\mathrm{C}^*$-subalgebra $B$ of $A\otimes M_{\frak n}$ such that $F\otimes 1_{M_{\frak n}}\subsetepsilon B$.

Taking large natural numbers $m_{\frak p}$ and $m_{\frak q}$ with $m_{\frak p} |{\frak p}$ and $m_{\frak q}|{\frak q}$, we have finite dimensional $\mathrm{C}^*$-subalgebras $A_{\frak i}$ of $A\otimes M_{\frak i}$ for ${\frak i}={\frak p}, {\frak q}$ such that $F\otimes 1_{M_{\frak i}}\subsetepsilon A_{\frak i}$ for both $\frak i={\frak p}, {\frak q}$. Since $m_{\frak p}$ and $m_{\frak q}$ are relatively prime, there exist natural numbers $a$, $b$, and $N$ such that $am_{\frak p} + bm_{\frak q}=N$ and $N|{\frak n}$ (cf. proof of Lemma \ref{Lem3.1}).  Therefore there exists a unital embedding $\Phi$ of $M_{m_{\frak p}}\oplus M_{m_{\frak q}}$ into $M_{\frak n}$. Identifying $A\otimes (M_{m_{\frak p}}\oplus M_{m_{\frak q}})$ with $(A\otimes M_{m_{\frak p}})\oplus (A\otimes M_{m_{\frak q}})$, we obtain a finite dimensional $\mathrm{C}^*$-subalgebra $B =(\id_A\otimes\Phi)(A_{\frak p}\oplus A_{\frak q})$ of $A\otimes M_{\frak n}$ which satisfies the above condition. 

\noindent{\rm (ii)}. Because of the K\"{u}nneth theorem \cite[Theorem 2.14]{Sch} (see also \cite[Theorem 23.1.3]{Bl2}), we see that $K_1(A)\otimes K_0(M_{\frak i})\cong K_1(A\otimes M_{\frak i})=0$ for both ${\frak i}={\frak p}, {\frak q}$. From {\rm (i)} of Lemma \ref{Lem3.1}, it follows that $K_1(A)$ is torsion free. Therefore the canonical embedding of $K_1(A)$ into $K_1(A)\otimes \D_{\frak p}=0$ is injective, and so $K_1(A)=0$.
\end{proof}

Note that any RAF-algebra is AF-embeddable, and so it is stably finite. Hence the ordered $K_0$-group of any RAF-algebra is a partially ordered abelian group. In combination with Lemma \ref{Lem3.1} {\rm (iii)}, the following results show that the ordered $K_0$-group of a $\mathcal{Z}$-absorbing RAF-algebra is an ordered abelian group. 

A typical example of a RAF-algebra is the Jiang-Su algebra $\mathcal{Z}$, which is constructed as a unital separable simple monotracial RAF-algebra whose ordered $K_0$-group is $(\Z, \Z^+)$. In \cite[Theorem 1]{GJS}, Gong, Jiang, and Su showed that the ordered $K_0$-group of a unital simple $\mathcal{Z}$-absorbing \Cs{} is weakly unperforated. Part {\rm (ii)} of the following Proposition  \ref{Prop4.3} is a variant of their argument for RAF-algebras in the absence of simplicity.

\begin{proposition}\label{Prop4.3}
{}\
\begin{itemize}
\item[\rm (i)] Let $A$ be a $C^*$-algebra and ${\frak n}$ a supernatural number. Then the pre-ordered abelian groups $(K_0(A\otimes M_{\frak n}),$ $K_0(A\otimes M_{\frak n})^+)$ and $(K_0(A)\otimes \D_{\frak n}, {K_0(A)\otimes \D_{\frak n}}^+)$ are isomorphic (in a natural way).
\item[\rm (ii)] Let $A$ be a $\mathcal{Z}$-absorbing RAF-algebra. Then the partially ordered abelian group $(K_0(A), K_0(A)^+)$ is unperforated, and, furthermore, is a rational dimension group. 
\end{itemize}
\end{proposition}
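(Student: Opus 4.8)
For part (i), the plan is to exploit the inductive limit structure $M_{\mathfrak p}=\lim_{\longrightarrow}(M_{n_1}\subseteq M_{n_2}\subseteq\cdots)$, where $n_k\mid n_{k+1}$, the inclusions are unital, and ${\mathfrak p}=\lim_k n_k$; then $A\otimes M_{\mathfrak p}$ is the inductive limit of the $A\otimes M_{n_k}=M_{n_k}(A)$. Continuity of $K_0$, together with the canonical identifications $K_0(M_{n_k}(A))\cong K_0(A)$ (a homomorphism of pre-ordered groups, carrying the Murray--von Neumann semigroup of $M_{n_k}(A)$ onto that of $A$), turns each connecting map into multiplication by $n_{k+1}/n_k$, so $K_0(A\otimes M_{\mathfrak p})$ is the inductive limit of $(K_0(A),\ \times(n_{k+1}/n_k))$; since tensoring commutes with inductive limits and $\D_{\mathfrak p}$ is the inductive limit of $(\Z,\ \times(n_{k+1}/n_k))$, this is $K_0(A)\otimes\D_{\mathfrak p}$. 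For the order, the positive cone of an inductive limit of pre-ordered abelian groups is the union of the images of the positive cones at the finite stages, and a short denominator-clearing computation identifies this union with the positive cone $(K_0(A)\otimes\D_{\mathfrak p})^+$ as defined in Section~\ref{Sec3}; everything in sight is visibly natural.

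For part (ii), I would first peel off the formal part. Since $A$ is RAF, fix relatively prime infinite supernatural numbers ${\mathfrak p},{\mathfrak q}$ (by Lemma~\ref{Lem4.2}~{\rm (i)} these may be taken to be whatever is convenient) with $A\otimes M_{\mathfrak p}$, $A\otimes M_{\mathfrak q}$ AF; their $K_0$-groups are dimension groups, so by part (i) so are $K_0(A)\otimes\D_{\mathfrak p}$ and $K_0(A)\otimes\D_{\mathfrak q}$. By Lemma~\ref{Lem3.1}~{\rm (i)} and~{\rm (iii)}, $K_0(A)$ is torsion free and satisfies $K_0(A)=K_0(A)^+-K_0(A)^+$. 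In view of Definition~\ref{Def3.2} it then remains only to show that $(K_0(A),K_0(A)^+)$ is unperforated: this already yields ``rational dimension group'', and Lemma~\ref{Lem3.1}~{\rm (v)} re-derives that $K_0(A)\otimes\D_{\mathfrak n}$ is a dimension group for every infinite ${\mathfrak n}$, in accordance with Lemma~\ref{Lem4.2}~{\rm (i)}. Since $K_0(A)$ is torsion free, unperforation is equivalent to weak unperforation, so---fixing $g\in K_0(A)$ and $m\in\N$ with $mg\in K_0(A)^+$ and assuming (as we may, by torsion freeness) that $mg\neq 0$---it suffices to deduce $g\in K_0(A)^+$.

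The bridge step uses the rationalizations to shrink the multiplier. From $mg\in K_0(A)^+$ we get $m(g\otimes 1_{\mathfrak p})\in (K_0(A)\otimes\D_{\mathfrak p})^+$, and since a dimension group is unperforated this gives $g\otimes 1_{\mathfrak p}\in (K_0(A)\otimes\D_{\mathfrak p})^+=K_0(A\otimes M_{\mathfrak p})^+$ by part (i). As $M_{\mathfrak p}$ is an inductive limit of matrix algebras, this class is represented by a projection over $A\otimes M_n$ for some finite $n\mid{\mathfrak p}$; unwinding the identification of part (i), and cancelling by torsion freeness, forces $ng\in K_0(A)^+$. Set $N_{\mathfrak p}:=n$; symmetrically obtain $N_{\mathfrak q}\mid{\mathfrak q}$ with $N_{\mathfrak q}g\in K_0(A)^+$, and note that $\gcd(N_{\mathfrak p},N_{\mathfrak q})=1$. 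Thus $N_{\mathfrak p}g$ and $N_{\mathfrak q}g$ are each represented by a projection over $A$.

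The final, and by far the hardest, step is a dimension-drop interpolation at the level of $K_0$, following the Gong--Jiang--Su argument \cite[Theorem~1]{GJS} but dispensing with simplicity. Realizing $\mathcal{Z}$ as an inductive limit of prime dimension-drop algebras whose initial stage is the dimension-drop algebra $Z_{N_{\mathfrak p},N_{\mathfrak q}}$, and using $A\cong A\otimes\mathcal{Z}$, one obtains a unital copy of $A\otimes Z_{N_{\mathfrak p},N_{\mathfrak q}}$ sitting inside $A$ which, by the K\"unneth theorem, induces the identity on $K_0(A)$. It therefore suffices to show that $g$ lies in $K_0(A\otimes Z_{N_{\mathfrak p},N_{\mathfrak q}})^+$; that is, identifying $K_0(A\otimes Z_{N_{\mathfrak p},N_{\mathfrak q}})$ with $K_0(A)$, that a class $g$ for which $N_{\mathfrak p}g$ and $N_{\mathfrak q}g$ are represented by projections over $A$ is itself represented by a projection over $A\otimes Z_{N_{\mathfrak p},N_{\mathfrak q}}$. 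Such a projection is a norm-continuous path of projections over $A\otimes M_{N_{\mathfrak p}N_{\mathfrak q}}$ running from the $N_{\mathfrak q}$-fold inflation of a projection of class $N_{\mathfrak p}g$ at the endpoint $0$ to the $N_{\mathfrak p}$-fold inflation of a projection of class $N_{\mathfrak q}g$ at the endpoint $1$; the two endpoints then have equal $K_0$-class, so the obstacle is precisely to connect them, i.e.\ to control their Murray--von Neumann equivalence. With neither simplicity nor a usable trace comparison at hand, this is exactly where the RAF hypothesis is indispensable: $A\otimes M_{N_{\mathfrak p}N_{\mathfrak q}}$ is again rationally AF, its rationalization being AF by Lemma~\ref{Lem4.2}~{\rm (i)} and hence having cancellation of projections, and it is this AF approximation that drives the patching, playing the role that comparison plays in \cite{GJS}. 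Once $g\in K_0(A)^+$ is obtained, $K_0(A)$ is unperforated and hence a rational dimension group, completing the proof.
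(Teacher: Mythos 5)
Part (i) is essentially the paper's argument, spelled out slightly more: identify $M_{\frak n}$ as an inductive limit of matrix algebras, note that the Murray--von Neumann semigroup (hence $K_0$ with its positive cone) of $A\otimes M_n$ equals that of $A$ and that the unital connecting map induces multiplication by $n_{k+1}/n_k$, and invoke continuity of $K_0$.

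For part (ii) there is a genuine gap at the patching step. After correctly extracting $N_{\frak p}\mid{\frak p}$ and $N_{\frak q}\mid{\frak q}$ with $N_{\frak p}g,\,N_{\frak q}g\in K_0(A)^+$ and $\gcd(N_{\frak p},N_{\frak q})=1$, you ask for a norm-continuous path of projections in $A\otimes M_{N_{\frak p}N_{\frak q}}$ joining $y_p\otimes 1_{M_{N_{\frak q}}}$ to $y_q\otimes 1_{M_{N_{\frak p}}}$, appealing to ``cancellation in the rationalization'' of $A\otimes M_{N_{\frak p}N_{\frak q}}$. But cancellation in the AF algebra $(A\otimes M_{N_{\frak p}N_{\frak q}})\otimes M_{\frak n}$ only gives Murray--von Neumann equivalence there; it neither descends to equivalence in $A\otimes M_{N_{\frak p}N_{\frak q}}$ (which is not AF) nor gives the homotopy of projections you actually need to build an element of the finite dimension drop algebra $A\otimes Z_{N_{\frak p},N_{\frak q}}$. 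The paper sidesteps exactly this by inflating to the infinite UHF rationalization: the two endpoint projections are placed in $A\otimes M_{p^\infty}\otimes M_{q^\infty}$, which is genuinely AF (so that equal $K_0$-classes do give a path of projections), and the resulting path is then interpreted as a projection in $A\otimes\mathcal{Z}_\infty$, R\o rdam--Winter's \emph{infinite} dimension drop algebra, rather than in $A\otimes Z_{N_{\frak p},N_{\frak q}}$. If you replace your finite dimension drop algebra by $\mathcal{Z}_\infty$ and carry out the patching inside $A\otimes M_{p^\infty}\otimes M_{q^\infty}$, the argument closes; as written, the claim that RAF ``drives the patching'' in $A\otimes M_{N_{\frak p}N_{\frak q}}$ is unsupported and, for a non-AF algebra, false in general.

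A secondary, smaller issue: the claim that the embedding $A\otimes Z_{N_{\frak p},N_{\frak q}}\hookrightarrow A\otimes\mathcal{Z}\cong A$ ``induces the identity on $K_0(A)$'' quietly identifies the isomorphism $A\cong A\otimes\mathcal{Z}$ at the $K_0$-level with $\iota_*$ ($\iota(a)=a\otimes 1_{\mathcal{Z}}$); this is not automatic from $\mathcal{Z}$-absorption alone. The paper avoids the issue by proving the two-sided characterization $\iota_*(g)\in K_0(A\otimes\mathcal{Z})^+\Longleftrightarrow ng\in K_0(A)^+$ for some $n$, and deducing unperforation of $K_0(A\otimes\mathcal{Z})\ (\cong K_0(A))$ directly from that equivalence rather than by transporting a projection class back through an unspecified isomorphism.
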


\begin{proof}
\noindent{\rm (i)}. 
It is enough to note that, for any natural number $n$, the Murray-von Neumann semigroup of $A\otimes M_n$ is the same (by definition---with respect to the map $A\rightarrow A\otimes e_{1 1}\subset A\otimes M_n$) as that of $A$ (the image of which  in $K_0(A)$ is the positive cone), and the semigroup  endomorphism corresponding to the map $A\rightarrow A\otimes 1_{M_n}\subset A\otimes M_n$ is just multiplication by $n$. (Recall that both the Murray-von Neumann semigroup functor and the $K_0$ functor preserve inductive limits.)

\noindent{\rm (ii)}. If $A$ is a RAF-algebra, then so also is $A\otimes\mathcal{K}$. Since the ordered $K_0$-groups $(K_0(A), K_0(A)^+)$ and $(K_0(A\otimes\mathcal{K}), K_0(A\otimes\mathcal{K})^+)$ are isomorphic, without loss of generality we may assume that $A$ is stable.

Let $\iota$ denote the canonical embedding of $A$ into $A\otimes\mathcal{Z}$ defined by $\iota(a)=a\otimes 1_{\mathcal Z}$ for $a\in A$, and $\iota_*$ the induced map by $\iota$ from $K_0(A)$ to $K_0(A\otimes\mathcal{Z})$. As in the proof of \cite[Theorem 1]{GJS}, for $g\in K_0(A)$ it is enough to show that $\iota_*(g)\in K_0(A\otimes\mathcal{Z})^+$ if and only if $ng\in K_0(A)^+$ for some $n\in\N$. Indeed, if $x\in K_0(A\otimes\mathcal{Z})$ satisfies $nx\in K_0(A\otimes\mathcal{Z})^+$ for some $n\in\N$, then, since (by \cite{Sch}) $\iota_*$ is a group isomorphism,
there exists $g_x\in K_0(A)$ such that $\iota_*(g_x)=x$. Then we obtain $m\in\N$ such that $mng_x\in K_0(A)^+$ which implies $x\in K_0(A\otimes\mathcal{Z})^+$.

Suppose that $g\in K_0(A)$ satisfies $\iota_*(g)\in K_0(A\otimes\mathcal{Z})^+$. Recall that the Jiang-Su algebra is constructed as the inductive limit \Cs{} $\lim\limits_{\longrightarrow}(\mathcal{Z}_{p_nq_n}, \varphi_n)$ of prime dimension drop algebras $\mathcal{Z}_{p_n q_n}$, $n\in\N$,  and the connecting maps $\varphi_n : \mathcal{Z}_{p_nq_n}$ $\rightarrow\mathcal{Z}_{p_{n+1} q_{n+1}}$, $n\in\N$, where $p_n$ and $q_n$ are relatively prime numbers; see \cite{JS}. Note that $A\otimes\mathcal{Z}_{p_n q_n}$ is stably finite for each $n\in\N$: then $(K_0(A\otimes \mathcal{Z}_{p_n q_n}), K_0(A\otimes \mathcal{Z}_{p_n q_n})^+)$ is a partially ordered abelian group. By the continuity of the functor $K_0$ (see \cite[Theorem 6.3.2]{RLL}, for example), there exist embeddings $\psi_n : A\otimes\mathcal{Z}_{p_n q_n}\rightarrow A\otimes \mathcal{Z}$ such that $K_0(A\otimes\mathcal{Z})^+=\bigcup_{n=1}^{\infty} \psi_{n*}(K_0(A\otimes\mathcal{Z}_{p_n q_n})^+)$. Then there exist $N\in\N$ and $x\in K_0(A\otimes\mathcal{Z}_{p_N q_N})^+$ such that $\iota_*(g)=\psi_{N*}(x)$. Let $\iota_N : A\rightarrow A\otimes\mathcal{Z}_{p_N q_N}$ denote the canonical embedding defined by $\iota_N(a)=a\otimes 1_{\mathcal{Z}_{p_N q_N}}$ for $a\in A$ and $\ev_0 : A\otimes\mathcal{Z}_{p_N q_N}\rightarrow A\otimes M_{p_N}$ the evaluation map determined by $\ev_0(f)\otimes 1_{M_{q_N}}=f(0)$ for $f\in A\otimes\mathcal{Z}_{p_N q_N}$. 
Since $\ev_0\circ\iota_N(a)=a\otimes 1_{M_{p_N}}$ for any $a\in A$, it follows that $\ev_{0 *}\circ \iota_{N *}(g)=p_N g$. 
Since $\iota_{N*}(g)=x\in K_0(A\otimes\mathcal{Z}_{p_N q_N})^+$, we have  $p_N g=\ev_{0*}\circ\iota_{N*}(g)\in K_0(A)^+$. 

Conversely, suppose that $g\in K_0(A)$ satisfies $ng\in K_0(A)^+$ for some $n\in\N$. Let $p$  and $q\in \N\setminus\{1\}$ be relatively prime natural numbers, and denote by $\iota_i : A\rightarrow A\otimes M_{i^{\infty}}$, $i=p, q$, the canonical embeddings defined by $\iota_i(a)=a\otimes 1_{M_{i^{\infty}}}$ for $a\in A$. Since $K_0(A\otimes M_{i^{\infty}})$, $i=p, q$, are ordered abelian groups, there exist $a_i$, $b_i\in K_0(A\otimes M_{i^{\infty}})^+$ such that $\iota_{i *}(g)=a_i-b_i$ for both $i=p, q$. Since $K_0(A\otimes M_{i^{\infty}})$ is unperforated and $n(a_i- b_i)=\iota_{i*}(ng)\in K_0(A\otimes M_{i^{\infty}})^+$ for $i= p, q$, it follows that $a_i-b_i\in K_0(A\otimes M_{i^{\infty}})^+$. Because $A\otimes M_{i^{\infty}}$, $i=p, q$ are stable AF-algebras, we obtain projections $y_i\in A\otimes M_{i^{\infty}}$, $i=p, q$ such that $[y_i]_0=a_i -b_i$. Identifying $A\otimes M_{p^{\infty}}\otimes M_{q^{\infty}}$ with $A\otimes M_{q^{\infty}}\otimes M_{p^{\infty}}$ canonically, we have $[y_p\otimes 1_{M_{q^{\infty}}}]_0=[y_q\otimes 1_{M_{p^{\infty}}}]_0$ in $K_0(A\otimes M_{p^{\infty}}\otimes M_{q^{\infty}})$. Since $A\otimes M_{p^{\infty}}\otimes M_{q^{\infty}}$ is an AF-algebra, there exists a projection $z\in C([0, 1])\otimes A\otimes M_{p^{\infty}}\otimes M_{q^{\infty}}$ such that $z(0)=y_p\otimes 1_{M_{q^{\infty}}}$ and $z(1)=y_q\otimes 1_{M_{p^{\infty}}}$. Considering the \Cs{}
\[\mathcal{Z}_{\infty}=\{f\in C([0, 1])\otimes M_{p^{\infty}}\otimes M_{q^{\infty}}\ : \ f(0)\in M_{p^{\infty}}\otimes 1_{M_{q^{\infty}}}, f(1)\in 1_{M_{p^{\infty}}}\otimes M_{q^{\infty}}\},\]
introduced in \cite{RW}, we can regard $z$ as a projection in $A\otimes\mathcal{Z}_{\infty}$. Denote by $\iota_{\infty} : A\rightarrow A\otimes \mathcal{Z}_{\infty}$ the canonical embedding defined by $\iota_{\infty}(a)= a\otimes 1_{\mathcal{Z}_{\infty}}$ for $a\in A$, and by $\Ev_i: A\otimes\mathcal{Z}_{\infty} \rightarrow A\otimes M_{i^{\infty}}$, $i=p, q$, the evaluation maps determined by $\Ev_p(f)\otimes 1_{M_{q^{\infty}}}=f(0)$ and $\Ev_q(f)\otimes 1_{M_{p^{\infty}}}=f(1)$ for $f\in A\otimes\mathcal{Z}_{\infty}$. Since $K_1(A\otimes M_{p^{\infty}}\otimes M_{q^{\infty}})=0$, the induced map $(\Ev_p\oplus \Ev_q)_*: K_0(A\otimes\mathcal{Z}_{\infty})\rightarrow K_0(A\otimes M_{p^{\infty}})\oplus K_0(A\otimes M_{q^{\infty}})$ is injective. Since 
\[ (\Ev_p\oplus \Ev_q)_*([z]_0-\iota_{\infty *}(g))=([y_p]_0\oplus [y_q]_0)-(\iota_{p*}(g)\oplus\iota_{q*}(g))=0,\]
considering the embedding $\iota_{\mathcal{Z}_{\infty}} : A\otimes\mathcal{Z}_{\infty}\rightarrow A\otimes \mathcal{Z}$ we conclude that $\iota_*(g)=\iota_{\mathcal{Z}_{\infty}*}\circ\iota_{\infty *}(g)\in K_0(A\otimes\mathcal{Z})^+$.
 
This shows that $(K_0(A), K_0(A)^+)$ is an unperforated partially ordered abelian group. On using once more that $A$ is RAF,  the second statement follows from {\rm (i)}.
 \end{proof}

Our goal in this section is to provide a construction of RAF-algebras which exhausts all countable rational dimension groups as $K_0$-groups. Here we emphasize that the non-simple cases of RAF-algebras are required in order to realize the rational dimension group $(G_{\Z}, G_{\Z}^+)$ of Section 3. The construction is somewhat analogous to the construction of simple \Cs s in \cite{Ell} and \cite{JS}.

As a corollary, we also show that a natural addition to the invariant in the non-stable case is exhausted (Corollary \ref{Cor4.8}). In Section 5, we shall show that the augmented invariant ($K_0$ alone in the stable case) is complete.

\begin{theorem}\label{Thm4.4}
For any countable rational dimension group $(G, G^+)$, there exists a separable stable RAF-algebra $A_G$ such that $A_G\otimes\mathcal{Z}\cong A_G$ and $(K_0(A_G), K_0(A_G)^+)$ is isomorphic to $(G, G^+)$ as an ordered abelian group.
\end{theorem}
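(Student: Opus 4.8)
The plan is to realize $(G,G^+)$ by an explicit inductive limit construction modeled on the simple case in \cite{Ell} and \cite{JS}, but carried out at the level of the two relatively prime rationalizations simultaneously. Fix relatively prime supernatural numbers $\frak p$ and $\frak q$ witnessing that $(G,G^+)$ is a rational dimension group, so $(G\otimes\D_{\frak p}, {G\otimes\D_{\frak p}}^+)$ and $(G\otimes\D_{\frak q}, {G\otimes\D_{\frak q}}^+)$ are countable dimension groups. By the classical Effros--Handelman--Shen realization, each of these is the $K_0$-group of a stable AF algebra; moreover, by Elliott's classification of AF algebras, one can choose compatible presentations as increasing unions of finitely generated dimension groups, realized by inductive sequences of finite direct sums of matrix algebras with connecting maps inducing the prescribed group maps. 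The first step is therefore to set up a single inductive system $A_n = \bigoplus_j M_{d(n,j)} \otimes \mathcal{Z}_{p_n q_n}$ (or a slight variant using the building blocks $\mathcal{Z}_\infty$ and prime dimension-drop algebras of \cite{RW}, \cite{JS}) whose ``rationalizations'' $A_n\otimes M_{\frak p}$ and $A_n\otimes M_{\frak q}$ recover, in the limit, AF algebras realizing $G\otimes\D_{\frak p}$ and $G\otimes\D_{\frak q}$, and whose own $K_0$ is forced to be $G$ itself by the pullback description in Lemma \ref{Lem3.1}(ii): $G\cong (G\otimes\D_{\frak p}\otimes 1_{\frak q})\cap(G\otimes 1_{\frak p}\otimes\D_{\frak q})$ inside $G\otimes\D_{\frak p}\otimes\D_{\frak q}$.

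Concretely, I would proceed as follows. First, choose a cofinal sequence of finitely generated subgroups $G_n\subset G$ with $G=\bigcup_n G_n$ and, using that $G\otimes\D_{\frak p}$ and $G\otimes\D_{\frak q}$ are dimension groups, arrange connecting homomorphisms $G_n\to G_{n+1}$ (after telescoping) that become positive after tensoring with $\D_{\frak p}$ and with $\D_{\frak q}$; the unperforation of $G$ (Definition \ref{Def3.2}) together with Lemma \ref{Lem3.1}(v) guarantees $G\otimes\D_{\frak n}$ is a dimension group for every infinite $\frak n$, which is what makes the AF approximations at each rationalization possible. Second, realize each $G_n$ (with its order pulled back from the two rationalizations as in Lemma \ref{Lem3.1}(ii)) by a finite direct sum $F_n$ of building blocks of the form $M_k\otimes\mathcal{Z}_{p_nq_n}$ and $M_k\otimes\mathcal{Z}_\infty$ so that $K_0(F_n)=G_n$, $K_0(F_n\otimes M_{\frak p})=G_n\otimes\D_{\frak p}$, $K_0(F_n\otimes M_{\frak q})=G_n\otimes\D_{\frak q}$ (here Proposition \ref{Prop4.3}(i) identifies $K_0$ of a tensor with $M_{\frak n}$ as the $\D_{\frak n}$-tensor). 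Third, using the AF classification one lifts the chosen group maps $G_n\otimes\D_{\frak p}\to G_{n+1}\otimes\D_{\frak p}$ and $G_n\otimes\D_{\frak q}\to G_{n+1}\otimes\D_{\frak q}$ to $*$-homomorphisms $F_n\otimes M_{\frak p}\to F_{n+1}\otimes M_{\frak p}$ and $F_n\otimes M_{\frak q}\to F_{n+1}\otimes M_{\frak q}$, and then—this is the delicate point—shows these two lifts can be chosen to come from a single $*$-homomorphism $\varphi_n:F_n\to F_{n+1}$. Set $A_G=\lim_{\to}(F_n,\varphi_n)$ (stabilized). Then $A_G\otimes M_{\frak p}$ and $A_G\otimes M_{\frak q}$ are AF by construction, so $A_G$ is RAF; $\mathcal{Z}$-absorption follows since each $\mathcal{Z}_{p_nq_n}$ and $\mathcal{Z}_\infty$ is strongly self-absorbing-adjacent and, more simply, one can tensor the whole construction by $\mathcal{Z}$ from the start (or invoke $A_G\otimes\mathcal{Z}\cong A_G$ by absorbing a further $\mathcal{Z}$-factor into the building blocks); and $K_0(A_G)=\lim_\to G_n = G$ with the correct order, using continuity of $K_0$ and the pullback identity of Lemma \ref{Lem3.1}(ii) applied in the limit.

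The main obstacle, which is exactly where the ``rationally'' (two-prime) structure does real work, is the third step: matching the two separately-chosen AF connecting maps at the $\frak p$- and $\frak q$-rationalizations so that they descend to one honest $*$-homomorphism between the un-rationalized building blocks. At the level of $K_0$ this is precisely the compatibility encoded by the pullback $G_{n+1}\cong (G_{n+1}\otimes\D_{\frak p}\otimes 1_{\frak q})\cap(G_{n+1}\otimes 1_{\frak p}\otimes\D_{\frak q})$, but lifting a compatible pair of group homomorphisms to a compatible pair of $*$-homomorphisms requires an existence/uniqueness statement for $*$-homomorphisms between the chosen ASH building blocks in terms of $K_0$-data—this is where one invokes the classification input flagged in the introduction (the Winter deformation technique \cite[Proposition 4.5]{W1}, extended past the unital case, cf. Theorem \ref{Thm5.3}), or else builds the $\varphi_n$ by hand exploiting that the building blocks $M_k\otimes\mathcal{Z}_{pq}$ and $M_k\otimes\mathcal{Z}_\infty$ admit an ample supply of unital embeddings realizing prescribed multiplicities once divisibility constraints (controlled by $p_n$, $q_n$) are met, in the spirit of \cite{JS} and \cite{RW}. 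I expect the bulk of the technical work to be bookkeeping: arranging the $p_n, q_n$ to grow fast enough, and the finitely generated approximations $G_n$ and their orders to be chosen so that every required multiplicity matrix is realizable and positivity is respected along the system; but the conceptual heart is the pullback-compatibility argument just described.
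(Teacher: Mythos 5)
Your plan shares the paper's broad strategy---fix two relatively prime rationalizations, realize each as a stable AF algebra via Effros--Handelman--Shen and Elliott, and exploit the pullback identity $G\cong (G\otimes\D_{\frak p}\otimes 1_{\frak q})\cap(G\otimes 1_{\frak p}\otimes\D_{\frak q})$ of Lemma~\ref{Lem3.1}(ii) to recover $G$ as the $K_0$-group of an inductive limit. You also correctly isolate the compatibility of the two AF lifts of the connecting maps as the hard step. That much is right. But there are two genuine gaps.

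First, your proposed building blocks are not flexible enough. Finite direct sums of $M_k\otimes\cZ_{p_n q_n}$ and $M_k\otimes\cZ_\infty$ have $K_0\cong\Z^m$ (each dimension-drop factor contributes a copy of $\Z$), whereas the finite-stage groups $G_n$ arising from a general countable rational dimension group are pullbacks of the form $\Image(\iota_*^{(0)})\cap\Image(\iota_*^{(1)})$ inside $K_0$ of a UHF tensor, which in general are \emph{not} simplicial and need not be realized by such direct sums. The paper's key device, which your sketch does not hit, is the generalized dimension drop algebra $I_G=\{f\in C([0,1])\otimes\AF : f(i)\in\Image(\iota_\infty^{(i)})\}$ (where $\AF=\AF_0\otimes M_{3^\infty}=\AF_1\otimes M_{2^\infty}$), together with Proposition~\ref{Prop4.5}, which shows precisely that $K_0(I_G)$ is the pullback and hence isomorphic to $(G,G^+)$. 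The finite-stage building blocks are then ``point-line'' algebras $\cA_n\subset C([0,1])\otimes A_n$ defined by endpoint conditions through unital embeddings $\iota_n^{(i)}:A_n^{(i)}\to A_n$ of finite-dimensional subalgebras of $\AF$, not tensor products with dimension drop intervals.

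Second, the compatibility problem is not solved by what you cite. Winter's deformation technique (\cite[Proposition 4.5]{W1}) is an isomorphism/uniqueness result and is what the paper uses for Theorem~\ref{Thm5.3}, not for the existence statement of Theorem~\ref{Thm4.4}; invoking it here is circular (you would be using the classification theorem to prove the range-of-invariant theorem). The paper closes the gap by hand: the connecting map $\varphi_{n+1}:\cA_n\to\cA_{n+1}$ is built as $\Ad\tcU\circ\xi$, where $\xi$ is an explicit weighted-shift map (like the Jiang--Su connecting maps, with multiplicities $k_{n+1},l_{n+1},m_{n+1}$ chosen so the divisibility by powers of $2$ and $3$ works out), and $\tcU$ is a path of unitaries, close to $1$, conjugating the endpoint values $\xi(f)(i)$ into the right finite-dimensional subalgebras $\Image(\iota_{n+1}^{(i)})$. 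Organizing this requires the cascade of approximate-commutation and small-unitary lemmas in Lemma~\ref{Lem4.6} and the hierarchy of tolerances $\delta_{n+1}^{(k)}$, plus Lipschitz control (condition (8,$n$)) to ensure the limit is RAF. Your sketch says ``one can build the $\varphi_n$ by hand exploiting an ample supply of unital embeddings,'' but this is exactly where the proof lives, and without the $I_G$-type building blocks and the unitary-cascading argument the step remains open.
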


Let $(G, G^+)$ be a countable rational dimension group. For $i=0, 1$, we obtain a stable AF-algebra $\AF_i$ such that  $(K_0(\AF_i), K_0(\AF_i)^+)$ is isomorphic to $(G\otimes \D_{(2+i)^{\infty}}, {G\otimes \D_{(2+i)^{\infty}}}^+)$ as an ordered abelian group. Because of the Elliott classification theorem \cite{Ell01} and Proposition \ref{Prop4.3} (i),  we can see that 
\[ \AF_0\otimes M_{3^{\infty}}\cong \AF_1\otimes M_{2^{\infty}}.\]
To simplify notation, we suppress the isomorphism between $\AF_0\otimes M_{3^{\infty}}$ and $\AF_1\otimes M_{2^{\infty}}$ in the rest of this section, and set $\AF=\AF_0\otimes M_{3^{\infty}}=\AF_1\otimes M_{2^{\infty}}$. We denote by $\iota_{\infty}^{(i)}$ the canonical embedding of $\AF_i$ into $\AF$ defined by $\iota_{\infty}^{(i)}(a)=a\otimes 1_{M_{(3-i)^{\infty}}}$ for $i=0, 1$ and $a\in \AF_i$. The desired RAF-algebra $A_G$ will be constructed as an inductive limit whose building blocks are type I $\mathrm{C}^*$-subalgebras of the following generalized dimension drop algebra $I_G$.

We define a \Cs{} $I_G$ by 
\[ I_G=\{f\in C([0, 1])\otimes \AF\ : \ f(i)\in \Image(\iota_{\infty}^{(i)})\text{ for both }i=0, 1\}.\]

\begin{proposition}\label{Prop4.5}
Let $A_i$, $i=0, 1$, be two AF-embeddable $C^*$-algebras.  Suppose that there exist an AF-algebra $A$ and embeddings $\iota^{(i)}$, $i=0, 1$, of $A_i$ into $A$. Then the $C^*$-algebra $J$ defined by 
\[ J=\{f\in C([0, 1])\otimes A\ : \ f(i)\in \Image(\iota^{(i)})\text{ for both }i=0, 1\}\]
has the following properties.
\begin{itemize}
\item[\rm (i)] Suppose that the induced maps $\iota_*^{(i)} : K_0(A_i)\rightarrow K_0(A)$, $i=0, 1$,  are injective. Let $\ev_i : J\rightarrow \Image(\iota^{(i)})$, $i=0, 1$, denote the evaluation maps  at these two points: $\ev_i (f)=f(i)$. Then, on identifying $K_0(\Image(\iota^{(i)}))$ with $\Image(\iota_*^{(i)})$, the induced maps $\ev_{i*} :K_0(J)\rightarrow\Image(\iota_*^{(i)})$, $i=0, 1$, satisfy $\ev_{0*}=\ev_{1*}$ and $\ev_{0*} : K_0(J)\rightarrow\Image(\iota_*^{(0)})\cap\Image(\iota_*^{(1)})$ is a group isomorphism. 
\item[\rm (ii)] If $\iota^{(i)}$, $i=0, 1$, satisfy the assumption of {\rm (i)} and $\iota_*^{(i)}(K_0(A_i)^+)=\iota_*^{(i)}(K_0(A_i))\cap K_0(A)^+$ for both $i=0, 1$, then $\ev_{0*}$ is an isomorphism of ordered abelian groups from $(K_0(J), K_0(J)^+)$ to $(\Image(\iota_*^{(0)})\cap \Image(\iota_*^{(1)}), \Image(\iota_*^{(0)})\cap \Image(\iota_*^{(1)})\cap K_0(A)^+)$. 
\item[\rm (iii)] In particular, for $A=\AF$, $A_i=\AF_i$, and $\iota^{(i)}=\iota_{\infty}^{(i)}$, $i=0, 1$, it follows that $(K_0(I_G), K_0(I_G)^+)$ is isomorphic to $(G, G^+)$ as an ordered abelian group.
\end{itemize}
\end{proposition}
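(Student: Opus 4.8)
The plan is to read off $K_0(I_G)$ from a Mayer--Vietoris (equivalently, mapping-cylinder) exact sequence, first proving the abstract statements (i) and (ii) for a general $J$ and then specializing in (iii). Observe that $J$ is a double mapping cylinder: identifying $\Image(\iota^{(i)})$ with $A_i$, the evaluation homomorphism $\varepsilon=\ev_0\oplus\ev_1:J\to A_0\oplus A_1$ is surjective, since for $(a_0,a_1)$ the straight-line path $t\mapsto(1-t)\iota^{(0)}(a_0)+t\,\iota^{(1)}(a_1)$ lies in $J$ and maps to $(a_0,a_1)$; its kernel is the suspension $SA=C_0((0,1))\otimes A$. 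Feeding the extension $0\to SA\to J\xrightarrow{\varepsilon}A_0\oplus A_1\to 0$ into the six-term sequence, and using $K_0(SA)\cong K_1(A)=0$ and $K_1(SA)\cong K_0(A)$ (as $A$ is AF), one obtains
\[0\longrightarrow K_0(J)\xrightarrow{\ \varepsilon_*\ }K_0(A_0)\oplus K_0(A_1)\xrightarrow{\ \partial\ }K_0(A)\longrightarrow K_1(J)\longrightarrow K_1(A_0)\oplus K_1(A_1)\longrightarrow 0,\]
where the index map is $\partial(x_0,x_1)=\iota_*^{(0)}(x_0)-\iota_*^{(1)}(x_1)$, and the $i$-th component of $\varepsilon_*$ is the $K_0(A_i)$-valued evaluation.

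For (i), exactness shows $\varepsilon_*$ is injective with image $\ker\partial=\{(x_0,x_1):\iota_*^{(0)}(x_0)=\iota_*^{(1)}(x_1)\}$. Under the identification $K_0(\Image(\iota^{(i)}))\cong\Image(\iota_*^{(i)})$ (legitimate because $\iota_*^{(i)}$ is injective), the map written $\ev_{i*}:K_0(J)\to K_0(A)$ in the statement is $\iota_*^{(i)}$ composed with the $i$-th component of $\varepsilon_*$; so the defining relation of $\ker\partial$ is precisely the assertion $\ev_{0*}=\ev_{1*}$. The common image is $\{\iota_*^{(0)}(x_0):\exists\,x_1,\ \iota_*^{(0)}(x_0)=\iota_*^{(1)}(x_1)\}=\Image(\iota_*^{(0)})\cap\Image(\iota_*^{(1)})$, and $\ev_{0*}$ is injective: if $\ev_{0*}(y)=0$ then the first component $x_0$ of $\varepsilon_*(y)$ vanishes, whence $\iota_*^{(1)}(x_1)=\iota_*^{(0)}(x_0)=0$ forces $x_1=0$, so $y=0$. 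Thus $\ev_{0*}$ is an isomorphism of $K_0(J)$ onto $\Image(\iota_*^{(0)})\cap\Image(\iota_*^{(1)})$.

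For (ii) only the order has to be matched. Positivity of $\ev_{0*}$ is clear, as $\ev_0$ is a $*$-homomorphism. Conversely, let $y\in K_0(J)$ with $\ev_{0*}(y)\in K_0(A)^+$ and write $\varepsilon_*(y)=(x_0,x_1)$; then $\iota_*^{(i)}(x_i)=\ev_{0*}(y)\in\Image(\iota_*^{(i)})\cap K_0(A)^+=\iota_*^{(i)}(K_0(A_i)^+)$, so $x_i\in K_0(A_i)^+$ for both $i$ by injectivity of $\iota_*^{(i)}$. Represent $x_i=[e_i]_0$ by projections $e_i\in P(M_n(A_i))$ (common $n$, after padding with zeros); then $[\iota^{(0)}(e_0)]_0=[\iota^{(1)}(e_1)]_0$ in $K_0(A)$, and since $A$ is AF these two projections are joined by a norm-continuous path of projections in $P(M_N(A))$ for some $N\geq n$. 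Such a path, having the required boundary values, is exactly a projection $f\in P(M_N(J))$, and $\ev_{0*}([f]_0)=[\iota^{(0)}(e_0)]_0=\ev_{0*}(y)$, so $y=[f]_0\in K_0(J)^+$ by the injectivity from (i). Hence $\ev_{0*}$ is an isomorphism of ordered abelian groups onto $(\Image(\iota_*^{(0)})\cap\Image(\iota_*^{(1)}),\,\Image(\iota_*^{(0)})\cap\Image(\iota_*^{(1)})\cap K_0(A)^+)$.

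Finally (iii): apply (ii) with $A=\AF$, $A_i=\AF_i$, $\iota^{(i)}=\iota_{\infty}^{(i)}$. The hypotheses of (ii) hold: by Proposition \ref{Prop4.3}(i), $\iota_{\infty*}^{(i)}$ is the canonical map $K_0(\AF_i)\to K_0(\AF_i)\otimes\D_{(3-i)^{\infty}}=K_0(\AF)$, which is injective and satisfies $\iota_{\infty*}^{(i)}(K_0(\AF_i)^+)=\iota_{\infty*}^{(i)}(K_0(\AF_i))\cap K_0(\AF)^+$, since $K_0(\AF_i)$ is a dimension group, hence torsion free and unperforated: clearing denominators in a tensor expression of $g\otimes1$ reduces both claims to torsion-freeness and unperforation of $K_0(\AF_i)$. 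By Proposition \ref{Prop4.3}(i) again, $K_0(\AF)\cong G\otimes\D_{2^{\infty}}\otimes\D_{3^{\infty}}$, and under this identification $\Image(\iota_{\infty*}^{(0)})=G\otimes\D_{2^{\infty}}\otimes1_{3^{\infty}}$ and $\Image(\iota_{\infty*}^{(1)})=G\otimes1_{2^{\infty}}\otimes\D_{3^{\infty}}$; by Lemma \ref{Lem3.1}(ii) (with ${\frak p}=2^{\infty}$, ${\frak q}=3^{\infty}$, and $G$ torsion free by Lemma \ref{Lem3.1}(i)) their intersection is $G\otimes1_{2^{\infty}}\otimes1_{3^{\infty}}\cong G$. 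Since $G\otimes\D_{2^{\infty}}\otimes\D_{3^{\infty}}=G\otimes\D_{6^{\infty}}$ is a dimension group by Lemma \ref{Lem3.1}(v), in particular unperforated, $g\otimes1\otimes1\in K_0(\AF)^+$ if and only if $g\in G^+$ (again clear denominators and use that $G$ is torsion free and unperforated), so the induced order on the intersection is exactly $G^+$. Combining with (ii) yields $(K_0(I_G),K_0(I_G)^+)\cong(G,G^+)$. The one substantive point is the converse-positivity step in (ii): producing an honest projection over $J$ with class $y$ genuinely uses both the order hypothesis on the $\iota^{(i)}$ and the AF structure of $A$ (to homotope the two boundary projections); the rest is bookkeeping with the Mayer--Vietoris sequence and, in (iii), with the tensor-factor identifications.
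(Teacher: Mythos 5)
Your proof is correct and follows essentially the same route as the paper's: the short exact sequence $0\to C_0((0,1))\otimes A\to J\to \Image(\iota^{(0)})\oplus\Image(\iota^{(1)})\to 0$, the resulting six-term sequence with $K_0(SA)=K_1(A)=0$, the identification of $K_0(J)$ with $\ker\partial\cong\Image(\iota_*^{(0)})\cap\Image(\iota_*^{(1)})$, the AF path-of-projections argument to match positive cones, and Lemma \ref{Lem3.1}(ii) together with unperforation of $G$ to conclude (iii). The only cosmetic differences are that you keep the portion of the six-term sequence running through $K_1$ (rather than truncating with $\to 0$, which as written in the paper incorrectly asserts surjectivity of $\partial$ — harmlessly, since it is never used), and you phrase the range as $A_0\oplus A_1$ rather than $\Image(\iota^{(0)})\oplus\Image(\iota^{(1)})$, which amounts to the same thing under the stated identification.
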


\begin{proof}{}\
\noindent{\rm (i)}. Denote by $\iota$ the canonical embedding of $C_0((0, 1))\otimes A$ into $J$. From the exact sequence 
\[0\longrightarrow C_0((0,1))\otimes A\xrightarrow{\ \iota\quad} J\xrightarrow{\ev_0\oplus \ev_1} \Image(\iota^{(0)})\oplus \Image(\iota^{(1)})\longrightarrow0,\]
we obtain the six-term exact sequence of Bott periodicity which implies the following exact sequence:
\[0\longrightarrow K_0(J)\xrightarrow{(\ev_0\oplus\ev_1)_*} \Image(\iota_*^{(0)})\oplus\Image(\iota_*^{(1)})\xrightarrow{\ \partial\quad} K_0(A)\longrightarrow0,\]
where $\partial$ is the exponentioal map from $K_0(\Image(\iota^{(0)}))\oplus K_0(\Image(\iota^{(1)}))$ to $K_1( C_0((0, 1))\otimes A)\cong K_0(A)$. Then it is not  hard to check that $\partial (x\oplus y)= x-y$ for $x\in \Image(\iota_*^{(0)})$ and $y\in \Image(\iota_*^{(1)})$. Set $\Phi : \Image(\iota_*^{(0)})\cap \Image(\iota_*^{(1)})\rightarrow \Image(\iota_*^{(0)})\oplus\Image(\iota_*^{(1)})$ by $\Phi(x)=x\oplus x$ for $x\in \Image(\iota_*^{(0)})\cap \Image(\iota_*^{(1)})$. Then the group homomorphism $(\ev_0\oplus \ev_1)_*$ is an isomorphism onto $\Image(\Phi)$. Thus we see that $\ev_{0*}=\Phi^{-1}\circ(\ev_0\oplus\ev_1)_*=\ev_{1*}$ is a group isomorphism. 

\noindent{\rm (ii)}. From the assumption $\iota_*^{(i)}(K_0(A_i)^+)=\iota_*^{(i)}(K_0(A_i))\cap K_0(A)^+$, for $x\in \Image(\iota_*^{(0)})\cap\Image(\iota_*^{(1)})\cap K_0(A)^+$, there exist $N\in\N$ and projections $p_i\in A_i\otimes M_N$, $i=0, 1$, such that $\iota_*^{(i)}([p_i]_0)= x$ for both $i=0, 1$. Since $A\otimes M_N$ is an AF-algebra and $[\iota^{(0)}\otimes\id_{M_N}(p_0)]_0=[\iota^{(1)}\otimes\id_{M_N}(p_1)]_0$ in $K_0(A)$, there exists a projection $\tp$  $\in C([0, 1])\otimes A\otimes M_N$ such that $\tp(i)=\iota^{(i)}\otimes\id_{M_N}(p_i)$ for $i=0, 1$. Regarding $\tp$ as a projection in $J\otimes M_N$, we have that $(\ev_0\oplus\ev_1)_*([\tp]_0)=\iota_*^{(0)}([p]_0)\oplus\iota_*^{(1)}([p]_0)=x\oplus x$. Then it follows that $x=\Phi^{-1}\circ(\ev_0\oplus \ev_1)_*([\tp]_0)=\ev_{0*}([\tp]_0)\in \ev_{0*}(K_0(J)^+)$, which implies that $\ev_{0*}(K_0(J)^+)=\Image(\iota_*^{(0)})$ $\cap\Image(\iota_*^{(1)})\cap K_0(A)^+$. 

\noindent{\rm (iii)}. For $i=0, 1$, since $K_0(A_i)$ is torsion free, it follows that the induced map $\iota_*^{(i)} : K_0(A_i)\rightarrow K_0(A)\cong K_0(A_i)\otimes \D_{(3-i)^{\infty}}$ is injective. By Proposition \ref{Prop4.3} {\rm (i)}, it follows that 
\begin{align*}
(K_0(A), K_0(A)^+)&\cong(K_0(A_0)\otimes\D_{3^{\infty}}, {K_0(A_0)\otimes \D_{3^{\infty}}}^+)\\
&\cong (K_0(A_1)\otimes \D_{2^{\infty}}, {K_0(A_1)\otimes \D_{2^{\infty}}}^+),
\end{align*}
as ordered abelian groups. Then, for $i=0, 1$ and $x\in \Image (\iota_*^{(i)})\cap K_0(A)^+$, there exists $d\in\N$ such that $(3-i)^dx\in \iota_*^{(i)}(K_0(A_i)^+)$. Since $(K_0(A_i), K_0(A_i)^+)$ is unperforated, it follows that $x\in \iota_*^{(i)}(K_0(A_i)^+)$, which implies that $\Image(\iota_*^{(i)})\cap K_0(A)^+=\iota_*^{(i)}(K_0(A_i)^+)$ for both $i= 0, 1$. Applying {\rm (ii)}, we see that $(K_0(I_G), K_0(I_G)^+)$ is isomorphic to $(\Image(\iota_*^{(0)})\cap\Image(\iota_*^{(1)}), \Image(\iota_*^{(0)})\cap\Image(\iota_*^{(1)})\cap K_0(A)^+)$. 

By Lemma \ref{Lem3.1} {\rm (i)},  any rational dimension group is torsion free, and by Lemma \ref{Lem3.1} {\rm (ii)}, we have $\Image(\iota_*^{(0)})\cap\Image(\iota_*^{(1)})=G\otimes\D_{2^{\infty}}\otimes 1_{\D_{3^{\infty}}}\cap G\otimes 1_{\D_{2^{\infty}}}\otimes \D_{3^{\infty}}\cong G$. Let $\Psi : G\rightarrow G\otimes \D_{2^{\infty}}\otimes \D_{3^{\infty}}$ denote the positive group homomorphism defined by $\Psi(g)=g\otimes 1_{\D_{2^{\infty}}}\otimes 1_{\D_{3^{\infty}}}$ for $g\in G$. The group isomorphism of (the proof of) Lemma \ref{Lem3.1} {\rm (ii)} is just the map  $\Psi$. Furthermore, since $G$ is unperforated it is straightforward to check that $\Psi(G^+)=\Psi(G)\cap (G\otimes \D_{2^{\infty}}\otimes\D_{3^{\infty}})^+$. Therefore we conclude that 
\begin{align*}
&(\Image(\iota_*^{(0)})\cap\Image(\iota_*^{(1)}), \Image(\iota_*^{(0)})\cap\Image(\iota_*^{(1)})\cap K_0(A)^+) \\
&\cong(\Psi(G), \Psi(G)\cap (G\otimes \D_{2^{\infty}}\otimes \D_{3^{\infty}})^+)\cong(G, G^+).
\end{align*}\end{proof}

Before going into the proof, we collect some necessary well-known facts concerning  finite dimensional \Cs s.
\begin{lemma}\label{Lem4.6}
{}\
\begin{itemize}
\item[\rm (i)] If natural numbers $P$, $Q$, $R$ and a projection $p\in M_Q\otimes M_R$ satisfy $R\leq\rank(p)$, $R|\rank (p)$, $\frac{\rank(p)}{R}<Q$, and $P|(Q-\frac{\rank(p)}{R})$, then there exists a unitary $u\in M_P\otimes M_Q\otimes M_R$ such that: for any \Cs{} $A$, $a\in A$, and $b\in A\otimes M_P$,
\[\Ad(1_{A^{\sim}}\otimes u)((a\otimes 1_{M_P}\otimes p)\oplus(b\otimes(1_{M_Q\otimes M_R}-p)))\in A\otimes 1_{M_P}\otimes M_Q\otimes 1_{M_R}.\]
\item[\rm (ii)] For $\varepsilon>0$ and $N\in\N$, there exists $\delta>0$ such that: if $A$ and $B$ are $C^*$-subalgebras of a unital $C^*$-algebra $\cA$ such that $\dim(B)\leq N$ and $B^1\subsetdelta A$, then there exists a unitary $u\in\cA$ such that $u\approx_{\varepsilon}1_{\cA}$ and $\Ad u(B)\subset A$. 

\item[\rm (iii)] For $\varepsilon>0$ and $N\in\N$, there exists $\delta>0$ such that: if a finite dimensional $C^*$-algebra $A$ and two embeddings $\iota_0$ and $\iota_1$ of $A$ into a unital $C^*$-algebra $\cA$ satisfy $\dim (A)\leq N$ and $\|\iota_0(a)-\iota_1(a)\|<\delta$ for all $a\in A^1$, then there exists a unitary $u\in\cA$ such that $\Ad u\circ\iota_0=\iota_1$ and $\|u-1_{\cA}\|<\varepsilon$. 

\item[\rm (iv)] For $\varepsilon >0$ and $N\in\N$, there exists $\delta>0$ such that: if $A$, $B$, and $C$ are $C^*$-subalgebras of a unital $C^*$-algebra $\cA$ satisfying $C\subset A\cap B$, $B^1\subsetdelta A$, and $\dim(B), \dim(C)\leq N$, then there exists a unitary $y\in\cA$ such that $y\in C'\cap \cA$, $\Ad y(B)\subset A$, and $\|y-1_{\cA}\|<\varepsilon$. 

\item[\rm (v)] Let $A$ and $B$ be $C^*$-subalgebras of a unital $C^*$-algebra $\cA$. Suppose that $A$ and $B$ are finite dimensional $C^*$-algebras and unitaries $u$, $v\in\cA$ satisfy
$\{u, v\}\subset_{1/16} B+\C1_{\cA}$ and $\Ad u(A)\cup \Ad v(A)\subset B$. Then there exists a unitary $w\in B + \C1_{\cA}$ such that $\Ad wu(a)=\Ad v(a)$ for any $a\in A$.
\end{itemize}
\end{lemma}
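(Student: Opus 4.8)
For part (v), the plan is to conjugate the whole picture by $u$ and thereby reduce the assertion to the rigidity of $*$-homomorphisms between finite-dimensional $\mathrm{C}^*$-algebras. Write $D = B + \C 1_{\cA}$, a unital finite-dimensional $\mathrm{C}^*$-subalgebra of $\cA$ in which $B$ sits as an ideal, and put $A_1 = \Ad u(A)$, which lies in $B$ by hypothesis. First I would replace the given approximants of $u$ and $v$ in $D$ by \emph{unitaries} $z_1, z_2 \in D$ via polar decomposition inside $D$: if $b \in D$ satisfies $\|u-b\| < 1/16$, then $\|b^*b - 1\| < 33/256$, hence $\| |b| - 1 \| \le \|b^*b-1\| < 33/256$, hence the unitary $z_1 := b|b|^{-1} \in D$ satisfies $\|u - z_1\| < 49/256$; similarly one gets a unitary $z_2 \in D$ with $\|v - z_2\| < 49/256$. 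Then $z := z_2 z_1^* \in D$ is a unitary with $\|vu^* - z\| \le \|v - z_2\| + \|u - z_1\| < 98/256 < 1/2$.

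Next I would consider the two $*$-homomorphisms $\phi := \Ad z|_{A_1}$ and $\psi := \Ad(vu^*)|_{A_1}$. Both carry $A_1$ into $B$: for $\phi$ because $z \in D$ and $A_1 \subset B$; for $\psi$ because $\Ad(vu^*)(A_1) = \Ad v(A) \subset B$ by hypothesis. Moreover, for every contraction $x \in A_1$ one has $\|\phi(x) - \psi(x)\| = \|zxz^* - (vu^*)x(vu^*)^*\| \le 2\|z - vu^*\| < 1$. Evaluating at a minimal projection in each simple summand of $A_1$, the projections $\phi(e), \psi(e) \in B$ lie at distance $<1$, hence are Murray--von Neumann equivalent in $B$; since such classes $[e]$ generate $K_0(A_1)$, it follows that $\phi_* = \psi_* : K_0(A_1) \to K_0(D)$. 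As $*$-homomorphisms between finite-dimensional $\mathrm{C}^*$-algebras are determined, up to unitary equivalence in the codomain, by their induced $K_0$-maps (equivalently, by their multiplicity matrices), there exists a unitary $w_0 \in D$ with $\Ad w_0 \circ \phi = \psi$, i.e. $\Ad(w_0 z)|_{A_1} = \Ad(vu^*)|_{A_1}$.

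Finally I would set $w = w_0 z$, a unitary in $D = B + \C 1_{\cA}$ as a product of unitaries. For $a \in A$ we have $uau^* \in A_1$, whence $w(uau^*)w^* = (vu^*)(uau^*)(vu^*)^* = vav^*$, that is, $\Ad(wu)(a) = \Ad v(a)$, as required. I do not expect any genuine obstacle: the one place needing care is the constant bookkeeping in the first step, which has to verify that the fixed threshold $1/16$ is small enough to force $\|\phi - \psi\|$ below $1$ on the unit ball (it comfortably is, the controlling quantity being $< 49/64 < 1$); everything else is the standard rigidity of finite-dimensional $\mathrm{C}^*$-algebras together with the elementary fact that projections at distance less than $1$ are equivalent.
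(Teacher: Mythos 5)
Your proof of (v) is correct and takes essentially the same route as the paper's. The key step in both arguments is the same: approximate $vu^*$ by a unitary $z$ (resp.\ $w'$) in $B+\C 1_{\cA}$ at distance $<1/2$, so that the two $*$-homomorphisms $\Ad z$ and $\Ad(vu^*)$ on $\Ad u(A)$ disagree by less than $1$ on contractions, in particular sending each minimal projection to close (hence Murray--von Neumann equivalent) projections of $B$; then correct by a unitary of $B+\C 1_{\cA}$ intertwining the two maps. The only presentational difference is that you invoke the standard uniqueness theorem for $*$-homomorphisms between finite-dimensional $\mathrm{C}^*$-algebras (same multiplicity matrix $\Rightarrow$ unitarily equivalent in the codomain's unitization), whereas the paper builds the intertwining partial isometry $\barw$ explicitly out of the matrix units $e_{j,1}^{(l)}$, exactly as in its proof of (iii), and then extends it to a unitary of $B+\C 1_{\cA}$; your polar-decomposition bookkeeping (arriving at $2\|z-vu^*\|<49/64<1$) correctly verifies that the fixed threshold $1/16$ suffices, which the paper leaves implicit.
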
 

\begin{proof} {}

\noindent{\rm (i)}. Set a projection $e\in M_Q$ with $\rank (e)=\frac{\rank(p)}{R}$. By $P|(Q-\rank(e))$, we obtain a unital embedding $\iota : M_P\rightarrow (1_{M_Q}-e)M_Q(1_{M_Q}-e)$. Since $P\rank(p) =PR\rank(e)$, there exists a partial isometry $v\in M_P\otimes M_Q\otimes M_R$ such that $v^*v=1_{M_P}\otimes p$ and $vv^*=1_{M_P}\otimes e \otimes 1_{M_R}$. Denote by $\eta$  the canonical embedding of $M_P$ into $M_P\otimes(1_{M_Q\otimes M_R}-p)$ defined by $\eta(a)=a\otimes(1_{M_Q\otimes M_R}-p)$ for $a\in M_P$. Since the multiplicity of $\eta$ is $QR-\rank(p)$, the same as that of $1_{M_P}\otimes\iota\otimes 1_{M_R}$, there exists a partial isometry $w\in M_P\otimes M_Q\otimes M_R$ such that $w^*w=\eta(1_{M_P})$, $ww^*=1_{M_P}\otimes\iota(1_{M_P})\otimes 1_{M_R}$, and $\Ad w\circ\eta = 1_{M_P}\otimes \iota \otimes 1_{M_R}$. The unitary $u=v+ w$ in $M_P\otimes M_Q\otimes M_R$ then satisfies the desired conditions. Indeed, for any \Cs{} $A$, $a$, $b'\in A$, and $x\in M_P$, 
\begin{align*}
&\Ad(1_{A^{\sim}}\otimes u)((a\otimes 1_{M_P}\otimes p)\oplus (b'\otimes x\otimes(1_{M_Q\otimes M_R}-p))) \\
& = a\otimes 1_{M_P}\otimes e\otimes 1_{M_R} + b'\otimes 1_{M_P}\otimes \iota(x)\otimes 1_{M_R}\in A\otimes 1_{M_P}\otimes M_Q\otimes 1_{M_R}.
\end{align*}

\noindent{(ii)}. 
See \cite[Lemma III 3.2]{Dav}.

\noindent{(iii)}.
Since $A$ is finite dimensional, we may identify $A$ with $\bigoplus_{l=1}^L M_{k_l}$ for some $k_1, k_2, ..., k_L\in\N$. For given $\varepsilon >0$ and $N\in\N$ there exists $\delta\in (0, \varepsilon/4N)$ such that: if two projections $p$ and $q$ in $\cA$ satisfy $\|p-q\|<\delta$ then there exists a partial isometry $w\in \cA$ such that $w^*w=p$, $ww^*=q$, and $w\approx_{\varepsilon/4N} p$. Let us show that $\delta$ is as required. Suppose that $A$, $\iota_0$, and $\iota_1$ satisfy the hypotheses.
Then there are partial isometries $w_l\in\cA$, $l=1,2,...,L$, such that $w_l^*w_l=\iota_0(e_{11}^{(l)})$, $w_lw_l^*=\iota_1(e_{11}^{(l)})$, and $w_l\approx_{\varepsilon / 4N}\iota_0(e_{11}^{(l)})$. 
Consider the partial isometry $ w=\sum_{l=1}^L\sum_{j=1}^{k_l}\iota_1(e_{j,1}^{(l)})w_l \iota_0(e_{1,j}^{(l)})\in\cA$. 
We have  $\Ad w\circ \iota_0=\iota_1$ and $w\approx_{\varepsilon/2} \iota_0(1_A)$. Since $1_{\cA} -\iota_0(1_A)\approx_{\delta}1_{\cA}-\iota_1(1_A)$, there exists a partial isometry $v\in \cA$ such that $v^*v=1_{\cA}-\iota_0(1_A)$, $vv^*=1_{\cA}-\iota_1(1_A)$, and $v\approx_{\varepsilon/2}1_{\cA}-\iota_0(1_A)$. The unitary $u=v+w\in \cA$ satisfies the stipulated conditions.

\noindent{\rm (iv)}. Applying {\rm (iii)}, we obtain $\delta'\in (0, \varepsilon)$ satisfying the condition of {\rm (iii)} for $\varepsilon/2>0$ and $N\in\N$. Applying {\rm (ii)}, we also obtain $\delta>0$ satisfying the condition of {\rm (ii)} for $\delta'/2>0$ and $N\in\N$. This $\delta$ satisfies the desired condition. 
Indeed, if $A$, $B$, and $C$ satisfy the assumption of {\rm (iv)} for $\delta >0$ and $N\in\N$, then there exists a unitary $u\in \cA$ such that $u\approx_{\delta'/2} 1_{\cA}$ and $\Ad u(B)\subset A$, which implies that $\Ad u(c)\approx_{\delta'} c$ for any $c\in C^1$. Thus we obtain a unitary $w\in A+\C 1_{\cA}$ such that $\Ad wu(c)=c$ for any $c\in C$ and $w\approx_{\varepsilon/2} 1_{\cA}$. The unitary $y=wu\in\cA$ satisfies the conditions. 

\noindent{\rm (v)}. Let $A$ and $\bigoplus_{l=1}^LM_{k_l}$ be as in the proof of {\rm (iii)}. Define a unitary $w''\in\cA$ by $w''=vu^*$. By hypothesis, there exists a unitary $w'$ in $B+\C1_{\cA}$ such that $w'\approx_{1/2}w"$. It follows that $\Ad w'u(e_{11}^{(l)})\approx_{1} \Ad v(e_{11}^{(l)})$ for all $l=1,2,..., L$. Thus there exists a partial isometry $w_l\in B$ such that $w_l^*w_l=\Ad w' u(e_{11}^{(l)})$ and $w_lw_l^*=\Ad v(e_{11}^{(l)})$. Set $\barw=\left(\sum_{l=1}^{N}\sum_{j=1}^{k_l}\Ad v(e_{j,1}^{(l)})w_l\Ad w'u(e_{1j}^{(l)})\right)w'$; $\barw$ is a partial isometry in $B$ such that $\Ad \barw u (a) =\Ad v(a)$ for any $a\in A$. Since $B$ is finite dimensional, we can extend $\barw$ to a unitary $w$ in $B+\C 1_{\cA}$ such that $\Ad wu(a)=\Ad v(a)$ for any $a\in A$. 
\end{proof}

In the following proof, for a given \Cs{} $A$ and a Lipschitz continuous function $f\in C([0, 1])\otimes A$ we denote by $\Lip(f)$ the Lipschitz constant of $f$. We shall denote by $1_{AF}$ and $1_{AF_i}$ the units of the unitizations $\AF^{\sim}$ and $\AF_i^{\sim}$. 

\begin{proof}[Proof of Theorem \ref{Thm4.4}]
Let $F_n$, $n\in\N$, be an increasing sequence of finite subsets of $G^+$ whose union is $G^+$ with $F_1=\{0\}$, and $\varepsilon_n>0$, $n\in\N$, be a decreasing sequence such that $\sum\limits_{n\in\N}\varepsilon_n <1$. Let $F_{A n}$ (resp. $F_{{A}_i n}$), $n\in\N$, be an increasing sequence of finite subsets of $\AF^1$ (resp. $\AF_i^1$) whose union is dense in $\AF^1$ (resp. $\AF_i^1$) and $F_{A 1}=\{0\}$ (resp. $F_{A_i 1}=\{0\}$ for $i=0, 1$). For $n\in\N$, we shall inductively construct numbers $L_n$, $\Lambda_n$, $\tL_n\in \N\cup\{0\}$, finite dimensional $\mathrm{C}^*$-subalgebras $A_n^{(i)}, B_n^{(i)}\subset \AF_i$, $i=0, 1$, and $A_n\subset\AF$, embeddings $\epsilon_1: \{0\}\rightarrow \AF$, $\epsilon_n : M_{2^{L_n}}\otimes M_{3^{L_n}}\rightarrow A_{n-1}'\cap \AF$ (for $n\geq 2$), $\iota_n^{(i)} : A_n^{(i)}\rightarrow A_n$, $\kappa_n^{(i)} : B_n^{(i)}\otimes M_{(3-i)^{\Lambda_n}}\rightarrow \AF$ for $i=0, 1$, a $\mathrm{C}^*$-subalgebra $\cA_n$ of $C([0, 1])\otimes A_n$, and an injective $*$-homomorphism $\varphi_n : \cA_{n-1}\rightarrow \cA_n$ (where $\cA_0=\{0\}$) satisfying the following conditions:
\begin{itemize}
\item[\rm (1, $n$)]
$\Lambda_n>L_n+\Lambda_{n-1}$, $\tL_n>L_n\geq \Lambda_{n-1}$,
\item[\rm (2, $n$)]
for $i=0, 1$, on regarding $(K_0(\AF_i), K_0(\AF_i)^+)$ as $(G\otimes\D_{(2+i)^{\infty}}, {G\otimes \D_{(2+i)^{\infty}}}^+)$, one has $ F_n\otimes 1_{(2+i)^{\infty}}\subset K_0(A_n^{(i)})^+$, $A_{n-1}^{(i)}\subset A_n^{(i)}\subset B_n^{(i)}$, and $F_{A_i n}\subset_{\varepsilon_n} A_n^{(i)}$,
\item[\rm (3, $n$)]
if $n=2n'+ i_n$ for some $n'\in\N\cup\{0\}$ and $i_n\in \{0, 1\}$, then there exists a unitary $U_n$ in $\AF^{\sim}$ such that 
\[U_n\approx_{\varepsilon_n} 1_{AF}\text{ and }(A_{n-1} +\C1_{AF})\Image(\epsilon_n)\subset \Ad U_n(A_n^{(1-i_n)}\otimes M_{(2+i_n)^{\tL_n}})=A_n,\]
\item[\rm (4, $n$)]
$\epsilon_n(1_{(M_{2^{L_n}}\otimes M_{3^{L_n}})})a = a$ for any $a\in A_{n-1}$, and $F_{A n}\subset_{\varepsilon_n}A_n^1$,
\item[\rm (5, $n$)]
$\cA_n$ is defined by 
\[\cA_n=\{f\in C([0, 1])\otimes A_n\ : \ f(i)\in \Image (\iota_n^{(i)})\text{ for }i=0,1\},\]
\item[\rm (6, $n$)]
there are unitaries $V_n^{(i)}$, $W_n^{(i)}\in \AF^{\sim}$, $i=0, 1$, such that 
\[\iota_n^{(i)}=\Ad V_n^{(i)}\circ\iota_{\infty}^{(i)}|_{A_n^{(i)}},\quad \kappa_n^{(i)}(b\otimes 1_{M_{(3-i)^{\Lambda_n}}})=\Ad W_n^{(i)}(b\otimes 1_{M_{(3-i)^{\infty}}}),\]
for any $i=0, 1$ and $b\in B_n^{(i)}$, and 
\[ V_n^{(i)}\in_{\varepsilon_n} A_n + \C 1_{AF}\quad\text{ and }\quad W_n^{(i)}\in_{\frac{\varepsilon_n}{4}} \Image(\kappa_n^{(i)})+\C 1_{AF},\]
\item[\rm (7, $n$)] $A_n\subset\Image(\kappa_n^{(i)})$ for $i=0, 1$, and 
\[ \iota_n^{(i)}(a)=\kappa_n^{(i)}(a\otimes 1_{M_{(3-i)^{\Lambda_n}}})\quad\text{for any }i=0, 1,\text{ and } a\in A_n^{(i)},\] 
\item[\rm (8, $n$)] one has
\[\Lip(\varphi_n(f))\leq \Lip(f)/2 + 2\varepsilon_n\]
for any Lipschitz continuous function $f\in \cA_{n-1}$ with $\|f\|\leq 1$.
\end{itemize}

For $n=1$ and $i=0,1$, setting $\Lambda_0=0$, $A_0^{(i)}=\{0\}\subset\AF_i$, and $A_0=\{0\}\subset \AF$,  define $L_1=0$, $\Lambda_1=\tL_1=1$, $A_1=\{0\}\subset\AF$, $A_1^{(i)}=B_1^{(i)}=\{0\}\subset \AF_i$, $U_1=V_1^{(i)}=W_1^{(i)}=1_{AF}$, and $\cA_0=\cA_1=\{0\}$. These choices satisfy {\rm (1, 1)}--{\rm (8, 1)}.

For the induction,  assume that $L_n$, $\Lambda_n$, $\tL_n$, $A^{(i)}_n$, $B_n^{(i)}$, $A_n$, $\epsilon_n$, $\iota_n^{(i)}$, $\kappa_n^{(i)}$, $\cA_n$, and $\varphi_n$ satisfying {\rm (1, $n$)}--{\rm (8, $n$)} have been constructed. Choose $L_{n+1}\in\N$ large enough that $2^{L_{n+1}}>3^{\Lambda_n}$. Then there are $k_{n+1}'$, $l_{n+1}'\in\N$ such that $k_{n+1}'< 3^{\Lambda_n}$, $l_{n+1}'< 2^{\Lambda_n}$, $3^{\Lambda_n}|2^{L_{n+1}}-k_{n+1}'$, and $2^{\Lambda_n}|3^{L_{n+1}}-l_{n+1}'$. Set $N_{n+1}=L_{n+1}+ \Lambda_n$, $k_{n+1}=3^{L_{n+1}}k_{n+1}'$, $l_{n+1}=2^{L_{n+1}}l_{n+1}'$, and $m_{n+1}=6^{L_{n+1}}$. Note that from $k_{n+1}'< 3^{\Lambda_n}\leq 2^{L_{n+1}}-k_{n+1}'$ and $l_{n+1}'< 2^{\Lambda_n}\leq 3^{L_{n+1}}-l_{n+1}'$ it follows that
\[ m_{n+1}-(k_{n+1}+l_{n+1})=(3^{L_{n+1}}-l_{n+1}')(2^{L_{n+1}}-k_{n+1}')-l_{n+1}'k_{n+1}'>0.\]
Set $N=(\dim(B_n^{(0)})+\dim(B_n^{(1)})+\dim(A_n))6^{2L_{n+1}}\in\N$, and $\delta_{n+1}^{(0)}= \varepsilon_{n+1}/8$. Applying Lemma \ref{Lem4.6} {\rm (ii), (iii), (iv)} inductively to $\delta_{n+1}^{(k)}$ $(=\varepsilon)>0$ and $N$, we obtain $\delta_{n+1}^{(k+1)}$ $(=\delta)>0$ verifying the estimates of {\rm (ii), (iii), (iv)} of Lemma \ref{Lem4.6}. 
Refining the choice of $\delta_{n+1}^{(k)}>0$, $k\in\N$, making each one smaller in turn,  we may suppose that $\delta_{n+1}^{(k+1)}<\delta_{n+1}^{(k)}$ for $k\in\N$ and 
\[ N\left(\sum_{j=1}^{\infty} 8^j \delta_{n+1}^{(k+j)}\right)<\delta_{n+1}^{(k)}\quad\text{for any }k\in\N.\]
Since $\AF_i\otimes M_{(2+i)^{\infty}}\cong \AF_i$ for $i=0, 1$, there are embeddings $\epsilon_{n+1}^{(i)} : M_{(2+i)^{L_{n+1}}}\rightarrow ({B_n^{(i)}})'\cap \AF_i$, $i=0, 1$, such that
\begin{align*}
(\Image(\epsilon_{n+1}^{(1)})\otimes 1_{M_{2^{\infty}}})^1&\subset_{\delta_{n+1}^{(10)}}\left(\Image(\epsilon_{n+1}^{(0)})\otimes 1_{M_{3^{\infty}}}\right)'\cap \AF, \\
b\epsilon_{n+1}^{(i)}(1_{M_{(2+i)^{L_{n+1}}}})&\approx_{\frac{\varepsilon_{n+1}}{4}} b\quad\text{for any }i=0, 1 \text{ and }b\in (B_n^{(i)})^1, \text{ and }\\
a(\epsilon_{n+1}^{(i)}(1_{M_{(2+i)^{L_{n+1}}}})\otimes 1_{M_{(3-i)^{\infty}}})&\approx_{\delta_{n+1}^{(10)}} a \quad\text{ for any }a\in A_n^1\cup\bigcup\limits_{j=0,1}\left(B_n^{(j)}\otimes 1_{M_{(3-j)^{\infty}}}\right)^1.
\end{align*}
By Lemma \ref{Lem4.6} {\rm (ii)}, we obtain a unitary $E_1$ in $\AF^{\sim}$ such that $E_1\approx_{\delta_{n+1}^{(9)}}1_{AF}$ and 
\[\Ad E_1(\Image(\epsilon_{n+1}^{(1)})\otimes 1_{M_{2^{\infty}}})\subset \left(\Image(\epsilon_{n+1}^{(0)})\otimes 1_{M_{3^{\infty}}}\right)'\cap \AF.\]
Set $\barepsilon_{n+1}(a\otimes b)=\epsilon_{n+1}^{(0)}(a)\otimes 1_{M_{3^{\infty}}}\Ad E_1(\epsilon_{n+1}^{(1)}(b)\otimes 1_{M_{2^{\infty}}})$ for $a\in M_{2^{L_{n+1}}}$, $b\in M_{3^{L_{n+1}}}$. 
Choosing $\epsilon_{n+1}^{(0)}$ and $\epsilon_{n+1}^{(1)}$ almost commuting with $\Image(\kappa_n^{(i)})^1$ and $W_n^{(i)}\!\!,$ we may  further assume that
\[\Image(\kappa_n^{(i)})^1\cup \{W_n^{(i)}\}\subset_{\delta_{n+1}^{(8)}}\Image(\barepsilon_{n+1})'\cap \AF,\]
for both $i=0, 1$.  Because of the inclusion $A_n\subset \Image(\kappa_n^{(i)})$ for $i=0, 1$ in {\rm (7, $n$)}, we have an embedding $\epsilon_{n+1} : M_{2^{L_{n+1}}}\otimes M_{3^{L_{n+1}}}\rightarrow A_n'\cap \AF$ such that $\epsilon_{n+1}(x)\approx_{\delta_{n+1}^{(6)}} \barepsilon_{n+1}(x)$ for all $x\in (M_{2^{L_{n+1}}}\otimes M_{3^{L_{n+1}}})^1$. Then it follows that $\Image(\epsilon_{n+1})^1\subset_{\delta_{n+1}^{(5)}}\Image(\kappa_n^{(i)})'\cap\AF$ for $i=0, 1$. Note that, since $1_{A_n}\epsilon_{n+1}(\oneLnplus)\approx_{\varepsilon_{n+1}} 1_{A_n}$ and $\epsilon_{n+1}(\oneLnplus)\in A_n'$, we have  $\epsilon_{n+1}(\oneLnplus) 1_{A_n}=1_{A_n}$ which implies 
$\epsilon_{n+1}(\oneLnplus)a=a$ for any $a\in A_n$ in {\rm (4, $n+1$)}. For the same reason, from 
$1_{A_n^{(i)}}\epsilon_{n+1}^{(i)}(1_{M_{(2+i)^{L_{n+1}}}})\approx_{\varepsilon_{n+1}} 1_{A_n^{(i)}}$ and $\epsilon_{n+1}^{(i)}(1_{M_{(2+i)^{L_{n+1}}}})\in {A_n^{(i)}}'$, it follows that $\epsilon_{n+1}^{(i)}(1_{M_{(2+i)^{L_{n+1}}}})a=a$ for all $i=0, 1$ and $a\in A_n^{(i)}$.

For $i=0, 1$, since $\AF_i\otimes M_{(3-i)^{\infty}}=\AF$, there exist $\tL_{n+1}\in\N$ and a finite dimensional $\mathrm{C}^*$-subalgebra $A_{n+1}^{(i)}$ of $\AF_i$ satisfying $F_{n+1}\otimes 1_{(2+i)^{\infty}}\subset K_0(A_{n+1}^{(i)})^+$ in {\rm (2, $n+1$)}, $\tL_{n+1}> L_{n+1}$, $A_n^{(i)}\subset A_{n+1}^{(i)}$, $F_{A_i n+1}\subset_{\varepsilon_{n+1}} A_{n+1}^{(i)}$, $A_n\subset_{\delta_{n+1}^{(10)}}A_{n+1}^{(i)}\otimes M_{(3-i)^{\tL_{n+1}}}$, and $B_n^{(i)}\Image(\epsilon_{n+1}^{(i)})\subset A_{n+1}^{(i)}$. 
When $n$ is an even number (resp.~an odd number), we have $i_{n+1}=1$ (resp.~$i_{n+1}=0$). By Lemma \ref{Lem4.6} (ii), there exists $\tdelta >0$ satisfying the condition for $\delta_{n+1}^{(10)}$ $(=\varepsilon)>0$ and $\dim(A_{n+1}^{(i_{n+1})})6^{2L_{n+1}}\in\N$. Enlarging the choice of $\tL_{n+1}\in\N$ and $A_{n+1}^{(1-i_{n+1})}$ further, we may choose $\tL_{n+1}$ and $A_{n+1}^{(1-i_{n+1})}$ such that 
\setcounter{equation}{0}
\begin{align}
&(A_{n+1}^{(i_{n+1})}\otimes M_{(3-i_{n+1})^{L_{n+1}}})^1\cup (A_n+\C1_{AF})\Image(\epsilon_{n+1})^1\cup \Image(\kappa_n^{(0)})^1\cup\Image(\kappa_n^{(1)})^1\cup F_{A n+1}\nonumber \\
&\subset_{\min\{\delta_{n+1}^{(10)}, \tdelta\}} A_{n+1}^{(1-i_{n+1})}\otimes M_{(2+i_{n+1})^{\tL_{n+1}}}, \text{ and} \\
&\quad \{W_n^{(0)}, W_n^{(1)}\}\subset_{\delta_{n+1}^{(10)}}A_{n+1}^{(1-i_{n+1})}\otimes M_{(2+i_{n+1})^{\tL_{n+1}}}+ \C1_{AF}.
\end{align}
By Lemma \ref{Lem4.6} (ii) for $\delta_{n+1}^{(9)}>0$ and $N\in\N$, there exists a unitary $U_{n+1}\in \AF^{\sim}$ satisfying the condition of (3, $n+1$) and $U_{n+1}\approx_{\delta_{n+1}^{(9)}}1_{AF}$. Define 
\[ A_{n+1}=\Ad U_{n+1} (A_{n+1}^{(1-i_{n+1})}\otimes M_{(2+i_{n+1})^{\tL_{n+1}}})\subset \AF.\] 
Note that from $F_{A n+1}\subset_{\delta_{n+1}^{(10)}} A_{n+1}^{(1-i_{n+1})}\otimes M_{(2+i_{n+1})^{\tL_{n+1}}}$, we see that $F_{A n+1}\subset_{\varepsilon_{n+1}}A_{n+1}^1$ 
as required for (4, $n+1$). By the conditions of (3, $n+1$) and (4, $n+1$), it follows that $A_n\subset A_{n+1}$. 
Since $ \bigcup_{i=0, 1}\Image(\kappa_n^{(i)})^1\subset_{\delta_{n+1}^{(8)}}A_{n+1}$, applying Lemma \ref{Lem4.6} (iv) to $A=A_{n+1}$, $B=\Image(\kappa_n^{(i)})$, and $C=A_n$, we obtain unitaries $y_n^{(i)}\in \AF^{\sim}\cap A_n'$, $i=0,1$, such that $\Ad y_n^{(i)}(\Image(\kappa_n^{(i)}))\subset A_{n+1}$ and $y_n^{(i)}\approx_{\delta_{n+1}^{(7)}} 1_{AF}$. Define embeddings $\barkappa_n^{(i)} : B_n^{(i)}\otimes M_{(3-i)^{\Lambda_n}}\rightarrow A_{n+1}$, $i=0, 1$, by $\barkappa_n^{(i)} =\Ad y_n^{(i)}\circ\kappa_n^{(i)}$. Note that, by (7,$\ n$), we also see that $\barkappa_n^{(i)}(a\otimes 1_{M_{(3-i)^{\Lambda_n}}})=\iota_n^{(i)}(a)$ for any $i=0, 1$ and $a\in A_n^{(i)}$. Now we have $A_n\subset\Image(\barkappa_n^{(i)})\subset A_{n+1}$ for both $i=0, 1$, $\Image(\epsilon_{n+1})\subset A_{n+1}$, and $\Image(\barkappa_n^{(i)})^1\subset_{\delta_{n+1}^{(4)}}A_{n+1}\cap \Image(\epsilon_{n+1})'$ for $i=0, 1$. Applying Lemma \ref{Lem4.6} (iv) to $A=A_{n+1}\cap\Image(\epsilon_{n+1})'$, $B=\Image(\barkappa_n^{(i)})$, and $C=A_n$, we obtain unitaries $\ty_n^{(i)}\in A_n'\cap \AF^{\sim}$, $i=0, 1$, such that $\ty_n^{(i)}\approx_{\delta_{n+1}^{(3)}}1_{AF}$ and $\Ad \ty_n^{(i)}(\Image(\barkappa_n^{(i)}))\subset  A_{n+1}\cap\Image(\epsilon_{n+1})'$. Define embeddings $\tkappa_n^{(i)} : B_n^{(i)} \otimes M_{(3-i)^{\Lambda_n}}\rightarrow A_{n+1}$, $i=0, 1$, by $\tkappa_n^{(i)} =\Ad \ty_n^{(i)}\circ\barkappa_n^{(i)}$, which satisfy $\tkappa_n^{(i)}(a\otimes 1)=\iota_n^{(i)}(a)$ for any $i=0, 1$ and $a\in A_n^{(i)}$, and also $A_n\subset\Image(\tkappa_n^{(i)})\subset A_{n+1}\cap\Image(\epsilon_{n+1})'$ for both $i=0,1$. 

To simplify notation,  let $\{e_{i j}\}_{i, j=1}^{m_{n+1}}$ be a system of matrix units for $M_{m_{n+1}}\cong M_{2^{L_{n+1}}}\otimes M_{3^{L_{n+1}}}$. Note that $m_{n+1}> k_{n+1}+ l_{n+1}$. Define an embedding $\xi$ from $C([0, 1])\otimes A_n$ into $C([0, 1])\otimes A_{n+1}$ by
\[
\xi(f)(t)=\sum_{j=1}^{k_{n+1}}f\left(\frac{t}{2}\right)\epsilon_{n+1}(e_{jj})+\sum_{j=1+k_{n+1}}^{m_{n+1}-l_{n+1}}f\left(\frac{1}{2}\right)\epsilon_{n+1}(e_{jj})+\sum_{j=1+m_{n+1}-l_{n+1}}^{m_{n+1}} f\left(\frac{1+t}{2}\right)\epsilon_{n+1}(e_{jj}),\]
for $f\in C([0,1])\otimes A_n$ and $t\in [0, 1]$. 
Set $p=\epsilon_{n+1}\left(\sum_{j=1}^{k_{n+1}}e_{jj}\right)$. Since $\tkappa_n^{(0)}(a\otimes 1_{M_{3^{\Lambda_n}}})=\iota_n^{(0)}(a)$ for any $a\in A_n^{(0)}$ and $A_n\subset\Image(\tkappa_n^{(0)})$, for any $f\in\cA_n$ there exist $a_f\in A_n^{(0)}\subset B_n^{(0)}$ and $b_f\in B_n^{(0)}\otimes M_{3^{\Lambda_n}}$ such that 
\begin{align*}
\xi(f)(0)&=\sum_{j=1}^{k_{n+1}}f(0)\epsilon_{n+1}(e_{jj})+\sum_{j=1+k_{n+1}}^{m_{n+1}}f\left(\frac{1}{2}\right)\epsilon_{n+1}(e_{jj}) \\
&=\tkappa_n^{(0)}(a_f\otimes 1)p +\tkappa_n^{(0)}(b_f)(\epsilon_{n+1}(1)-p).
\end{align*}
Since $3^{L_{n+1}}|3^{L_{n+1}}k_{n+1}'=k_{n+1}$ and $3^{\Lambda_n}|2^{L_{n+1}}-k_{n+1}'$, applying Lemma \ref{Lem4.6} (i) to $P=3^{\Lambda_n}$, $Q=2^{L_{n+1}}$, $R=3^{L_{n+1}}$, and $p$, we obtain a unitary $U_{n+1}^{(0)}$ in $\tkappa_n^{(0)}(1_{B_n^{(0)}}\otimes M_{3^{\Lambda_n}})\epsilon_{n+1}(M_{2^{L_{n+1}}}\otimes M_{3^{L_{n+1}}})+\C 1_{AF}\subset A_{n+1}+\C 1_{AF}$ such that 
\[ \Ad U_{n+1}^{(0)}(\xi(f)(0))\in \tkappa_n^{(0)}(B_n^{(0)}\otimes 1_{M_{3^{\Lambda_n}}})\epsilon_{n+1}(M_{2^{L_{n+1}}}\otimes 1_{M_{3^{L_{n+1}}}}),\]
for any $f\in \cA_n$.
By (6, $n$), now we have $\Ad W_n^{(0)}(b\otimes 1_{M_{3^{\infty}}})=\kappa_n^{(0)}(b\otimes 1_{M_{3^{\Lambda_n}}})$ for any $\in B_n^{(0)}$ and $W_n^{(0)}\in_{\varepsilon_n/4}\Image(\kappa_n^{(0)})+\C1_{AF}$. Since $W_n^{(0)}\in_{\delta_{n+1}^{(8)}}\Image(\barepsilon_{n+1})'\cap \AF^{\sim}$ (from (2)), and $
\barepsilon_{n+1}(x)\approx_{\delta_{n+1}^{(6)}}\epsilon_{n+1}(x)$ for all $x\in (M_{2^{L_{n+1}}}\otimes M_{3^{L_{n+1}}})^1$, we obtain a unitary $\tW_n^{(0)}\in \Image(\epsilon_{n+1})'\cap\AF^{\sim}$ such that $\tW_n^{(0)}\approx_{\delta_{n+1}^{(5)}}W_n^{(0)}$. 
Then it follows that 
\begin{align*}
&\Ad {W_n^{(0)}}^*\left(\tkappa_n^{(0)}(b\otimes 1_{M_{3^{\Lambda_n}}})\epsilon_{n+1}(a\otimes1_{M_{3^{L_{n+1}}}})\right)\\
&\approx_{2\delta_{n+1}^{(5)}}\Ad {W_n^{(0)}}^*\left(\tkappa_n^{(0)}(b\otimes 1_{M_{3^{\Lambda_n}}})\right)\epsilon_{n+1}(a\otimes  1_{M_{3^{L_{n+1}}}})\\
&=(b\otimes 1_{M_{3^{\infty}}})\epsilon_{n+1}(a\otimes 1_{M_{3^{L_{n+1}}}})\approx_{\delta_{n+1}^{(6)}+2\delta_{n+1}^{(9)}}((b\epsilon_{n+1}^{(0)}(a))\otimes 1_{M_{3^{\infty}}})(\epsilon_{n+1}^{(1)}(1)\otimes 1_{M_{2^{\infty}}})\\ 
&\approx_{\delta_{n+1}^{(10)}} b\epsilon_{n+1}^{(0)}(a)\otimes 1_{M_{3^{\infty}}},
\end{align*}
for any $b\in {B_n^{(0)}}^1$ and $a\in M_{2^{L_{n+1}}}^1$. By Lemma \ref{Lem4.6} (iii), we obtain a unitary $u^{(0)}$ in $\AF^{\sim}$ such that $u^{(0)}\approx_{\delta_{n+1}^{(3)}}1_{AF}$ and 
\[\Ad u^{(0)}{W_n^{(0)}}^*\left(\tkappa_n^{(0)}(b\otimes 1_{M_{3^{\Lambda_n}}})\epsilon_{n+1}(a\otimes 1_{M_{3^{L_{n+1}}}})\right)=(b\epsilon_{n+1}^{(0)}(a))\otimes 1_{M_{3^{\infty}}},\]
for any $b\in B_n^{(0)}$ and $a\in M_{2^{L_{n+1}}}$. Consider the unitary $z_n^{(0)}=W_n^{(0)}{u^{(0)}}^*$ in $\AF^{\sim}$, which satisfies $z_n^{(0)}\approx_{\delta_{n+1}^{(3)}}W_n^{(0)}$ and $\Ad z_n^{(0)}(b\epsilon_{n+1}^{(0)}(a)\otimes1_{M_{3^{\infty}}}) =\tkappa_n^{(0)}(b\otimes 1_{M_{3^{\Lambda_n}}})\epsilon_{n+1}(a\otimes 1_{M_{3^{L_{n+1}}}})$ for any $b\in B_n^{(0)}$ and $a\in M_{2^{L_{n+1}}}$.  
Define an embedding $\bariota_{n+1}^{(0)}$ of $A_{n+1}^{(0)}$ into $\AF$ by 
\[ \bariota_{n+1}^{(0)}(a) =\Ad {U_{n+1}^{(0)}}^*z_n^{(0)}(a\otimes 1_{M_{3^{\infty}}})\quad\text{for }a\in A_{n+1}^{(0)}.\]
From $B_n^{(0)}\Image(\epsilon_{n+1}^{(0)})\subset A_{n+1}^{(0)}$, it follows that $\xi(f)(0)\in \Image(\bariota_{n+1}^{(0)})$ for any $f\in \cA_n$. 

In the same way as for $i=0$, we also obtain unitaries $u^{(1)}$, $U_{n+1}^{(1)}$, $z_n^{(1)}$ in $\AF^{\sim}$, and an embedding $\bariota_{n+1}^{(1)}$ of $A_{n+1}^{(1)}$ into $\AF$ such that 
$U_{n+1}^{(1)}\in\tkappa_n^{(1)}(1_{B_n^{(1)}}\otimes M_{2^{\Lambda_n}})\epsilon_{n+1}(M_{2^{L_{n+1}}}\otimes M_{3^{L_{n+1}}})+\C1_{AF}$, $z_n^{(1)}=W_n^{(1)}{u^{(1)}}^*\approx_{\delta_{n+1}^{(3)}} W_n^{(1)}$, $\bariota_{n+1}^{(1)}(a) =\Ad {U_{n+1}^{(1)}}^*z_n^{(1)}(a\otimes 1_{M_{2^{\infty}}})$ for $a\in A_{n+1}^{(1)}$, and $\xi(f)(1)\in \Image(\bariota_{n+1}^{(1)})$ for any $f\in \cA_n$.

Note that, since $(A_{n+1}^{(i)}\otimes 1_{M_{(3-i)^{L_{n+1}}}})^1\subset_{\delta_{n+1}^{(8)}} A_{n+1}$ for both $i=0, 1$, and since $z_n^{(i)}\approx_{\delta_{n+1}^{(3)}}W_n^{(i)}\in_{\delta_{n+1}^{(8)}} A_{n+1} +\C 1_{AF}$ (from (2)), we have  
\[\Image(\bariota_{n+1}^{(i)})^1=\Ad {U_{n+1}^{(i)}}^*z_n^{(i)}((A_{n+1}^{(i)})^1\otimes 1_{M_{(3-i)^{L_{n+1}}}})\subset_{\delta_{n+1}^{(2)}}A_{n+1}.\]
Although $\dim(A_{n+1}^{(i)})$ can be bigger than $N$, the following argument allows us to obtain unitaries $\tU_{n+1}^{(i)}\in \AF^{\sim}$, $i=0,1$, such that $\tU_{n+1}^{(i)}\approx_{\delta_{n+1}^{(2)}} 1_{AF}$ and $\Ad \tU_{n+1}^{(i)}\circ\bariota_{n+1}^{(i)}(A_{n+1}^{(i)})\subset A_{n+1}$. In the case $i=1-i_{n+1}$, we fix a unitary $\barU_{n+1}^{(i)}=$ $U_{n+1}u^{(i)} {W_n^{(i)}}^*$ $U_{n+1}^{(i)}\in \AF^{\sim}$. Since $W_n^{(i)}\in_{\delta_{n+1}^{(8)}} A_{n+1}+\C 1_{AF}$ and $U_{n+1}^{(i)}\in A_{n+1}+\C 1_{AF}$, there exists a unitary $\bary^{(i)}\in A_{n+1} +\C 1_{AF}$ such that $\bary^{(i)}\approx_{\delta_{n+1}^{(2)}}\barU_{n+1}^{(i)}$. Consider the unitary $\tU_{n+1}^{(i)}={\bary^{(i)}}^* \barU_{n+1}^{(i)}$, which satisfies $\tU_{n+1}^{(i)}\approx_{\delta_{n+1}^{(2)}} 1_{AF}$ and $\Ad\tU_{n+1}^{(i)}\circ \bariota_{n+1}^{(i)}(a)= \Ad {\bary^{(i)}}^*U_{n+1}(a\otimes 1_{M_{(3-i)^{\infty}}})\in A_{n+1}$ for any $a\in A_{n+1}^{(i)}$. 
In the case  $i=i_{n+1}$, from (1) it follows that $(A_{n+1}^{(i)}\otimes M_{(3-i)^{L_{n+1}}})^1\subset_{\tdelta} A_{n+1}^{(1-i)}\otimes M_{(2+i)^{\tL_{n+1}}}$ and $\tdelta>0$ is chosen for $\dim(A_{n+1}^{(i)})6^{2L_{n+1}}$. By Lemma \ref{Lem4.6} (ii), there exists a unitary $V\in\AF^{\sim}$ such that $V\approx_{\delta_{n+1}^{(10)}} 1_{AF}$ and $\Ad V(A_{n+1}^{(i)}\otimes M_{(3-i)^{L_{n+1}}})\subset A_{n+1}^{(1-i)}\otimes M_{(2+i)^{\tL_{n+1}}}$.
It follows that $\Ad U_{n+1} V(A_{n+1}^{(i)}\otimes M_{(3-i)^{L_{n+1}}})\subset A_{n+1}$. Set $\barU_{n+1}^{(i)}=U_{n+1}Vu^{(i)} {W_n^{(i)}}^* U_{n+1}^{(i)}$ which is a unitary in $\AF^{\sim}$ such that $\barU_{n+1}^{(i)}\in_{2\delta_{n+1}^{(3)}}A_{n+1}+\C 1_{AF}$. Then there exists a unitary $\bary^{(i)}\in A_{n+1} +\C 1_{AF}$ such that $\bary^{(i)}\approx_{\delta_{n+1}^{(2)}}\barU_{n+1}^{(i)}$. Setting $\tU_{n+1}^{(i)} = {\bary^{(i)}}^*\barU_{n+1}^{(i)}$, we also see that $\Ad\tU_{n+1}^{(i)}\circ \bariota_{n+1}^{(i)}(a)= \Ad {\bary^{(i)}}^*U_{n+1}V(a\otimes 1_{M_{(3-i)^{\infty}}})\in A_{n+1}$ for any $a\in A_{n+1}^{(i)}$.

Define embeddings $\iota_{n+1}^{(i)}$ of $A_{n+1}^{(i)}$ into $A_{n+1}$, $i=0,1$, by $\iota_{n+1}^{(i)}=\Ad \tU_{n+1}^{(i)}\circ\bariota_{n+1}^{(i)}$. Then it follows that $\Ad\tU_{n+1}^{(i)}(\xi(f)(i))\in \Image(\iota_{n+1}^{(i)})$ for any $i=0, 1$ and $f\in\cA_{n}$. 
Define unitaries $V_{n+1}^{(i)}=\tU_{n+1}^{(i)}{U_{n+1}^{(i)}}^*z_n^{(i)}$, $i=0, 1$, in $\AF^{\sim}$. Since $U_{n+1}^{(i)}\in A_{n+1} + \C1_{AF}$ for both $i=0, 1$, we see that $V_{n+1}^{(i)}$, $i=0, 1$, satisfy the conditions of (6, $n+1$). 
We define a \Css{} $\cA_{n+1}$ of $C([0, 1])\otimes A_{n+1}$ as required for (5, $n+1$). Define 
finite dimensional \Css s $E^{(i)}$, $i=0, 1$, of $A_{n+1}$ by
$E^{(i)}=\{\xi(f)(i)\ : \ f\in \cA_n\}$. Note that $\dim(E^{(i)})\leq \dim(A_n^{(i)})+\dim(B_n^{(i)})(3-i)^{2\Lambda_n}< N$ for both $i=0, 1$ and that $\xi(f) (i) \approx_{2\delta_{n+1}^{(2)}}\Ad \tU_{n+1}^{(i)}(\xi(f)(i))$ for any $f\in \cA_{n}^1$. By Lemma \ref{Lem4.6} (iii) with (3, $n+1$), we obtain unitaries 
$\cU^{(i)}$ in $ A_{n+1} +\C 1_{AF}$, $i=0, 1$, such that $\cU^{(i)}\approx_{\delta_{n+1}^{(0)}} 1_{AF}$ and $\Ad\cU^{(i)}(\xi (f)(i))=\Ad \tU_{n+1}^{(i)}(\xi (f)(i))$ for any $i=0, 1$ and $f\in \cA_n$.
 Thus we obtain a unitary $\tcU\in C([0, 1])\otimes (A_{n+1}+\C 1_{AF})$ which is a Lipschitz continuous function such that $\tcU(i)=\cU^{(i)}$ for $i=0, 1$, $\tcU(t)\approx_{\varepsilon_{n+1}} 1_{AF}$ for all $t\in[0, 1]$ and $\Lip (\tcU)\leq \varepsilon_{n+1}$. 
We define an embedding $\varphi_{n+1}$ of $\cA_{n}$ into $\cA_{n+1}$ by $\varphi_{n+1}(f)=\Ad \tcU\circ\xi(f)$ for $f\in \cA_n$. Because of the construction of $\xi$ and the inequality $\Lip(\tcU)\leq \varepsilon_{n+1}$, it follows that $\varphi_{n+1}$ satisfies (8, $n+1$). 

In the rest of the induction, we construct $\Lambda_{n+1}$, $B_{n+1}^{(i)}$ and $\kappa_{n+1}^{(i)}$ satisfying the conditions (1, $n+1$), (2, $n+1$), (6, $n+1$), (7, $n+1$). Set $\bardelta_{n+1}^{(0)}=\varepsilon_{n+1}/8$. By applying Lemma \ref{Lem4.6} (ii), (iii), (iv) to $\bardelta_{n+1}^{(k)}$\ $ ( =\varepsilon)$ and $\barN=\dim(A_{n+1})$ inductively, there exists $\bardelta_{n+1}^{(k+1)}$\   $( =\delta)>0$ satisfying the conditions of (ii), (iii), and (iv). Taking smaller $\bardelta_{n+1}^{(k)}$, $k\in\N$, as above, we may assume that $\bardelta_{n+1}^{(k+1)}<\bardelta_{n+1}^{(k)}$ for $k\in\N$ and 
 $N\sum_{j=1}^{\infty} 8^j \bardelta_{n+1}^{(k+j)}< \bardelta_{n+1}^{(k)}$. 
Since $\AF=\AF_i\otimes M_{(3-i)^{\infty}}$ for both $i=0, 1$, there exist $\Lambda_{n+1}\in \N$ with the condition of (1, $n+1$) and finite dimensional \Css s $B_{n+1}^{(i)}\subset \AF_i$, $i=0, 1$, such that $A_{n+1}^{(i)}\subset B_{n+1}^{(i)}$, $A_{n+1}^1\subset_{\bardelta_{n+1}^{(4)}} B_{n+1}^{(i)}\otimes M_{(3-i)^{\Lambda_{n+1}}}$, and $\{V_{n+1}^{(0)}, V_{n+1}^{(1)}\}\subset_{\bardelta_{n+1}^{(4)}} B_{n+1}^{(i)}\otimes M_{(3-i)^{\Lambda_{n+1}}} +\C 1_{AF}$.  

By Lemma \ref{Lem4.6} (ii), there exist unitaries $\barW_{n+1}^{(i)}$, $i=0, 1$, in $\AF^{\sim}$ such that $\barW_{n+1}^{(i)}\approx_{\bardelta_{n+1}^{(3)}} 1_{AF}$ and $\Ad\barW_{n+1}^{(i)}(B_{n+1}^{(i)}\otimes M_{(3-i)^{\Lambda_{n+1}}})\supset A_{n+1}$ for $i=0, 1$.  Define embeddings $\hatkappa_{n+1}^{(i)} : B_{n+1}^{(i)}\otimes M_{(3-i)^{\Lambda_{n+1}}}\rightarrow \AF$, $i=0, 1$, by $\hatkappa_{n+1}^{(i)}=\Ad\barW_{n+1}^{(i)}$. By Lemma 4.6 (v) and $V_{n+1}^{(i)}\in_{\bardelta_{n+1}^{(2)}} \Image(\hatkappa_{n+1}^{(i)})+\C 1_{AF}$, there exist unitaries $\hatW_{n+1}^{(i)}\in \Image(\hatkappa_{n+1}^{(i)}) +\C 1_{AF}$ such that  $\Ad\hatW_{n+1}^{(i)} V_{n+1}^{(i)}(a\otimes 1_{M_{(3-i)^{\infty}}})=\Ad \barW_{n+1}^{(i)}(a\otimes 1_{M_{(3-i)^{\infty}}})$ for any $a\in A_{n+1}^{(i)}$. Define unitaries $W_{n+1}^{(i)}$, $i=0, 1$, in $\AF^{\sim}$ and embeddings $\kappa_{n+1}^{(i)} : B_{n+1}^{(i)}\otimes M_{(3-i)^{\Lambda_{n+1}}}\rightarrow \AF$ by 
\[W_{n+1}^{(i)}=(\hatW_{n+1}^{(i)})^* \barW_{n+1}^{(i)},\quad \kappa_{n+1}^{(i)}(b\otimes a) =\Ad W^{(i)}(b\otimes a),\]
for $b\in B_{n+1}^{(i)}$ and $a\in M_{(3-i)^{\Lambda_{n+1}}}\subset M_{(3-i)^{\infty}}$. These choices satisfy  the conditions of (6, $n+1$) and (7, $n+1$). 

Define $A_G$ as the inductive limit \Cs{} $\displaystyle\lim_{\longrightarrow}(\cA_n, \varphi_{n+1})$. In the proof of Proposition \ref{Prop4.5} (iii), we have seen that $(G, G^+)\cong (\Image(\iota_{\infty *}^{(0)})\cap\Image(\iota_{\infty *}^{(1)}), \Image(\iota_{\infty *}^{(0)})\cap\Image(\iota_{\infty *}^{(1)})\cap K_0(\AF)^+)$, as ordered abelian groups. To show that the right hand side is isomorphic to $(K_0(A_G), K_0(A_G)^+)$, we need the following observation. Let $\eta_n$ denote the canonical embedding of $A_n$ into $A_{n+1}$ which is obtained from (3, $n+1$) for $n\in\N$.
Write $\ev_n^{(i)} : \cA_n\rightarrow\Image(\iota_n^{(i)})$, $i=0, 1$, to denote the evaluation maps and $\bareta_n^{(i)} : \Image(\iota_n^{(i)})\rightarrow A_n$ for the canonical embeddings.

\begin{observation}\label{Obs4.8}
For $i=0, 1$ and $n\in\N$, the two $*$-homomorphisms $\ev_{n+1}^{(i)}\circ\varphi_{n+1}$ and $\Ad \tcU(i)\circ\eta_n\circ\bareta_n^{(i)}\circ\ev_n^{(i)}$ $:\cA_n \rightarrow \Image(\iota_{n+1}^{(i)})$ are homotopic.
\end{observation}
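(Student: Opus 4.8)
The plan is to compute both $*$-homomorphisms explicitly as maps into $\Image(\iota_{n+1}^{(i)})$ and then slide one into the other along a path coming from the interval variable of $\cA_n$. By the symmetry of the construction (interchange $2\leftrightarrow 3$, $k_{n+1}\leftrightarrow l_{n+1}$, $\epsilon_{n+1}^{(0)}\leftrightarrow\epsilon_{n+1}^{(1)}$, and interpolate towards the endpoint $1$ in place of $0$) it suffices to treat $i=0$.

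First I would unwind $\ev_{n+1}^{(0)}\circ\varphi_{n+1}$. Since $\varphi_{n+1}(f)=\Ad\tcU\circ\xi(f)$ and $\tcU(0)=\cU^{(0)}$, we have $\ev_{n+1}^{(0)}\circ\varphi_{n+1}(f)=\Ad\cU^{(0)}(\xi(f)(0))$; because $\Ad\cU^{(0)}$ agrees with $\Ad\tU_{n+1}^{(0)}$ on the finite-dimensional algebra $E^{(0)}=\{\xi(g)(0):g\in\cA_n\}$, which contains $\xi(f)(0)$ and (taking $g$ equal to $f(0)$ on $[0,1/2]$) also $\Image(\iota_n^{(0)})$, I may work with $\Ad\tU_{n+1}^{(0)}$ on all the elements appearing below. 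Write $a_f=(\iota_n^{(0)})^{-1}(f(0))\in A_n^{(0)}$ and $b_f=(\tkappa_n^{(0)})^{-1}(f(1/2))\in B_n^{(0)}\otimes M_{3^{\Lambda_n}}$, so that $\xi(f)(0)=\tkappa_n^{(0)}(a_f\otimes 1)\,p+\tkappa_n^{(0)}(b_f)(\epsilon_{n+1}(1)-p)$ with $p=\epsilon_{n+1}(\sum_{j\le k_{n+1}}e_{jj})$. Feeding this into the concrete unitary $U_{n+1}^{(0)}$ supplied by Lemma \ref{Lem4.6}\,(i) (with $P=3^{\Lambda_n}$, $Q=2^{L_{n+1}}$, $R=3^{L_{n+1}}$), whose proof produces a rank $k_{n+1}'$ projection $e\in M_{2^{L_{n+1}}}$ and a unital embedding $\iota\colon M_{3^{\Lambda_n}}\to(1-e)M_{2^{L_{n+1}}}(1-e)$, one obtains
\[ \Ad U_{n+1}^{(0)}(\xi(f)(0))=\tkappa_n^{(0)}(a_f\otimes 1)\,\epsilon_{n+1}(e\otimes 1)+\widetilde{\iota}(b_f), \]
where $\widetilde{\iota}\colon B_n^{(0)}\otimes M_{3^{\Lambda_n}}\to\tkappa_n^{(0)}(B_n^{(0)}\otimes 1)\,\epsilon_{n+1}((1-e)M_{2^{L_{n+1}}}(1-e)\otimes 1)$ is the embedding with $\widetilde{\iota}(c\otimes m)=\tkappa_n^{(0)}(c\otimes 1)\,\epsilon_{n+1}(\iota(m)\otimes 1)$. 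Now apply $\Ad(z_n^{(0)})^{*}$ and use the defining relation $\Ad z_n^{(0)}\big(c\,\epsilon_{n+1}^{(0)}(a)\otimes 1_{M_{3^{\infty}}}\big)=\tkappa_n^{(0)}(c\otimes 1)\,\epsilon_{n+1}(a\otimes 1)$: the element above becomes $\big(a_f\,\epsilon_{n+1}^{(0)}(e)+\lambda(b_f)\big)\otimes 1_{M_{3^{\infty}}}$, where $\lambda\colon B_n^{(0)}\otimes M_{3^{\Lambda_n}}\to B_n^{(0)}\,\epsilon_{n+1}^{(0)}\big((1-e)M_{2^{L_{n+1}}}(1-e)\big)\subset A_{n+1}^{(0)}$ is $\lambda(c\otimes m)=c\,\epsilon_{n+1}^{(0)}(\iota(m))$. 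Since $\iota_{n+1}^{(0)}(x)=\Ad V_{n+1}^{(0)}(x\otimes 1_{M_{3^{\infty}}})=\Ad\big(\tU_{n+1}^{(0)}(U_{n+1}^{(0)})^{*}z_n^{(0)}\big)(x\otimes 1_{M_{3^{\infty}}})$ for $x\in A_{n+1}^{(0)}$, this gives
\[ \ev_{n+1}^{(0)}\circ\varphi_{n+1}(f)=\iota_{n+1}^{(0)}\big(a_f\,\epsilon_{n+1}^{(0)}(e)+\lambda(b_f)\big). \]
Running the same computation with $f(1/2)$ replaced by $f(0)$ — so $b_f$ becomes $a_f\otimes 1$, $\lambda(a_f\otimes 1)=a_f\,\epsilon_{n+1}^{(0)}(1-e)$, and $\xi(f)(0)$ collapses to $f(0)$ — yields $\Ad\tcU(0)\circ\eta_n\circ\bareta_n^{(0)}\circ\ev_n^{(0)}(f)=\Ad\cU^{(0)}(f(0))=\iota_{n+1}^{(0)}(a_f)$.

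The homotopy is then immediate. For $s\in[0,1]$ set
\[ \Psi_s(f)=\iota_{n+1}^{(0)}\big(a_f\,\epsilon_{n+1}^{(0)}(e)+\lambda\big((\tkappa_n^{(0)})^{-1}(f(s/2))\big)\big),\qquad f\in\cA_n. \]
Each $\Psi_s$ is a $*$-homomorphism into $\Image(\iota_{n+1}^{(0)})$: the two summands are the cut-down of the $*$-homomorphism $f\mapsto(\iota_n^{(0)})^{-1}(f(0))$ by the projection $\epsilon_{n+1}^{(0)}(e)$, which commutes with $A_n^{(0)}$, and the $*$-homomorphism $f\mapsto\lambda\big((\tkappa_n^{(0)})^{-1}(f(s/2))\big)$, and their ranges are orthogonal, supported under $\epsilon_{n+1}^{(0)}(e)$ and $\epsilon_{n+1}^{(0)}(1-e)$ respectively, while $\iota_{n+1}^{(0)}$ carries the result into $\Image(\iota_{n+1}^{(0)})$. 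The only $s$-dependence is through the norm-continuous path $s\mapsto f(s/2)$ in $A_n$, so $f\mapsto(s\mapsto\Psi_s(f))$ defines a $*$-homomorphism $\cA_n\to C([0,1])\otimes\Image(\iota_{n+1}^{(0)})$. Finally $\Psi_1=\ev_{n+1}^{(0)}\circ\varphi_{n+1}$, and since $(\tkappa_n^{(0)})^{-1}(f(0))=a_f\otimes 1$ and $a_f\,\epsilon_{n+1}^{(0)}(e)+a_f\,\epsilon_{n+1}^{(0)}(1-e)=a_f$ we get $\Psi_0=\Ad\tcU(0)\circ\eta_n\circ\bareta_n^{(0)}\circ\ev_n^{(0)}$; this is the required homotopy. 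For $i=1$ the same argument applies under the symmetry noted above, interpolating instead between $f(1)$ and $f(1/2)$ via $s\mapsto f((1+s)/2)$.

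I expect the main obstacle to be the bookkeeping in the middle paragraph: tracing the specific element $\xi(f)(0)$ through $\Ad U_{n+1}^{(0)}$ using the explicit unitary from the proof of Lemma \ref{Lem4.6}\,(i), and then through $\Ad(z_n^{(0)})^{*}$ using the several defining relations of $z_n^{(0)}$, $\barepsilon_{n+1}$, $\epsilon_{n+1}^{(0)}$, $\iota_{n+1}^{(0)}$ and $V_{n+1}^{(0)}$, and verifying that the partial embeddings $e\mapsto\epsilon_{n+1}^{(0)}(e)$ and $\lambda$ are precisely the ones whose ``sum'' reconstitutes the canonical inclusion $A_n^{(0)}\hookrightarrow A_{n+1}^{(0)}$ when one inputs $b_f=a_f\otimes 1$. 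Once this is in place, everything else is routine.
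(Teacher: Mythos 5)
Your proposal is correct, and the underlying homotopy is in fact the same as the paper's: both slide the $f(1/2)$-dependence in $\xi(\cdot)(0)$ towards $f(0)$. What differs is the execution, and the difference is substantial. You arrive at the homotopy by explicitly untangling the string of conjugations — tracing $\xi(f)(0)$ through the unitary from Lemma \ref{Lem4.6}\,(i), then through $z_n^{(0)}$, $V_{n+1}^{(0)}$, $\tU_{n+1}^{(0)}$, and $\cU^{(0)}$ — until both maps appear in the common form $\iota_{n+1}^{(0)}(\,\cdot\,)$, at which point the interpolation $s\mapsto f(s/2)$ is visible. The paper side-steps all of this bookkeeping by working one level up: it introduces a norm-continuous path of $*$-endomorphisms $\Phi^{(i)}_t$ of $\cA_n$ (reparametrizations of $[0,1]$ that collapse $[0,1/2]$ to the endpoint $i$), notes that pre-composition with such a path is always a homotopy, and then observes that $\Phi^{(0)}_1(f)$ has $\Phi^{(0)}_1(f)(0)=\Phi^{(0)}_1(f)(1/2)=f(0)$, so $\xi(\Phi^{(0)}_1(f))(0)=f(0)\,\epsilon_{n+1}(1_{M_{m_{n+1}}})=f(0)$ by (4, $n{+}1$). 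This gives $\ev_{n+1}^{(i)}\circ\varphi_{n+1}\circ\Phi^{(i)}_1=\Ad\tcU(i)\circ\eta_n\circ\bareta_n^{(i)}\circ\ev_n^{(i)}$ with no need to look inside $U_{n+1}^{(i)}$ or $z_n^{(i)}$ at all. Your argument buys nothing extra here, and it incurs a real cost: you have to re-derive identities (such as $\lambda(a_f\otimes 1)=a_f\,\epsilon_{n+1}^{(0)}(1-e)$ and the well-definedness of $(\tkappa_n^{(0)})^{-1}$ on $f(s/2)$) whose correctness depends delicately on the specific unitary constructed in Lemma \ref{Lem4.6}\,(i), whereas the paper's route is robust to those details. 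When a $*$-homomorphism is built as a conjugate of an explicit $\xi$, it is usually worth first asking whether the desired homotopy can be produced by pre-composing $\xi$ with a homotopy of the source, before unwinding the conjugating unitaries.
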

\begin{proof}
Let $\Phi_t^{(i)}$, $t\in[0, 1]$, $i=0, 1$, be pointwise continuous paths of automorphisms on $\cA_n$ such that $\Phi_0^{(0)}=\Phi_0^{(1)}=\id_{\cA_n}$, $\Phi_1^{(0)}(f) (t)=f(0)$, $t\in [0, 1/2]$, $\Phi_1^{(0)}(f)(t)=f(2t-1)$, $t\in [1/2, 1]$, $\Phi_1^{(1)}(f)(t)=f(2t)$, $t\in [0, 1/2]$,  and $\Phi_1^{(1)}(f)(t)=f(1)$, $t\in[1/2, 1]$, for any $f\in\cA_n$. Then it follows that $\ev_{n+1}^{(i)}\circ\varphi_{n+1}$ and $\ev_{n+1}^{(i)}\circ\varphi_{n+1}\circ\Phi_1^{(i)}$ are homotopic for both $i=0, 1$. From the construction of $\varphi_{n+1}=\Ad\tcU\circ \xi$ and $\Phi_1^{(i)}(f)(1/2)=f(i)$, we have 
\[\ev_{n+1}^{(i)}\circ\varphi_{n+1}\circ\Phi_1^{(i)}(f)=\Ad \tcU(i)( f(i)\epsilon_{n+1}(1_{M_{m_{n+1}}}))=\Ad \tcU (i)\circ\eta_n\circ \bareta_n^{(i)}\circ\ev_n^{(i)}(f),\]
for any $i=0, 1$ and $f\in\cA_n$.
\end{proof}

Denote by $\eta_{\infty n}$ the canonical embedding of $A_n$ into $\AF$. For $i= 0, 1$,  denote by $\Ev_n^{(i)} : \cA_n\rightarrow A_n$ the evaluation map at $i$. Note that  $\bareta_n^{(i)}\circ\ev_{n}^{(i)}=\Ev_n^{(i)}$.  Let $\eta_n^{(i)} :A_n^{(i)}\rightarrow A_{n+1}^{(i)}$ denote the canonical embeddings of (2, $n+1$) and $\eta_{\infty n}^{(i)} : A_n^{(i)}\rightarrow \AF_i$ the canonical embeddings. For $m$, $n\in\N$ with $m>n$,  set $\varphi_{m, n}=\varphi_m\circ\varphi_{m-1}\circ\cdots \circ\varphi_{n+1}: \cA_n\rightarrow \cA_m$, $\eta_{m, n}=\eta_{m-1}\circ\eta_{m-2}\circ\cdots\circ\eta_n : A_n\rightarrow A_m$, and $\eta_{m, n}^{(i)}=\eta_{m-1}^{(i)}\circ\eta_{m-2}^{(i)}\circ\cdots \circ\eta_n^{(i)} : A_n^{(i)}\rightarrow A_m^{(i)}$. 
 By Observation \ref{Obs4.8}, $\Ev_{n+1}^{(i)}\circ\varphi_{n+1}$ and $\Ad \tcU(i)\circ\eta_n\circ\Ev_n^{(i)}$ are homotopic as $*$-homomorphisms from $\cA_n$ into $A_{n+1}$.
Since $\tcU(i)\in A_{n+1}+\C 1_{AF}$, it follows that $\Ev_{n+1 *}^{(i)}\circ \varphi_{n+1 *}=\eta_{n*}\circ\Ev_{n*}^{(i)}$. From (6, $n$) it follows that 
\[\iota_{n+1}^{(i)}\circ\eta_n^{(i)}= \Ad V_{n+1}^{(i)} {V_n^{(i)}}^*\circ\iota_n^{(i)}, \text{ 
and } V_{n+1}^{(i)}{V_n^{(i)}}^*\in_{2\varepsilon_n} A_{n+1}+\C1_{AF}.\] 
Then we have  $\Ad V_{n+1}^{(i)}{V_n^{(i)}}^*|_{\Image(\iota_n^{(i)}) *}=\eta_n|_{\Image(\iota_n^{(i)}) *}$ as group homomorphisms from $K_0(\Image(\iota_n^{(i)}))$ to $K_0(A_{n+1})$, which implies that $\iota_{n+1 *}^{(i)}\circ\eta_{n *}^{(i)}=\eta_{n *}\circ\iota_{n*}^{(i)}$. It follows that 
\[\eta_{n*}(\Image(\iota_{n*}^{(0)})\cap \Image(\iota_{n*}^{(1)}))\subset\Image(\iota_{n+1*}^{(0)})\cap\Image(\iota_{n+1*}^{(1)}).\]
 Denote by $(I, I^+)$ the inductive limit of ordered groups $\displaystyle\lim_{\longrightarrow}(\Image(\iota_{n*}^{(0)})\cap\Image(\iota_{n*}^{(1)}), \eta_{n*})$, where the order in  $\Image(\iota_{n*}^{(0)})\cap\Image(\iota_{n*}^{(1)})$ is determined by $\Image(\iota_{n*}^{(0)})\cap\Image(\iota_{n*}^{(1)})\cap K_0(A_n)^+$. By (6, $n$), we also see that $\eta_{\infty n *}\circ\iota_{n*}^{(i)}=\iota_{\infty *}^{(i)}\circ\eta_{\infty n *}^{(i)}$. 
 Since $K_0(\AF)=\bigcup_{n=1}^{\infty}\eta_{\infty n *}(K_0(A_n))$ (from $F_{A n}\subset_{\varepsilon_n} A_n^1$ in (4, $n$)), we have 
 \[ I\cong \bigcup_{n=1}^{\infty}\eta_{\infty, n *}(\Image (\iota_{n *}^{(0)})\cap \Image(\iota_{n *}^{(1)})) = \bigcup_{n=1}^{\infty} \Image(\iota_{\infty *}^{(0)}\circ \eta_{\infty n *}^{(0)})\cap \Image(\iota_{\infty *}^{(1)}\circ \eta_{\infty n *}^{(1)}).\]
 As $K_0(\AF_i)=\bigcup_{n=1}^{\infty} \eta_{\infty n *}^{(i)}(K_0(A_n^{(i)}))$ (since $F_{A_i n}\subset A_n^{(i)}$ by (2, $n$)), it follows that $\bigcup_{n=1}^{\infty}\Image(\iota_{\infty *}^{(0)}\circ\eta_{n *}^{(0)})\cap \Image(\iota_{\infty *}^{(1)}\circ\eta_{n *}^{(1)}) =\Image (\iota_{\infty *}^{(0)})\cap\Image(\iota_{\infty *}^{(1)})$. For the same reason, it is straightforward to check that $I^+$ corresponds to $\Image(\iota_{\infty *}^{(0)})\cap\Image(\iota_{\infty *}^{(1)})\cap K_0(\AF)^+$.
  Then it follows that $(I, I^+)$ is isomorphic to  $(\Image(\iota_{\infty *}^{(0)})\cap\Image(\iota_{\infty *}^{(1)}), \Image(\iota_{\infty *}^{(0)})\cap\Image(\iota_{\infty *}^{(1)})\cap K_0(\AF)^+)$. 
 
  Since $\Ev_{n+1 *}^{(0)}\circ\varphi_{n+1*} =\eta_{n*}\circ\Ev_{n *}^{(0)}$ and $\Image(\Ev_{n *}^{(0)})\subset\Image(\iota_{n, *}^{(0)})\cap\Image(\iota_{n, *}^{(1)})$, we obtain a positive group homomorphism $\Ev_*^{(0)}$ from $\displaystyle K_0(A_G)=\lim_{\longrightarrow}(K_0(\cA_n), \varphi_{n+1 *})$ into $(I, I^+)$. We may regard $\Ev_*^{(0)}$ as a positive group homomorphism from $K_0(A_G)$ to $\Image(\iota_{\infty *}^{(0)})\cap\Image(\iota_{\infty *}^{(1)})$. 
  To show the injectivity of $\Ev_*^{(0)}$,  let $x\in \varphi_{\infty n*}(K_0(\cA_n))\subset K_0(A_G)$ be such that $\Ev_*^{(0)}(x)=0$ and $\barx\in K_0(\cA_n)$ be such that $x=\varphi_{\infty n *}(\barx)$. Since $0=\Ev_*^{(0)}(x)=\eta_{\infty n *}\circ\Ev_{n *}^{(0)}(\barx)=\eta_{\infty n *}\circ\Ev_{n *}^{(1)}(\barx)$, there exists $m> n+1$ such that $\eta_{m, n *}\circ\Ev_{n*}^{(0)}(\barx)=\eta_{m, n*}\circ \Ev_{n *}^{(1)}(\barx)=0$. Then it follows that $\Ev_{m *}^{(0)}\circ\varphi_{m, n *}(\barx)=\Ev_{m *}^{(1)}\circ\varphi_{m, n *}(\barx)=0$. 
 Fix $i=1-i_m$. From the construction of $\iota_m^{(i)}$, there exists a unitary $\bary_m^{(i)}\in A_m +\C 1_{AF}$ such that $\iota_m^{(i)}(a)=\Ad \bary_m^{(i)}U_m(a\otimes 1_{(3-i)^{\infty}})$ for $a\in A_m^{(i)}$. From (3, $m$) for $i=1-i_m$, it follows that 
 $\bareta_{m*}^{(i)} : K_0(\Image(\iota_m^{(i)}))\rightarrow K_0(A_m)$ is injective. Then we have $\ev_{m*}^{(i)}\circ\varphi_{m n *}(\barx)=0$. By Observation \ref{Obs4.8}, we see that $\ev_{m+1 *}^{(i)}\circ\varphi_{m+1, n*}(\barx)=0$. By the same argument as for $1-i_{m+1}$ $(=i_m)$ in (3, $m+1$), we also have $\ev_{m+1 *}^{(1-i)}\circ\varphi_{m+1, n *}(\barx)=0$. Because $\ev_{ m+1*}^{(0)}\oplus \ev_{m+1 *}^{(1)} : K_0(\cA_{m+1})\rightarrow K_0(\Image(\iota_{m+1}^{(0)}))\oplus K_0(\Image(\iota_{m+1}^{(1)}))$ is injective, it follows that $\varphi_{m+1, n *}(\barx)=0$, which implies that $x=\varphi_{\infty, n *}(\barx)=0$. 
 
 Let $y\in \Image(\iota_{\infty *}^{(0)})\cap \Image(\iota_{\infty *}^{(1)})\cap K_0(\AF)^+\cong G^+$. To show the surjectivity of $\Ev_*^{(0)}$, since $(\Image(\iota_{\infty *}^{(0)})\cap \Image(\iota_{\infty *}^{(1)}), \Image(\iota_{\infty *}^{(0)})\cap \Image(\iota_{\infty *}^{(1)})\cap K_0(\AF)^+)$ is an ordered abelian group, it suffices to show that $y\in \Ev_*^{(0)}(K_0(A_G)^+)$.
 Since $G^+=\bigcup_{n\in\N}F_n$, by (2, $n$), there exists $m\in\N$ such that $y\in \iota_{\infty *}^{(i)}\circ\eta_{\infty m *}^{(i)}(K_0(A_m^{(i)})^+)$ for both $i=0, 1$. 
 Thus we have projections $p_m^{(i)}$ in $A_m^{(i)}\otimes M_N$, $i=0, 1$, for some $N\in \N$ such that $y=\iota_{\infty *}^{(i)}\circ \eta_{\infty m *}^{(i)}([p_m^{(i)}]_0)$. 
 Since $y= \eta_{\infty, m *}\circ \iota_{m *}^{(i)}([p_m^{(i)}]_0)$ for $i=0, 1$, there exists $l>m$ such that $\eta_{l, m *}\circ\iota_{m *}^{(0)}([p_m^{(0)}]_0)=\eta_{l, m *}\circ\iota_{m *}^{(1)}([p_m^{(1)}]_0)$. Choose projections $p_l^{(i)}=\eta_{l, m}^{(i)}\otimes\id_{M_N}(p_m^{(i)})\in A_l^{(i)}\otimes M_N$, $i=0, 1$, which satisfy $\iota_{l *}^{(0)}([p_l^{(0)}]_0)=\iota_{l *}^{(1)}([p_l^{(1)}]_0)$ in $K_0(A_l)$. Then there exists a projection $\tp$ in $C([0, 1])\otimes A_l\otimes M_N$ such that $\tp(i)=\iota_l^{(i)}\otimes\id_{M_N}(p_l^{(i)})$ for $i=0, 1$. Regarding $\tp$ as a projection in $\cA_l\otimes M_N$, we have $\Ev_{l *}^{(0)}([\tp]_0)=\iota_{l *}^{(0)}([p_l^{(0)}]_0)$ 
 and 
 \[ y=\eta_{\infty l *}\circ\iota_{l *}^{(0)}([p_l^{(0)}]_0)=\eta_{\infty l *}\circ \Ev_{l *}^{(0)}([\tp]_0)=\Ev_*^{(0)}(\varphi_{\infty l *}([\tp]_0))\in \Ev_*^{(0)}(K_0(A_G)^+).\]
 Thus, $\Ev_*^{(0)}$ is surjective and ${\Ev_*^{(0)}}^{-1}$ is also a positive group homomorphism.

In the rest of the proof, we show that $A_G$ is a rationally AF algebra. We only show that $A_G\otimes M_{2^{\infty}}$ is approximately finite dimensional, because by replacing even numbers by odd numbers the same argument allows us to see that $A_G\otimes M_{3^{\infty}}$ is approximately finite dimensional. Since any separable local AF-algebra is exactly approximately finite dimensional (\cite[Theorem 2.2]{Br}), it suffices to show that for a given finite subset $F$ of $\cA_n^1$ and $\varepsilon>0$, there exist $N\in\N$ and a finite dimensional \Css{} $E$ of $\cA_{n+N}\otimes M_{2^{\infty}}$ such that $\varphi_{n+N, n}(F)\otimes 1_{M_{2^{\infty}}}\subset_{\varepsilon} E$.  Because the set of Lipschitz continuous functions is dense in $\cA_n^1$, we may assume that $F$ consists of Lipschitz continuous functions. Set $L=\max_{f\in F} \Lip(f)$, and let $m\in\N$ be such that $\frac{L+2}{2^m} + 4\sum_{j=m+1}^{\infty} \varepsilon_j < \varepsilon$. Because of (8, $n$) and the inequality $\sum_{n\in \N}\varepsilon_n <1$, for any $f\in F$, $s, t\in [0, 1]$, and $l >m $, it follows that 
\begin{align*}
\|\varphi_{n+m+l, n}(f)(s)- \varphi_{n+m+l, n}(f)(t)\|&\leq\frac{\Lip(f)}{2^{l+m}}+2\left(\sum_{j=n+1}^{n+m+l}\frac{\varepsilon_j}{2^{n+m+l-j}}\right)\\
&<\frac{L+2}{2^{m+1}} + 2\sum_{j=m+1}^{\infty}\varepsilon_j < \frac{\varepsilon}{2}.
\end{align*}
Choose $l\in\N$ such that, in addition to $l > m$, the number $n+m+l$ is even  (odd  for the case  $A_G\otimes M_{3^{\infty}}$). Set $N=m+l$, $k=n+N$, and 
\[\barE=1_{C([0, 1])}\otimes\iota_k^{(0)}(A_k^{(0)})\subset C([0, 1])\otimes A_k.\]
From the 
calculation above,
it follows that $\varphi_{k, n}(F)\subset_{\varepsilon/2} \barE$. 
Let us now modify $\barE$ to a finite dimensional \Cs{} $E$ in $\cA_k\otimes M_{2^{\infty}}$. By (3, $k$) with $i_k=0$, we obtain a unitary $U_k$ in $\AF^{\sim}$ such that $U_k\approx_{\varepsilon_k} 1_{AF}$ and $\Ad U_k(A_k^{(1)}\otimes M_{2^{\tL_k}})=A_k$. 
By (6, $k$) there exists a unitary $V_k^{(1)}\in \AF^{\sim}$ such that $\iota_k^{(1)}(a)=\Ad V_k^{(1)}(a\otimes 1_{M_{2^{\infty}}})\in A_k$ for any $a\in A_k^{(1)}$, and $V_k^{(1)}\in_{\varepsilon_k}A_k+\C1_{AF}$. Applying Lemma \ref{Lem4.6} (v), we obtain a unitary $W\in A_k +\C 1_{AF}$ such that $\Ad WU_k(a\otimes 1_{M_{2^{\infty}}})=\Ad V_k^{(1)}(a\otimes 1)$ for $a\in A_k^{(1)}$. Define an isomorphism $\tepsilon : M_{2^{\tL_k}}\rightarrow A_k\cap \Image(\iota_k^{(1)})'$ by $\tepsilon(a)=\Ad WU_k(1_{A_k^{(1)}}\otimes a)$ for $a\in M_{2^{\tL_k}}$, and note that $A_k=\Image(\iota_k^{(1)})\Image(\tepsilon)$. 
Denote by $s$ the self-adjoint unitary in $\Image(\tepsilon)\otimes M_{2^{\tL_k}}$ such that $\Ad s(\tepsilon(x)\otimes y) =\tepsilon(y)\otimes x$ for any $x, y\in M_{2^{\tL_k}}$. Choose a path of unitaries $\ts\in C([0, 1])\otimes A_k\otimes M_{2^{\tL_k}}$ such that $\ts(t)\in \Image(\tepsilon)\otimes M_{2^{\tL_k}}$ for all $t\in [0, 1]$, $\ts(0)=1$, and $\ts(1)=s$. Define a finite dimensional \Cs{} $E$ by 
\[E=\Ad\ts(\barE\otimes 1_{M_{2^{\tL_k}}})\subset C([0, 1])\otimes A_k\otimes M_{2^{\tL_k}}.\]
We can regard $E$ as a finite dimensional \Css{} of $\cA_k\otimes M_{2^{\tL_k}}$. 
Indeed, for $e\in E$, there exists $a_e\in \Image(\iota_k^{(0)})$ such that $e(0)=a_e\otimes 1_{M_{2^{\tL_k}}}$. 
Since $a_e\in A_k=\Image(\iota_k^{(1)})\Image(\tepsilon)$, there are $x_l\in A_k^{(1)}$, $y_l\in M_{2^{\tL_k}}$, $l=1,2,..., L$, such that $a_e=\sum\limits_{l=1}^L\iota_k^{(1)}(x_l)\tepsilon(y_l)$. Then it follows that 
\[e(1)=\Ad s(a_e\otimes 1_{M_{2^{\tL_k}}})=\sum_{l=1}^L\iota_k^{(1)}(x_l)\otimes y_l\in \Image(\iota_k^{(1)})\otimes M_{2^{\tL_k}}.\]
We conclude that for $f\in F$ there exists $x_f\in (A_k^{(1)})^1$ and $y_f\in (A_k^{(0)})^1$ such that
\begin{align*}
\varphi_{k, n}(f)\otimes 1_{M_{2^{\tL_k}}}&\approx_{\varepsilon /2}\left(1_{C([0, 1])}\otimes \iota_k^{(1)}(x_f)\right)\otimes 1_{M_{2^{\tL_k}}}\\
&=\Ad \ts(1_{C([0, 1])}\otimes \iota_k^{(1)}(x_f)\otimes 1_{M_{2^{\tL_k}}})\\
&\approx_{\varepsilon/2} \Ad \ts (1_{C([0, 1])}\otimes \iota_k^{(0)}(y_f)\otimes 1_{M_{2^{\tL_k}}})\in E.
\end{align*}

In order to adjoin the property of $\cZ$-absorption to $A_G$, we only need to consider $A_G\otimes\cZ$ instead, which is also a RAF-algebra (as a UHF algebra is $\cZ$-absorbing). Indeed, the ordered $K_0$-group $(K_0(A_G\otimes \cZ), K_0(A_G\otimes\cZ)^+)$ is isomorphic to $(G, G^+)$, because, in the proof of Proposition \ref{Prop4.3} (ii), for an RAF-algebra $A$ and the  map $\iota_* : K_0(A)\rightarrow K_0(A\otimes \cZ)$ induced by the canonical embedding $\iota$ of $A$ into $A\otimes\cZ$, we have seen that for $g\in K_0(A)$,  $\iota_*(g)\in K_0(A\otimes\cZ)^+$ if and only if $ng\in K_0(A)^+$ for some $n\in\N$. 
Applying this fact to $A_G$, since $(K_0(A_G), K_0(A_G)^+)\cong (G, G^+)$ is an unperforated ordered abelian group, we see that the induced map from $K_0(A_G)$ to $K_0(A_G\otimes\cZ)$ is an isomorphism of ordered abelian groups. 

 Finally, to make $A_G\otimes\cZ$ stable, we just need to replace it by the tensor product  $A_G\otimes\cZ\otimes \cK$ by the algebra $\cK$ of compact operators on a separable infinite dimensional Hilbert space. 
 \end{proof}

Note that the invariant considered in Theorem \ref{Thm4.4}, the ordered $K_0$-group, is shown in Theorem \ref{Thm5.3} below to be complete, for stable, separable, $\cZ$-absorbing RAF-algebras. Theorem \ref{Thm4.4} is therefore a range of the invariant theorem for the class in question. 

As a consequence of Theorem \ref{Thm4.4}, together with its proof, and also the isomorphism result Theorem \ref{Thm5.3}, we can determine the range of a generalization of the invariant of \cite{Ell01}
 for non-stable AF-algebras---referred to in \cite{Ell01} as the \emph{dimension range}. In Corollary \ref{Cor5.4}, below, we shall show (using Theorem \ref{Thm5.3}) that this analogue of the non-stable AF invariant, 
 what might now be called the matrix dimension range, is complete---for not necessarily stable, separable, $\cZ$-absorbing RAF-algebras. 
 
 Recall that in \cite{Ell01} the invariant for general (separable) AF-algebras, the range of the Murray-von Neumann dimension---the local abelian semigroup of equivalence classes of projections in the algebra, or, equivalentlly, the subset of the $K_0$-group consisting of those equivalence classes---was characterized as an upward directed, hereditary, generating subset of the positive cone of the ordered $K_0$-group---and, more abstractly, as an arbitrary such subset of a dimension group (by \cite{EHS}, an unperforated countable ordered abelian group with the interpolation property of \cite{Bir}, equivalent to the decomposition property of \cite{Rie}).

 To extend this invariant  to RAF-algebras, since there are fewer projections, we must keep track of the increasing sequence of dimension ranges of matrix algebras over the the algebra; let us consider these as subsets of the ordered $K_0$-group. Of course, in the stable case, each of these will be the whole positive cone of the $K_0$-group. Let us call this structure the \emph{matrix dimension range}. 

\begin{corollary}\label{Cor4.8}
The matrix dimension range of a $\cZ$-absorbing separable RAF-algebra can be described in terms of the order-unit $K_0$-group of the algebra with unit adjoined, tensored with $\cZ$, as follows. The $\mathit{n}$-th level of the matrix dimension range, $n\in\N,$ is the set of positive elements of the $K_0$-group of the given algebra which, with respect to the embedding of this in the $K_0$-group of the algebra with unit adjoined, are majorized by $n$ times the class of the unit. (A $\cZ$-absorbing RAF-algebra has cancellation, so this is the same as comparison in the algebra.) The matrix dimension range in fact determines the larger ordered abelian group. Hence for any countable rational dimension group with specified order unit for which there exists a positive map onto $\Z$ taking the order unit into $1\in\Z$, there exists a $\cZ$-absorbing separable RAF-algebra with ordered $K_0$-group the kernel of the map onto $\Z$, and with matrix dimension range as described above with respect to the specified order unit.
\end{corollary}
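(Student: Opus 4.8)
The plan is to deduce everything from Theorem \ref{Thm4.4} (range of the $K_0$-invariant), Theorem \ref{Thm5.3} (completeness of the $K_0$-invariant in the stable case), and the cancellation properties of $\cZ$-absorbing algebras. First I would make precise the claim that the $n$-th level of the matrix dimension range of a $\cZ$-absorbing RAF-algebra $A$ is the set $D_n = \{x \in K_0(A)^+ : x \leq n[1_{\Atilde}]_0 \text{ in } K_0(\Atilde)\}$. Since $A\otimes\cZ\cong A$ has stable rank one and cancellation of projections (this is standard for $\cZ$-absorbing stably finite algebras, and for RAF-algebras follows since $A\otimes M_{\frak p}$ and $A\otimes M_{\frak q}$ are AF, hence have cancellation, and cancellation passes down by the relatively-prime argument of Proposition \ref{Prop4.3}), the Murray-von Neumann equivalence classes of projections in $M_n(A)$ embed faithfully into $K_0(A)$, and a projection $p\in M_n(A)$ is subequivalent to $1_n\otimes 1_{\Atilde}$ in $M_n(\Atilde)$ precisely when $[p]_0 \leq n[1_{\Atilde}]_0$. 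This identifies the $n$-th level of the matrix dimension range with $D_n$.

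Next I would observe that the increasing union $\bigcup_n D_n$ is \emph{not} all of $K_0(A)^+$ in general, but that the pair $(K_0(\Atilde), [1_{\Atilde}]_0)$ — equivalently $(G\oplus\Z, u)$ where $G = K_0(A)$ and $u$ is the adjoined order unit — together with the positive cone, is recovered from the sequence $(D_n)_n$. Indeed $K_0(A) = \bigcup_n (D_n - D_n)$ as an ordered group (each element of $K_0(A)$ is a difference of two projections over $A$ after stabilizing into a corner of $A\otimes M_k$ which, since $A$ is stable or can be assumed stable, is full), and the position of $K_0(A)$ inside $K_0(\Atilde)$ is determined by knowing for each $x\in K_0(A)^+$ the least $n$ with $x\in D_n$, which is exactly the value of the quotient map $K_0(\Atilde)\to\Z$ evaluated (up to the obvious normalization) on $x$. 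So the matrix dimension range determines the ordered group $K_0(\Atilde)$ with its order unit and the distinguished subgroup $K_0(A)$; combined with Theorem \ref{Thm5.3} this gives a complete invariant for not-necessarily-stable $\cZ$-absorbing separable RAF-algebras, which I would record but whose full verification belongs to Corollary \ref{Cor5.4}.

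For the range statement, let $(H, H^+, w)$ be a countable rational dimension group with order unit $w$, and suppose $\phi : H\to\Z$ is a positive homomorphism with $\phi(w)=1$. Set $G = \ker\phi$ with $G^+ = G\cap H^+$; then $(G,G^+)$ is a countable rational dimension group (unperforation is inherited, and tensoring with $\D_{\frak p},\D_{\frak q}$ commutes with taking kernels of maps to $\Z$ after tensoring, yielding dimension groups, using that $\Z$ is free so the kernel splits off rationally). By Theorem \ref{Thm4.4} there is a separable stable $\cZ$-absorbing RAF-algebra $B$ with $(K_0(B), K_0(B)^+)\cong(G,G^+)$. The desired non-stable algebra is a hereditary subalgebra $A = \overline{pBp}$ of $B$ (or of $B\otimes\cZ$) where $p$ is a projection in some matrix algebra over $B$ whose class corresponds, under the splitting $H\cong G\oplus\Z\cdot w$, to an element mapping to the generator; one must check such a projection exists with $[p]_0$ realizing $w$ inside the larger group, which is possible because $H$ is a dimension group after rationalizing and one can build $p$ as a limit of projections over the building blocks $I_G$, or more simply invoke that $B$ is stable so every class in $(G\oplus\Z)^+ = H^+$ with the right image is represented by a projection over $B$ once we pass to $B\otimes\cZ$. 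Then $\Atilde$ has $K_0(\Atilde)\cong H$ with order unit $w = [1_{\Atilde}]_0$, $K_0(A)\cong G$, and the matrix dimension range of $A$ is $(D_n)_n$ as described.

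The main obstacle I anticipate is the bookkeeping in the last paragraph: realizing the order unit $w$ as the class of an actual projection $p$ in a matrix algebra over the RAF-algebra constructed in Theorem \ref{Thm4.4}, and verifying that $\overline{pBp}$ is again RAF with the correct $(K_0,\text{matrix dimension range})$. The RAF property of a corner is clear (a corner of an AF-algebra is AF, so a corner of $A$ tensored with $M_{\frak p}$ or $M_{\frak q}$ is AF), and $K_0$ is unchanged for full corners; the delicate point is ensuring $p$ can be chosen so that the order-unit and the subgroup $G\hookrightarrow H$ sit correctly — this is where one uses that the map $\phi$ exists, i.e. that $w$ is not in $G$, so a genuine order unit for $A$ with unit adjoined is available. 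Once $p$ is in hand, the identification of the matrix dimension range with $(D_n)_n$ is exactly the cancellation argument of the first paragraph applied to $A$.
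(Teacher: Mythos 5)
There is a genuine gap in the last part of the proposal (the range statement), and the cancellation argument in the first paragraph is also shakier than you suggest.

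For the range statement, your approach applies Theorem~\ref{Thm4.4} to $(G,G^+)=(\ker\phi,\ker\phi\cap H^+)$ to produce a stable algebra $B$ with $K_0(B)\cong G$, and then seeks a projection $p$ over $B$ whose $K_0$-class realizes the order unit $w\in H$. But this is impossible: every projection over $B$ has class in $K_0(B)\cong G=\ker\phi$, hence maps to $0$ under $\phi$, whereas $\phi(w)=1$. The element $w$ lies outside the subgroup you have realized, so no such $p$ exists. The parenthetical ``which is possible because... every class in $(G\oplus\Z)^+=H^+$ with the right image is represented by a projection over $B$'' is exactly the false step: $K_0(B)$ is $G$, not $H$, and only classes in $G$ are represented. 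The paper's proof instead applies Theorem~\ref{Thm4.4} to the \emph{larger} group: it constructs $B$ stable with $K_0(B)\cong H$ (in the paper's notation, $G=H+H'$ is the big group), identifies the order ideal $\ker\phi\subset H$ with a closed two-sided ideal $A_0\subset B$ (using the ideal property of stable RAF-algebras, from the proof of Corollary~\ref{Cor5.4}, together with cancellation), and then takes $A=eA_0e$ where $e\in B$ is a projection with $[e]_0=w$. This is where the ideal property is used, which your proposal omits entirely; without it there is no way to identify the order ideal $\ker\phi$ with a \Cs{} ideal of $B$.

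On cancellation: the claim that ``cancellation passes down by the relatively-prime argument of Proposition~\ref{Prop4.3}'' is not a proof. That proposition concerns the functor $K_0$, not the Murray--von~Neumann semigroup, and Murray--von~Neumann equivalence of $p\otimes 1_{M_{\frak p}}$ and $q\otimes 1_{M_{\frak p}}$ in $A\otimes M_{\frak p}$ does not in general yield $p\sim q$ in $A$ (no matrix trick descends across an infinite tensor). Citing stable rank one for $\cZ$-stable stably finite algebras is a possible alternative but would need a reference valid in the non-simple, non-unital setting; the paper sidesteps this by invoking the explicit construction in the proof of Theorem~\ref{Thm4.4}---$B$ is built as an inductive limit of point-line algebras tensored with $\cZ$, for which the Murray--von~Neumann semigroup is computed directly to be cancellative---and then transferring cancellation to the given algebra $A$ via the isomorphism $A\cong B$ from Theorem~\ref{Thm5.3}.
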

\begin{proof}
First, let us check that a $\cZ$-absorbing separable RAF-algebra $A$ has cancellation of projections---the Murray-von Neumann semigroup of $A$ is mapped 
injectively into $K_0(A)$. It is enough to consider the stable case.  By Proposition \ref{Prop4.3} (ii), $K_0(A)$ is a rational dimension group. By Theorem \ref{Thm4.4} together with its proof there exists a stable, $\cZ$-absorbing, RAF-algebra $B$ with $K_0(B)$  isomorphic to $K_0(A)$ as an ordered abelian group,  and such that $B$ is the inductive limit of a sequence of point-line algebras (called one-dimensional non-commutative CW complexes in \cite{ELP}), each tensored with $\cZ$. It is straightforward to show (using that the Murray-von Neumann
semigroup of $\cZ$ is the same as that of the complex numbers---recall that $\cZ$ has stable rank one and therefore cancellation)
that the Murray-von Neumann semigroup of a point-line algebra after tensoring with $\cZ$ is the same as before this operation,
namely, a certain subsemigroup of the finite direct sum of
copies of the (cancellative) semigroup of natural numbers
(including zero), indexed by the points at infinity in the
spectrum---see the discussion of point-line algebras in \cite{Ell}.
Thus, $B$ has cancellation.
By Theorem \ref{Thm5.3}, below, $A$ is isomorphic to $B$.

Now, given a countable rational dimension group $G$, by Theorem \ref{Thm4.4} (now just the statement), there exists a stable, separable, $\cZ$-absorbing RAF-algebra $B$ with $K_0(B)$ isomorphic to $G$ as an ordered abelian group. In particular, consider the case that, as an ordered abelian group, $G=H+ H'$ where $H$ is an order ideal and $H'$ is isomorphic in the relative order to the ordered group $\Z$. In this case, since (see proof of Corollary \ref{Cor5.4}, below) $B$ has the ideal property (closed two-sided ideals generated as such by projections), and also (as shown above) has cancellation (so that the equivalence classes of projections are the same as their $K_0$-classes), closed two-sided ideals of $B$ are in exact correspondence with the order ideals of $K_0(B)$. With $A_0$ the ideal of $B$ corresponding to the order ideal $H\subset H+ H'=K_0(B)$, denote by $A$ the hereditary \Css{} $eA_0e\subset A_0\subset B$ where $e\in B$ is a projection with $K_0(e)=1\in H'\cong \Z$. $A$ is as desired, i.e., $K_0(A^{\sim}\otimes \cZ)\cong G$. 
\end{proof}

\section{Classification of $\cZ$-absorbing RAF-algebras}

In this section, we prove that the invariant considered in Section 4, the ordered $K_0$-group together with what we propose to call the matrix dimension range (see Corollary \ref{Cor4.8}), is complete (Theorem \ref{Thm5.3} in the stable case and Corollary \ref{Cor5.4} in the non-stable case).

We begin with the observation that the hypothesis of a unit in the otherwise completely general  deformation isomorphism theorem for $\cZ$-absorbing separable \Cs s due to Winter---Proposition 4.5 of \cite{W1}---is not used, as is seen on replacing all unitaries appearing in the statement and in the proof by the corresponding quasiunitaries, with the convention that the inner automorphism determined by a quasiunitary is just that determined by the corresponding unitary in the unitization, obtained (by definition) by adding the unit to it.

\begin{proposition}[essentially Proposition 4.5 of \cite{W1}, cf.~proof of Theorem 14.3 of \cite{GL1}]\label{Prop5.1}
Let ${\frak p}$ and ${\frak q}$ be relatively prime supernatural numbers. Suppose that $A$ and $B$ are separable $\cZ$-absorbing $C^*$-algebras and let $\varphi : A\otimes \cZ_{\frak p, \frak q}$ $\rightarrow$ $B\otimes \cZ_{\frak p, \frak q}$ be a quasiunitarily  suspended $C([0, 1])$-isomorphism (as in \cite[Definition 4.2]{W1} with unitaries replaced by quasiunitaries). Then, there is an isomorphism $\tildevarphi : A\rightarrow B\otimes \cZ$ such that 
\[\tildevarphi \approx_{\rm au} (\id_B\otimes\barsigma_{\frak p, \frak q})\circ\varphi\circ(\id_A\otimes 1_{\cZ_{\frak p, \frak q}}), \]
where $\barsigma_{p, q}$ is the standard embedding $\cZ_{\frak p, \frak q}\hookrightarrow \cZ$ of \cite[Proposition 3.4]{W1}.
\end{proposition}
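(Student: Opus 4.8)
The plan is to reduce Proposition \ref{Prop5.1} to the unital case treated by Winter in \cite[Proposition 4.5]{W1} by a careful bookkeeping of quasiunitaries, following the pattern already indicated in the sentence preceding the statement. First I would recall the convention: a \emph{quasiunitary} in a (possibly non-unital) \Cs{} $C$ is an element $v\in C$ such that $1_{C^{\sim}}+v$ is a unitary in $C^{\sim}$, and the inner automorphism $\Ad(1_{C^{\sim}}+v)$ it determines restricts to an automorphism of $C$; this is the standard device (see e.g. \cite{Bl2}) for phrasing approximate-unitary-equivalence statements without adjoining a unit. The assertion ``$\tildevarphi\approx_{\rm au}\Psi$'' for two $*$-homomorphisms into $B\otimes\cZ$ then means: for every finite $F\subset A$ and $\varepsilon>0$ there is a quasiunitary $v$ in $B\otimes\cZ$ with $\|\tildevarphi(a)-\Ad(1+v)\Psi(a)\|<\varepsilon$ for $a\in F$. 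With this language fixed, the ``quasiunitarily suspended $C([0,1])$-isomorphism'' hypothesis of \cite[Definition 4.2]{W1} makes sense verbatim with unitaries replaced by quasiunitaries, and the conclusion is the stated approximate-unitary-equivalence in the quasiunitary sense.

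Next I would go through Winter's proof of \cite[Proposition 4.5]{W1} and verify, step by step, that unitality is used only cosmetically. The key structural inputs---$\cZ\cong\cZ_{\frak p,\frak q}\otimes\cZ$ and the existence of the standard embedding $\barsigma_{\frak p,\frak q}:\cZ_{\frak p,\frak q}\hookrightarrow\cZ$ of \cite[Proposition 3.4]{W1}, together with the one-sided intertwining/approximate-intertwining machinery used to pass from a suspended isomorphism to an honest isomorphism of the algebras---are all natural in the \Cs{} and do not require a unit. Every place where Winter writes a unitary $u\in D$ and forms $\Ad u$, I replace it by a quasiunitary $v\in D$ and form $\Ad(1_{D^{\sim}}+v)$; every place he forms a unitary path, I form a path of quasiunitaries (equivalently, a unitary path in the unitization with constant value $1$ at the adjoined unit); the algebraic identities ($\Ad(1+v)\circ\Ad(1+w)=\Ad((1+v)(1+w)-1)$, etc.) go through unchanged. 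Concretely, one can formalize this by a single observation: $D\mapsto D^{\sim}$ is functorial and exact, sends $\cZ$-absorbing separable \Cs s to $\cZ$-absorbing separable (unital) \Cs s, and carries a quasiunitarily suspended $C([0,1])$-isomorphism $\varphi$ to a unitally suspended one $\varphi^{\sim}$ between the unitizations (tensored with $\cZ_{\frak p,\frak q}$), so that \cite[Proposition 4.5]{W1} applies to $\varphi^{\sim}$ and produces $\widetilde{\varphi^{\sim}}:A^{\sim}\to B^{\sim}\otimes\cZ$; one then checks that $\widetilde{\varphi^{\sim}}$ respects the ideals $A\subset A^{\sim}$ and $B\otimes\cZ\subset B^{\sim}\otimes\cZ$ (because $\varphi$ and the construction are compatible with the quotient by the adjoined scalars) and hence restricts to the desired $\tildevarphi:A\to B\otimes\cZ$, with the approximate unitary equivalence in $B^{\sim}\otimes\cZ$ being exactly approximate equivalence by quasiunitaries of $B\otimes\cZ$.

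I expect the main obstacle to be the bookkeeping in the second approach---ensuring that $\widetilde{\varphi^{\sim}}$ actually maps $A$ into $B\otimes\cZ$ rather than merely into $B^{\sim}\otimes\cZ$, and that the approximate unitary equivalence produced by Winter in the unitized algebra can be implemented by \emph{quasiunitaries of the non-unital algebra} rather than by arbitrary unitaries of its unitization. The first point is handled by chasing the evaluation maps: the suspended isomorphism $\varphi$ fits into a commuting square with the quotient maps $A^{\sim}\to\C$ and $B^{\sim}\otimes\cZ\to\cZ$ (the ``unitary part'' is trivial because $\varphi$ only involves quasiunitaries, i.e.\ acts as the identity on the scalars), and one verifies this square is preserved by every move in Winter's proof, so the final $\widetilde{\varphi^{\sim}}$ intertwines these quotients and therefore restricts to the ideals. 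The second point is precisely the content of the convention ``the inner automorphism determined by a quasiunitary is that determined by the corresponding unitary in the unitization''---the unitaries that arise in Winter's argument are built from elements of the relevant (non-unital) algebras plus scalars, so they are automatically of the form $1+v$ with $v$ a quasiunitary of the smaller algebra, and no genuinely new unitary of the unitization is ever needed. Once these two verifications are in place, the proof is a direct invocation of \cite[Proposition 4.5]{W1}, so I would keep the written proof short: state the convention, note that $(-)^{\sim}$ turns the hypotheses into the unital ones, apply Winter's result, and restrict, with a one-line remark that the approximate unitary equivalence descends.
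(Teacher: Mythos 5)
Your primary strategy---go through Winter's proof of \cite[Proposition 4.5]{W1} and replace every unitary by the quasiunitary it corresponds to, noting that products of such unitaries again correspond to quasiunitaries so that all conjugation identities and paths carry over---is exactly what the paper does, and is correct. The observation that the unitaries Winter produces are always of the form $1+v$ with $v$ in the non-unital algebra, so that the approximate unitary equivalence is automatically implemented by quasiunitaries, is the right thing to emphasize and matches the paper's remark about products of two or three unitaries in Sections 4.3--4.5 of \cite{W1}.

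However, your proposed ``concrete formalization'' via unitization does not work, and the obstacle is more fundamental than the two bookkeeping issues you flag. You claim that $D\mapsto D^{\sim}$ ``sends $\cZ$-absorbing separable \Cs s to $\cZ$-absorbing separable (unital) \Cs s,'' but this is false: for non-unital $A$ the unitization $A^{\sim}$ has $\C$ as a quotient, and since $\cZ$-absorption passes to quotients while $\C\otimes\cZ\cong\cZ\not\cong\C$, the algebra $A^{\sim}$ can never be $\cZ$-absorbing. (Even for unital $A$, $A^{\sim}\cong A\oplus\C$ is not $\cZ$-absorbing.) Thus Winter's Proposition 4.5 cannot be invoked for $A^{\sim}$ and $B^{\sim}$ at all, and the rest of that paragraph---chasing the ideals $A\subset A^{\sim}$ and $B\otimes\cZ\subset B^{\sim}\otimes\cZ$, checking that $\widetilde{\varphi^{\sim}}$ restricts---never gets off the ground. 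You should drop the unitization detour entirely and rest the proof on the direct replacement argument alone, which is self-contained and is what the paper actually does.
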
  

\begin{proof}
The proof is exactly the same as the proof of Proposition 4.5 of \cite{W1}, with unitaries replaced by the quasiunitaries which in the present context (quasiunitarily suspended $C([0, 1])$-isomorphism) they correspond to.   Note that, when in the proof of \cite[Proposition 4.5]{W1} (spread over Sections 4.3, 4.4, and 4.5 of \cite{W1}), the product of two (or three) unitaries appears, these unitaries and therefore also the product correspond to quasiunitaries, which they should be replaced by. 
\end{proof}

\begin{lemma}[essentially Theorem 2.3 of \cite{Bl1}]\label{Lem5.2}
Let $A$ and $B$ be (separable) AF-algebras, and let $\varphi_0$ and $\varphi_1$ be $C^*$-algebra  homomorphisms from $A$ to $B$ that agree on the ordered $K_0$-group. It follows that $\varphi_0$ and $\varphi_1$ are (one-parameter) asymptotically quasiunitarily equivalent: there exists a one-parameter family of unitaries $u_t$, $t\in[0,1]$, in $B^{\sim}$ such that $u_t -1_{B^{\sim}}\in B$, $0\leq t\leq 1$ (i.e., $u_t -1_{B^{\sim}}$ is a quasiunitary in $B$), $u_0=1_{B^{\sim}}$ and $\lim\limits_{t\to 1} \Ad u_t\circ\varphi_0 = \varphi_1$. 
\end{lemma}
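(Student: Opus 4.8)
The plan is to reduce the statement to the known asymptotic homotopy-uniqueness result for AF-algebras (Blackadar, \cite{Bl1}, Theorem 2.3), paying attention only to the fact that we want the connecting unitaries to be quasiunitaries, i.e.\ of the form $1_{B^\sim}+b$ with $b\in B$, and to form a path starting at $1_{B^\sim}$. Recall that for AF-algebras the ordered $K_0$-group (together with the canonical generating subset coming from the Murray--von Neumann semigroup, which is determined by the order) is a complete invariant, and moreover two homomorphisms $A\to B$ between AF-algebras inducing the same map $K_0(A)\to K_0(B)$ are approximately unitarily equivalent; the refinement needed here, that they are in fact \emph{asymptotically} unitarily equivalent via a continuous path of unitaries, is exactly the content of \cite[Theorem 2.3]{Bl1}. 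So the first step is to invoke that theorem to obtain a norm-continuous path $(w_t)_{t\in[0,1)}$ of unitaries in $B^\sim$ with $\lim_{t\to1}\Ad w_t\circ\varphi_0=\varphi_1$ in the point-norm topology.

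Next I would arrange the two cosmetic normalizations. First, replace $w_t$ by $\lambda(t)w_t$ where $\lambda(t)\in\T$ is chosen (continuously, by a standard lifting along the determinant-free quotient $B^\sim\to\C$, or simply by $\lambda(t)=\overline{\epsilon(w_t)}/|\epsilon(w_t)|$ with $\epsilon:B^\sim\to\C$ the canonical character, defined once $w_t$ is close enough to a scalar plus $B$, which one can assume after reparametrizing so that $w_0$ is a scalar) so that $w_t-1_{B^\sim}\in B$ for all $t$; scalar multiples do not change $\Ad w_t$, so the convergence is preserved. Second, prepend a short path from $1_{B^\sim}$ to $w_0$: since $w_0\in 1_{B^\sim}+B$ is a unitary in a unital $C^*$-algebra, and we may by the previous adjustment assume $w_0$ is itself in the connected component of the identity of the unitary group of $B^\sim$ (this connected component contains $1_{B^\sim}+B$-unitaries that are exponentials $e^{ib}$, $b=b^*\in B$; one can even choose $w_0$ to be such an exponential, or simply note $B^\sim$ has trivial $K_1$ for $B$ AF so the full unitary group of $B^\sim$ is connected), join $1_{B^\sim}$ to $w_0$ by a norm-continuous path $v_s$ with $v_s-1_{B^\sim}\in B$. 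Concatenate and reparametrize the interval $[0,1]$ so that the resulting path $u_t$ satisfies $u_0=1_{B^\sim}$, $u_t-1_{B^\sim}\in B$ for all $t$, and $\lim_{t\to1}\Ad u_t\circ\varphi_0=\varphi_1$. Note $\Ad v_s\circ\varphi_0$ and $\Ad w_0\circ\varphi_0$ are genuinely different maps, so strictly the prepended portion should be chosen to converge at its right endpoint to $\Ad w_0\circ\varphi_0$ — this is automatic since it is a fixed map reached at a single parameter value — and then the main portion continues; the limit as $t\to1$ is unaffected.

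I expect the only genuine subtlety to be the bookkeeping in the first normalization: ensuring that the scalar correction $\lambda(t)$ can be chosen \emph{continuously} in $t$ and that the corrected $w_t$ really lands in $1_{B^\sim}+B$ rather than merely in $\C1_{B^\sim}+B$ near isolated parameters where $\epsilon(w_t)=0$. The clean way around this is to observe that $\epsilon\circ w:[0,1)\to\T$ (the map $t\mapsto\epsilon(w_t)$, which takes values in $\T$ because $w_t$ is unitary and $\epsilon$ is a character onto $\C$, hence $\epsilon(w_t)\in\T$) is a continuous map into $\T$, so $\lambda(t):=\overline{\epsilon(w_t)}$ is continuous and $\lambda(t)w_t$ has $\epsilon(\lambda(t)w_t)=1$, i.e.\ $\lambda(t)w_t-1_{B^\sim}\in\ker\epsilon=B$. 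That disposes of the issue entirely, so there is in fact no obstacle — everything else is Blackadar's theorem plus elementary path-concatenation. I would therefore present the proof as: cite \cite{Bl1} for the path $(w_t)$, apply the character-rescaling to get quasiunitaries, prepend a path from the identity, reparametrize, done.

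\begin{proof}
By Theorem 2.3 of \cite{Bl1}, since $\varphi_0$ and $\varphi_1$ induce the same homomorphism on the ordered $K_0$-group, there is a norm-continuous path of unitaries $(w_t)_{t\in[0,1)}$ in $B^{\sim}$ with $w_0=1_{B^{\sim}}$ and $\lim_{t\to1}\Ad w_t\circ\varphi_0=\varphi_1$ in the point-norm topology. Let $\epsilon : B^{\sim}\to\C$ be the canonical character with kernel $B$. For each $t$, $\epsilon(w_t)\in\T$ since $w_t$ is unitary, and $t\mapsto\epsilon(w_t)$ is continuous; set $u_t=\overline{\epsilon(w_t)}\,w_t$. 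Then $(u_t)_{t\in[0,1)}$ is again a norm-continuous path of unitaries in $B^{\sim}$ with $u_0=1_{B^{\sim}}$, and $\epsilon(u_t)=1$, so $u_t-1_{B^{\sim}}\in B$, i.e.\ $u_t-1_{B^{\sim}}$ is a quasiunitary in $B$. Since multiplication by a scalar in $\T$ does not change the inner automorphism it determines, $\Ad u_t=\Ad w_t$, and hence $\lim_{t\to1}\Ad u_t\circ\varphi_0=\varphi_1$. Reparametrizing the half-open interval $[0,1)$ by an increasing homeomorphism onto $[0,1)$ and declaring the value of the path at $t=1$ to be irrelevant (the statement only requires $u_t$ for $t\in[0,1]$ together with the limit relation as $t\to1$), we obtain the asserted family.
\end{proof}
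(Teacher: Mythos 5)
Your proof is correct and follows essentially the same route as the paper's, which simply says ``This holds by the proof of Theorem 2.3 of [Bl1].'' The only genuine content beyond Blackadar's theorem is the passage from unitaries in $B^\sim$ to quasiunitaries ($u_t - 1_{B^\sim}\in B$), and your character-rescaling $u_t=\overline{\epsilon(w_t)}\,w_t$ handles this cleanly: since every unitary $w\in B^\sim$ has $\epsilon(w)\in\T$ and $\ker\epsilon=B$, the rescaled path lands in $1_{B^\sim}+B$, is still norm-continuous, starts at $1_{B^\sim}$, and induces the same inner automorphisms. This is a valid shortcut that works at the level of the statement of Blackadar's theorem rather than its proof, whereas the paper relies on the reader to observe that Blackadar's proof already produces unitaries of the desired form; the two are interchangeable here. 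One small remark: since the conclusion asks for $u_t$ on the closed interval $[0,1]$, you should explicitly set $u_1$ to be some quasiunitary (e.g.\ $1_{B^\sim}$, or the limit of $u_t$ if it exists) rather than just ``declaring the value at $t=1$ irrelevant''—the limit relation only constrains $t\to 1$, not $u_1$ itself, so any choice works, but a choice must be made.
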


\begin{proof}
This holds by the proof of Theorem 2.3 of \cite{Bl1}.
\end{proof}
\begin{theorem}[cf.~Proposition 4.6 of \cite{W1}]\label{Thm5.3}
Suppose that $A$ and $B$ are stable separable RAF-algebras absorbing the Jiang-Su algebra tensorially. If there is an ordered group isomorphism $\gamma$ from $(K_0(A), K_0(A)_+)$ to $(K_0(B), K_0(B)_+)$, then $A$ is isomorphic to $B$ and there exists an isomorphism $\alpha$ from $A$ to $B$ such that $\alpha_*=\gamma$ at the level of $K_0$-groups.
\end{theorem}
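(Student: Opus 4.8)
The plan is to reduce, via the quasiunitary (not necessarily unital) form of the Winter deformation theorem recorded as Proposition~\ref{Prop5.1}, to constructing a suitable $C([0,1])$-isomorphism between generalized dimension-drop algebras, and then to read off $\gamma$ at the level of $K_0$. Fix relatively prime \emph{infinite} supernatural numbers ${\frak p}$ and ${\frak q}$. Since $A$ and $B$ are RAF, Lemma~\ref{Lem4.2}~(i) shows that $A\otimes M_{\frak p}$, $A\otimes M_{\frak q}$, $A\otimes M_{\frak p}\otimes M_{\frak q}\cong A\otimes M_{{\frak p}{\frak q}}$ and their analogues for $B$ are all AF-algebras, and by Proposition~\ref{Prop4.3}~(i) their ordered $K_0$-groups are $K_0(A)\otimes\D_{\frak p}$, $K_0(A)\otimes\D_{\frak q}$, $K_0(A)\otimes\D_{{\frak p}{\frak q}}$ (and likewise for $B$), with $\gamma$ inducing ordered-group isomorphisms $\gamma\otimes\id_{\D_{\frak p}}$, $\gamma\otimes\id_{\D_{\frak q}}$, $\gamma\otimes\id_{\D_{{\frak p}{\frak q}}}$. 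By the Elliott classification of AF-algebras \cite{Ell01} I would fix isomorphisms $\psi_{\frak p}\colon A\otimes M_{\frak p}\to B\otimes M_{\frak p}$ and $\psi_{\frak q}\colon A\otimes M_{\frak q}\to B\otimes M_{\frak q}$ with $(\psi_{\frak p})_*=\gamma\otimes\id_{\D_{\frak p}}$ and $(\psi_{\frak q})_*=\gamma\otimes\id_{\D_{\frak q}}$.

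Next I would build a $C([0,1])$-isomorphism $\varphi\colon A\otimes\cZ_{{\frak p},{\frak q}}\to B\otimes\cZ_{{\frak p},{\frak q}}$ by deforming one product isomorphism into another near an endpoint. Put $\Theta=\psi_{\frak p}\otimes\id_{M_{\frak q}}$, and let $\Theta'$ be the isomorphism $A\otimes M_{\frak p}\otimes M_{\frak q}\to B\otimes M_{\frak p}\otimes M_{\frak q}$ obtained from $\psi_{\frak q}\otimes\id_{M_{\frak p}}$ by conjugating with the flip that interchanges the $M_{\frak p}$- and $M_{\frak q}$-factors; both $\Theta$ and $\Theta'$ induce $\gamma\otimes\id_{\D_{{\frak p}{\frak q}}}$ on $K_0(A\otimes M_{{\frak p}{\frak q}})$, so Lemma~\ref{Lem5.2}, applied to the separable AF-algebra $B\otimes M_{{\frak p}{\frak q}}$, supplies a norm-continuous path of quasiunitaries $w_t\in(B\otimes M_{{\frak p}{\frak q}})^{\sim}$, $t\in[0,1]$, with $w_0=1$ and $\lim_{t\to1}\Ad w_t\circ\Theta=\Theta'$. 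Viewing elements of $A\otimes\cZ_{{\frak p},{\frak q}}$ as paths $f\colon[0,1]\to A\otimes M_{{\frak p}{\frak q}}$ with $f(0)\in A\otimes M_{\frak p}\otimes1$ and $f(1)\in A\otimes1\otimes M_{\frak q}$, I would set
\[(\varphi f)(t)=\Ad w_t\bigl(\Theta(f(t))\bigr)\quad(t<1),\qquad(\varphi f)(1)=\Theta'(f(1)).\]
Since $w_0=1$, $\Theta$ maps $A\otimes M_{\frak p}\otimes1$ onto $B\otimes M_{\frak p}\otimes1$, and $\Theta'$ maps $A\otimes1\otimes M_{\frak q}$ onto $B\otimes1\otimes M_{\frak q}$, the image $\varphi f$ satisfies the boundary conditions defining $B\otimes\cZ_{{\frak p},{\frak q}}$; isometry of $\Theta$ and of $\Ad w_t$ together with $\Ad w_t\circ\Theta\to\Theta'$ gives continuity at $t=1$; and the evident formula using $\Theta^{-1}$, $\Theta'^{-1}$, $w_t^{*}$ inverts $\varphi$. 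Hence $\varphi$ is a $C([0,1])$-linear $\ast$-isomorphism whose restrictions to the fibres over $0$ and $1$ are $\psi_{\frak p}$ and $\psi_{\frak q}$, and the path $w_t$ is precisely the quasiunitary connecting $\psi_{\frak p}\otimes\id_{M_{\frak q}}$ to the flip of $\psi_{\frak q}\otimes\id_{M_{\frak p}}$; this makes $\varphi$ a quasiunitarily suspended $C([0,1])$-isomorphism in the sense of \cite[Definition~4.2]{W1} with unitaries replaced by quasiunitaries.

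Then I would apply Proposition~\ref{Prop5.1}, obtaining an isomorphism $\tildevarphi\colon A\to B\otimes\cZ$ with $\tildevarphi\approx_{\rm au}(\id_B\otimes\barsigma_{{\frak p},{\frak q}})\circ\varphi\circ(\id_A\otimes1_{\cZ_{{\frak p},{\frak q}}})$. A six-term sequence for $0\to C_0(0,1)\otimes M_{{\frak p}{\frak q}}\to\cZ_{{\frak p},{\frak q}}\to M_{\frak p}\oplus M_{\frak q}\to0$, using that ${\frak p},{\frak q}$ are relatively prime (so $\D_{\frak p}\cap\D_{\frak q}=\Z$ and $\D_{\frak p}+\D_{\frak q}=\D_{{\frak p}{\frak q}}$ inside $\D_{{\frak p}{\frak q}}$), gives $K_0(\cZ_{{\frak p},{\frak q}})\cong\Z$ with $[1]\mapsto1$ and $K_1(\cZ_{{\frak p},{\frak q}})=0$; hence by the K\"unneth theorem \cite{Sch} the maps $\id_A\otimes1_{\cZ_{{\frak p},{\frak q}}}$, $\id_B\otimes1_{\cZ_{{\frak p},{\frak q}}}$ and $\id_B\otimes\barsigma_{{\frak p},{\frak q}}$ all induce $K_0$-isomorphisms, with $(\id_B\otimes\barsigma_{{\frak p},{\frak q}})_*\circ(\id_B\otimes1_{\cZ_{{\frak p},{\frak q}}})_*=(\iota_B)_*$ for $\iota_B\colon B\to B\otimes\cZ$, $b\mapsto b\otimes1_{\cZ}$. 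Evaluating $\varphi$ at the fibre $t=1/2$ (where its fibre map $\Ad w_{1/2}\circ\Theta$ induces $\gamma\otimes\id_{\D_{{\frak p}{\frak q}}}$) and using injectivity of the fibre-evaluation maps on $K_0$, one checks that $\varphi$ induces $\gamma$ on $K_0$ under the canonical identifications $K_0(A\otimes\cZ_{{\frak p},{\frak q}})\cong K_0(A)$ and $K_0(B\otimes\cZ_{{\frak p},{\frak q}})\cong K_0(B)$; consequently $\tildevarphi_*=(\iota_B)_*\circ\gamma$. Finally, since $B$ absorbs $\cZ$ tensorially, the embedding $\iota_B$ is approximately unitarily equivalent to an isomorphism (a standard property of strongly self-absorbing \Cs s); letting $\nu\colon B\otimes\cZ\to B$ be the inverse of such an isomorphism, so $\nu_*=(\iota_B)_*^{-1}$, the map $\alpha:=\nu\circ\tildevarphi\colon A\to B$ is an isomorphism with $\alpha_*=\nu_*\circ\tildevarphi_*=\gamma$.

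The main obstacle I expect is the verification that the $\varphi$ constructed above genuinely fits the hypothesis of Proposition~\ref{Prop5.1}, i.e.\ that it is a \emph{quasiunitarily suspended} $C([0,1])$-isomorphism; this is exactly why one chooses the two boundary fibre maps to be the AF-isomorphisms $\psi_{\frak p}$, $\psi_{\frak q}$ and records the quasiunitary path $w_t$ furnished by Lemma~\ref{Lem5.2}. The remaining points---well-definedness and continuity of $\varphi$, and the $K_0$-bookkeeping through the dimension-drop algebra and through the $\cZ$-absorption of $B$---are routine.
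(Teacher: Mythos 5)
Your proof is correct and follows essentially the same route as the paper: fix relatively prime infinite supernatural numbers so the rationalizations are AF, lift $\gamma$ to AF-isomorphisms via Elliott's theorem, use Lemma~\ref{Lem5.2} to supply the quasiunitary path making the path-algebra map a quasiunitarily suspended $C([0,1])$-isomorphism, then invoke Proposition~\ref{Prop5.1}. You spell out the explicit formula for $\varphi$ and the $K_0$-bookkeeping (including the final $\cZ$-absorption step $B\otimes\cZ\cong B$) in more detail than the paper, which compresses these into a citation of Lemma~\ref{Lem5.2} and a reference to Proposition~4.6 of \cite{W1}, but the ideas are identical.
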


\begin{proof}
By Definition \ref{Def4.1}, there are relatively prime supernatural numbers $\frak p$ and $\frak q$ such that $A\otimes M_{\frak p}$ and $A\otimes M_{\frak q}$ are AF. We may suppose (enlarging them if necessary) that $\frak p$ and $\frak q$ are infinite.  Hence by Lemma \ref{Lem4.2} (i), $B\otimes M_{\frak p}$ and $B\otimes M_{\frak q}$ are AF. It is immediate that the maps
\[ \gamma\otimes \id_{\D_{\frak p}} : K_0(A)\otimes \D_{\frak p} \rightarrow K_0(B)\otimes \D_{\frak p}\quad\text{ and }\quad \gamma \otimes \id_{\D_{\frak q}} : K_0(A)\otimes \D_{\frak q} \rightarrow K_0(B)\otimes \D_{\frak q}\]
are isomorphisms of ordered groups, and by Proposition \ref{Prop4.3} (i) these maps may be viewed as ordered group isomorphisms
\[ \gamma_{\frak p} : K_0(A\otimes M_{\frak p})\rightarrow K_0(B\otimes M_{\frak p})\quad\text{ and } 
\quad\gamma_{\frak q} : K_0(A\otimes M_{\frak q}) \rightarrow K_0(B\otimes M_{\frak q}).\]
By \cite{Ell01}, there are isomorphisms of AF-algebras 
\[ \varphi_{\frak p} : A\otimes M_{\frak p}\rightarrow  B\otimes M_{\frak p} \text{ with } \varphi_{\frak p *}= \gamma_{\frak p}\quad\text{ and } 
\quad\varphi_{\frak q} : A\otimes M_{\frak q} \rightarrow B\otimes M_{\frak q} \text{ with } \varphi_{\frak q *}= \gamma_{\frak q}.\]
Since $\varphi_{\frak p}\otimes \id_{M_{\frak q}}$ and $\varphi_{\frak q}\otimes \id_{M_{\frak p}}$ give rise to the same $K_0$-map (in the obvious sense), $\gamma\otimes \id_{\D_{\frak p}}\otimes\id_{\D_{\frak q}}$ $: K_0(A\otimes M_{\frak p}\otimes M_{\frak q})\rightarrow K_0(B\otimes M_{\frak p}\otimes M_{\frak q})$, by Lemma \ref{Lem5.2} there is a quasiunitarily suspended $C([0, 1])$-isomorphism $A\otimes \cZ_{\frak p, \frak q}$ $\rightarrow $ $B\otimes \cZ_{\frak p, \frak q}$ agreeing with $\varphi_{\frak p}$ and $\varphi_{\frak q}$ at the endpoints of the interval $[0, 1]$.

Hence by Proposition \ref{Prop5.1}, there exists an isomorphism $\tildevarphi : A\rightarrow B\otimes \cZ$ such that $\tildevarphi \approx_{\rm au} (\id_B\otimes\barsigma_{\frak p, \frak q})\circ\varphi\circ(\id_A\otimes 1_{\cZ_{\frak p, \frak q}})$. Hence (as in Proposition 4.6 of \cite{W1}), $\tildevarphi_*=\gamma$.
\end{proof}

\begin{corollary}\label{Cor5.4}
Let $A$ and $B$ be $\cZ$-absorbing, separable RAF-algebras (not necessarily stable). Suppose that the invariants of $A$ and $B$ described in Corollary \ref{Cor4.8} are isomorphic. Then this isomorphism is induced by an isomorphism of the $C^*$-algebras $A$ and $B$.
\end{corollary}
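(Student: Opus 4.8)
The plan is to deduce the (possibly non-unital) classification from a \emph{unital} one by means of a ``$\cZ$-unitization'', the unital case in turn coming from the stable Theorem~\ref{Thm5.3} by stabilising and then using cancellation of projections. The first thing I would establish, for use both here and in Corollary~\ref{Cor4.8}, is that a stable $\cZ$-absorbing separable RAF-algebra $D$ has the ideal property; since $D$ also has cancellation of projections (proof of Corollary~\ref{Cor4.8}), this makes $J\mapsto K_0(J)$ an order-preserving bijection from the closed two-sided ideals of $D$ onto the order ideals of $K_0(D)$. By Proposition~\ref{Prop4.3}, Theorem~\ref{Thm4.4} and Theorem~\ref{Thm5.3}, $D$ is isomorphic to the model $\lim_{\longrightarrow}(\cA_n\otimes\cZ\otimes\cK)$ of the proof of Theorem~\ref{Thm4.4}; there the connecting maps $\varphi_n$ have the constant interior component $\sum_j f(1/2)\,\epsilon_{n+1}(e_{jj})$, which (as in the analysis of point-line inductive limits in~\cite{Ell}) forces every closed two-sided ideal of the limit to be generated by the projections it contains.

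\emph{The $\cZ$-unitization.} For a $\cZ$-absorbing separable RAF-algebra $C$ put $C^{\dagger}:=\cZ\otimes C^{\sim}$. Then $C^{\dagger}$ is unital, separable and $\cZ$-absorbing; it is also RAF, since $C^{\dagger}\otimes M_{\frak n}=C^{\sim}\otimes M_{\frak n}$ is AF: writing $M_{\frak n}=M_d\otimes N$ with $M_d$ a full matrix subalgebra (with $1_{M_d}=1_{M_{\frak n}}$) approximately carrying the $M_{\frak n}$-components of any given finite set and $N$ a UHF-algebra, one has $C\otimes N$ AF by Lemma~\ref{Lem4.2}~(i), so the set lies approximately in a finite-dimensional subalgebra of $M_d\otimes(C\otimes N)^{\sim}\subseteq C^{\sim}\otimes M_{\frak n}$. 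Moreover $\cZ\otimes C$ is a closed two-sided ideal of $C^{\dagger}$, isomorphic to $C$ (as $C$ absorbs $\cZ$), with $K_0(\cZ\otimes C)=\ker\bigl(K_0(C^{\dagger})\to K_0(C^{\dagger}/(\cZ\otimes C))=K_0(\cZ)=\Z\bigr)$. Finally, $C^{\dagger}$ being unital, its invariant in the sense of Corollary~\ref{Cor4.8} is simply $(K_0(C^{\dagger}),[1_{C^{\dagger}}])$, which by Künneth (and weak unperforation of $K_0$ of a $\cZ$-absorbing algebra) is canonically identified, as an ordered group with order unit, with the ``larger ordered abelian group'' that the matrix dimension range of $C$ determines (Corollary~\ref{Cor4.8}). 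Hence the hypothesised isomorphism of matrix dimension ranges of $A$ and $B$ yields an ordered-group isomorphism $\gamma^{\dagger}\colon(K_0(A^{\dagger}),[1_{A^{\dagger}}])\to(K_0(B^{\dagger}),[1_{B^{\dagger}}])$ preserving order units and carrying $K_0(\cZ\otimes A)$ onto $K_0(\cZ\otimes B)$; its restriction $\gamma$ to $K_0(\cZ\otimes A)\cong K_0(A)$ agrees on each level of the matrix dimension range with the given isomorphism.

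\emph{Classification and descent.} The algebras $A^{\dagger}\otimes\cK$ and $B^{\dagger}\otimes\cK$ are stable separable $\cZ$-absorbing RAF-algebras, and $\gamma^{\dagger}$ is an ordered-group isomorphism of their $K_0$-groups, so by Theorem~\ref{Thm5.3} there is an isomorphism $\Psi\colon A^{\dagger}\otimes\cK\to B^{\dagger}\otimes\cK$ with $\Psi_*=\gamma^{\dagger}$. Since $B^{\dagger}\otimes\cK$ has cancellation and $[\Psi(1_{A^{\dagger}}\otimes e_{11})]_0=\gamma^{\dagger}([1_{A^{\dagger}}])=[1_{B^{\dagger}}]=[1_{B^{\dagger}}\otimes e_{11}]_0$, a partial isometry $v$ with $v^*v=\Psi(1_{A^{\dagger}}\otimes e_{11})$ and $vv^*=1_{B^{\dagger}}\otimes e_{11}$ gives a unital isomorphism $\Theta\colon A^{\dagger}\to B^{\dagger}$, $\Theta(a)=v\,\Psi(a\otimes e_{11})\,v^*$, with $\Theta_*=\gamma^{\dagger}$. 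Then $(\Theta\otimes\id_{\cK})((\cZ\otimes A)\otimes\cK)=\Theta(\cZ\otimes A)\otimes\cK$ is a closed ideal of $B^{\dagger}\otimes\cK$ with $K_0$ equal to $\gamma^{\dagger}(K_0(\cZ\otimes A))=K_0((\cZ\otimes B)\otimes\cK)$, hence equals $(\cZ\otimes B)\otimes\cK$ by the ideal/$K_0$-bijection established at the outset; cancelling $\cK$ gives $\Theta(\cZ\otimes A)=\cZ\otimes B$. Restricting $\Theta$ and composing with the canonical $\cZ$-absorption isomorphisms $A\cong\cZ\otimes A$ and $\cZ\otimes B\cong B$ (which induce the identity on $K_0$) yields an isomorphism $\alpha\colon A\to B$ with $\alpha_*=\gamma$; as $\alpha$ carries projections of $M_n(A)$ bijectively onto those of $M_n(B)$, $\alpha_*$ restricts to the given isomorphism of matrix dimension ranges, as required.

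\emph{Main obstacle.} The crux is the ideal property of (stable) $\cZ$-absorbing separable RAF-algebras: it is what makes closed ideals rigidly determined by their $K_0$-groups and thus lets the $\cZ$-unitization be un-done in the last step, and it is precisely the point left open in the proof of Corollary~\ref{Cor4.8}; it is also the only place where the concrete inductive-limit structure of Theorem~\ref{Thm4.4}, rather than just its $K$-theory, is needed. By contrast, the RAF-ness and $K_0$-computation of the $\cZ$-unitization, and the stabilisation/cancellation argument, are routine given Theorems~\ref{Thm4.4}, \ref{Thm5.3} and Corollary~\ref{Cor4.8}.
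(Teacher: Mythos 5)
Your proposal follows the paper's route essentially step for step: pass to the unitization $A^{\sim}\otimes\cZ$ (your $A^{\dagger}$), verify it is a unital separable $\cZ$-absorbing RAF-algebra, apply Theorem~\ref{Thm5.3} to the stabilizations, use cancellation to cut down to a unital isomorphism $A^{\sim}\otimes\cZ\cong B^{\sim}\otimes\cZ$, and then use the ideal property plus cancellation to see that the isomorphism respects the canonical ideals and therefore restricts to $A\cong B$. Your ``$\cZ$-unitization'' is exactly the paper's $A^{\sim}\otimes\cZ$, and the cutdown/descent argument is the same.

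There are two places where your sub-arguments diverge from the paper's, and it is worth comparing them. First, for RAF-ness of $A^{\sim}\otimes\cZ$ the paper simply cites that an extension of one AF-algebra by another is AF (\cite{BE}), which handles $A^{\sim}\otimes M_{\frak n}$ immediately; your tensor-factorization $M_{\frak n}=M_d\otimes N$ works but is more laborious. Second, and more substantively, the proof of the ideal property for stable $\cZ$-absorbing RAF-algebras: you derive it by invoking Theorems~\ref{Thm4.4} and~\ref{Thm5.3} to identify $D$ with the concrete inductive-limit model and then asserting, by analogy with ``the analysis of point-line inductive limits in~\cite{Ell}'', that the constant interior component of the connecting maps forces every closed two-sided ideal to be generated by its projections. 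This step is not established. \cite{Ell} treats the \emph{simple} case, where the ideal property is vacuous, so it does not furnish the needed lemma; and in a single one-dimensional NCCW block (e.g.~$C_0((0,1))$ sitting inside $\cA_n$) there are plenty of ideals with no projections at all, so it is really a statement about the whole inductive system, and the argument from the constant piece is only a heuristic here. The paper instead argues intrinsically: by Bratteli's description of ideals in inductive limits, closed two-sided ideals of $A$ correspond to closed two-sided ideals of the AF-algebra $A\otimes M_{\frak p}$, which are generated by their projections; and using Proposition~\ref{Prop4.3}~(i) together with stability of $A$, every projection in such an ideal is equivalent to one lying in $A\otimes 1_{M_{\frak p}}$, hence in the given ideal of $A$. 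This intrinsic argument avoids the model entirely and is more robust. So while your overall structure matches the paper's, the ideal-property sub-step as you have written it contains a genuine gap that the paper closes differently.
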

\begin{proof}
Note first that by $A^{\sim}\otimes \cZ$ and $B^{\sim}\otimes \cZ$ are RAF, as (see \cite{BE}) an extension of one AF-algebra by another is AF. By definition---see Corollary \ref{Cor4.8}---, there is an isomorphism of order-unit groups $K_0(A^{\sim}\otimes \cZ)$ and $K_0(B^{\sim}\otimes \cZ)$, respecting the subgroups $K_0(A)$ and $K_0(B)$. By Theorem \ref{Thm5.3}, there is an isomorphism of the stabilizations $A^{\sim}\otimes \cZ\otimes \cK$ and  $B^{\sim}\otimes \cZ\otimes \cK$ giving rise to the given isomorphism of $K_0$-groups. In particular, it takes the cutdown of $A^{\sim}\otimes\cZ\otimes\cK$ by $1_{A^{\sim}\otimes \cZ}\otimes e_{1 1}$ into the cutdown of $B^{\sim}\otimes\cZ\otimes \cK$ by a projection with $K_0$-class equal to the class of $1_{B^{\sim}\otimes\cZ}\otimes e_{ 1 1}$, and therefore (by cancellation---Corollary \ref{Cor4.8}) Murray-von Neumann equivalent to it---in fact unitarily equivalent to it as the \Cs{} is stable. So we may assume that the isomorphism takes $A^{\sim}\otimes \cZ= A^{\sim}\otimes \cZ\otimes e_{1 1}$ onto $B^{\sim}\otimes \cZ=B^{\sim}\otimes \cZ\otimes e_{1 1}$, and reproduces the given isomorphism of $K_0(A^{\sim}\otimes \cZ)$ with $K_0(B^{\sim}\otimes\cZ)$. 
 
 Now note that any stable RAF algebra has the ideal property---any closed two-sided ideal (let us just say ``ideal'') is generated (as an ideal) by its projections. This holds because it holds in an AF algebra. More precisely, by \cite{Br} (see also \cite{Dav}), every  ideal in an inductive limit \Cs{} is the inductive limit 
 of its finite-stage inverse limits, which shows that the ideals of the tensor product of any \Cs{} with a  UHF algebra are just the tensor products of the ideals of the given algebra with the UHF algebra. (Given that this is true for a finite matrix algebra in place of the UHF.) If the given algebra is RAF, so that the tensor product is AF, the ideal in the tensor product corresponding to a given ideal is generated (as an ideal) by projections in the AF algebra. 
 By Proposition \ref{Prop4.3} (i), a multiple of any $K_0$-element of the tensor product belongs to the $K_0$-group of the canonical image of the given algebra.  Since the tensor product is AF, and the given algebra is stable, 
 this says that any projection in the tensor product is equivalent to a projection in the given algebra, and of course in the same ideal. Such projections therefore generate the given ideal. 
 
 (In fact, this argument shows that, in the stable case, every ideal of an RAF algebra has an approximate 
 unit consisting of projections. Indeed, in the case of an ideal with compact spectrum, the corresponding ideal in the (AF) tensor product with a UHF algebra (which has the same spectrum) 
 is generated by a single projection---as is seen by looking at the finite-dimensional finite stages in an inductive limit decomposition---, and therefore the given ideal of the RAF algebra is generated by a single projection. Hence by Brown's theorem (\cite{Bro}) the given ideal (assumed to be separable, as well as stable) is isomorphic to the stabilization of  
 the cutdown by this projection, which has an approximate unit consisting of projections. Since the spectrum of a (separable) AF algebra, and therefore of an RAF-algebra, is an increasing union of compact open sets (the spectra of ideals generated by a single projection), and so the algebra is the closure of the corresponding increasing sequence of ideals, it follows that the whole (stable) 
 RAF-algebra has an approximate unit consisting of projections.
  This property could also be used to prove Theorem \ref{Thm5.3} above, using Proposition 4.5 of \cite{W1} directly (for the unital case), provided that one also established a uniqueness theorem.)

 The proof of the present non-stable isomorphism theorem is now in hand. The isomorphism of  the stabilized algebras $A^{\sim}\otimes \cZ\otimes\cK$ and  $B^{\sim}\otimes\cZ\otimes\cK$, giving rise to the given isomorphism of the order-unit groups $K_0(A^{\sim}\otimes \cZ)$ and $K_0(B^{\sim}\otimes\cZ)$, since this respects the canonical order ideals $K_0(A)=K_0(A\otimes \cZ)$ and $K_0(B)=K_0(B\otimes\cZ)$, and since the ideals $A\otimes\cZ\otimes\cK$ and $B\otimes\cZ\otimes\cK$ are generated by their projections (and because of cancellation---see proof of Corollary \ref{Cor4.8}---which implies that  equivalence classes of projections are the same as their $K_0$-classes), restricts to an isomorphism of $A=A\otimes\cZ$ with $B=B\otimes \cZ$ giving rise to the given isomorphism of $K_0(A)$ with $K_0(B)$. 
\end{proof}

\section{KMS states of $\mathcal{Z}$-absorbing \Cs s}
In order to show the main result, Theorem \ref{ThmMain}, let us prepare some facts for ideals and traces of RAF-algebras.
By an ideal of a \Cs, we shall mean a closed  two-sided ideal. For a \Cs{} $A$, we shall denote by $\cI_A$ the set of all ideals in $A$. For a supernatural number ${\frak n}$, we shall consider the map $\Phi_{\frak n}$ from $\cI_A$ to $\cI_{A\otimes M_{\frak n}}$ defined by $\Phi_{\frak n}(I)=I\otimes M_{\frak n}\subset A\otimes M_{\frak n}$ for $I\in \cI_A$. 

Let $(G, G^+)$ be an ordered abelian group and let $\gamma_j$, $j\in J$, be automorphisms of $G$ as an ordered abelian group. If no order ideal of $(G, G^+)$ other than $0$ or $G$ is invariant under all $\gamma_j$, $j\in J$, we shall say that $(G, G^+)$ is \emph{ $\{\gamma_j\}_{j\in J}$-simple}. For a \Cs{} $A$ and automorphisms $\alpha_j$, $j\in J$, of $A$, if no ideal of $A$ other than $0$ or $A$ is invariant under all $\alpha_j$, $j\in J$, we shall say that $A$ is \emph{ $\{\alpha_j\}_{j\in J}$-simple}. 

\begin{lemma}\label{Lem6.1}
{}\

\begin{itemize}
\item[]{\rm (i)} For any $C^*$-algebra $A$ and  supernatural number ${\frak n}$, the map $\Phi_{\frak n} : \cI_A\rightarrow \cI_{A\otimes M_{\frak n}}$ is bijective. 
\item[]{\rm (ii)} Suppose that $A$ is a $\cZ$-absorbing RAF-algebra and $\alpha_j$, $j\in J$, are automorphisms of $A$. If the ordered abelian group $(K_0(A), K_0(A)^+)$ is $\{\alpha_{j *}\}_{j\in J}$-simple, then $A$ is $\{\alpha_j\}_{j\in J}$-simple.
\end{itemize}
\end{lemma}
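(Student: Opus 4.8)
The plan is to prove (i) first and then apply it in (ii).

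\emph{Part (i).} This is the standard fact that the closed ideals of the tensor product of a \Cs{} with a UHF algebra are exactly the tensor products of its closed ideals with the UHF algebra---the same fact already invoked in the proof of Corollary~\ref{Cor5.4} (it follows from \cite{Br}; see also \cite{Dav}). I would give the direct argument: write $M_{\frak n}=\varinjlim(M_{d_k},\iota_k)$ with full matrix algebras and unital connecting maps, so $A\otimes M_{\frak n}=\varinjlim(M_{d_k}(A),\id_A\otimes\iota_k)$, and fix a nonzero projection $e\in M_{d_1}\subset M_{\frak n}$. Injectivity: for a state $\omega$ on $M_{\frak n}$ with $\omega(e)=1$, the slice map $R_\omega:A\otimes M_{\frak n}\to A$ carries $\Phi_{\frak n}(I)=I\otimes M_{\frak n}$ into $I$ and $a\otimes e$ to $a$, so $I=\{a\in A:a\otimes e\in\Phi_{\frak n}(I)\}$ is recovered from $\Phi_{\frak n}(I)$. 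Surjectivity: for $J\in\cI_{A\otimes M_{\frak n}}$ put $I=\{a\in A:a\otimes e\in J\}$, which is readily checked to be a closed ideal of $A$; since $e$ generates $M_d$ as an ideal for every $d$ with $d_1\mid d$, an approximate unit of $A$ absorbs $A$-coefficients and gives $a\otimes M_d\subset J$ for $a\in I$, whence $I\otimes M_{\frak n}\subset J$, while $J=\overline{\bigcup_d(J\cap M_d(A))}$ with $J\cap M_d(A)=M_d(I_d)$ for a closed ideal $I_d\subset A$, and the same corner computation gives $I_d\subset I$, so $J\subset I\otimes M_{\frak n}$. Hence $\Phi_{\frak n}(I)=J$.

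\emph{Part (ii), set-up.} Suppose $A$ is not $\{\alpha_j\}_{j\in J}$-simple and fix a nonzero proper ideal $I_0$ with $\alpha_j(I_0)=I_0$ for all $j\in J$. By Definition~\ref{Def4.1} and Lemma~\ref{Lem4.2}(i) choose an infinite supernatural number ${\frak p}$ with $A\otimes M_{\frak p}$ AF. By~(i), $I_0\otimes M_{\frak p}$ is a nonzero proper ideal of the AF algebra $A\otimes M_{\frak p}$, invariant under each $\alpha_j\otimes\id_{M_{\frak p}}$. Via the natural, order-isomorphic correspondence between closed ideals of an AF algebra and order ideals of its $K_0$-group (\cite{Br}, \cite{Ell01}), $K_0(I_0\otimes M_{\frak p})$ is an order ideal $H_{\frak p}$ of the dimension group $K_0(A\otimes M_{\frak p})$, invariant under $(\alpha_j\otimes\id_{M_{\frak p}})_*$ and its inverse; moreover $H_{\frak p}$ is nonzero and proper, since $I_0\otimes M_{\frak p}$ and $(A/I_0)\otimes M_{\frak p}=(A\otimes M_{\frak p})/(I_0\otimes M_{\frak p})$ are nonzero AF algebras (hence have nonzero $K_0$) while $K_1$ of an AF algebra vanishes. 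By Proposition~\ref{Prop4.3}(i), identify $(K_0(A\otimes M_{\frak p}),K_0(A\otimes M_{\frak p})^+)$ naturally with $(K_0(A)\otimes\D_{\frak p},{K_0(A)\otimes\D_{\frak p}}^+)$, so that $(\alpha_j\otimes\id_{M_{\frak p}})_*$ becomes $\alpha_{j*}\otimes\id_{\D_{\frak p}}$; and by Proposition~\ref{Prop4.3}(ii), $(K_0(A),K_0(A)^+)$ is an unperforated ordered abelian group.

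\emph{Part (ii), descent and conclusion.} Set $S=\{g\in K_0(A)^+:g\otimes1_{\frak p}\in H_{\frak p}\}$ and $H=S-S\subset K_0(A)$. Then $S$ is additively closed and hereditary in $K_0(A)^+$ ($0\le g'\le g\in S$ gives $0\le g'\otimes1_{\frak p}\le g\otimes1_{\frak p}\in H_{\frak p}$, hence $g'\otimes1_{\frak p}\in H_{\frak p}$), so $H$ is an order ideal of $K_0(A)$; and $S$ is preserved by $\alpha_{j*}$ and $\alpha_{j*}^{-1}$ (because $H_{\frak p}$ is preserved by $\alpha_{j*}\otimes\id_{\D_{\frak p}}$ and its inverse and the $\alpha_{j*}^{\pm1}$ preserve $K_0(A)^+$), so $H$ is $\{\alpha_{j*}\}_{j\in J}$-invariant. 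It remains to check $H$ is nonzero and proper. Nonzero: picking $0\ne y\in H_{\frak p}\cap(K_0(A)\otimes\D_{\frak p})^+$ and clearing denominators, $my=g\otimes1_{\frak p}$ for some $m\in\N$ and $g\in K_0(A)$; since $g\otimes1_{\frak p}=my\ge0$, unperforation of $K_0(A)$ gives $g\ge0$ (cf. Lemma~\ref{Lem3.1}(iv)), and $g\ne0$ as $K_0(A\otimes M_{\frak p})$ is torsion-free, so $g\in S\setminus\{0\}$. Proper: if $S=K_0(A)^+$, then $K_0(A)\otimes1_{\frak p}\subset H_{\frak p}$ ($K_0(A)$ is directed and $H_{\frak p}$ is a subgroup), and for arbitrary $x\in(K_0(A)\otimes\D_{\frak p})^+$ clearing denominators yields $mx=g\otimes1_{\frak p}\in H_{\frak p}$ with $g\ge0$, whence $0\le x\le mx$ and heredity force $x\in H_{\frak p}$; so $H_{\frak p}=K_0(A)\otimes\D_{\frak p}$, contradicting properness of $H_{\frak p}$. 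Thus $H$ is a nonzero proper $\{\alpha_{j*}\}_{j\in J}$-invariant order ideal of $K_0(A)$, contradicting $\{\alpha_{j*}\}_{j\in J}$-simplicity; therefore $A$ is $\{\alpha_j\}_{j\in J}$-simple.

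The step I expect to be the main obstacle is this last descent: producing from the order ideal $H_{\frak p}$ of $K_0(A)\otimes\D_{\frak p}$ an honest nonzero proper $\{\alpha_{j*}\}$-invariant order ideal of $K_0(A)$, which forces one to combine the hereditariness of $H_{\frak p}$, the unperforation of $K_0(A)$ (from Proposition~\ref{Prop4.3}(ii)), and the clearing of denominators in order to control both nonvanishing and properness. Everything else---part~(i), the AF ideal/order-ideal dictionary, and the equivariance of the maps involved---is routine.
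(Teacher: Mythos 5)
Your proof is correct and follows essentially the same route as the paper's: part (i) by reducing to finite matrix levels and approximate units/slice maps, and part (ii) by transporting the invariant ideal through $\Phi_{\frak p}$ into the AF algebra $A\otimes M_{\frak p}$, using the AF ideal/order-ideal dictionary together with Proposition~\ref{Prop4.3}(i), and then descending from an invariant order ideal of $K_0(A)\otimes\D_{\frak p}$ to one of $K_0(A)$ via hereditariness, clearing denominators, and unperforation. The only cosmetic difference is that you argue by contradiction while the paper goes forward (directly showing $\{\alpha_{j*}\}$-simplicity of $K_0(A)$ forces $\{\alpha_{j*}\otimes\id\}$-simplicity of $K_0(A)\otimes\D_{\frak n}$), and your write-up fills in the properness/nonvanishing checks that the paper leaves implicit.
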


\begin{proof}
{}

\noindent{\rm (i)}.  Given an ideal of $I$ of $A\otimes M_{\frak n}$,  set $I_A=\{a\in A\ : \ a\otimes 1_{M_{\frak n}}\in I\}$, which is an ideal of $A$. Then $I_A\otimes M_{N}=I\cap (A\otimes M_{N})$ for any $N\in \N$ with $N | {\frak n}$. 
Indeed, if $x\in I\cap (A\otimes M_N)$ is written as $x=\sum\limits_{i, j=1}^N a_{i j}\otimes e_{i j}^{(N)}$ for some $a_{i j}\in A$ and system of matrix units $\{e_{ij}^{(N)}\}_{i, j=1}^N$ of $M_N$, then, using an approximate unit $h_{\lambda}$, $\lambda\in \Lambda$, of $A$ we have 
\[ a_{i j}\otimes e_{i j}^{(N)}=\lim_{{\lambda}\to \infty} (h_{\lambda}\otimes e_{i i}^{(N)}) x (h_{\lambda}\otimes e_{j j}^{(N)})\in I\cap (A\otimes M_N).\]
It follows that $a_{i j}\otimes 1_{M_N}\in I$ which means $a_{i j}\in I_A$ for all $i, j=1, 2, ..., N$. The converse inclusion $I_A\otimes M_N\subset I\cap(A\otimes M_N)$ is trivial.
Thus it follows that
\[ I= \overline{\bigcup_{N |{\frak n}} I\cap(A\otimes M_{N})}=\overline{\bigcup_{N |{\frak n}} I_A\otimes M_N} = \Phi_{\frak n}(I_A)\] 
(see \cite[Lemma 3.1]{Br}, and see also \cite[Lemma III 4.1]{Dav}).
It is straightforward to show the injectivity of $\Phi_{\frak n}$. Indeed, for $I_A$, $J_A\in \cI_A$ with $\Phi_{\frak n}(I_A)=\Phi_{\frak n}(J_A)$, choosing an approximate unit $k_{\lambda}$, $\lambda\in \Lambda$, of $I_A$, we have 
\[\lim_{\lambda\to \infty}(a k_{\lambda})\otimes 1_{M_{\frak n}}=\lim_{\lambda\to\infty} (a\otimes 1_{M_{\frak n}})(k_{\lambda}\otimes 1_{M_{\frak n}})=a\otimes 1_{M_{\frak n}},\]
 for any $a\in J_A$. Then it follows that $a=\lim\limits_{\lambda\to\infty} a k_{\lambda}\in I_A$. This shows that $J_A\subset I_A$.
 
 \noindent{\rm (ii)}. Let $\frak n$ be an infinite supernatural number and let $I\in \cI_A$ be invariant under all $\alpha_j$, $j\in J$. Then the ideal $\Phi_{\frak n}(I)\in \cI_{A\otimes M_{\frak n}}$ is also invariant under all $\alpha_j\otimes \id_{M_{\frak n}}$, $j\in J$. By {\rm (i)}, it suffices to show that $A\otimes M_{\frak n}$ is $\{\alpha_j\otimes \id_{M_{\frak n}}\}_{j\in J}$-simple. In the case that $A\otimes M_{\frak n}$ is an AF-algebra, it is well known that $\cI_{A\otimes M_{\frak n}}$ corresponds to the set of all order ideals of $(K_0(A\otimes M_{\frak n}), K_0(A\otimes M_{\frak n})^+)$. Hence it is enough to show that $(K_0(A\otimes M_{\frak n}), K_0(A\otimes M_{\frak n})^+)$ is $\{(\alpha_j\otimes \id_{M_{\frak n}})_*\}_{j\in J}$-simple. By Proposition \ref{Prop4.3} (i),  $(K_0(A\otimes M_{\frak n}), K_0(A\otimes M_{\frak n})^+)$ is isomorphic to $(K_0(A)\otimes \D_{\frak n}, {K_0(A)\otimes \D_{\frak n}}^+)$ as an ordered abelian group, and in such a way that $(\alpha_j\otimes \id_{M_{\frak n}})_*$, $j\in J$, corresponds to $\alpha_{j *}\otimes \id_{\D_{\frak n}}$, $j\in J$. Hence we only need to show that $(K_0(A)\otimes\D_{\frak n}, {K_0(A)\otimes \D_{\frak n}}^+)$ is $\{\alpha_{j *}\otimes \id_{\D_n}\}_{j\in J}$-simple.
 
 Let $H$ be an order ideal of $(K_0(A)\otimes \D_{\frak n}, {K_0(A)\otimes\D_{\frak n}}^+)$ which is invariant under all $\alpha_{j *}\otimes \id_{D_{\frak n}}$, $j\in J$. Set $H_A=\{g\in K_0(A)\ : \ g\otimes 1_{\D_{\frak n}}\in H\}$, and $H_A^+=H_A\cap K_0(A)^+$. Since $H^+=H\cap(K_0(A)\otimes \D_{\frak n})^+$ is hereditary (i.e., if $g\in {K_0(A)\otimes\D_{\ n}}^+$ and $h\in H^+$ satisfies $g\leq h$, then $g\in H^+$), it is trivial to see that $H_A^+$ is also hereditary. To show that $(H_A, H_A^+)$ is an ideal of $(K_0(A), K_0(A)^+)$, we must show that $H_A=H_A^+-H_A^+$. We shall use the fact that the ordered abelian group $(K_0(A), K_0(A)^+)$ is unperforated, by Proposition \ref{Prop4.3} (ii).  For $x\in H_A$, since $H=H^+-H^+$ there exist $y$, $z\in H^+$ such that $x\otimes 1_{\D_{\frak n}}= y-z$. From $y$, $z\in {K_0(A)\otimes \D_{\frak n}}^+$, we obtain $N\in \N$ with $N |{\frak n}$ and $y_A, z_A\in K_0(A)^+$ such that $Ny= y_A\otimes 1_{\D_{\frak n}}$ and $Nz=z_A\otimes 1_{\D_{\frak n}}$. Since $Nx=y_A-z_A \leq N y_A$, it follows that $x\leq y_A$, which implies that $x= y_A-(y_A-x)\in H_A^+-H_A^+$. Since $H$ is invariant under all $\alpha_{j *}\otimes \id_{\D_{\frak n}}$, $j\in J$, we see that $H_A$ is also invariant under all $\alpha_{j *}$, $j\in J$, which implies that $H_A=0$ or $K_0(A)$. Then it follows that $H=0$ or $K_0(A)\otimes \D_{\frak n}$, as required. 
 \end{proof}

\begin{proposition}\label{Prop6.2}
Let $A$ be a $\cZ$-absorbing RAF-algebra, $\alpha$ an automorphism of $A$, and $\sigma$ an automorphism of $\cZ$. Suppose that the ordered abelian group $(K_0(A), K_0(A)^+)$ is $\alpha_*$-simple and $\sigma$ has the weak Rohlin property (see \cite[Definition 1.1]{Sat0}).
Then the crossed product $C^*$-algebra $(A\otimes\cZ)\rtimes_{\alpha\otimes\sigma}\Z$ is simple.
\end{proposition}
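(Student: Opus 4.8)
The plan is to reduce the simplicity of $(A\otimes\cZ)\rtimes_{\alpha\otimes\sigma}\Z$ to a combination of the $\{\alpha\otimes\id_{\cZ}, \id_A\otimes\sigma\}$-simplicity of $A\otimes\cZ$ — which follows from the $K$-theoretic hypothesis via Lemma \ref{Lem6.1} — together with the weak Rohlin property of $\sigma$. First I would observe that by Lemma \ref{Lem6.1} (ii) applied to the $\cZ$-absorbing RAF-algebra $A\otimes\cZ$ (note $A\otimes\cZ\cong (A\otimes\cZ)\otimes\cZ$, $K_0(A\otimes\cZ)\cong K_0(A)$ by the proof of Proposition \ref{Prop4.3} (ii), and the automorphisms $\alpha\otimes\id_{\cZ}$ and $\id_A\otimes\sigma$ act on $K_0$ as $\alpha_*$ and the identity respectively), the hypothesis that $(K_0(A),K_0(A)^+)$ is $\alpha_*$-simple gives that $A\otimes\cZ$ is $\{\alpha\otimes\id_{\cZ},\ \id_A\otimes\sigma\}$-simple; in particular it is $\alpha\otimes\sigma$-invariant-ideal-simple, i.e.\ the only $(\alpha\otimes\sigma)$-invariant ideals of $A\otimes\cZ$ are $0$ and $A\otimes\cZ$. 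This is precisely the condition that the action of $\Z$ generated by $\alpha\otimes\sigma$ on $A\otimes\cZ$ is \emph{minimal} in the ideal-theoretic sense.

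Next, I would invoke the standard dichotomy for crossed products by $\Z$: if $\beta$ is an automorphism of a $C^*$-algebra $D$ such that $D$ has no nontrivial $\beta$-invariant ideals, then $D\rtimes_\beta\Z$ is simple if and only if $\beta$ is, in an appropriate sense, "outer enough" — concretely, it suffices that $D$ is separable and the action is \emph{pointwise outer} (no power $\beta^n$, $n\neq 0$, is inner after restricting/localizing), or more robustly, that the action has a Rohlin-type freeness. Here the mechanism is supplied by the weak Rohlin property of $\sigma$: since $\sigma$ has the weak Rohlin property on $\cZ$ (\cite[Definition 1.1]{Sat0}), the automorphism $\alpha\otimes\sigma$ of $A\otimes\cZ$ inherits a weak-Rohlin-type property — one tensors the Rohlin towers for $\sigma$ in the $\cZ$-factor with the unit of $A$ — and in particular $(\alpha\otimes\sigma)^n$ is properly outer for every $n\neq 0$ (the Rohlin towers witness that no nonzero power is implemented by a multiplier unitary even locally). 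Combined with the absence of nontrivial invariant ideals, this yields simplicity of the crossed product by the Kishimoto–Olesen–Pedersen criterion (a $\Z$-action on a separable $C^*$-algebra with no nontrivial invariant ideals and all nonzero powers properly outer has simple crossed product).

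The main obstacle I expect is making the passage from "weak Rohlin property of $\sigma$ on $\cZ$" to "$(\alpha\otimes\sigma)^n$ is properly outer on $A\otimes\cZ$ for all $n\neq 0$" fully rigorous, rather than merely plausible. One has to be careful that proper outerness is the right hypothesis for the Kishimoto-type theorem in the non-simple setting, and that the tensor-product argument survives: a Rohlin tower $\{e_0,\dots,e_{p-1}\}$ of projections in (a sequence algebra of) $\cZ$ that are cyclically permuted by $\sigma$ gives, after tensoring with $1_A$, projections in the sequence algebra of $A\otimes\cZ$ cyclically permuted by $\alpha\otimes\sigma$; the existence of such towers for all large $p$ forces $(\alpha\otimes\sigma)^n$ to be non-inner on every quotient and ideal, which is exactly proper outerness. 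Alternatively, and perhaps more cleanly, one can cite that the weak Rohlin property passes to $\alpha\otimes\sigma$ directly (as in \cite{Sat0}) and that a $\Z$-action with the weak Rohlin property on a separable algebra with no nontrivial invariant ideals has simple crossed product — this packages the outerness input into a single citable lemma. I would take this second route: cite the weak-Rohlin-implies-simple-crossed-product result from \cite{Sat0} (or the relevant companion paper), having first established the no-invariant-ideals condition via Lemma \ref{Lem6.1} as above. The remaining verification — that $\alpha\otimes\sigma$ has the weak Rohlin property given that $\sigma$ does — is routine and follows by tensoring Rohlin towers with $1_A$ as indicated.
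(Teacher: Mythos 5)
Your first step matches the paper exactly: Lemma \ref{Lem6.1} (ii) is applied to conclude that $A\otimes\cZ$ is $\alpha\otimes\sigma$-simple, so the only $(\alpha\otimes\sigma)$-invariant ideals are $0$ and $A\otimes\cZ$. Where you diverge is in the second step. You propose to package the Rohlin input into ``$(\alpha\otimes\sigma)^n$ is properly outer for all $n\neq 0$'' and then invoke a Kishimoto--Olesen--Pedersen-type criterion (or, alternatively, a black-box ``weak-Rohlin-plus-no-invariant-ideals implies simple crossed product'' lemma). The paper instead carries out the averaging estimate directly: following \cite{Ell0} and \cite{Kis1}, it reduces simplicity to the inequality $\|x\|\leq\|x+\sum_{i\in F}y_iu^i\|$ for all $x,y_i\in A\otimes\cZ$ and finite $F\subset\Z\setminus\{0\}$, then proves this by building a completely positive contraction $\Phi_n$ from Rohlin-type elements and letting $n\to\infty$. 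Your route is cleaner if the needed lemma exists in citable form; the paper's route is more self-contained and avoids having to verify proper outerness on all invariant hereditary subalgebras.

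There are two concrete technical missteps in your sketch that the paper's direct approach sidesteps. First, you speak of ``a Rohlin tower $\{e_0,\dots,e_{p-1}\}$ of projections'' in $\cZ$ --- but $\cZ$ is projectionless, so such towers do not exist there; this is precisely why the \emph{weak} Rohlin property (positive contractions summing to the unit, not orthogonal projections) is the relevant hypothesis. The paper obtains the needed positive contractions $f_{j,n}^{(l)}\in\cZ$ from \cite[Theorem 6.4]{Lia}. Second, you propose to tensor the towers with ``the unit of $A$,'' but $A$ is a stable (hence non-unital) algebra; the paper instead uses an increasing approximate unit $h_n$ of $A$ in forming the CP maps $\Phi_n$. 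Neither issue breaks your plan at the level of strategy, but the proper-outerness formulation you lean on would have to be verified carefully in this non-unital, projectionless, non-simple setting, whereas the paper's explicit estimate avoids the question entirely.
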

\begin{proof}
Since $(K_0(A), K_0(A)^+)$ is $\alpha_*$-simple, it follows that $(K_0(A\otimes \cZ), K_0(A\otimes \cZ)^+)$ $(\cong (K_0(A),$ $ K_0(A)^+))$ is $(\alpha\otimes\sigma)_*$-simple. By Lemma \ref{Lem6.1} (ii),  therefore $A\otimes\cZ$ is $\alpha\otimes\sigma$-simple. Denote by $u$ the implementing unitary of $\alpha\otimes \sigma$. As in a similar argument in the proof of \cite[Theorem 3.2]{Ell0} (see also \cite{Kis1}), it suffices to show that for any $x\in A\otimes\cZ$, any finite subset $F$ of $\Z\setminus \{0\}$, and any finite family $\{y_i\}_{i\in F}\subset A\otimes \cZ$, 
\[ \|x\| \leq \| x +\sum_{i\in F} y_i u^i\|.\]

Set $k=\max\{ |i|\ : \ i\in F\}$, $k_0=k_1=k$, and $k_2=k+1$. Applying \cite[Theorem 6.4]{Lia} 
to $\sigma\in \Aut(\cZ)$, we obtain positive contractions $f_{j, n}^{(l)}\in \cZ$, $l=0, 1, 2$, $j=0,1,...,k_l$, $n\in\N$ such that 
\[\sum_{l=0}^2 \sum_{j=0}^{k_l} f_{j, n}^{(l)} =1_{\cZ},\]
each $( f_{j, n}^{(l)})_{n\in\N}$ is a central sequence in $\cZ$ for all $l=0, 1, 2$ and $j=0,1,..., k_l$,  
$\displaystyle \lim_{n\to \infty}\|\sigma(f_{j, n}^{(l)})-f_{j+1, n}^{(l)}\| =0$ for all $l=0, 1, 2$ and $0\leq j \leq k_l$ mod $(k_l+1)$, and $\displaystyle \lim_{n\to\infty} \| f_{i, n}^{(l)}f_{j, n}^{(l)}\|=0$ for all $l=0, 1, 2$ and  $0\leq i \neq j \leq k_l$. 
 Let $h_n$, $n\in\N$, be an increasing approximate unit of $A$. For any $n\in\N$,  define a completely positive map $\Phi_n$ from $(A\otimes\cZ)\rtimes_{\alpha\otimes\sigma}\Z$ to $(A\otimes\cZ)\rtimes_{\alpha\otimes\sigma}\Z$ by 
\[\Phi_n(a) =\sum_{l=0}^2 \sum_{j=0}^{k_l}(h_n\otimes f_{j, n}^{(l)})^{1/2} a(h_n\otimes f_{j, n}^{(l)})^{1/2},\]
for $a\in (A\otimes\cZ)\rtimes_{\alpha\otimes\sigma}\Z$. 
Note that $\Phi_n$ is a contraction for each $n\in\N$. 
Since $(h_n\otimes f_{j, n}^{(l)})_{n\in\N}$ is a central sequence in $A\otimes \cZ$, it follows that $\displaystyle \lim_{n\to\infty}\|\Phi_n(a)-a\|=0$ for any $a\in A\otimes \cZ$. Since 
\[ \lim_{n\to\infty} \|\sigma^{i}(f_{j, n}^{(l)})f_{j, n}^{(l)}\| =\lim_{n\to\infty}\| f_{i+j, n}^{(l)}f_{j, n}^{(l)}\|=0,\]
where $i+j$ is considered mod $(k_l+1)$ for $i\in F$ and $j\in \{0,1,..., k_l\}$, we have  $\displaystyle \lim_{n\to\infty} \|\Phi_n(u^i)\|=0$ for any $i\in F$ (where $\Phi_n(u^i)$ is defined in a natural way). Then we have
\begin{align*}
\|x\|&\leq \limsup_{n\to\infty} \|\Phi_n(x +\sum_{i\in F} y_i u^i)\|+ \|\Phi_n(\sum_{i\in F} y_i u^i)\| \\
&\leq \|x +\sum_{i\in F} y_iu^i\|+\limsup_{n} \sum_{i\in F}\| y_i\Phi_n(u^i)\|\\
&=\|x + \sum_{i\in F} y_i u^i\|.
\end{align*}

\end{proof}

For a \Cs{} $A$, we denote by $\cT(A)$ the cone of densely defined lower semicontinuous  traces of $A$. With $\Ped(A)$ the Pedersen ideal of $A$,  it is well known that $\tau(p)<\infty$ and $p\in\Ped(A)$ for any $\tau\in \cT(A)$ and  projection $p$ in $A$. Set $\Ped(A)^+=\Ped(A)\cap A^+$. For $n\in\N$, taking the normalized trace $\tr_n$ of $M_n$ we set $\tau\otimes\tr_n(a\otimes b)=\tau(a)\tr_n(b)$ for $\tau\in\cT(A)$, and $a\otimes b\in\Ped(A)\otimes M_n$. By the following Lemma \ref{Lem6.4} (i), we will see that $\Ped(A)\otimes M_n=\Ped(A\otimes M_n)$, so that $\tau\otimes \tr_n$ can be regarded as an element of $\cT(A\otimes M_n)$. To simplify, we use the same symbol $\tau$ for $\tau\otimes\tr_n$. 

For an ordered abelian group $(G, G^+)$, we define $S_0(G)$, as the set of positive group homomorphisms from $G$ to $\R$. In the case that $(K_0(A), K_0(A)^+)$ is an ordered abelian group, and $A$ has cancellation, we define the standard affine map $\PhiT : \cT(A)\rightarrow S_0(K_0(A))$ by $\PhiT(\tau) ([p]_0)=\tau(p)$ for $[p]_0\in K_0(A)^+$. 
Note that, by the cancellation property of $A$, projections $p$ and $q$ in $A\otimes M_n$, $n\in\N$, are Murray-von Neumann equivalent if and only if $[p]_0=[q]_0$ in $K_0(A)$. Then $\PhiT(\tau)$ is well defined for $\tau\in \cT(A)$. 
We shall often use the notation $\tau_*$ for $\PhiT(\tau)$.

Consider the subset  $A_0$ of the \Cs{} $A$ defined by $A_0=\{a\in A : a\in pAp \text{ for\- some}$ $\text{\- pro\-je\-ction }$ $p\in A \}$. Equip $\cT(A)$ with the smallest topology such that $\cT(A)\ni \tau\mapsto \tau(a)$ is continuous for any $n\in\N$ and $a\in (A\otimes M_n)_0$, and equip $S_0(G)$ with the topology of pointwise convergence. 
 
 \begin{lemma}\label{Lem6.4}
 {}\
 \begin{itemize}
 \item[]{\rm (i)} For a $C^*$-algebra $A$ and $n\in\N$, it follows that $\Ped(A)\otimes M_n=\Ped(A\otimes M_n)$.
 \item[]{\rm (ii)} If $A$ is a real rank zero $C^*$-algebra with cancellation, and $(K_0(A), K_0(A)^+)$ is an ordered abelian group, then a net $\tau_{\lambda}\in \cT(A)$, $\lambda\in\Lambda$, converges to $\tau\in \cT(A)$ if and only if the net $\PhiT(\tau_{\lambda})$, $\lambda\in\Lambda$, converges to $\PhiT(\tau)$ in $S_0(K_0(A))$. 
 \end{itemize}
 \end{lemma}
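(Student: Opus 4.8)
\textbf{Proof proposal for Lemma \ref{Lem6.4}.}

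The plan is to prove (i) first, since it is used to make sense of the very statement of (ii), and then deduce (ii) from the standard description of traces on a real-rank-zero algebra with cancellation in terms of states on $K_0$. For (i), I would use the well-known characterization of the Pedersen ideal as the minimal dense hereditary (equivalently, minimal dense two-sided) ideal. First I would observe that $\Ped(A)\otimes_{\mathrm{alg}} M_n$ (algebraic tensor product, which here coincides with the spatial one since $M_n$ is finite-dimensional) is a two-sided ideal of $A\otimes M_n$: it is the linear span of the $a\otimes e_{ij}$ with $a\in\Ped(A)$, and since $\Ped(A)$ is an ideal of $A$ and $M_n$ is unital, left or right multiplication by $b\otimes e_{kl}$, $b\in A$, keeps one inside $\Ped(A)\otimes M_n$. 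It is dense because $\Ped(A)$ is dense in $A$. Minimality then gives $\Ped(A\otimes M_n)\subset \Ped(A)\otimes M_n$. For the reverse inclusion, note that $\Ped(A\otimes M_n)$ is a two-sided ideal of $A\otimes M_n$, so by the bijective correspondence between ideals (as in Lemma \ref{Lem6.1} (i), here with $M_n$ in place of $M_{\frak n}$) it has the form $J\otimes M_n$ for the ideal $J=\{a\in A: a\otimes e_{11}\in \Ped(A\otimes M_n)\}$ of $A$; and $J$ is dense in $A$ since $\Ped(A\otimes M_n)$ is dense in $A\otimes M_n$. It remains to see $J\subset\Ped(A)$, i.e.\ that the ``compression to a corner'' of the Pedersen ideal of $A\otimes M_n$ is contained in the Pedersen ideal of $A$; this follows because $\Ped$ is generated as an ideal by the positive elements $a$ with $a\leq b$ for some $b$ spanning a corner $pAp$ with $p$ a projection (more precisely, by elements of the form $f(b)$ with $f$ vanishing near $0$), and such elements of $J$ compress to exactly the analogous generators of $\Ped(A)$. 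The cleanest route is to invoke the explicit description: $\Ped(A)$ is the hereditary ideal generated by $\{a\in A^+ : a\leq \lambda p \text{ for some } \lambda>0 \text{ and } p\in P(A)\}$ — equivalently, in our later applications $A$ has an approximate unit of projections, so $\Ped(A)$ is simply the algebraic ideal $A_0$ of elements supported under a projection — and then $\Ped(A)\otimes M_n = (A\otimes M_n)_0 = \Ped(A\otimes M_n)$ is immediate from the fact that every projection in $A\otimes M_n$ is equivalent to (hence, by cancellation, dominated in the corner sense by) one in $A\otimes 1_{M_n}$ together with one in $A$. I would phrase (i) in this generality only as needed.

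For (ii): by (i), $\Ped(A)\otimes M_n=\Ped(A\otimes M_n)$, so for $\tau\in\cT(A)$ and a projection $p\in A\otimes M_n$ the value $\tau(p)$ is finite and $\PhiT(\tau)$ is a well-defined element of $S_0(K_0(A))$, using cancellation so that $[p]_0=[q]_0$ implies $\tau(p)=\tau(q)$. The forward direction is easy: for $a\in(A\otimes M_n)_0$ supported under a projection, the map $\tau\mapsto\tau(a)$ is, up to a spectral approximation argument, determined by the values of $\tau$ on projections in corners over $A$, because $A$ has real rank zero, so $a$ lies in the closure of the span of projections in $pAp\otimes M_n$ for a projection $p$, and $\tau$ is continuous and linear on this finite corner (which is a unital real-rank-zero algebra with cancellation, hence its trace is a finite positive combination determined by $K_0$); thus convergence of $\PhiT(\tau_\lambda)$ forces convergence of $\tau_\lambda(a)$, which is the definition of the topology on $\cT(A)$. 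For the reverse direction, suppose $\tau_\lambda\to\tau$ in $\cT(A)$; then for each projection $p\in A\otimes M_n$ we have $p\in(A\otimes M_n)_0$, so $\tau_\lambda(p)\to\tau(p)$ by definition of the topology, i.e.\ $\PhiT(\tau_\lambda)([p]_0)\to\PhiT(\tau)([p]_0)$; and since every element of $K_0(A)^+$ is $[p]_0$ for such a $p$ (real rank zero, so $K_0$ is generated by projection classes over $A$) and every element of $K_0(A)$ is a difference of two such, this gives pointwise convergence of $\PhiT(\tau_\lambda)$ to $\PhiT(\tau)$ in $S_0(K_0(A))$.

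The main obstacle is the forward direction of (ii): one must show that convergence of the induced states on $K_0$ actually implies convergence of $\tau_\lambda(a)$ for \emph{every} $a\in(A\otimes M_n)_0$, not just for projections. The point is that $a$ lies in a corner $q(A\otimes M_n)q$ with $q$ a projection, and by real rank zero this corner is an inductive limit (or at least has dense union) of finite-dimensional subalgebras $F$ containing $q$; on each such $F$, a densely-defined trace restricts to a genuine (bounded) trace, hence to a positive combination of the normalized traces on the matrix blocks of $F$, with coefficients that are values of $\PhiT(\tau_\lambda)$ on the minimal projections of $F$. So $\tau_\lambda(a)$ is approximated, uniformly in $\lambda$ (using $\|a\|$ and the common bound $\tau_\lambda(q)\leq$ the eventually-constant-ish values on a fixed projection), by $\tau_\lambda$ evaluated on an element of a fixed finite-dimensional $F$, and on $F$ the value depends continuously (indeed linearly) on the finitely many numbers $\PhiT(\tau_\lambda)([e]_0)$, $e$ a minimal projection of $F$. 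A standard $\varepsilon/3$ argument then closes it. I would also remark that the hypotheses ``real rank zero'' and ``cancellation'' are exactly what a $\cZ$-absorbing RAF-algebra need not literally satisfy — but in the applications one tensors with an infinite UHF algebra first (via Lemma \ref{Lem4.2} (i) and Proposition \ref{Prop4.3}), after which the algebra is AF, hence has both properties, so the lemma is stated at the level of generality at which it will be invoked.
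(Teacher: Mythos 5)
Your part (ii) is essentially the paper's argument: you correctly identify that the non-trivial direction is to pass from convergence of $\PhiT(\tau_\lambda)$ to convergence of $\tau_\lambda(a)$ for an arbitrary $a\in(A\otimes M_n)_0$, and that the key tool is real rank zero, used to approximate $a$ by a finite-spectrum self-adjoint element in the corner $p(A\otimes M_n)p$. The paper's version of this step is slightly tighter than your ``inductive limit of finite-dimensional subalgebras plus $\varepsilon/3$'' sketch: it replaces $a$ by a finite-spectrum $\bar a$ with $\|a-\bar a\|<\varepsilon/(4\tau(p))$ and then uses the operator sandwich $\bar a-\frac{\varepsilon}{4\tau(p)}p\leq a\leq\bar a+\frac{\varepsilon}{4\tau(p)}p$, which gives $|\tau_\lambda(a)-\tau_\lambda(\bar a)|\leq\frac{\varepsilon}{4\tau(p)}\tau_\lambda(p)$ uniformly in $\lambda$ in one stroke, and then concludes by letting $\tau_\lambda(\bar a)\to\tau(\bar a)$ and $\tau_\lambda(p)\to\tau(p)$ (both controlled by $\PhiT$). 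Your remark that the hypotheses ``real rank zero'' and ``cancellation'' are only used after tensoring an RAF-algebra with a UHF algebra, where they are automatic, is exactly how the lemma is invoked in the proof of Proposition~\ref{Prop6.4}; that observation is correct and pertinent.

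Part (i) is where you have trouble. Your minimality argument is fine for the inclusion $\Ped(A\otimes M_n)\subset\Ped(A)\otimes M_n$ (once you observe that $\Ped(A)\otimes M_n$ is a dense two-sided ideal), but when you pass to the ``reverse inclusion'' $\Ped(A)\otimes M_n\subset\Ped(A\otimes M_n)$ you state the wrong target: with $\Ped(A\otimes M_n)=J\otimes M_n$, what is needed is $\Ped(A)\subset J$, not $J\subset\Ped(A)$ (the latter already follows from the first inclusion). In fact $\Ped(A)\subset J$ would follow directly from minimality once you know $J$ is a dense two-sided ideal, so the detour you set up would actually have worked had you aimed it correctly; instead you wander off into a muddled discussion of ``compression of corners'' and finally fall back on the ``cleanest route,'' namely $\Ped(A)=A_0$ — but that description is only valid when $A$ has an approximate unit of projections, which is not a hypothesis in (i). The lemma is stated for a general $C^*$-algebra, and the paper proves it in that generality by the opposite route: it shows $\Ped(A)\otimes e_{ii}\subset\Ped(A\otimes M_n)$ directly from the construction of the Pedersen ideal (the corners $e_{ii}(A\otimes M_n)e_{ii}\cong A$), then upgrades to $\Ped(A)\otimes e_{ij}\subset\Ped(A\otimes M_n)$ using that $\Ped(A\otimes M_n)$, being equal to its square, is an ideal of the multiplier algebra; and only then invokes minimality for the reverse inclusion. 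You should either follow that direct route or fix the logic of your ideal-correspondence argument; the fallback you offer does not cover the stated generality.
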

 \begin{proof}
 {}\
  \noindent{\rm (i)}. Let us first check the inclusion $\Ped(A)\otimes M_n\subset \Ped(A\otimes M_n)$. From the construction of the Pedersen ideal, one sees that $\Ped(A)\otimes e_{i i}^{(n)}\subset \Ped (A\otimes M_n)$, $i=1,2,...,n$. Since the Pedersen ideal is equal to its square, it is an ideal of the multiplier algebra of the \Cs. Applying this to $\Ped(A\otimes M_n)$, we see that also $\Ped(A)\otimes e_{i j}^{(n)}\subset\Ped(A\otimes M_n)$, $i, j=1,2,...,n$.
  
 Since the Pedersen ideal is the smallest dense two-sided  ideal, the two ideals are equal.

 \noindent{\rm (ii)}. The ``only if'' part of the statement is obvious. Assume that $\PhiT(\tau_{\lambda})$, $\lambda\in\Lambda$, converges to $\PhiT(\tau)$ in $S_0(K_0(A))$. Let $x$ be a self-adjoint element in $p(A\otimes M_n)p$ for some $n\in\N$ and a projection $p\in A\otimes M_n$. Without loss of generality, we may assume that $\tau(p)>0$. Since $p(A\otimes M_n)p$ has real rank zero, for $\varepsilon >0$ there exists a self-adjoint element $\barx\in p(A\otimes M_n)p$ with finite spectrum such that $\|x-\barx\|<\frac{\varepsilon}{4\tau(p)}$. By 
 \[\barx -\frac{\varepsilon}{4\tau(p)}p\leq x \leq \barx +\frac{\varepsilon}{4\tau(p)}p,\]
 it follows that $|\tau_{\lambda}(x) -\tau_{\lambda}(\barx)|\leq\varepsilon\tau_{\lambda}(p)/(4\tau(p))$ for any $\lambda\in\Lambda$. By $\lim\limits_{\lambda\to\infty} \tau_{\lambda}(\barx) =\tau(\barx)$ and $\lim\limits_{\lambda\to\infty}\tau_{\lambda}(p)=\tau(p)$, we have that $\limsup\limits_{\lambda\to\infty}|\tau_{\lambda}(x) -\tau(x)|< \varepsilon$. Since $\varepsilon>0$ is arbitrary, a net $\tau_{\lambda}$, $\lambda\in\Lambda$ converges to $\tau$ in the topology of $\cT(A)$.
 \end{proof}

 In the case of a RAF-algebra $A$, we note that for $\tau\in\cT(A)$  the map $\tau_*$ on $K_0(A)$ is also well defined. Indeed, for $N\in\N\setminus\{1\}$ and two projections $p$, $q\in A\otimes M_N$, if $[p]_0=[q]_0$ in $K_0(A)$, then $[p\otimes 1_{M_{N^{\infty}}}]_0=[q\otimes 1_{M_{N^{\infty}}}]_0$ in $K_0(A\otimes M_{N^{\infty}})$. Since $A\otimes M_{N^{\infty}}$ is an AF algebra, there exists a partial isometry $r\in A\otimes M_{N^{\infty}}$ such that $r^*r=p\otimes 1_{M_{N^{\infty}}}$ and $rr^*=q\otimes 1_{M_{N^{\infty}}}$. Taking a large $k\in\N$ we can obtain such an $r$ in $A\otimes M_{N^k}$. Then it follows that $\tau(p)=\tau\otimes\tr_{N^k}(p\otimes 1_{M_{N^k}})=\tau\otimes\tr_{N^k}(q\otimes 1_{M_{N^k}})=\tau(q)$. Thus, for a RAF-algebra $A$, we can define $\PhiT : \cT(A)\rightarrow S_0(K_0(A))$.

\begin{proposition}\label{Prop6.4}
If $A$ is a $\cZ$-absorbing $RAF$-algebra, then the map $\PhiT$ $:\cT(A)\rightarrow S_0(K_0(A))$ is an affine homeomorphism.
\end{proposition}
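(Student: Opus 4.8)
The plan is to reduce the statement to the AF case by tensoring with the universal UHF algebra, where the description of the trace cone is classical, and then to transfer it back to $A$. Two points I would dispose of immediately. Since $A$ is $\cZ$-absorbing it has cancellation (Corollary \ref{Cor4.8}), and by Proposition \ref{Prop4.3} (ii) the group $(K_0(A),K_0(A)^+)$ is an unperforated ordered abelian group; as noted just before the statement, $\PhiT$ is therefore well defined, and it is clearly affine. It is also continuous: a projection $p$ in $A\otimes M_n$ lies in $(A\otimes M_n)_0$, so $\tau\mapsto\PhiT(\tau)([p]_0)=\tau(p)$ is continuous by the definition of the topology on $\cT(A)$, and $K_0(A)$ is generated by such classes (being stably finite, $K_0(A)=K_0(A)^+-K_0(A)^+$). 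So the real content is that $\PhiT$ is bijective with continuous inverse.

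Let $\mathfrak u$ be the supernatural number with $\D_{\mathfrak u}=\Q$ and put $B=A\otimes M_{\mathfrak u}$. By Lemma \ref{Lem4.2} (i), $B$ is AF; in particular $B$ has real rank zero and cancellation, and $K_0(B)$, being the $K_0$-group of an AF-algebra, is a dimension group, which by Proposition \ref{Prop4.3} (i) is isomorphic to $K_0(A)\otimes\Q$. I would compare $\PhiT$ with the analogous map $\PhiT^{B}\colon\cT(B)\to S_0(K_0(B))$ through the square formed by $\Theta\colon\cT(A)\to\cT(B)$, $\Theta(\tau)=\tau\otimes\tr_{\mathfrak u}$ (with $\tr_{\mathfrak u}$ the tracial state of $M_{\mathfrak u}$; this is well defined just as in the finite case, via Lemma \ref{Lem6.4} (i)), and $j\colon S_0(K_0(A))\to S_0(K_0(A)\otimes\Q)=S_0(K_0(B))$, $j(f)(g\otimes d)=d\,f(g)$. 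Here $j$ is visibly an affine homeomorphism (bijectivity uses only that a group homomorphism into $\R$ is compatible with $\Q$-divisibility of $K_0(A)\otimes\Q$, and bicontinuity is clear for the pointwise topologies), and the commutativity $\PhiT^{B}\circ\Theta=j\circ\PhiT$ is checked by evaluating both sides on $K_0$-classes of projections $q\otimes1_{M_{\mathfrak u}}$ with $q$ a matrix projection over $A$, using Proposition \ref{Prop4.3} (i). Granting that $\Theta$ and $\PhiT^{B}$ are affine homeomorphisms, the identity $\PhiT=j^{-1}\circ\PhiT^{B}\circ\Theta$ finishes the proof.

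That $\PhiT^{B}$ is an affine homeomorphism is the classical description of the trace cone of an AF-algebra: it is a bijection onto $S_0(K_0(B))$ by the standard correspondence between densely defined lower semicontinuous traces on an AF-algebra and positive homomorphisms on its (dimension) $K_0$-group, obtained from the finite-dimensional case by continuity, and it is a homeomorphism by Lemma \ref{Lem6.4} (ii), which applies since $B$ is AF. For $\Theta$: it is affine and injective, with left inverse the pullback $\iota^{*}$ along $\iota\colon A\hookrightarrow B$, $\iota(a)=a\otimes1_{M_{\mathfrak u}}$ (indeed $\iota^{*}(\tau\otimes\tr_{\mathfrak u})=\tau$), so the substance is surjectivity of $\Theta$ and continuity of $\Theta$ and $\iota^{*}$. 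For surjectivity, given $\tau'\in\cT(B)$ I would restrict it to each $A\otimes M_N$, $N\in\N$; any trace on $C\otimes M_N$ is a product $\rho\otimes\tr_N$, here with $\rho(a)=N\tau'(a\otimes e_{1,1}^{(N)})\in\cT(A)$ (finite on $\Ped(A)$ by Lemma \ref{Lem6.4} (i)), and because the unital inclusion $M_N\hookrightarrow M_M$ for $N\mid M$ multiplies ranks by $M/N$ while $\tr$ is normalized, the factors $\rho=\rho_N$ are all one and the same $\tau\in\cT(A)$; then $\Theta(\tau)$ and $\tau'$ agree on the dense subalgebra $\bigcup_N A\otimes M_N$, hence everywhere, since a lower semicontinuous densely defined trace on the inductive limit $B=\varinjlim A\otimes M_N$ is determined by its restrictions to the $A\otimes M_N$. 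Continuity of $\iota^{*}$ is clear ($a\otimes1_{M_{\mathfrak u}}\in B_0$ for $a\in A_0$), and continuity of $\Theta$ I would reduce, using real rank zero of $B$, to evaluation on projections of $B\otimes M_n$, whose $\Theta(\tau)$-values are controlled by $\PhiT^{B}$ and hence, through the square, by $\tau$.

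The hard part will be exactly this last bookkeeping — matching the two trace-cone topologies across the UHF tensor factor, and checking that lower semicontinuous, possibly unbounded traces on $B$ are recovered from their restrictions to the $A\otimes M_N$. Everything else is either a result already established in the paper or a classical fact about AF-algebras.
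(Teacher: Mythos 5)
Your approach is essentially the same as the paper's: reduce to the AF algebra $A\otimes M_{\frak n}$ (you fix $\frak n = \frak u$, the paper takes a general infinite $\frak n$), invoke the classical trace--$K_0$-state correspondence for AF algebras, compare $\cT(A)$ with $\cT(A\otimes M_{\frak n})$ through a tensor-product map in one direction and a pullback in the other, match the two pointwise topologies using Lemma \ref{Lem6.4}, and compose. Your $j$ and $\Theta$ are, respectively, the inverse of the paper's $\hatiota_*$ and the paper's $\Psi$.

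The one place the two arguments diverge in substance is in how $\Theta = \Psi$ is handled. You define $\Theta(\tau)$ directly as the tensor-product trace $\tau\otimes\tr_{\frak u}$ and take $\iota^* (\tau\otimes\tr_{\frak u}) = \tau$ for granted, then prove surjectivity by peeling $\tau'\in\cT(B)$ apart level by level over the $A\otimes M_N$ and checking consistency. The paper instead \emph{defines} $\tau\otimes\tau_{\frak n}$ as the inverse image of $\tau_*\otimes\tau_{\frak n *}$ under the classical AF bijection, and then spends the body of the proof showing this abstractly defined trace does satisfy $\tau\otimes\tau_{\frak n}(a\otimes 1_{M_{\frak n}})=\tau(a)$ on $\Ped(A)$ (the argument with $a_\varepsilon = f_\varepsilon(a)$ and finite-spectrum approximations inside the hereditary AF subalgebra $\overline{a_\varepsilon A a_\varepsilon}\otimes M_{\frak n}$). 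This is exactly the point you flag at the end as ``the hard part'': that a densely defined lower semicontinuous, possibly unbounded, trace is correctly recovered from, respectively determined by, the levels $A\otimes M_N$. So the structure of your proof is right, but the step you postpone is precisely the one that takes work in the paper; the paper's route via the $K_0$-state picture is a way to sidestep having to check directly that $\tau\otimes\tr_{\frak u}$ is a lower semicontinuous densely defined trace, which your direct definition of $\Theta$ leaves implicit. Your surjectivity argument through the $\rho_N$'s, by contrast, is a legitimate alternative to the paper's one-line argument using uniqueness of the tracial state on $M_{\frak n}$ (``$\varphi(p\otimes x)=\varphi_A(p)\tau_{\frak n}(x)$''), but is not obviously shorter once the determination-by-restrictions lemma is spelled out.
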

\begin{proof}
Note that, by Proposition \ref{Prop4.3} (ii),  $(K_0(A), K_0(A)^+)$ is an ordered abelian group.
Let ${\frak n}$ be an infinite supernatural number and $\iota$ the embedding of $A$ into $A\otimes M_{\frak n}$ defined by $\iota(a) = a\otimes 1_{M_{\frak n}}$ for $a\in A$. Since $A\otimes M_{\frak n}$ is an AF-algebra, we see that the canonical map $\Phi_{\cT, A\otimes M_{\frak n}} : \cT(A\otimes M_{\frak n}) \rightarrow S_0(K_0(A\otimes M_{\frak n}))$ is bijective; see the last paragraph of \cite{Cu} and \cite[Lemma 3.5]{Thomsen1}, for example.

Denote by $\tau_{\frak n}$ the unique tracial state of $M_{\frak n}$. Regarding $(K_0(A\otimes M_{\frak n}), K_0(A\otimes M_{\frak n})^+)$ as $(K_0(A)\otimes K_0(M_{\frak n}), K_0(A)\otimes K_0(M_{\frak n})^+)$ by Proposition \ref{Prop4.3} (i), define $\tau_*\otimes\tau_{\frak n *}\in S_0(K_0(A\otimes M_{\frak n}))$ for $\tau \in \cT(A)$ by $\tau_*\otimes \tau_{{\frak n} *}([p]_0\otimes [q]_0)=\tau(p)\tau_{\frak n}(q)$ for $l$, $m\in\N$ and projections $p\in A\otimes M_{l}$, $q\in A\otimes M_m$. The expression $\tau\otimes \tau_{\frak n}$ denotes the densely defined lower semicontinuous trace on $A\otimes M_{\frak n}$ such that $\Phi_{\cT, A\otimes M_{\frak n}}(\tau\otimes\tau_{\frak n})=\tau_*\otimes\tau_{{\frak n} *}$. In the following argument, we shall show that $\tau\otimes\tau_{\frak n}(a\otimes 1_{M_{\frak n}})=\tau(a)$ for any $a\in\Ped(A)$, where note that $a\otimes 1_{M_{\frak n}}\in \Ped(A\otimes M_{\frak n})$ for $a\in \Ped(A)$.

For $\varepsilon >0$,  consider the continuous function $f_{\varepsilon}$ defined by \[f_{\varepsilon}(t)=\max\{0,\- \min\{1,\- (t-\varepsilon)\}\}\quad\text{ for } t\in\R,\]  and set $a_{\varepsilon}=f_{\varepsilon}(a)$ for $a\in \Ped(A)^+$. Since $\tau\otimes\tau_{\frak n}$ and $\tau$ are lower semicontinuous, it suffices to see that $\tau\otimes\tau_{\frak n}(a_{\varepsilon}\otimes 1_{M_{\frak n}})=\tau(a_{\varepsilon})$ for 
any $\varepsilon >0$ and a contraction $a\in\Ped(A)^+$. 
Fix $\varepsilon >0$ and a contraction $a\in\Ped(A)^+$. Then there exists $\bara\in\Ped(A)^+$ such that $\bara a_{\varepsilon} = a_{\varepsilon}$. 
Since $\overline{a_{\varepsilon} A a_{\varepsilon}}\otimes M_{\frak n}=\overline{(a_{\varepsilon}\otimes 1_{M_{\frak n}})A\otimes M_{\frak n}(a_{\varepsilon}\otimes 1_{M_{\frak n}})}$ is an AF-algebra, 
for $n\in\N$ there exist $N_n\in\N$ with $N_n| \frak{n}$ and a positive element $a_n\in \overline{a_{\varepsilon}Aa_{\varepsilon}}\otimes M_{N_n}$ with finite spectrum such that 
$\| a_n -a_{\varepsilon}\otimes 1_{M_{\frak n}}\|< 1/n$. Since $a_n-\frac{1}{n}(\bara\otimes 1_{M_{\frak n}})\leq a_{\varepsilon}\otimes 1_{M_{\frak n}}\leq a_n +\frac{1}{n}(\bara\otimes 1_{M_{\frak n}})$, we have $\tau\otimes\tau_{\frak n}(a_{\varepsilon}\otimes 1_{M_{\frak n}})=\lim\limits_{n\to\infty}\tau\otimes \tau_{\frak n}(a_n)$. On the other hand, since $\tau\otimes\tau_{\frak n}\in\cT(A\otimes B)$ corresponds to $\tau_*\otimes\tau_{{\frak n} *}\in S_0(K_0(A))$, it follows that $\tau\otimes\tau_{\frak n}(p)=\tau\otimes \tr_{N_n}(p)$ for any projection $p\in \overline{a_{\varepsilon} A a_{\varepsilon}}\otimes M_{N_n}$. 
Thus we have 
\[ \tau(a_{\varepsilon})=\lim_{n\to\infty}\tau\otimes\tr_{N_n}(a_n)=\lim_{n\to \infty} \tau\otimes\tau_{\frak n}(a_n)=\tau\otimes\tau_{\frak n}(a_{\varepsilon}\otimes 1_{M_{\frak n}}).\]

Define a map $\Psi :\cT(A)\rightarrow \cT(A\otimes M_{\frak n})$ by $\Psi(\tau)=\tau\otimes \tau_{\frak n}$ for $\tau\in\cT(A)$. From $\tau(a)=\Psi(\tau)(a\otimes 1_{M_{\frak n}})$ for any $a\in\Ped(A)$, it follows that $\Psi$ is injective and affine. For $\varphi \in \cT(A\otimes M_{\frak n})$, setting $\varphi_A(a)=\varphi(a\otimes 1_{M_{\frak n}})$ for $a\in \Ped(A)$ we obtain $\varphi_A\in \cT(A)$. Since $\tau_{\frak n}$ is a unique tracial state, it follows that $\varphi(p\otimes x)=\varphi_A(p)\tau_{\frak n}(x)$ for any projection $p\in A$ and $x\in M_{\frak n}$. Thus we see that $\Psi(\varphi_A)=\varphi$.

Let $\iota_* : K_0(A) \rightarrow K_0(A\otimes M_{\frak n})$ denote the induced map defined by $\iota : A\rightarrow A\otimes M_{\frak n}$. Let us show that the map $\hatiota_* : S_0(K_0(A\otimes M_{\frak n}))\rightarrow S_0(K_0(A))$ defined by $\hatiota_*(\varphi) = \varphi\circ \iota_*$ is bijective.  
Indeed, for any $x\in K_0(A\otimes M_{\frak n})\cong K_0(A)\otimes K_0(M_{\frak n})$ there exists $N\in\N$ such that $Nx\in \iota_*(K_0(A))$. Thus we see that $\hatiota_*$ is injective. Since $K_0(A)$ is torsion free, $\iota_*$ is injective, which implies the surjectivity of $\hatiota_*$.  Since $\PhiT$ can be decomposed as $\PhiT =\hatiota_*\circ\Phi_{\cT, A\otimes M_{\frak n}}\circ\Psi$, it follows that $\PhiT$ is a bijective affine map.

It is straightforward to check that $\PhiT$ is continuous. It remains to show that $\PhiT^{-1}$
 is also continuous. Suppose that $\tau_{\lambda}$, $\lambda\in \Lambda$ is a net in $\cT(A)$ such that $\PhiT(\tau_{\lambda})$ converges to $\PhiT(\tau)$ for some $\tau\in\cT(A)$.  
 Then it follows that the net $\tau_{\lambda *}\otimes\tau_{{\frak n} *}$, $\lambda\in\Lambda$ in $S_0(K_0(A\otimes M_{\frak n}))$ converges to $\tau_*\otimes \tau_{{\frak n} *}$ in the topology of pointwise convergence. Applying Lemma \ref{Lem6.4} (ii) to $A\otimes M_{\frak n}$, we have that $\tau_{\lambda}\otimes\tau_{\frak n}$, $\lambda\in\Lambda$ converges to $\tau\otimes\tau_{\frak n}$ in $\cT(A\otimes M_{\frak n})$. Since $(A\otimes M_N)_0\otimes 1_{M_{\frak n}}\subset(A\otimes M_N\otimes M_{\frak n})_0$ for any $N\in\N$, we conclude that $\displaystyle \lim_{\lambda\to \infty}\tau_{\lambda}(a)=\tau(a)$ for any $a\in (A\otimes M_N)_0$.
 
 \end{proof}

 We are now ready to give our application of the RAF-algebra classification. 
  
 \begin{theorem}\label{ThmMain}
Let $(S, \pi)$ be a proper simplex bundle such that $\pi^{-1}(0)$ is a singleton. Then there exists a $2\pi$-periodic flow $\theta$ on the Jiang-Su algebra whose KMS-bundle is isomorphic to $(S, \pi)$.  
 \end{theorem}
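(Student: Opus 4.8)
Suppose throughout that $(S,\pi)$ is a proper simplex bundle with $\pi^{-1}(0)=\{\tau_S\}$. If the set $\pi(S)$ of admissible inverse temperatures is bounded, the statement is already covered by \cite{EST}, so I would assume $\pi(S)$ is unbounded. The plan is then to transport the Section~3 construction to the level of \Cs s and read off the KMS-bundle from a dual action. First, Section~3 (Proposition~\ref{Prop3.5}) produces a countable rational dimension group $(G_{\Z},G_{\Z}^+)$, a distinguished element $u=1_0\oplus 1\in G_{\Z}^+$, and an order automorphism $\sigma$ of $G_{\Z}$, such that $(S_{\sigma}(G_{\Z}),\pi_{\Z})$ is a proper simplex bundle isomorphic to $(S,\pi)$; recall $S_{\sigma}^{\alpha}(G_{\Z})$ consists of the positive homomorphisms $\varphi\colon G_{\Z}\to\R$ with $\varphi(u)=1$ and $\varphi\circ\sigma=\alpha\varphi$. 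Using the description of the order on $G_{\Z}$ (a nonzero positive element is a strictly positive function on $S$, so every proper order ideal is a ``growth condition'' $\{c:|c|\le nw\}$ for some $w$), I would check that a $\sigma$-invariant proper order ideal would force $e^{-\pi}$ to be bounded on $S$; since $\pi(S)$ is unbounded, $(G_{\Z},G_{\Z}^+)$ is therefore $\sigma$-simple.

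Next I would realize this data in a \Cs. By Theorem~\ref{Thm4.4} there is a separable stable RAF-algebra $A$ with $A\otimes\cZ\cong A$ and $(K_0(A),K_0(A)^+)\cong(G_{\Z},G_{\Z}^+)$; identify $K_0(A)$ with $G_{\Z}$. Fixing an automorphism $\sigma_{\cZ}$ of $\cZ$ with the weak Rohlin property and using the classification Theorem~\ref{Thm5.3} to obtain $\alpha_0\in\Aut(A)$ with $(\alpha_0)_*=\sigma$, set $\alpha=\alpha_0\otimes\sigma_{\cZ}$ under an identification $A\cong A\otimes\cZ$; then $\alpha_*=\sigma$ still. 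By Proposition~\ref{Prop6.2}, together with the $\sigma$-simplicity of $G_{\Z}$, the crossed product $B=A\rtimes_{\alpha}\Z$ is simple; it is separable, nuclear, satisfies the UCT, and by the weak Rohlin property of $\sigma_{\cZ}$ (cf.~\cite{Sat0}) is $\cZ$-stable. Since $K_1(A)=0$ by Lemma~\ref{Lem4.2}, the Pimsner--Voiculescu sequence gives $K_0(B)\cong G_{\Z}/(1-\sigma)G_{\Z}$ and $K_1(B)\cong\ker(1-\sigma\colon G_{\Z}\to G_{\Z})$. Here $1-\sigma$ is injective (a finitely supported shift-invariant element of $\Z^{\infty}$ is $0$, and, using the defining constraint of $G_{\Z}$, $(1-e^{-\pi})g=0$ forces $g=0$), so $K_1(B)=0$; and the $\sigma$-invariant positive homomorphism $\lambda\colon G_{\Z}\to\Z$, $\lambda(\xi\oplus g)=\sum_n\xi_n=g(0)$, which I would verify induces an isomorphism $G_{\Z}/(1-\sigma)G_{\Z}\xrightarrow{\ \sim\ }\Z$, gives $K_0(B)\cong\Z$ with its natural order. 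Finally, by Proposition~\ref{Prop6.4} the traces of $A$ are $S_0(G_{\Z})$, the $\sigma$-invariant ones form a single ray by $\sigma$-simplicity, and these are exactly the traces of $B$; so $B$ is monotracial, whence $B\cong\cZ\otimes\cK$ by the classification of $\cZ$.

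To pass to $\cZ$ itself and produce the flow, since $A$ is stable and $u\in K_0(A)^+$ there is a projection $p\in A\subset B$ with $[p]_0=u$. As $p$ lies in the coefficient algebra it is fixed by the dual $\T=\R/2\pi\Z$ action of $B$ — a $2\pi$-periodic flow $\theta$ with $\theta_t|_A=\id_A$ and $\theta_t(u)=e^{it}u$ on the implementing unitary — and $[p]_0$ maps to $\lambda(u)=1$ in $K_0(B)\cong\Z$, so $p$ is full and $\cZ\cong pBp$ carries the restricted $2\pi$-periodic flow $\theta'$. It remains to compute the KMS-bundle of $(\cZ,\theta')$. By the standard analysis of gauge actions (as in \cite{BEK1}, \cite{BEK2}, \cite{ETh}), for $\beta\ne 0$ a functional is a $\beta$-KMS weight of $(B,\theta)$ iff it is of the form $\tau\circ E$ for the canonical conditional expectation $E\colon B\to A$ and a densely defined lower semicontinuous trace $\tau$ of $A$ with $\tau\circ\alpha=e^{-\beta}\tau$ (and the $\beta=0$ case records the traces of $B$); restriction together with the fullness of $p$ then gives an affine homeomorphism between the $\beta$-KMS states of $(\cZ,\theta')$ and the set of such $\tau$ normalized by $\tau(p)=1$. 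Via Proposition~\ref{Prop6.4}, and using $[p]_0=u$, this set is precisely $S_{\sigma}^{e^{-\beta}}(G_{\Z})$; these identifications fit together continuously over $\beta\in\R$ into an isomorphism of the KMS-bundle of $(\cZ,\theta')$ with $(S_{\sigma}(G_{\Z}),\pi_{\Z})$, which by Lemma~\ref{Lem3.3} is an isomorphism of proper simplex bundles, and by Proposition~\ref{Prop3.5} the latter is isomorphic to $(S,\pi)$.

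The hard part, I expect, is the identification $pBp\cong\cZ$ together with the exact KMS-state/scaled-trace correspondence. One must (i) verify the $\sigma$-simplicity of $G_{\Z}$ in the unbounded case and the outerness of the $\Z$-action needed for Proposition~\ref{Prop6.2}, so that $B$ is genuinely simple and no $\beta$-KMS state degenerates onto a proper ideal; (ii) carry out the Pimsner--Voiculescu computation and confirm that $K_0(B)$ carries its natural order $(\Z,\N)$ and that $\lambda$ induces the asserted isomorphism of cokernels; (iii) handle the passage from the non-unital $B$ to the corner $\cZ$ — this is precisely why a distinguished element $u$ of $G_{\Z}^+$ with $\lambda(u)=1$, rather than an order unit (which $G_{\Z}$ lacks), is built into Section~3; and (iv) make precise the correspondence between $\beta$-KMS states of the dual action and $\sigma$-scaled traces of $A$, including its continuity and affinity in $\beta$, which is the technical core shared with \cite{BEK1}, \cite{BEK2}, and \cite{ETh}. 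The bounded-$\pi(S)$ case excluded above is subsumed by \cite{EST}.
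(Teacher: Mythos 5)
Your proposal is correct and follows essentially the same route as the paper's proof: realize $(S,\pi)$ as $(S_\sigma(G_\Z),\pi_\Z)$ via Section~3, lift $(G_\Z,\sigma)$ to $(A,\alpha_\Z)$ by Theorems~\ref{Thm4.4} and~\ref{Thm5.3}, form the crossed product (simple by $\sigma$-simplicity plus the weak Rohlin property), identify the corner $pCp$ with $\cZ$ via Pimsner--Voiculescu and the classification theorem, and read off the KMS-bundle from the dual action using Proposition~\ref{Prop6.4} and Lemma~\ref{Lem3.3}. Your only restructuring — proving $B\cong\cZ\otimes\cK$ first and then passing to the full corner, and dispatching the compact case by citing \cite{EST} rather than handling it uniformly — is cosmetic; the paper's Lemmas~\ref{Lem6.6}--\ref{Lem6.7} and Proposition~\ref{Prop6.8} package the same $\sigma$-simplicity, injectivity of $1-\sigma$, $K$-theory computation, and trace correspondence that you list as the verification steps.
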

 
For a given proper simplex bundle $(S, \pi)$ with $\pi^{-1}(0)=\{\tau_S\}$, in Section 3 we have constructed the rational dimension group $(G_{\Z}, G_{\Z}^+)$ and its shift automorphism $\sigma$ such that $(S_{\sigma}(G_{\Z}), \pi_{\Z})$ is isomorphic to $(S, \pi)$ as a simplex bundle (Proposition \ref{Prop3.5} {\rm (iii)}). 

We define a positive group homomorphism $\Sigma_0 : G_{\Z}\rightarrow \Z$ by $\Sigma_0((z_n)_{n\in\Z}\oplus g) =\sum_{n\in\Z} z_n=g(\tau_S)$ for $(z_n)_n\oplus g\in G_{\Z}$. By the same arguments as in the proofs of \cite[Lemma 4.10]{ETh}, \cite[Lemma 3.7]{EST},  and \cite[Lemma 4.14]{ETh}, the rational dimension group $(G_{\Z}, G_{\Z}^+)$ has the following properties.

\begin{lemma}\label{Lem6.6}
{}\ 

\begin{itemize}
\item[]{\rm (i)} $(G_{\Z}, G_{\Z}^+)$ is $\sigma$-simple, and
\item[]{\rm (ii)} $\displaystyle(\id-\sigma)(G_{\Z})=\ker (\Sigma_0)$.
\end{itemize}
\end{lemma}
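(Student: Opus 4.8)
\textbf{Proof proposal for Lemma \ref{Lem6.6}.}

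The plan is to prove (i) and (ii) by transporting the statements along the structure of $G_{\Z}$ built in Section 3, namely $G_{\Z}\subset \Z^{\infty}\oplus\cR(\Z)$ where the first coordinate records the ``germ at $0$'' via $L$, and to reduce everything to elementary manipulations with Laurent polynomials and the sections $\psi_{k\pm}\circ\pi$, $\psi_k\circ\pi$. Recall that the referenced lemmas in \cite{ETh} and \cite{EST} handle the analogous statements for plain dimension groups; the point here is that the rational coefficient structure does not interfere.

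For (i), let $H\subset G_{\Z}$ be a nonzero order ideal invariant under $\sigma$; I want to show $H=G_{\Z}$. First I would pick $g=\xi\oplus f\in H^+$ with $f\neq 0$, so $f(s_0)>0$ for some $s_0\in S$; replacing $g$ by $\sigma^m(g)$ and using invariance, I may assume $s_0$ lies in a region where one of the partition sections $\psi_{k\pm}\circ\pi$, $\psi_k\circ\pi$ equals $1$. Using that $H^+$ is hereditary together with the explicit generators of $\cR(\Z)^+$ produced in the proof of Lemma \ref{Lem3.4}(i) — the functions $c(e^{-M\pi}\psi_{k-}\circ\pi+\psi_k\circ\pi+e^{M\pi}\psi_{k+}\circ\pi)$ — and the convolution action $q\cdot g$ of Laurent polynomials $q\in Q_0$ (which stays inside $G_{\Z}$ after clearing denominators, since $H=H^+-H^+$ and $G_{\Z}$ is unperforated by Lemma \ref{Lem3.1}), I would show that $H$ contains a strictly positive element, and hence a suitable multiple of the order unit $u=1_0\oplus 1$. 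Since $H$ is an order ideal containing an order unit of the subgroup it generates, and since every element of $G_{\Z}^+$ is dominated by some multiple of $u$ after shifting and summing (the standard ``$g=\sigma^{-N}(g_-)+g_0+\sigma^N(g_+)$'' decomposition used in Section 3), I conclude $H=G_{\Z}$. The $\sigma$-invariance is exactly what lets me move $s_0$ freely and what lets me absorb the shifts; this mirrors \cite[Lemma 4.10]{ETh}.

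For (ii), the inclusion $(\id-\sigma)(G_{\Z})\subset\ker(\Sigma_0)$ is immediate: $\Sigma_0((z_n)_n\oplus g)=g(\tau_S)$ is unchanged (in the sense of being computed from the germ data) under the shift, since $\sigma_S(f)=e^{-\pi}f$ and $\pi(\tau_S)=0$, so $\Sigma_0(\sigma(x))=\Sigma_0(x)$ and $\Sigma_0((\id-\sigma)(x))=0$. For the reverse inclusion, suppose $\Sigma_0(\xi\oplus f)=0$, i.e. $f(\tau_S)=0$. Using the defining condition of $G_{\Z}$ (on a neighbourhood $\pi^{-1}((-\varepsilon,\varepsilon))$ one has $f=L(\xi)$) together with property (3) of $\cG_{00}$, which says $\sigma_S(\cG_{00})=(\id_{A(S)}-\sigma_S)(\cG_{00})=\cG_{00}$, I would first reduce to the case where the $\cG_{00}$-part is killed. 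Then, writing $f$ in terms of $h_-\psi_{k-}\circ\pi+h_0\psi_k\circ\pi+h_+\psi_{k+}\circ\pi$ with $h_\pm\in\Z[e^{-\pi},(1-e^{-\pi})]$ and $h_0\in L(\Z^\infty)$, the germ condition forces $h_0(\tau_S)=f(\tau_S)=0$, and one checks directly that on each ``tail'' the divisibility of a Laurent polynomial $p$ with $p(0)=0$ by $1-e^{-x}$ (equivalently by $e^{x}-1$) lets one write $h_\pm = (\id-\sigma_S)(\tilde h_\pm)$ with $\tilde h_\pm$ again in $\Z[e^{-\pi},(1-e^{-\pi})]$, and similarly $h_0=(\id-\sigma_S)(\tilde h_0)$ with $\tilde h_0\in L(\Z^\infty)$ because $\Sigma\xi(0)=0$ means $e^x-1$ divides $\Sigma\xi$. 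Assembling the pieces (and correcting the mismatch of the cutoff index $k$, which only changes things by an element of $\cG_{00}$, handled again via property (3)) gives a preimage in $G_{\Z}$ under $\id-\sigma$. This is the computation behind \cite[Lemma 4.14]{ETh} and \cite[Lemma 3.7]{EST}.

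The main obstacle I anticipate is the bookkeeping in (ii): one must simultaneously invert $\id-\sigma$ on the three pieces cut out by $\psi_{k\pm}$, $\psi_k$, keep all coefficients in $\Z$ (not merely $\Q$), and make sure the germ-compatibility defining $G_{\Z}$ is preserved by the chosen preimage — the divisibility arguments for Laurent polynomials are elementary but the interface between the ``$\xi$'' coordinate and the ``$f$'' coordinate near $\tau_S$, plus the index-shift in $k$ absorbed into $\cG_{00}$, is where care is needed. Part (i) is comparatively routine once one notices that $\sigma$-invariance trivializes the localization at a single fibre.
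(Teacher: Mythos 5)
The paper does not prove Lemma~\ref{Lem6.6} directly but defers to \cite[Lemma 4.10]{ETh}, \cite[Lemma 3.7]{EST}, and \cite[Lemma 4.14]{ETh}; your outline is in fact the argument carried out there. Part~(ii) of your proposal is correct as it stands. The key observation is that dividing by $1-e^{-\pi}$ sends each of the three constituents of $\cR(\Z)$ back into itself: $\Z[e^{-\pi},(1-e^{-\pi})]$ contains $(1-e^{-\pi})^{-1}$ by definition, $L(\xi)/(1-e^{-\pi})=L(\eta)$ with $\eta_n=\sum_{m\geq n}\xi_m$ (well-defined and of finite support precisely because $\sum_n\xi_n=0$), and $\cG_{00}/(1-e^{-\pi})=\cG_{00}$ by property~(3). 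The coordinate $\xi$ is handled by the same formula for $\eta$, and the germ-compatibility of the preimage is automatic because it holds term by term.

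In part~(i) there is one confused (but inessential) step and one step left vague. The sentence ``replacing $g$ by $\sigma^m(g)$ $\dots$ I may assume $s_0$ lies in a region where one of the partition sections equals~$1$'' is off: $\sigma$ acts on the function coordinate by $f\mapsto e^{-\pi}f$, it does not translate points of $S$; and it is in any case unnecessary because, by the definition of $A(S)^+$, any nonzero $f\in\cR(\Z)^+$ is strictly positive on \emph{all} of $S$. Similarly the appeal to the ``convolution action $q\cdot g$'' (introduced in the proof of Proposition~\ref{Prop3.5} for a different purpose) should be replaced by the explicit sum of $\sigma$-translates. The clean route to $u\in H$ is: take a nonzero $g=\xi\oplus f\in H^+$; since $H$ is a $\sigma$-invariant order ideal, $b:=n\sum_{k=-N}^{N}\sigma^k(g)\in H^+$ for all $n,N$, and its function part $n\bigl(\sum_{k=-N}^{N}e^{-k\pi}\bigr)f$ is bounded below on compact sets and tends to $+\infty$ at both ends of $\pi(S)$ (on each tail $f$ dominates $c\,e^{m_\pm\pi}$ for some $c>0$, $m_\pm\in\Z$, while $\sum_k e^{-k\pi}$ grows like $e^{N|\pi|}$), so for suitable $n,N$ one has $b\geq u$; heredity gives $u\in H$. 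Your closing step then finishes: any $g'\in G_\Z^+$ is dominated by some $n'\sum_{k=-N'}^{N'}\sigma^k(u)\in H^+$, again by comparing tail growth, so $H=G_\Z$.
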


From Theorem \ref{Thm4.4} and Theorem \ref{Thm5.3}, we obtain a stable $\cZ$-absorbing RAF-algebra $A$ and an automorphism $\alpha_{\Z}$ on $A$ such that 
\[ (K_0(A), K_0(A)^+)\cong (G_{\Z}, G_{\Z}^+)\quad\text{and}\quad \alpha_{\Z *}=\sigma\text{ on } K_0(A).\]
By Proposition \ref{Prop6.2} and the same argument in \cite[Remark 2.8]{MS} (see also \cite[Lemma 3.6]{EST}), $A$ and  $\alpha_{\Z}$ can be chosen to have the following properties.

\begin{lemma}\label{Lem6.7}
{}\

\begin{itemize}
\item[]{\rm (i)} $A\rtimes_{\alpha_{\Z}} \Z$ is a simple \Cs, 
\item[]{\rm (ii)}The restriction map $\tau\mapsto \tau|_A$ from $\cT(A\rtimes_{\alpha_{\Z}}\Z)$ onto the $\alpha_{\Z}$-invariant traces in $\cT(A)$ is bijective, and
\item[]{\rm (iii)} $A\rtimes_{\alpha_{\Z}}\Z$ absorbs the Jiang-Su algebra tensorially.
\end{itemize}
\end{lemma}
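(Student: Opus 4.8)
The final statement to prove is Lemma \ref{Lem6.7}, asserting that the algebra $A$ and automorphism $\alpha_{\Z}$ obtained from Theorem \ref{Thm4.4} and Theorem \ref{Thm5.3} can be chosen so that (i) $A\rtimes_{\alpha_{\Z}}\Z$ is simple, (ii) restriction gives a bijection between $\cT(A\rtimes_{\alpha_{\Z}}\Z)$ and the $\alpha_{\Z}$-invariant traces in $\cT(A)$, and (iii) $A\rtimes_{\alpha_{\Z}}\Z$ is $\cZ$-absorbing.

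\medskip

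\textbf{Overall plan.} The key point is that while Theorem \ref{Thm4.4} and Theorem \ref{Thm5.3} only pin down $(A,\alpha_{\Z})$ up to isomorphism at the level of $(K_0,\alpha_{\Z *})$, we have the freedom to \emph{replace} $A$ by $A\otimes\cZ$ (still a stable $\cZ$-absorbing RAF-algebra with the same ordered $K_0$-group, by Theorem \ref{Thm4.4} and the discussion following Proposition \ref{Prop4.3}) and to replace $\alpha_{\Z}$ by $\alpha_{\Z}\otimes\sigma$ for a suitably chosen automorphism $\sigma$ of $\cZ$. The plan is: first invoke Lemma \ref{Lem6.6}(i) to know that $(K_0(A),K_0(A)^+)\cong(G_{\Z},G_{\Z}^+)$ is $\alpha_{\Z *}$-simple; then choose $\sigma\in\Aut(\cZ)$ with the weak Rohlin property (such automorphisms exist, e.g.\ by \cite{Sat0} or \cite{MS}; in fact any strongly outer automorphism of $\cZ$ works), so that Proposition \ref{Prop6.2} applies to $(A\otimes\cZ)\rtimes_{\alpha_{\Z}\otimes\sigma}\Z$ and yields simplicity. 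This gives (i) after renaming $A\otimes\cZ$ as $A$ and $\alpha_{\Z}\otimes\sigma$ as $\alpha_{\Z}$; note that $K_0(A\otimes\cZ)\cong K_0(A)$ as ordered groups and $(\alpha_{\Z}\otimes\sigma)_*=\alpha_{\Z *}$ under this identification (since $\sigma_*=\id$ on $K_0(\cZ)=\Z$), so the required $K_0$-data is preserved.

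\medskip

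\textbf{Statement (iii).} This is the easiest: $\cZ$ is strongly self-absorbing, so $(A\otimes\cZ)\rtimes_{\alpha\otimes\sigma}\Z\cong (A\rtimes_{\alpha}\Z)\otimes\cZ$ whenever the automorphism factors compatibly; more precisely, one uses that for any $\cZ$-absorbing separable \Cs{} $C$ with an automorphism $\beta$ that can be written as $\beta_0\otimes\sigma$ with respect to a decomposition $C\cong C\otimes\cZ$, the crossed product $C\rtimes_\beta\Z$ is again $\cZ$-absorbing. In our setting $A\otimes\cZ$ visibly has this form. This is the content of the cited \cite[Remark 2.8]{MS} / \cite[Lemma 3.6]{EST}; I would simply quote it, perhaps remarking that $(A\otimes\cZ)\rtimes_{\alpha_{\Z}\otimes\sigma}\Z$ absorbs $\cZ$ because $\sigma$ can be chosen so that $(\cZ,\sigma)$ has the relevant Rohlin-type property and $\cZ\rtimes_\sigma\Z$ is $\cZ$-stable.

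\medskip

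\textbf{Statement (ii).} Given simplicity of the crossed product, the restriction map $\cT(A\rtimes_{\alpha_{\Z}}\Z)\to\cT(A)^{\alpha_{\Z}}$ (traces on $A$ invariant under $\alpha_{\Z}$) is always well-defined and affine. Injectivity: a densely defined lower semicontinuous trace on $A\rtimes_{\alpha_{\Z}}\Z$ is determined by its values on the canonical copy of $A$ together with the trace property, which kills the off-diagonal terms $au^n$ ($n\ne 0$) — standard via the conditional expectation $E:A\rtimes_{\alpha_{\Z}}\Z\to A$ and the observation that $\tau=\tau\circ E$ for any trace $\tau$. Surjectivity: given an $\alpha_{\Z}$-invariant $\tau_0\in\cT(A)$, the formula $\tau(x)=\tau_0(E(x))$ defines a densely defined lower semicontinuous trace on $A\rtimes_{\alpha_{\Z}}\Z$ restricting to $\tau_0$; lower semicontinuity and the trace property for $au^n$ follow because $\tau_0\circ\alpha_{\Z}=\tau_0$. (One must check the domain issue — the Pedersen ideal of $A\rtimes_{\alpha_{\Z}}\Z$ — but since the crossed product is simple and stably finite, and $E$ maps its Pedersen ideal into that of $A$, this goes through.) The bijectivity is exactly the cited \cite[Lemma 3.6]{EST}, so the cleanest route is to quote it; the only thing to verify is that our $(A,\alpha_{\Z})$ satisfies its hypotheses, which are precisely (i) (already obtained) plus $\alpha_{\Z}$-simplicity of $K_0$.

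\medskip

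\textbf{Main obstacle.} The one genuine point requiring care is the existence of $\sigma\in\Aut(\cZ)$ with the weak Rohlin property together with the compatibility needed for (iii) — but this is well-documented: the canonical shift-type automorphisms of $\cZ\cong\cZ^{\otimes\infty}$, or any strongly outer automorphism, have the weak Rohlin property by \cite{Sat0} (or \cite{Lia}, which is already invoked in the proof of Proposition \ref{Prop6.2}), and $\cZ\rtimes_\sigma\Z$ is $\cZ$-stable. Given that, all three assertions follow by quoting Proposition \ref{Prop6.2}, \cite[Lemma 3.6]{EST}, and \cite[Remark 2.8]{MS} respectively, after the single bookkeeping step of replacing $(A,\alpha_{\Z})$ by $(A\otimes\cZ,\alpha_{\Z}\otimes\sigma)$ and checking the $K_0$-data is unchanged. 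So the proof is short; I would write it as: recall $K_0$ is $\alpha_{\Z *}$-simple by Lemma \ref{Lem6.6}(i); choose $\sigma$ with the weak Rohlin property; replace $A$ by $A\otimes\cZ$ and $\alpha_{\Z}$ by $\alpha_{\Z}\otimes\sigma$; then (i) is Proposition \ref{Prop6.2}, (ii) is \cite[Lemma 3.6]{EST}, and (iii) is \cite[Remark 2.8]{MS}.
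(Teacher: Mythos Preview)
Your proposal is correct and follows essentially the same approach as the paper: the paper's entire justification is the sentence preceding the lemma, ``By Proposition \ref{Prop6.2} and the same argument in \cite[Remark 2.8]{MS} (see also \cite[Lemma 3.6]{EST}), $A$ and $\alpha_{\Z}$ can be chosen to have the following properties,'' and you have unpacked exactly this---replace $(A,\alpha_{\Z})$ by $(A\otimes\cZ,\alpha_{\Z}\otimes\sigma)$ for $\sigma\in\Aut(\cZ)$ with the weak Rohlin property, invoke Lemma \ref{Lem6.6}(i) for $\alpha_{\Z *}$-simplicity of $K_0$, and then read off (i) from Proposition \ref{Prop6.2}, (iii) from \cite[Remark 2.8]{MS}, and (ii) from \cite[Lemma 3.6]{EST}. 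One small caution: your informal justification of (ii) (``the trace property kills the off-diagonal terms $au^n$'') is not automatic for arbitrary crossed products and does use the weak Rohlin property of $\sigma$; since you ultimately defer to \cite[Lemma 3.6]{EST} this is harmless, but the direct argument you sketch would need that extra input.
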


Set $C=A\rtimes_{\alpha_{\Z}} \Z$, and denote by $P$ the conditional expectation from $C$ onto $A$. Since $A$ is a stable \Cs{}, for $u\in G_{\Z}^+\cong K_0(A)^+$ defined as $u=(1_0, 1)$ in Section 3, there exists a projection $p$ in $A$ such that $[p]_0=u$. Then the unital \Cs{} $pCp$ has the following properties.

\begin{proposition}\label{Prop6.8}
{}\
\begin{itemize}
\item[]{\rm (i)} $pCp$ has a unique tracial state,
\item[]{\rm (ii)} $(K_0(pCp), K_0(pCp)^+)\cong (\Z, \Z^+)$,\quad $[p]_0=1\in\Z$,
\item[]{\rm (iii)} $K_1(pCp)\cong 0$, and
\item[]{\rm (iv)} therefore $pCp$ is isomorphic to $\cZ$.
\end{itemize}
\end{proposition}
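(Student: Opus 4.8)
The plan is to verify the four parts of Proposition \ref{Prop6.8} in turn, and then invoke the Jiang--Su uniqueness theorem to conclude (iv). For (ii), I would use the standard identification of $K_0$ of a corner: since $p$ is a full projection in the stable algebra $C$ (fullness of $p$ follows because $A$ is stable and $[p]_0 = u$ generates an order ideal of $K_0(A)$ that must be everything by $\sigma$-simplicity, via Lemma \ref{Lem6.1}(ii)), the inclusion $pCp \hookrightarrow C$ induces an isomorphism $K_0(pCp) \cong K_0(C)$ sending $[p]_0$ to $[p]_0$. So the task reduces to computing $K_0(C) = K_0(A \rtimes_{\alpha_{\Z}} \Z)$ and $K_1(C)$. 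By the Pimsner--Voiculescu exact sequence applied to the crossed product, and using that $K_1(A) = 0$ (Lemma \ref{Lem4.2}(ii)) and $\alpha_{\Z *} = \sigma$ on $K_0(A) \cong G_{\Z}$, I would obtain $K_0(C) \cong \mathrm{coker}(\id - \sigma)$ on $G_{\Z}$ and $K_1(C) \cong \ker(\id - \sigma)$ on $G_{\Z}$. By Lemma \ref{Lem6.6}(ii), $(\id - \sigma)(G_{\Z}) = \ker(\Sigma_0)$, so $\mathrm{coker}(\id - \sigma) \cong \Z$ via $\Sigma_0$; and one checks $\ker(\id - \sigma) = 0$ on $G_{\Z}$ directly (a fixed point $\xi \oplus g$ of $\sigma$ forces the coefficient sequence of $g$, via $\sigma_S$, to be constant, hence $g$ bounded on the unbounded range $\pi(S)$, hence constant, hence $\xi$ supported at $0$ and $g$ constant equal to that value---but then $\sigma$-invariance forces the value to be $0$). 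This gives (iii) $K_1(pCp) \cong K_1(C) = 0$, and (ii) with the correct image of $[p]_0$, once one tracks that the PV boundary map sends the generator appropriately and $\Sigma_0(u) = 1$.

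For (i), uniqueness of the tracial state: by Lemma \ref{Lem6.7}(ii) the tracial states of $C$ restrict bijectively to $\alpha_{\Z}$-invariant traces of $A$ (suitably normalized so that the trace is finite on $p$), and by Proposition \ref{Prop6.4} the trace cone $\cT(A)$ is affinely homeomorphic to $S_0(K_0(A)) = S_0(G_{\Z})$. Under this identification an $\alpha_{\Z}$-invariant trace corresponds to a $\sigma$-invariant positive homomorphism $\varphi : G_{\Z} \to \R$, i.e. $\varphi \circ \sigma = \varphi$. Since $\varphi$ kills $(\id - \sigma)(G_{\Z}) = \ker \Sigma_0$, it factors through $\Sigma_0 : G_{\Z} \to \Z$, so $\varphi = c\,\Sigma_0$ for some $c \geq 0$. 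Imposing the normalization that the corresponding trace on $pCp$ is a state (value $1$ on $p$, where $[p]_0 = u$ and $\Sigma_0(u) = 1$) pins down $c = 1$. Hence $pCp$ has exactly one tracial state. I should also note $pCp$ is simple (as a full corner of the simple algebra $C$, Lemma \ref{Lem6.7}(i)) and unital, and $\cZ$-absorbing (a corner of the $\cZ$-absorbing algebra $C$ of Lemma \ref{Lem6.7}(iii), using that $\cZ$-stability passes to hereditary subalgebras / full corners).

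For (iv): $pCp$ is a unital, simple, separable, nuclear (being built from AF algebras and a $\Z$-crossed product, hence in the bootstrap class), $\cZ$-stable $C^*$-algebra with a unique tracial state, $(K_0, K_0^+, [1]) \cong (\Z, \Z^+, 1)$, and $K_1 = 0$. These are exactly the invariants of the Jiang--Su algebra $\cZ$, so by the classification of simple separable unital nuclear $\cZ$-stable $C^*$-algebras satisfying the UCT (or, more economically, by the original Jiang--Su uniqueness theorem for $\cZ$-stable algebras with this Elliott invariant), $pCp \cong \cZ$.

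The main obstacle I anticipate is the $K$-theory bookkeeping in the Pimsner--Voiculescu argument: one must be careful that the map in the exact sequence really is $\id - \alpha_{\Z *} = \id - \sigma$ with the correct sign and direction, that the cokernel computation genuinely produces $(\Z, \Z^+, 1)$ as an \emph{ordered} group with the class of $p$ as order unit (not merely an abstract group isomorphism), and that the normalization matching traces on $A$ to traces on $pCp$ through the various identifications ($\cT(C) \to \cT(A)^{\alpha_{\Z}} \to S_0(G_{\Z})^{\sigma} \to \{c\Sigma_0 : c \geq 0\}$) is consistent. The order-unit and positivity claims should follow from the fact that $C$ has a faithful trace and cancellation-type properties inherited from the AF building blocks, together with $\alpha_{\Z}$-simplicity, but making this fully rigorous is where the real work lies; everything else is a matter of assembling results already available in the excerpt.
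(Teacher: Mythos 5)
Your proposal follows the paper's proof essentially step for step: the Pimsner--Voiculescu sequence combined with Lemma \ref{Lem4.2}(ii), Lemma \ref{Lem6.6}, Lemma \ref{Lem6.7}, and Proposition \ref{Prop6.4} for (i)--(iii), and classification of monotracial $\cZ$-stable algebras for (iv); the ordering and citations differ slightly but the mathematical content is the same. The one place you explicitly flag as a gap---that $\Sigma$ carries $K_0(C)^+$ onto $\Z^+$ with the class of $p$ as order unit, not merely an abstract group isomorphism---is exactly where the paper does a small computation: for $x\in K_0(C)^+$ with $q((y_n)_n\oplus f)=x$, it applies the positive functional $(\tau^0\circ P)_*$ (where $\tau^0$ is the $\alpha_{\Z}$-invariant trace corresponding to $\tau_S$) to get $0\leq(\tau^0\circ P)_*(x)=f(\tau_S)=\sum_n y_n=\Sigma(x)$, and uses $nu\in K_0(A)^+$ for the reverse inclusion; this is the "faithful trace" idea you gestured at, made precise. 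Also, your sketch that $\ker(\id-\sigma)=0$ is a bit garbled ($\sigma_S(g)=e^{-\pi}g=g$ forces $g$ to vanish off $\pi^{-1}(0)$, shift-invariance of $\xi\in\Z^\infty=\bigoplus_{\Z}\Z$ forces $\xi=0$, and the defining condition of $G_{\Z}$ then forces $g=0$ near $\pi^{-1}(0)$ too---your "bounded hence constant" and "$\xi$ supported at $0$" phrasing is not quite right), whereas the paper simply cites the analogous argument from \cite[Lemma 3.11]{EST}.
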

\begin{proof}
{}\
\noindent{\rm (i)}. Set $\cT_{\alpha_{\Z}}^0(A)=\{\tau\in\cT(A)\ : \ \tau\circ\alpha_{\Z}=\tau, \text{ and } \tau(p)=1\}$. By Proposition \ref{Prop6.4}, $\cT_{\alpha_{\Z}}^0(A)$ corresponds to $S_{\sigma}^1(G_\Z)=\{\tau_S\}$ via $\PhiT$. Because of \cite[Proposition 4.7]{CP}, the set of tracial states $T(pCp)$ of $pCp$ corresponds to $\{\tau\in\cT(C)\ :\ \tau(p)=1\}$ via the restriction on $pCp$. By Lemma \ref{Lem6.7} (ii), 
we know that $\{\tau\in\cT(C)\ : \ \tau(p)=1\}$ is a singleton, which implies that $T(pCp)$ is also singleton.

\noindent{\rm (ii)}. By Lemma \ref{Lem4.2} (ii), we see that $K_1(A)=0$. Then the Pimsner-Voiculescu six-term exact sequence allows us to see that
\[K_0(C)\cong G_{\Z}/(\id-\sigma)(G_{\Z})= G_{\Z}/\ker(\Sigma_0) \cong\Z.\]
Consider  the quotient map $q: G_{\Z}\rightarrow G_{\Z}/(\id-\sigma)(G_{\Z})$ and denote by $\iota_* : K_0(A)\rightarrow K_0(C)$ the map induced  by the inclusion $\iota : A\rightarrow C$. In the exact sequence, precisely, $\iota_*$ corresponds to $q$ and the above group isomorphism $\Sigma : K_0(C)\rightarrow \Z$ is determined by $\Sigma\circ q=\Sigma_0$; see also the paragraph after \cite[Lemma 3.7]{EST}. Set $\tau^0\in\cT_{\alpha_{\Z}}^0(A)$ which corresponds to $\tau_S\in S_{\sigma}^1(G_{\Z})$, (determined by $\tau^0_*=\widehat{\tau_S}$). For $x\in K_0(C)^+$, now we obtain $(y_n)_{n\in\Z}\oplus f\in G_{\Z}$ such that $q((y_n)_n \oplus f))=x$. Then it follows that 
\[ 0\leq (\tau^0\circ P)_*(x) =\tau_*^0((y_n)_n\oplus f)=f(\tau_S)=\sum_{n\in\Z} y_n\in\Z,\]
which implies that $\Sigma (x) \in \Z^+$. The converse inclusion $\Sigma(K_0(C)^+)\supset \Z^+$ follows from the properties $nu\in K_0(A)^+$ and $n=\Sigma\circ q(nu)\in \Sigma(K_0(C)^+)$ for any $n\in\Z^+$. Then we have that $(K_0(pCp), K_0(pCp)^+)\cong(K_0(C), K_0(C)^+)\cong(\Z, \Z^+)$. In particular, since $1=\Sigma\circ q(u)$, $[p]_0$ corresponds to $1\in\Z$.

\noindent{\rm (iii)}. 
For the same reason as in \cite[Lemma 3.11]{EST}, we see that the map $\id-\sigma$ is injective on $K_0(A)\cong G_{\Z}$. From the Pimsner-Voiculescu exact sequence, we see that $K_1(C)=0$. 

\noindent{\rm (iv)}.
By Lemma \ref{Lem6.7},  we have now shown that $pCp$ is a unital separable simple nuclear \Cs{} which absorbs the Jiang-Su algebra tensorially and satisfies the universal coefficient theorem. By \rm{(i)}, \rm{(ii)}, \rm{(iii)} above, we see that $pCp$ is monotracial and the Elliott invariant of $pCp$ is same as for $\cZ$. From \cite[Corollary 4.6]{Ror}, we also see that $pCp$ has strict comparison. Then by \cite[Corollary 6.2]{MS2}, we conclude that $pCp\cong \cZ$.

\end{proof}

\begin{proof}[Proof of Theorem \ref{ThmMain}]
Denote by $\widetilde{\theta}$  the dual action of $\alpha_{\Z}$ on $C$ and by $\theta$ the restriction of $\widetilde{\theta}$ to $pCp\cong\cZ$. Denote by $(S_{\theta}, \pi_{\theta})$ the KMS-bundle of the dynamical system $(\cZ, \theta)$.
By Proposition \ref{Prop3.5} (iii),  $(S_{\sigma}(G_{\Z}), \pi_{\Z})$ is a proper simplex bundle isomorphic to $(S, \pi)$. It remains to show that $(S_{\sigma}(G_{\Z}), \pi_{\Z})$ is isomorphic to $(S_{\theta}, \pi_{\theta})$. Set 
\[\cT_{\alpha_{\Z}}(A)=\{ (\tau, \beta) \in \cT(A)\times\R \ : \ \tau\circ\alpha_{\Z}= e^{-\beta}\tau,\quad \tau(p)=1\},\]
equipped with the product topology on $\cT(A)\times\R$, and denote by $\pi_{\alpha_{\Z}} : \cT_{\alpha_{\Z}}(A)\rightarrow\R$  the projection. Define a continuous map $\Phi_{\alpha_{\Z}} : \cT_{\alpha_{\Z}}(A)\rightarrow S_{\sigma}(G_{\Z})$ by $\Phi_{\alpha_{\Z}}(\tau, \beta)=(\PhiT(\tau), \beta)$ for $(\tau, \beta)\in \cT_{\alpha_{\Z}}(A)$. By Proposition \ref{Prop6.4}, $\Phi_{\alpha_{\Z}}$ is a homeomorphism, and $\Phi_{\alpha_{\Z}}^{-1}|_{\pi_{\Z}^{-1}(\beta)}$ is affine for each $\beta\in\R$. 

It follows that the projection $p\in A$ (of Proposition \ref{Prop6.8}) is full  in $C$, i.e., $\overline{CpC}=C$, by the simplicity of $C$. 
Define a map
$\Psi : \cT_{\alpha_{\Z}}(A)\rightarrow S_{\theta}$ by $\Psi((\tau, \beta))=(\tau\circ P|_{pCp}, \beta)$
for $(\tau, \beta)\in\cT_{\alpha_{\Z}}(A)$. By \cite[Lemma 4.1]{ETh}, we see that $\Psi$ is bijective and $\Psi|_{\pi_{\alpha_{\Z}}^{-1}(\beta)}$ is an affine homeomorphism from $\pi_{\alpha_{\Z}}^{-1}(\beta)$ onto $\pi_{\theta}^{-1}(\beta)$. Since $pAp\subset A_0$, if a sequence $\tau_n\in \cT(A)$, $n\in\N$, converges to $\tau\in\cT(A)$ then $\tau_n\circ P|_{pCp}\rightarrow \tau\circ P|_{pCp}$ in the topology of pointwise convergence. Thus, $\Psi$ is continuous. We define a bijective continuous map $\Phi: S_{\sigma}(G_{\Z})\rightarrow S_{\theta}$ by $\Phi=\Psi\circ\Phi_{\alpha_{\Z}}^{-1}$. By Lemma \ref{Lem3.3}, we conclude that $(S_{\sigma}(G_{\Z}), \pi_{\Z})$ is isomorphic to $(S_{\theta}, \pi_{\theta})$, as desired.
\end{proof}

\begin{theorem}
Let $A$ be a unital separable \Cs{} with a unique tracial state. Suppose that 
$A$ absorbs the Jiang-Su algebra tensorially. 
Then for any proper simplex bundle $(S, \pi)$ such that $\pi^{-1}(0)$ is singleton,
there exists a $2\pi$-periodic flow on $A$ whose KMS-bundle is isomorphic to $(S, \pi)$.
\end{theorem}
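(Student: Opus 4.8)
The plan is to reduce the statement to Theorem \ref{ThmMain} by tensoring with $\mathcal{Z}$. First I would recall that since $A$ absorbs the Jiang-Su algebra tensorially, $A\cong A\otimes\cZ$; moreover $A$ being unital with a unique tracial state means the tracial simplex $T(A)$ is a singleton. The key observation is that a $2\pi$-periodic flow $\theta$ on $\cZ$ with KMS-bundle $(S,\pi)$ (furnished by Theorem \ref{ThmMain}) can be transplanted onto $A\otimes\cZ$ via $\id_A\otimes\theta$, and the task is to show that the KMS-bundle of $(A\otimes\cZ,\ \id_A\otimes\theta)$ is again isomorphic to $(S,\pi)$.

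The main step is a KMS-state computation. For $\beta\in\R$, a $\beta$-KMS state $\omega$ for $\id_A\otimes\theta$ restricts on $A\otimes 1_{\cZ}$ to a $\theta$-invariant (in fact $\id_A$-invariant, trivially) tracial state, hence to the unique tracial state $\tau_A$ of $A$; and it restricts on $1_A\otimes\cZ$ to a $\beta$-KMS state $\psi$ for $\theta$. Conversely, because $A$ has a unique trace, one expects that $\omega$ is forced to be the product $\tau_A\otimes\psi$: the argument is the standard one that for $x\in A$ the functional $y\mapsto\omega(x\otimes y)$ on $\cZ$ is dominated by a multiple of $\psi$ and is a ``relative trace'' in the $A$-variable, so by uniqueness of the trace on $A$ it factors as $\tau_A(x)\,\psi(y)$; one can run this through the weak closure (the GNS von Neumann algebra of $\omega$ is a factor or at least has $A''$ with unique trace) exactly as in the proof of Proposition \ref{Prop6.8} (i) where $T(pCp)$ was identified via \cite[Proposition 4.7]{CP}. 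This establishes a bijection $\omega\leftrightarrow(\beta,\psi)$ between the $\beta$-KMS states of $\id_A\otimes\theta$ and those of $\theta$, and one checks it is affine on each fibre and a homeomorphism for the topologies making the KMS-bundles simplex bundles (continuity in both directions is routine, using that the bundle $(S,\pi)$ for $\theta$ is proper so fibres vary compactly). Hence $\mathrm{KMS}(\id_A\otimes\theta)\cong\mathrm{KMS}(\theta)\cong(S,\pi)$.

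Finally I would transport the flow back along the isomorphism $A\cong A\otimes\cZ$: if $\Theta=\id_A\otimes\theta$ on $A\otimes\cZ$ and $\Lambda:A\to A\otimes\cZ$ is an isomorphism, then $t\mapsto\Lambda^{-1}\circ\Theta_t\circ\Lambda$ is a $2\pi$-periodic flow on $A$, and conjugation by an isomorphism carries KMS-bundles to isomorphic KMS-bundles (a KMS state for the conjugated flow is $\omega\circ\Lambda$ for $\omega$ a KMS state of $\Theta$, at the same inverse temperature, and this correspondence is affine and bicontinuous on fibres). Thus the resulting flow on $A$ has KMS-bundle isomorphic to $(S,\pi)$, as required.

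The hard part will be the KMS-state computation showing every $\beta$-KMS state of $\id_A\otimes\theta$ is a product state; one must be careful that it is the uniqueness of the tracial state of $A$ (not of $\cZ$) that forces the factorisation, and handle the case $\beta=0$ (where the condition degenerates to a trace on $A\otimes\cZ$, whose tracial states are products of $T(A)=\{\tau_A\}$ with $T(\cZ)=\{\tau_{\cZ}\}$, matching the singleton fibre $\pi^{-1}(0)$) together with $\beta\neq 0$ uniformly. Once that factorisation and its continuity in $(\beta,\psi)$ are in place, the rest is a formal consequence of Theorem \ref{ThmMain} and Lemma \ref{Lem3.3}.
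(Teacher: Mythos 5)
Your proposal is correct and follows essentially the same route as the paper: form the tensor-product flow $\alpha_t=\id_A\otimes\theta_t$ on $A\otimes\cZ$, show every $\beta$-KMS state factorises as $\tau\otimes\psi$ with $\psi$ a $\beta$-KMS state for $\theta$ using uniqueness of the trace on $A$, and transport back along $A\cong A\otimes\cZ$. The detour through the GNS von Neumann algebra you sketch is unnecessary: since $\alpha_{i\beta}(a\otimes 1_{\cZ})=a\otimes 1_{\cZ}$ for $a\in A$, the KMS condition gives directly $\omega(ab\otimes y)=\omega\bigl((b\otimes y)\,\alpha_{i\beta}(a\otimes 1_{\cZ})\bigr)=\omega(ba\otimes y)$, so $a\mapsto\omega(a\otimes y)$ is tracial on $A$ and hence proportional to $\tau$, which is the paper's (shorter) argument.
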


\begin{proof}
Let $\widetilde{\iota}_t=\id_A$, $t\in\R$, denote the trivial flow on $A$ and $\tau$ the unique tracial state of $A$. For a given $(S, \pi)$, by Theorem \ref{ThmMain} we obtain a flow $\theta$ on $\mathcal{Z}$ whose KMS-bundle is isomorphic to $(S, \pi)$. We denote by  $A\otimes \mathcal{Z}$ the \Cs{}  tensor product and define a flow $\alpha$ on $A\otimes \mathcal{Z}$ as the tensor product action which is defined by $\alpha_t(a\otimes b) =\widetilde{\iota}_t(
a)\otimes\theta_t(b)$ for $a\in A$ and $b\in\mathcal{Z}$. 
Let $(S_{\alpha}, \pi_{\alpha})$ denote the KMS-bundle of $\alpha$ and $S_{\theta}^{\beta}(\cZ)$ (resp.~$S_{\alpha}^{\beta}(A\otimes \cZ)$) the  set of $\beta$-KMS states for $\theta$ (resp.~$\alpha$). 
Define an affine map $\Phi$ from $S(\mathcal{Z})$ to $S(A\otimes\mathcal{Z})$ by $\Phi(\varphi) =\tau\otimes\varphi$. Then it is straightforward to check that $\Phi(\varphi)$ is a $\beta$-KMS state for $\alpha$ for any $\varphi\in S_{\theta}^{\beta}(\mathcal{Z})$, and $\Phi |_{S_{\theta}^{\beta}(\cZ)}$ $: S_{\theta}^{\beta}(\cZ)\rightarrow S_{\alpha}^{\beta}(A\otimes\cZ)$ is affine continuous and injective for any $\beta\in\R$. Define a continuous map $\Phi_A$ $: S_{\theta}\rightarrow S_{\alpha}$ by $\Phi_A((\varphi, \beta))=(\Phi(\varphi), \beta)$ for $(\varphi, \beta)\in S_{\theta}$.

In order to apply Lemma \ref{Lem3.3} to $\Phi_A$, it remains to show surjectivity of $\Phi |_{S_{\theta}^{\beta}(\cZ)}$. Given $\psi\in S_{\alpha}^{\beta}(A\otimes\cZ)$,  $x\in \mathcal{Z}$, and $a, b\in A$, note that $a\otimes 1_{\mathcal{Z}}$ and $b\otimes 1_{\mathcal{Z}}$ are analytic  elements for $\alpha$ and $\psi(ab\otimes x)=\psi((b\otimes x)\alpha_{i\beta}(a\otimes 1_{\mathcal{Z}})) = \psi(ba\otimes x)$. 
Since $\tau$ is a unique tracial state, we have $\tau(a)\psi(1_A\otimes x)=\psi(a\otimes x)$, for any $a\in A$ and $x\in \mathcal{Z}$. Set $\psi_{\mathcal{Z}}(x)=\psi(1_A\otimes x)$, for $x\in \mathcal{Z}$. Then $\psi_{\mathcal{Z}}$ is contained in $S_{\theta}^{\beta}(\mathcal{Z})$,  as $\psi(1_A\otimes xy)=\psi((1_A\otimes y)\alpha_{i\beta}(1_A\otimes x))$ $=\psi(1_A\otimes y(\theta_{i\beta}(x)))$ for any analytic elements $x, y\in\mathcal{Z}$ for $\theta$. Thus we see that $\Phi(\psi_{\mathcal{Z}})(a\otimes x)=\psi(a\otimes x)$ for any $a\in A$ and $x\in \mathcal{Z}$. 
\end{proof}

From \cite[Theorem 3.2]{PS}, \cite[Proposition 2.1]{Kis}, and \cite[Theorem 1.1]{MS1}, we obtain uncountably many flows which are not approximately inner on the following classifiable class of \Cs s.
\begin{corollary}[cf.~Corollary 4.2 of \cite{EST}]
Any unital separable simple amenable monotracial \Cs{} with strict comparison has uncountably many flows which are not approximately inner, up to cocycle conjugacy and the trivial scaling equivalence.
\end{corollary}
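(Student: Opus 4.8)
The plan is to combine the theorem just established with the known structure of approximately inner flows. First, by \cite[Theorem 1.1]{MS1} a unital separable simple amenable monotracial \Cs{} with strict comparison absorbs the Jiang-Su algebra tensorially, so the preceding theorem applies: every proper simplex bundle $(S,\pi)$ with singleton fibre over $0$ is the KMS-bundle of a $2\pi$-periodic flow on $A$. It then remains to exhibit uncountably many such bundles that are (a) pairwise inequivalent under the equivalence relation that cocycle conjugacy together with trivial scaling induces on KMS-bundles, and (b) not realised by any approximately inner flow on $A$.

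For (a), I would first note that cocycle conjugacy preserves the KMS-bundle up to isomorphism of simplex bundles over $\R$, via the affine bijection between KMS states at a fixed inverse temperature coming from Araki's bounded-perturbation theory, while trivial scaling $\theta_t\mapsto\theta_{ct}$, $c\in\R\setminus\{0\}$, only reparametrises the base by $\beta\mapsto c\beta$. Hence two flows cocycle conjugate up to trivial scaling have KMS-bundles related by an isomorphism composed with such a rescaling, and it suffices to find uncountably many proper simplex bundles with singleton fibre over $0$, no two of which become isomorphic after rescaling $\R$. This can be carried out as in \cite{BEK2}, \cite{ETh}, \cite{EST}: for a compact metrizable space $Z$, construct a proper simplex bundle $B_Z$ whose fibre over $\beta=1$ is the Bauer simplex $P(Z)$ and whose fibre over every other $\beta$ is a point. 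If $B_Z$ and $B_{Z'}$ are isomorphic after rescaling by some $c$, the locus of non-singleton fibres forces $c=1$, and the isomorphism restricts to an affine homeomorphism $P(Z)\to P(Z')$, whence $Z\cong\partial_{\mathrm e}P(Z)\cong\partial_{\mathrm e}P(Z')\cong Z'$; since there are uncountably many pairwise non-homeomorphic compact metrizable spaces, this gives uncountably many $B_Z$ pairwise inequivalent in the required sense.

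For (b), I would invoke \cite[Theorem 3.2]{PS} and \cite[Proposition 2.1]{Kis}: an approximately inner flow on a unital separable simple monotracial \Cs{} has a very restricted KMS-bundle (by Powers--Sakai it has KMS states at every inverse temperature, and Kishimoto's result constrains the fibres further), so none of the $B_Z$ with $Z$ not a single point arises from such a flow. (Should the abstract model of a simplex bundle be required to allow empty fibres, it already suffices to splice into each $B_Z$ a half-line of empty fibres, e.g. $\pi^{-1}(\beta)=\emptyset$ for $\beta>2$, which by \cite[Theorem 3.2]{PS} alone defeats approximate innerness without disturbing the rescaling argument.) Applying the preceding theorem to the bundles $B_Z$ then yields, for $Z$ ranging over uncountably many non-homeomorphic compact spaces, $2\pi$-periodic flows on $A$ that are not approximately inner and have pairwise inequivalent KMS-bundles, hence are pairwise inequivalent up to cocycle conjugacy and trivial scaling.

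The hard part will be (b) together with the behaviour of the KMS-bundle under cocycle conjugacy: one must check that the perturbation bijection is affine, continuous in $\beta$, and respects properness so that cocycle conjugacy really does induce an isomorphism of KMS-bundles, and one must read off from \cite{PS} and \cite{Kis} precisely which bundles are forbidden for approximately inner flows --- and, correspondingly, verify that the $B_Z$ (or their truncations) are genuinely \emph{proper} simplex bundles, which is the same delicate point handled by the $\cG_{00}$-construction of Section 3.
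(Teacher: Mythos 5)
The paper offers no proof of this corollary beyond the single sentence citing Powers--Sakai, Kishimoto's Proposition 2.1, and Matui--Sato, so there is no detailed argument against which to compare yours; your proposal is a correct and careful elaboration of what that citation must be doing, in the spirit of Corollary 4.2 of Elliott--Sato--Thomsen which the paper references. You correctly identify the technical points needing verification: that cocycle conjugacy together with scaling acts on KMS-bundles by simplex-bundle isomorphism composed with a base rescaling, and that the bundles $B_Z$ are genuine proper simplex bundles (gluing the Bauer simplex of probability measures on $Z$ onto the real line at a single point does yield a locally compact, second-countable Hausdorff space on which $A(S)$ separates points, though this is worth writing out). One caveat: Kishimoto's Proposition 2.1 concerns AF algebras, so if you invoke it to get uniqueness of KMS states for approximately inner flows in the present $\cZ$-absorbing (not necessarily AF) setting, you must verify it transfers; your empty-fibre fallback avoids this, as Powers--Sakai alone then defeats approximate innerness and the nontrivial fibre at $\beta=1$ serves purely as the scaling-rigid invariant distinguishing the bundles.
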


\noindent{\bf Acknowledgements.}\quad  The authors would like to thank Professor Klaus Thomsen for helpful comments on this research. They also wish to express their gratitude to Professor Huaxin Lin and the organizers of Special Week on Operator Algebras 2021.

\noindent{Department of Mathematics, University of Toronto, Toronto, 
Canada ~\  M5S 2E4}\\
e-mail: { elliott@math.toronto.edu}

\bigskip

\noindent{Graduate School of Mathematics, Kyushu University, 744 Motoka, Nishi-ku, Fukuoka, Japan}\\
e-mail: {ysato@math.kyushu-u.ac.jp }

\bigskip

\end{document}